\newtheorem{theorem}{Theorem}[subsection]
\newtheorem{lemma}[theorem]{Lemma}
\newtheorem{proposition}[theorem]{Proposition}
\newtheorem{conjecture}[theorem]{Conjecture}
\newtheorem{corollary}[theorem]{Corollary}
\theoremstyle{definition}
\newtheorem{definition}[theorem]{Definition}
\theoremstyle{remark}
\newtheorem{remark}[theorem]{Remark}
\newtheorem{observation}[theorem]{Observation}
\numberwithin{equation}{subsection}
\newcommand{\gfp}{G(\mathbb{F}_p)}
\newcommand{\gfpr}{G(\mathbb{F}_{q})}
\newcommand{\gfq}{G(\mathbb{F}_{p^r})}
\newcommand{\Fp}{\mathbb{F}_p}
\newcommand{\cgr}{\mathcal{G}_r(k)}
\newcommand{\ul}{\mathfrak{u}}
\newcommand{\Lie}{\operatorname{Lie}}
\newcommand{\ch}{\text{\rm ch}}
\newcommand{\ind}{\operatorname{ind}}
\newcommand{\Ext}{\operatorname{Ext}}
\newcommand{\into}{\hookrightarrow}
\newcommand{\onto}{\twoheadrightarrow}
\newcommand{\opH}{\operatorname{H}}
\newcommand{\Hom}{\operatorname{Hom}}
\renewcommand{\ch}{\operatorname{ch}}
\newcommand{\ga}{\gamma}
\newcommand{\la}{\lambda}
\newcommand{\al}{\alpha}
\newcommand{\ta}{\tilde{\alpha}}
\newcommand{\be}{\beta}
\newcommand{\ep}{\epsilon}
\newcommand{\si}{\sigma}
\begin{document}

\title[On the vanishing ranges for the cohomology of finite
groups of Lie type II]{On the vanishing ranges for the cohomology of finite
groups of Lie type II}


\author{Christopher P. Bendel}
\address{Department of Mathematics, Statistics and Computer Science,
University of
Wisconsin-Stout, 
Menomonie WI~54751, USA}
\curraddr{}
\email{bendelc@uwstout.edu}
\thanks{Research of the first author was supported in part by
NSF
grant DMS-0400558}

\author{Daniel K. Nakano}
\address{Department of Mathematics, University of Georgia,
Athens GA~30602, USA}
\curraddr{}
\email{nakano@math.uga.edu}
\thanks{Research of the second author was supported in part by
NSF
grant DMS-1002135}

\author{Cornelius Pillen}
\address{Department of Mathematics and Statistics, University
of South
Alabama,
Mobile AL~36688, USA}
\curraddr{}
\email{pillen@jaguar1.usouthal.edu}
\thanks{}
\subjclass[2000]{Primary 20J06;
Secondary 20G10}

\date{November 2011}

\begin{abstract} The computation of the cohomology for finite groups of Lie type in the describing 
characteristic is a challenging and difficult problem. In \cite{BNP}, the authors constructed an induction 
functor which takes modules over the finite group of Lie type, $G({\mathbb F}_{q})$, to modules for the ambient algebraic group $G$. 
In particular this functor when applied to the trivial module yields a natural $G$-filtration. This filtration was utilized 
in \cite{BNP} to determine the first non-trivial cohomology class when the underlying root system is of 
type $A_{n}$ or $C_{n}$. In this paper the authors extend these results toward locating the first non-trivial 
cohomology classes for the remaining finite groups of Lie type (i.e., the underlying root system 
is of type $B_{n}$, $C_{n}$, $D_{n}$, $E_{6}$, $E_{7}$, $E_{8}$, $F_{4}$, and $G_{2}$) when the prime 
is larger than the Coxeter number. 
\end{abstract}

\maketitle


\section{Introduction}

\subsection{} Let $G$ be a simple algebraic group scheme over a
field $k$
of prime characteristic $p$ which is defined and split over the
prime field $\Fp$,
and $F : G \to G$ denote the Frobenius map. The fixed points of
the $r$th
iterate of the Frobenius map, denoted $\gfpr$, is a finite
Chevalley group where
${\mathbb F}_{q}$ denotes the finite field with $p^{r}$
elements. An elusive problem of major
interest has been to determine the cohomology ring
$\opH^{\bullet}(\gfpr,k)$.
Until recently, aside from small rank cases, it was not even
known in
which degree the first non-trivial cohomology class occurs.

This present paper is a sequel to \cite{BNP} where we began 
investigating three related problems of increasing levels of
difficulty:
\vskip .25cm 
\noindent 
(1.1.1) Determining Vanishing Ranges: Finding $D>0$ such that 
the cohomology group $\opH^i(\gfpr,k)=0$ for $0 <  i < D$.
\vskip .15cm 
\noindent 
(1.1.2) Locating the First Non-Trivial Cohomology Class: Finding
a $D$
satisfying (1.1.1) such that 
$$\opH^{D}(\gfpr,k)\neq 0.$$ 
A $D$ satisfying this property will be called a {\em sharp bound}.
\vskip .15cm 
\noindent
(1.1.3) Determining the Least Non-Trivial Cohomology: For a
sharp $D$ as in (1.1.2) compute $\opH^D(\gfpr,k)$.
\vskip .25cm

Vanishing ranges (1.1.1) were found in earlier work of Quillen
\cite{Q}, Friedlander \cite{F} and Hiller \cite{H} .
Sharp bounds (1.1.2) were later found by Friedlander and
Parshall for the Borel subgroup $B({\mathbb F}_{q})$ of the
$GL_{n}({\mathbb F}_{q})$, and conjectured for the general
linear group by Barbu \cite{B}.
A more detailed discussion of these results can be found in
\cite[Section 1.1]{BNP}.

In \cite{BNP}, for simple, simply connected $G$ and primes $p$
larger than the
Coxeter number $h$, we proved that $\opH^i(\gfq,k) = 0$ for $0 <
i < r(p-2)$.
This provided an answer to (1.1.1) and improved on Hiller's
bounds \cite{H}.
For a group with underlying root system of type $C_n$, we
demonstrated that $D = r(p-2)$ is in fact
a sharp bound, answering (1.1.2). The first non-vanishing
cohomology, as in (1.1.3),
was also determined. For type $A_n$, questions (1.1.2) and
(1.1.3) were also answered,
where the $r > 1$ cases required the prime to be at least twice
the Coxeter number.
Our methods also yielded a proof of Barbu's Conjecture
\cite[Conjecture 4.11]{B}.

In this paper, we continue these investigations in two
directions. First we consider the case that $G$ is a group of
adjoint type (as opposed to simply connected). For such $G$ with
$p > h$,
when the root system is simply laced, one obtains a uniform
sharp bound of $r(2p - 3)$
answering (1.1.2) (cf. Corollary~\ref{C:simplylacedadjoint}). The same uniform bound also holds for the adjoint versions of the twisted groups of 
types $A$, $D$, and $E_6$ when $p >h$. 

We
then consider
the remaining types in the simply connected case. For $G$ being
simple, simply connected and having root system
of type $D_n$ with $p > h$, (1.1.2) and (1.1.3) are answered
(cf. Theorem~\ref{T:Dsummary2}). For type $E_n$,
(1.1.2) is answered for  all primes $p >h$ (with the exceptions of $p= 17, 19$ for type $E_6$); cf.
Theorems~\ref{T:E6q-results}, \ref{T:E7summary-r}, and
\ref{T:E8summary}. 

The calculations for the non-simply-laced groups are considerably more complicated. 
For type $B$ we answer (1.1.2) when $r=1$ and $p>h$, see Theorem~\ref{T:Bsumm}. 
Some discussion of the situation for types $G_2$ and $F_4$ is given
in Sections 7 and 8 respectively. For $r = 1$ and $p > h$, we find improved answers to 
(1.1.1); cf. Theorem \ref{T:G2summary} and Theorem \ref{T:F4van}.  
Finding an answer to (1.1.2) and (1.1.3) continues to be elusive in these types
although some further information towards answering these questions is obtained.


\subsection{\bf Notation.} Throughout this paper, we will follow
the notation and conventions given in the standard reference
\cite{Jan}. $G$ will denote a simple, simply connected algebraic
group scheme which is defined
and split over the finite field ${\mathbb F}_p$ with $p$
elements (except in Section \ref{S:adjoint} where $G$ is assumed to be of adjoint
type rather than simply connected). Throughout the paper let $k$ be an algebraically
closed field of
characteristic $p$. For $r\geq 1$, let $G_r:=\text{ker }F^{r}$
be the $r$th Frobenius kernel of $G$ and $\gfpr$ be the
associated finite Chevalley group.
Let $T$ be a maximal split torus and $\Phi$ be the root system
associated to $(G,T)$. The positive (resp. negative)
roots are $\Phi^{+}$ (resp. $\Phi^{-}$), and $\Delta$ is
the set of simple roots. Let $B$ be a Borel subgroup
containing $T$ corresponding to the negative roots and $U$ be
the
unipotent radical of $B$. For a given root system of rank $n$,
the simple roots will be denoted by $\al_1, \al_2, \dots, \al_n$
(via the the Bourbaki ordering of simple roots). For type
$B_n$, $\al_n$ denotes the unique short simple root and for type
$C_n$, $\al_n$ denotes the unique long simple root.
The highest (positive) root will be denoted $\ta$, and for root
systems
with multiple root lengths, the highest short root will be
denoted $\al_0$.
Let $W$ denote the Weyl group associated to $\Phi$, and, for $w
\in W$, let $\ell(w)$
denote the length of the element $w$ (i.e., number of elements
in a reduced expression for $w$).

Let ${\mathbb E}$ be the Euclidean space associated with $\Phi$,
and
the inner product on ${\mathbb E} $ will be denoted by $\langle\
, \
\rangle$. Let
$\alpha^{\vee}=2\alpha/\langle\alpha,\alpha\rangle$
be the coroot corresponding to $\alpha\in \Phi$. The fundamental
weights (basis dual to
$\al_1^{\vee}, \al_2^{\vee}, \dots, \al_n^{\vee}$)
will be denoted by $\omega_1$, $\omega_2$, \dots,
$\omega_n$. Let $X(T)$ be the integral weight lattice spanned by these fundamental weights. The set of dominant integral weights
is
denoted by $X(T)_{+}$. For a weight $\la \in X(T)$, set 
$\la^* : = -w_0\la$ where $w_0$ is the longest word in the 
Weyl group $W$. By $w \cdot \la := w(\la + \rho) -\rho$ we mean
the ``dot" action of $W$ on $X(T)$, with $\rho$ being the
half-sum
of the positive roots. For $\al \in \Delta$, $s_{\al} \in W$
denotes the reflection in the hyperplane determined by $\al$.

For a $G$-module $M$, let $M^{(r)}$ be the module obtained by
composing
the underlying representation for $M$ with $F^{r}$. Moreover,
let
$M^*$ denote the dual module. For $\la \in X(T)_+$, let
$H^0(\la) := \ind_B^G\la$
be the induced module and $V(\la) := H^0(\la^*)^*$ be the Weyl
module of highest
weight $\lambda$.


\section{General Strategy and Techniques}


\subsection{} We will employ the basic strategy used in
\cite{BNP} in addressing (1.1.1)-(1.1.3) which
uses effective techniques developed by the authors which
relate $\opH^i(\gfpr,k)$ to extensions over $G$ via
an induction functor ${\mathcal G}_{r}(-)$. When ${\mathcal G}_{r}(-)$ is applied to the
trivial module $k$, ${\mathcal G}_{r}(k)$ has a
filtration with factors of the form $H^0(\la) \otimes
H^{0}(\la^*)^{(r)}$ \cite[Proposition 2.4.1]{BNP}.
The $G$-cohomology of these factors can be analyzed by using the
Lyndon-Hochschild-Serre (LHS) spectral sequence
involving the  Frobenius kernel $G_{r}$ (cf. \cite[Section
3]{BNP}). In particular for $r=1$, we can apply the
results of Kumar-Lauritzen-Thomsen \cite{KLT} to determine the
dimension of a
cohomology group $\opH^i(G,H^0(\la) \otimes
H^{0}(\la^*)^{(1)})$, which can in turn
be used to determine $\opH^i(\gfq,k)$. The dimension formula
involves the combinatorics                                                                                                                                                                                                                                                                                                                                                                                                                                                                                                                                                                                                                                                                                                                                                                                                                                                     
of the well-studied Kostant Partition Function. This reduces the
question of the vanishing of the finite group cohomology
to a question involving the combinatorics of the underlying root
system $\Phi$.

For root systems of types $A$ and $C$ the relevant root system
combinatorics was analyzed in \cite[Sections 5-6]{BNP}.
In the cases of the other root systems ($B$, $D$, $E$, $F$, $G$)
the combinatorics is much more involved and
we handle these remaining cases in Sections 4-8. In this section,
for the convenience of the reader, we state the key
results from \cite{BNP} which will be used throughout this
paper.


\subsection{}\label{observ} We first record here a formula for
$-w\cdot 0$ that will be used at various times in the exposition
\cite[Observation 2.1]{BNP}:


\begin{observation}\label{O:uniquedecomp}
If $w \in W$ admits a reduced expression
$w = s_{\be_1}s_{\be_2}\dots s_{\be_m}$ with $\be_i \in \Delta$
and
$m = \ell(w)$, then 
$$
-w\cdot 0 = \be_1 + s_{\be_1}(\be_2) + 
s_{\be_1}s_{\be_2}(\be_3) + \cdots + s_{\be_1}s_{\be_2}\dots
s_{\be_{m-1}}(\be_m).
$$
Moreover, this is the unique way in which $-w\cdot 0$ can be
expressed as a
sum of distinct positive roots.
\end{observation}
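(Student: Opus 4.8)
The plan is to prove both the explicit formula and the uniqueness statement by induction on $m = \ell(w)$. For the formula itself, I would use the standard fact that if $w = s_{\be_1}\cdots s_{\be_m}$ is a reduced expression, then the set of positive roots sent to negative roots by $w^{-1}$ is precisely
\[
\{\be_1,\ s_{\be_1}(\be_2),\ s_{\be_1}s_{\be_2}(\be_3),\ \dots,\ s_{\be_1}\cdots s_{\be_{m-1}}(\be_m)\},
\]
and that these $m$ roots are distinct and positive. Writing $N(w^{-1})$ for this set, the key identity is $-w\cdot 0 = \rho - w\rho = \sum_{\ga \in N(w^{-1})}\ga$, which follows because $w\rho = \rho - \sum_{\ga\in N(w^{-1})}\ga$ (each positive root $\ga$ with $w\ga < 0$ contributes, and $\rho = \frac12\sum_{\ga>0}\ga$). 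This gives the displayed formula directly; alternatively one can induct, peeling off $s_{\be_1}$ and using $-w\cdot 0 = \be_1 + s_{\be_1}(-(s_{\be_1}w)\cdot 0)$.

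For the uniqueness claim, suppose $-w\cdot 0 = \ga_1 + \cdots + \ga_t$ is any expression as a sum of distinct positive roots. First I would argue $t = m$: taking inner products with $\rho^\vee$ (or using the height function), $\operatorname{ht}(-w\cdot 0) = \operatorname{ht}(\rho) - \operatorname{ht}(w\rho)$, and one checks this equals $\ell(w) = m$ when all summands are distinct positive roots, using that $\operatorname{ht}(\ga)\geq 1$ with equality iff $\ga$ is simple — more carefully, $\langle \rho-w\rho,\rho^\vee\rangle = \ell(w)$ is a standard identity, while $\langle \ga_1+\cdots+\ga_t,\rho^\vee\rangle \geq t$ with equality forcing each $\ga_i\in\Delta$. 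Hmm — that would only handle uniqueness among sums of \emph{simple} roots, so instead I would proceed by induction: since $-w\cdot 0$ is a sum of distinct positive roots and $w\neq 1$, there is a simple root $\be$ with $\langle -w\cdot 0, \be^\vee\rangle > 0$; one shows $\be$ must appear among the $\ga_i$ and among the terms of the canonical expression, then subtract $\be$, apply $s_\be$, and reduce to $s_\be w$ (or $w s_\be$) of shorter length.

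The main obstacle is the uniqueness half, specifically showing that in \emph{any} decomposition into distinct positive roots, a suitable simple root must occur as one of the summands so that the inductive step goes through. The cleanest route is probably: let $\be\in\Delta$ be such that $w^{-1}\be \in \Phi^-$ (equivalently $\ell(s_\be w) < \ell(w)$); then $-s_\be w\cdot 0 = s_\be(-w\cdot 0 - \be) = s_\be(-w\cdot 0) - \be$, and since $s_\be$ permutes $\Phi^+\setminus\{\be\}$, the condition that $-w\cdot 0$ is a sum of distinct positive roots is equivalent, after removing the necessarily-present summand $\be$ and applying $s_\be$, to $-s_\be w\cdot 0$ being such a sum; the inductive hypothesis then pins down the decomposition uniquely. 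One must verify at the base case $w = 1$ (empty sum) and handle the verification that $\be$ genuinely appears as a summand $\ga_i$ — this uses that $\langle -w\cdot 0,\be^\vee\rangle \geq 1$ together with the fact that $s_\be$ preserves positivity of all other positive roots, so if $\be$ did not appear, applying $s_\be$ would keep all summands positive and strictly decrease the height, contradicting that the result must be a nonnegative combination realizing $-s_\beta w \cdot 0$ whose height equals $\ell(s_\beta w) = \ell(w) - 1 < \ell(w) - 1 + 1$. I would also remark that this observation is a mild extension of results in \cite{Jan} on the combinatorics of $W$ acting on $\rho$, and cite \cite[Observation 2.1]{BNP} as the original source.
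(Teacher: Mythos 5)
The paper itself offers no proof of this Observation; it is cited verbatim from \cite[Observation 2.1]{BNP}, so there is no in-text argument to compare yours against. Your derivation of the displayed formula, via $-w\cdot 0 = \rho - w\rho = \sum_\ga \ga$ (sum over $\ga\in\Phi^+$ with $w^{-1}\ga\in\Phi^-$) together with the standard description of that set from a reduced expression, is correct. Your overall strategy for uniqueness --- induct on $\ell(w)$, pick $\be\in\Delta$ with $\ell(s_\be w) < \ell(w)$, argue $\be$ must occur as a summand, strip it off and apply $s_\be$ --- is also the right one and can be made to close.

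However, the step in which you justify that $\be$ \emph{must} appear in an arbitrary decomposition has genuine gaps. The identity $\langle\rho - w\rho, \rho^\vee\rangle = \ell(w)$ that your first attempt relies on is false: already in type $A_2$ with $w = s_1s_2$ one has $\rho - w\rho = 2\al_1 + \al_2$, of height $3 \neq 2 = \ell(w)$. What is true is that $\rho - w\rho$ is a sum of exactly $\ell(w)$ \emph{distinct} positive roots, a cardinality statement, not a height statement, and these are different as soon as the roots involved are not all simple. You also have a sign error: since $s_\be\rho = \rho - \be$, one gets $-(s_\be w)\cdot 0 = s_\be(-w\cdot 0) + \be$, not minus $\be$. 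Most importantly, the final sentence of your height-based contradiction is not a coherent argument (it ends in the vacuous inequality $\ell(w)-1 < \ell(w)-1+1$). The argument that actually closes the gap is to invoke the inductive hypothesis itself: if $\be\notin\{\ga_1,\ldots,\ga_t\}$, then since $s_\be$ permutes $\Phi^+\setminus\{\be\}$, the roots $s_\be\ga_1,\ldots,s_\be\ga_t$ are distinct and positive, and $\{\be\}\cup\{s_\be\ga_i\}$ is a set of distinct positive roots summing to $-(s_\be w)\cdot 0$. By the inductive hypothesis (applicable since $\ell(s_\be w) < \ell(w)$) this set must coincide with $\{\al\in\Phi^+ : (s_\be w)^{-1}\al\in\Phi^-\}$, which does \emph{not} contain $\be$ because $(s_\be w)^{-1}\be = -w^{-1}\be > 0$ --- a contradiction. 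With that in place, the remaining reduction ($\be$ does appear, strip it off, apply $s_\be$, invoke the inductive hypothesis for $s_\be w$) goes through as you sketched.
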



\subsection{\bf The Induction Functor and Filtrations.} Let
$\cgr := \ind_{\gfpr}^G(k)$. The functor ${\mathcal G}_{r}(-)$ is exact and
one can use Frobenius reciprocity to relate extensions over $G$
with extensions over $\gfpr$ \cite[Proposition 2.2]{BNP}.

\begin{proposition}\label{P:extcomparison} Let $M, N$ be
rational $G$-modules. Then, for all $i \geq 0$,
$$
\Ext^i_{\gfpr}(M,N) \cong \Ext_G^i(M,N\otimes\cgr).
$$ 

\end{proposition}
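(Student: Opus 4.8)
The plan is to prove this by combining two standard tools: Frobenius reciprocity for the finite group $\gfpr$ inside the algebraic group $G$, and the exactness of the induction functor $\mathcal{G}_r(-) = \ind_{\gfpr}^G(-)$. The starting point is the tensor identity together with exactness. First I would observe that for any rational $G$-module $N$, restricting to $\gfpr$ and then inducing back up gives $\ind_{\gfpr}^G(N|_{\gfpr}) \cong N \otimes \mathcal{G}_r(k)$ as $G$-modules; this is the tensor identity $\ind_{\gfpr}^G(N|_{\gfpr} \otimes k) \cong N \otimes \ind_{\gfpr}^G(k)$, valid because $N$ is a $G$-module and hence "comes along for the ride" in the induction. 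Since $G/\gfpr$ is affine (indeed $\gfpr$ is a finite, hence reductive-by-finite... more precisely the quotient is affine because $\gfpr$ is finite), the functor $\mathcal{G}_r(-)$ is exact, so it carries injective $\gfpr$-modules to acyclic objects for $\Hom_G(M,-)$; equivalently, $R^j\ind_{\gfpr}^G = 0$ for $j>0$.

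Next I would set up a Grothendieck-style spectral sequence argument, or more simply a direct dimension-shifting/derived-functor comparison. Using the exactness of $\mathcal{G}_r(-)$, one has for the composite of functors $\Hom_{\gfpr}(M,-) = \Hom_G(M, \mathcal{G}_r(-))$ (the adjunction, i.e. Frobenius reciprocity at the level of Hom) that the derived functors satisfy $\Ext^i_{\gfpr}(M,N) \cong R^i\bigl(\Hom_G(M,-) \circ \mathcal{G}_r\bigr)(N)$, and since $\mathcal{G}_r$ is exact and preserves injectives (an exact right adjoint to the exact restriction functor preserves injectives), the Grothendieck spectral sequence for the composite collapses, yielding $\Ext^i_{\gfpr}(M,N) \cong \Ext^i_G(M, \mathcal{G}_r(N))$. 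Finally, I would apply the tensor identity $\mathcal{G}_r(N) = \ind_{\gfpr}^G(N|_{\gfpr}) \cong N \otimes \mathcal{G}_r(k) = N \otimes \cgr$ to rewrite the right-hand side as $\Ext^i_G(M, N \otimes \cgr)$, which is the claimed isomorphism. One should check the adjunction is natural enough that the spectral sequence is functorial, but that is routine.

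The main obstacle — really the only genuine point requiring care — is justifying that $\mathcal{G}_r(-)$ sends injective (or at least enough) $\gfpr$-modules to $\Hom_G(M,-)$-acyclic $G$-modules, so that the composite-functor spectral sequence degenerates and gives an honest isomorphism in every degree rather than just an edge map or a five-term exact sequence. The cleanest way around this is to note that restriction $\operatorname{res}: G\text{-mod} \to \gfpr\text{-mod}$ is exact and its right adjoint $\mathcal{G}_r$ is also exact (as $[G:\gfpr]$ being "cohomologically trivial" in the appropriate sense makes induction exact — this is precisely the content invoked in \cite[Section 2]{BNP}), and an exact functor that is right adjoint to an exact functor automatically preserves injectives. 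Given that, the Grothendieck spectral sequence $\Ext^p_G(M, R^q\mathcal{G}_r(N)) \Rightarrow \Ext^{p+q}_{\gfpr}(M,N)$ has $R^q\mathcal{G}_r(N) = 0$ for $q>0$, so it collapses to the stated isomorphism. Everything else — the tensor identity, Frobenius reciprocity for Hom — is formal.
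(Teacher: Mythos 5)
Your proposal is correct and is essentially the standard argument used in \cite{BNP}: Frobenius reciprocity at the level of $\Hom$, exactness of $\ind_{\gfpr}^G$ (since $\gfpr$ is finite, $G/\gfpr$ is affine), preservation of injectives by a right adjoint to the exact restriction functor, and the tensor identity to identify $\ind_{\gfpr}^G(N|_{\gfpr})$ with $N\otimes\cgr$. The only minor wrinkle is that your first paragraph briefly suggests exactness of $\mathcal{G}_r$ is what makes it send injectives to acyclics, whereas the actual reason (which you correctly supply in the final paragraph) is that $\mathcal{G}_r$ is right adjoint to exact restriction; but the argument as a whole is complete and matches the paper's cited proof.
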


In order to make the desired computations of cohomology groups,
we will make use of Proposition~\ref{P:extcomparison} (with $M =
k = N$).
In addition, we will use a special filtration on $\cgr$ (cf.
\cite[Proposition 2.4.1]{BNP}).

\begin{proposition}\label{P:sections} The induced module $\cgr$
has a filtration with factors of the form $H^0(\la)\otimes
H^0(\la^*)^{(r)}$
with multiplicity one for each $\la \in X(T)_+$.
\end{proposition}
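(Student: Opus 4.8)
The plan is to realize $\cgr = \ind_{\gfpr}^G(k)$ as a $G$-submodule of the coordinate algebra $k[G]$ and to exploit the algebraic Peter--Weyl decomposition of $k[G]$ as a $G\times G$-bimodule. Recall that $k[G] \cong \bigoplus_{\la \in X(T)_+} H^0(\la) \otimes H^0(\la^*)$ as a $G\times G$-module (Jantzen, II.4.20), where the first factor records the left regular action and the second the right regular action; more precisely the $\la$-isotypic piece under the right action, viewed as a left $G$-module, is $H^0(\la)^{\oplus \dim H^0(\la^*)}$, and the natural statement is that $k[G]$ filters (indeed here splits) with sections $H^0(\la) \otimes H^0(\la^*)$. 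The key observation is that $\cgr$ consists exactly of those functions $f \in k[G]$ that are right-invariant under the finite subgroup $\gfpr \subset G$, i.e. $\cgr = k[G]^{\gfpr}$ with respect to the right translation action, while $G$ still acts on $\cgr$ by left translation. Thus the task reduces to computing the $\gfpr$-fixed points, under right translation, of each bimodule section $H^0(\la) \otimes H^0(\la^*)$ of $k[G]$.

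First I would set up the identification $\cgr \cong k[G]^{\gfpr}$ carefully: since $\gfpr$ is a finite group and $k$ has characteristic $p$, taking $\gfpr$-invariants is not exact, so one must argue at the level of $k[G]$ directly rather than naively invoking a spectral sequence — but $\ind_{\gfpr}^G(k)$ is by definition the set of $k$-valued regular functions on $G$ constant on right $\gfpr$-cosets, which is precisely $k[G]^{\gfpr}$. Next, using the $G\times G$-filtration of $k[G]$ with sections $H^0(\la)\otimes H^0(\la^*)$, I would pass to associated graded and identify $\gr \cgr$ (as a left $G$-module) with $\bigoplus_\la H^0(\la) \otimes \big(H^0(\la^*)\big)^{\gfpr}$, where $\gfpr$ acts on the $H^0(\la^*)$ factor. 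The crucial representation-theoretic input is then: $\big(H^0(\la^*)\big)^{G(\mathbb F_q)} \cong k$, a one-dimensional space, for every $\la \in X(T)_+$ — equivalently $H^0(\mu)^{\gfpr}$ is one-dimensional for all dominant $\mu$. This is a known fact (for instance it follows from the fact that $H^0(\mu)$ has simple $G$-socle $L(\mu)$ restricting with a one-dimensional $\gfpr$-fixed space, or more robustly from the theory of the $\gfpr$-socle series / Steinberg's tensor product theorem applied to $L(\mu)$ together with the classical statement that $L(\nu)^{\gfpr} = k$ precisely when $\nu$ is $q$-restricted and equal to $\nu^*$ appropriately — but the cleanest route is via the evaluation map $H^0(\mu) = \ind_B^G \mu \to k$ at a point and the observation that $\gfpr$-invariant functions on $G/B$-type flag data are spanned by a single class). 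Feeding this back, each section $H^0(\la)\otimes H^0(\la^*)$ of $k[G]$ contributes exactly one copy of $H^0(\la)\otimes H^0(\la^*)^{(r)}$ to $\gr \cgr$ — here the Frobenius twist $(r)$ appears because the surviving line in $H^0(\la^*)$ is, as a $G$-module (not merely $\gfpr$-module), the twist $H^0(\la^*)^{(r)}$: the right $G$-action on $k[G]$ restricted to the $\gfpr$-invariants of the $\la^*$-section is the one that factors through $G/G_r \cdot(\text{finite part})$ and is recorded by $F^r$.

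The main obstacle, and the point requiring the most care, is exactly the last identification: verifying that the $G$-module structure on the one-dimensional $\gfpr$-invariant subspace of each $H^0(\la^*)$-section is genuinely $H^0(\la^*)^{(r)}$ rather than just some abstract line, and that these pieces assemble into a genuine $G$-filtration of $\cgr$ (not merely of an associated graded in a way that might not lift). To handle the filtration issue I would either invoke the exactness of $\mathcal G_r(-)$ (stated in the excerpt) to transport the good filtration of $k[G]$ — noting $\mathcal G_r(k[G])$-type manipulations — or, more directly, observe that the $G\times G$-filtration of $k[G]$ by the $\le \la$ pieces is $\gfpr$-stable on the right, so intersecting with $k[G]^{\gfpr}$ gives a $G$-stable filtration of $\cgr$ whose sections are the $\gfpr$-invariants of the sections of $k[G]$, which we have just computed. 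The twist claim is then pinned down by examining how $G$ acts on the highest-weight (or a chosen $\gfpr$-fixed) vector of $H^0(\la^*)$: the stabilizer behavior forces the action to be trivial on $G_r$, hence to factor through $G/G_r \cong G^{(r)}$, and matching highest weights identifies the factor as $H^0(\la^*)^{(r)}$, with multiplicity one for each $\la \in X(T)_+$ as claimed. This is essentially \cite[Proposition 2.4.1]{BNP}, and I would cite that for the twist-and-multiplicity bookkeeping while giving the Peter--Weyl reduction above as the conceptual skeleton.
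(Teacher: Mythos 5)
Your proposal goes wrong in a way that cannot be repaired by more care: the key representation-theoretic input you invoke, namely that $(H^0(\la^*))^{\gfpr}\cong k$ for every dominant $\la$, is false, and even if it were true the conclusion would not follow. For $p>h$ there are many $q$-restricted dominant $\mu\ne 0$ with $H^0(\mu)=L(\mu)$ irreducible; by Steinberg's theorem $L(\mu)$ remains irreducible and nontrivial on restriction to $\gfpr$, so $(H^0(\mu))^{\gfpr}=0$. Moreover, there is an internal dimension inconsistency: a one-dimensional space can never be the $G$-module $H^0(\la^*)^{(r)}$, which has dimension $\dim H^0(\la^*)$, so even granting the false claim you would produce sections $H^0(\la)\otimes k\cong H^0(\la)$, not $H^0(\la)\otimes H^0(\la^*)^{(r)}$. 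Two further structural problems: the right regular $G$-action on $k[G]$ does \emph{not} descend to an action on $(H^0(\la^*))^{\gfpr}$, because $\gfpr$ is not normal in $G$, so the ``$G$-module structure on the surviving line'' you try to identify does not exist; and taking $\gfpr$-invariants is only left exact in characteristic $p$, so intersecting the Peter--Weyl filtration of $k[G]$ with $k[G]^{\gfpr}$ gives sections that merely inject into $(H^0(\la)\otimes H^0(\la^*))^{\gfpr}$, with no control on the cokernel.

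The missing idea is the Lang--Steinberg theorem. The map $\phi:G\to G$, $\phi(g)=gF^{r}(g)^{-1}$, is surjective with fibers exactly the right $\gfpr$-cosets, so it induces an isomorphism of varieties $G/\gfpr\xrightarrow{\ \sim\ }G$, and hence a vector space isomorphism $\cgr=k[G/\gfpr]\cong k[G]$. Under this isomorphism the left translation action of $G$ on $G/\gfpr$ becomes the twisted conjugation $x\cdot h=xhF^{r}(x)^{-1}$ on $G$, because $\phi(xg)=x\,\phi(g)\,F^{r}(x)^{-1}$. In other words, $\cgr$ is $k[G]$ with its usual $G\times G$-action (left and right regular representations) restricted along the graph homomorphism $G\to G\times G$, $x\mapsto(x,F^{r}(x))$. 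Now the algebraic Peter--Weyl filtration of $k[G]$ you cite (Jantzen, II.4.20) has $G\times G$-sections $H^0(\la)\otimes H^0(\la^*)$, each with multiplicity one for $\la\in X(T)_+$; restricting along the graph of $F^{r}$ replaces the right-regular factor $H^0(\la^*)$ by its $r$-th Frobenius twist, yielding $G$-sections $H^0(\la)\otimes H^0(\la^*)^{(r)}$ with multiplicity one, as required. Your Peter--Weyl starting point is the right one, but the reduction should go through the Lang isomorphism (which makes the whole coordinate ring, with a twisted diagonal action, appear), not through $\gfpr$-invariants of each bimodule section.
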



\subsection{\bf A Vanishing Criterion.} \label{SS:vanishing} The
filtration from Proposition \ref{P:sections}
allows one to obtain a condition on $G$-cohomology which leads
to vanishing
of $\gfq$-cohomology (cf. \cite[Corollary 2.6.1]{BNP}).

\begin{proposition} \label{P:vanishing} Let $m$ be the least
positive integer such that
there exists $\la \in X(T)_{+}$ with $\opH^m(G,H^0(\la)\otimes
H^0(\la^*)^{(r)}) \neq 0$.
Then $\opH^i(\gfpr,k) \cong \opH^i(G,\cgr) = 0$ for $0 < i < m$.\end{proposition}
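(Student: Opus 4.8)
The plan is to combine the two tools the excerpt has already set up: the multiplicity-one filtration of $\cgr$ with factors $H^0(\la)\otimes H^0(\la^*)^{(r)}$ from Proposition~\ref{P:sections}, and the $\Ext$-comparison isomorphism $\opH^i(\gfpr,k)\cong\Ext^i_G(k,\cgr)=\opH^i(G,\cgr)$ obtained from Proposition~\ref{P:extcomparison} with $M=N=k$. So it suffices to show $\opH^i(G,\cgr)=0$ for $0<i<m$, where $m$ is as in the statement.

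First I would observe that the $\la=0$ factor of the filtration is simply $H^0(0)\otimes H^0(0)^{(r)}=k$, the trivial module. For every other factor $H^0(\la)\otimes H^0(\la^*)^{(r)}$ with $\la\in X(T)_+\setminus\{0\}$, by the definition of $m$ we have $\opH^i(G,H^0(\la)\otimes H^0(\la^*)^{(r)})=0$ for all $0<i<m$ (indeed $m$ is the \emph{least} positive degree in which any such nonzero factor has nonzero $G$-cohomology). The trivial factor contributes $\opH^i(G,k)$, which vanishes for $i>0$ since $G$ is a connected algebraic group (reductive, in fact semisimple) over $k$; so it too contributes nothing in the range $0<i<m$. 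Thus \emph{every} factor of the filtration has vanishing $G$-cohomology in degrees $0<i<m$.

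Next I would run the long exact sequences in $G$-cohomology associated to the filtration. Writing the filtration as $0=M_0\subset M_1\subset M_2\subset\cdots$ with each quotient $M_j/M_{j-1}$ one of the factors above, the long exact sequence for $0\to M_{j-1}\to M_j\to M_j/M_{j-1}\to 0$ gives, in each degree $i$ with $0<i<m$,
\[
\opH^i(G,M_{j-1})\to \opH^i(G,M_j)\to \opH^i(G,M_j/M_{j-1})=0,
\]
so $\opH^i(G,M_j)$ is a quotient of $\opH^i(G,M_{j-1})$; together with $\opH^i(G,M_0)=0$ this forces $\opH^i(G,M_j)=0$ by induction on $j$. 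Passing to the limit (a direct limit argument, using that cohomology of $G$ commutes with the directed union of the $M_j$, since the filtration exhausts $\cgr$ and the indexing is directed) yields $\opH^i(G,\cgr)=0$ for $0<i<m$, as desired. Combined with the $\Ext$-comparison this gives $\opH^i(\gfpr,k)=0$ in that range.

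The only real subtlety — the step I would flag as needing care rather than being wholly routine — is the passage from the finite stages to the full (infinite) filtration: one must know that $\opH^i(G,-)$ behaves well with respect to the directed union, i.e.\ that $\opH^i(G,\cgr)=\varinjlim \opH^i(G,M_j)$. This is standard for rational cohomology of an affine group scheme (cohomology commutes with directed colimits of modules), and it is implicitly what is used in \cite{BNP}; I would simply cite that rather than reprove it. Everything else is formal homological algebra on top of Propositions~\ref{P:extcomparison} and~\ref{P:sections} plus the vanishing of $\opH^{>0}(G,k)$.
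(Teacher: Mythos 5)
Your proof is correct, and it is the natural argument behind the cited \cite[Corollary 2.6.1]{BNP}: combine the $\Ext$-comparison of Proposition~\ref{P:extcomparison} with the filtration of Proposition~\ref{P:sections}, kill each subquotient's cohomology in degrees $0<i<m$, run the long exact sequences along the filtration, and pass to the direct limit. One small simplification worth noting: the separate treatment of the $\la=0$ factor is unnecessary. Since $H^0(0)\otimes H^0(0)^{(r)}=k$, the trivial module is itself one of the filtration factors, so the minimality of $m$ already gives $\opH^i(G,k)=0$ for $0<i<m$; you do not need to invoke the general vanishing of $\opH^{>0}(G,k)$ for reductive $G$ (though that fact is of course true and harmless to use). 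The rest — the surjectivity coming from the long exact sequence, the induction on the filtration stages, and the commutation of rational cohomology with directed colimits — is exactly right and is the standard way this vanishing criterion is established.
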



\subsection{\bf Non-vanishing.} \label{SS:nonvanishing} While
the identification of an $m$
satisfying Proposition~\ref{P:vanishing} gives a vanishing range
as in (1.1.1),
it does not a priori follow that $\opH^m(\gfpr,k) \neq 0$. 
The following theorem provides conditions which assist with
addressing (1.1.2) or (1.1.3) \cite[Theorem 2.8.1]{BNP}.

\begin{theorem} \label{T:nonvanishing} Let $m$ be the least
positive integer such that there exists
$\nu \in X(T)_{+}$ with $\opH^m(G,H^0(\nu)\otimes
H^0(\nu^*)^{(r)}) \neq 0$.
Let $\la \in X(T)_+$ be such that 
$\opH^m(G,H^0(\la)\otimes H^0(\la^*)^{(r)}) \neq 0$.  
Suppose $\opH^{m+1}(G,H^0(\nu)\otimes H^0(\nu^*)^{(r)}) = 0$ for
all $\nu < \la$ that are linked to $\la$.
Then
\begin{itemize}
\item[(a)] $\opH^i(\gfpr,k) = 0$ for $0 < i < m$;
\item[(b)] $\opH^m(\gfpr,k) \neq 0$;
\item[(c)] if, in addition, $\opH^{m}(G,H^0(\nu)\otimes
H^0(\nu^*)^{(r)}) = 0$
for all $\nu \in X(T)_+$ with $\nu \neq \la$, then 
$$\opH^m(\gfpr,k) \cong \opH^m(G,H^0(\la)\otimes H^0(\la^*)^{(r)}).$$
\end{itemize}
\end{theorem}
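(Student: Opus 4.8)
The plan is to transport everything to the rational cohomology of $G$ with coefficients in $\cgr$ and then extract the three assertions from the filtration of Proposition~\ref{P:sections}. By Proposition~\ref{P:extcomparison} with $M=N=k$, $\opH^i(\gfpr,k)\cong\opH^i(G,\cgr)$ for every $i$, so it suffices to analyze $\opH^\bullet(G,\cgr)$; part (a) is then precisely Proposition~\ref{P:vanishing}. For (b) and (c) I would fix a filtration $0=M_0\subset M_1\subset M_2\subset\cdots$, with $\cgr=\bigcup_j M_j$ and $M_j/M_{j-1}\cong H^0(\la_j)\otimes H^0(\la_j^*)^{(r)}$ as in Proposition~\ref{P:sections}, arranged so that the enumeration $\la_1,\la_2,\dots$ of $X(T)_+$ is a linear extension of the dominance order; because $\opH^\bullet(G,-)$ commutes with filtered colimits it is enough to work with the finite-dimensional truncations $M_j$. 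Concretely I take $M_{j-1}$ to be the span of the factors indexed by the finitely many $\nu\in X(T)_+$ with $\nu<\la$, so that $M_j/M_{j-1}\cong H^0(\la)\otimes H^0(\la^*)^{(r)}$.

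The engine is the pair of short exact sequences $0\to M_j\to\cgr\to\cgr/M_j\to0$ and $0\to M_{j-1}\to M_j\to H^0(\la)\otimes H^0(\la^*)^{(r)}\to0$, combined with two elementary facts: by minimality of $m$ every factor $H^0(\nu)\otimes H^0(\nu^*)^{(r)}$ has vanishing $G$-cohomology in degrees $1,\dots,m-1$, and $\opH^0(G,H^0(\nu)\otimes H^0(\nu^*)^{(r)})=0$ for $\nu\neq0$ (a nonzero invariant would yield a nonzero $G$-map $V(\nu)^{(r)}\to H^0(\nu)$, impossible on highest weights once $\nu\neq0$), while the $\nu=0$ factor is the trivial module sitting at the bottom of the filtration. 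The long exact sequence of the first short exact sequence then gives $\opH^{m-1}(G,\cgr/M_j)=0$ (for $m=1$ one uses instead that $\cgr/M_j$ has no trivial filtration factor), hence an injection $\opH^m(G,M_j)\hookrightarrow\opH^m(G,\cgr)$; the second gives that the image of $\opH^m(G,M_j)\to\opH^m(G,H^0(\la)\otimes H^0(\la^*)^{(r)})$ equals the kernel of the connecting map $\delta\colon\opH^m(G,H^0(\la)\otimes H^0(\la^*)^{(r)})\to\opH^{m+1}(G,M_{j-1})$. Thus (b) reduces to producing a nonzero class annihilated by $\delta$, and (c) to showing $\delta=0$ together with the remaining terms degenerating.

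To control $\delta$ I would use the block decomposition of rational $G$-modules. Since $\opH^\bullet(G,-)=\Ext^\bullet_G(k,-)$ is supported on the principal block $\mathcal{B}_0$, the map $\delta$ factors through $\opH^{m+1}(G,\mathcal{B}_0(M_{j-1}))$, and $\mathcal{B}_0(M_{j-1})$ is filtered by the principal-block components of those factors $H^0(\nu)\otimes H^0(\nu^*)^{(r)}$ with $\nu<\la$ that lie in the same block as $H^0(\la)\otimes H^0(\la^*)^{(r)}$, namely the $\nu<\la$ linked to $\la$. After replacing $\la$, if necessary, by a minimal weight in its linkage class for which the corresponding factor still has nonzero $\opH^m$ (harmless for (b), and the hypotheses are inherited because the set of relevant smaller linked weights only shrinks), minimality forces $\opH^m(G,H^0(\nu)\otimes H^0(\nu^*)^{(r)})=0$ for all $\nu<\la$ linked to $\la$, and the standing hypothesis supplies the same vanishing for $\opH^{m+1}$. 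An induction up the filtration of $\mathcal{B}_0(M_{j-1})$ then yields $\opH^m(G,\mathcal{B}_0(M_{j-1}))=\opH^{m+1}(G,\mathcal{B}_0(M_{j-1}))=0$, so $\delta=0$ and $\opH^m(G,M_j)\cong\opH^m(G,H^0(\la)\otimes H^0(\la^*)^{(r)})\neq0$; with the injection above this proves (b). For (c) the extra hypothesis makes every factor other than $H^0(\la)\otimes H^0(\la^*)^{(r)}$ have vanishing $G$-cohomology in degrees $1,\dots,m$ (and vanishing invariants apart from the trivial factor), so both long exact sequences degenerate and $\opH^m(\gfpr,k)\cong\opH^m(G,\cgr)\cong\opH^m(G,M_j)\cong\opH^m(G,H^0(\la)\otimes H^0(\la^*)^{(r)})$.

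The step I expect to be the main obstacle is making the analysis of $\delta$ fully rigorous, i.e.\ proving the vanishing of $\opH^{m+1}$ of the principal-block part of the portion of the filtration lying below $\la$: one must fix the precise sense of ``$\nu$ linked to $\la$'' in terms of the blocks of these tensor-product modules, check that passing to a minimal weight in the linkage class genuinely preserves the hypotheses, and ensure that the finitely many factors below $\la$ which may carry nonzero $\opH^m$ do not feed into the target of $\delta$. Everything else---the identification $\opH^i(\gfpr,k)\cong\opH^i(G,\cgr)$ and the long exact sequence bookkeeping---is routine granted Propositions~\ref{P:extcomparison}, \ref{P:sections}, and~\ref{P:vanishing}.
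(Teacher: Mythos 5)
The paper itself does not reprove this theorem---it cites \cite[Theorem 2.8.1]{BNP}---so a line-by-line comparison is not possible. On its own merits, your proposal has the correct skeleton: transport to $\opH^\bullet(G,\cgr)$ via Proposition~\ref{P:extcomparison}, part (a) is exactly Proposition~\ref{P:vanishing}, and the long-exact-sequence bookkeeping (vanishing of $\opH^{m-1}(G,\cgr/M_j)$ via $\opH^0(G,H^0(\nu)\otimes H^0(\nu^*)^{(r)})=0$ for $\nu\neq0$ plus minimality of $m$, the injection $\opH^m(G,M_j)\hookrightarrow\opH^m(G,\cgr)$, and the reduction of (b) to $\ker\delta\neq0$) is all sound, granted that $\Ext^\bullet_G(k,-)$ commutes with the filtered colimit defining $\cgr$.

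The genuine gap is the one you flag yourself, and you should not wave it off as mere bookkeeping: the assertion that $\mathcal B_0(M_{j-1})$ is filtered precisely by the factors $H^0(\nu)\otimes H^0(\nu^*)^{(r)}$ with $\nu<\la$ \emph{linked to $\la$} is not a formal consequence of block theory. What it requires is the statement that factors indexed by $\nu$'s in distinct $W_p$-linkage classes lie in distinct blocks of rational $G$-modules (equivalently, that there are no nontrivial extensions, in any degree, between such factors), so that $\cgr$ decomposes into pieces indexed by linkage classes and only the piece containing $\la$ can feed the target of $\delta$. As it stands, your argument would fail if some $\nu<\la$ \emph{not} linked to $\la$ had $\opH^{m+1}(G,H^0(\nu)\otimes H^0(\nu^*)^{(r)})\neq0$, since the theorem's hypothesis gives you vanishing only for $\nu$ linked to $\la$; you need to show that even if such a class exists it cannot receive the boundary of a class supported on the $\la$-linked piece. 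This decomposition is exactly what the paper invokes in the paragraph following Theorem~\ref{T:nonvanishing} (``can be decomposed as a direct sum over linkage classes of dominant weights''), and in a complete write-up you would need to prove it---for example by examining the composition-factor weights $\mu_1+p^r\mu_2$ of $H^0(\nu)\otimes H^0(\nu^*)^{(r)}$ and checking they stay in the $W_p$-orbit determined by $\nu$---rather than assume it. Your reduction to a minimal $\la'$ in the linkage class is fine once this point is secured, since the hypothesis is inherited. Everything else in the proposal is correct.
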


From the filtration on $\cgr$ in Proposition~\ref{P:sections}, 
$\opH^i(\gfpr,k) \cong \opH^i(G,\cgr)$
can be decomposed as a direct sum over linkage classes of
dominant weights.
For a fixed linkage class $\mathcal{L}$, let $m$ be the least 
positive integer such that there exists
$\nu \in \mathcal{L}$ with $\opH^m(G,H^0(\nu)\otimes
H^0(\nu^*)^{(r)}) \neq 0$. Let $\la \in \mathcal{L}$ be such
that
$\opH^m(G,H^0(\la)\otimes H^0(\la^*)^{(r)}) \neq 0$.  
Suppose $\opH^{m+1}(G,H^0(\nu)\otimes H^0(\nu^*)^{(r)}) = 0$ for
all $\nu < \la$
in $\mathcal{L}$. Then analogous to
Theorem~\ref{T:nonvanishing}, it follows that
$\opH^m(\gfpr,k) \neq 0$.  See \cite[Theorem 2.8.2]{BNP}.


\subsection{\bf Reducing to $G_1$-cohomology.} From
Sections~\ref{SS:vanishing} and ~\ref{SS:nonvanishing}, the key
to
understanding the vanishing of $\opH^i(\gfq,k)$ is to understand $\opH^i(G,H^0(\la)\otimes H^0(\la^*)^{(r)})$ for all dominant
weights $\la$.
For $r = 1$, these groups can be related to $G_1$-cohomology
groups (cf. \cite[Lemma 3.1]{BNP}).

\begin{lemma} \label{L:G1cohom} Suppose $p > h$ and let $\nu_1,
\nu_2 \in X(T)_+$.
Then for all $j$ 
$$
\opH^j(G,H^0(\nu_1)\otimes H^0(\nu_2^*)^{(1)}) \cong
\Ext_{G}^j(V(\nu_2)^{(1)},H^0(\nu_1)) \cong
\Hom_{G}(V(\nu_2),\opH^j(G_1,H^0(\nu_1))^{(-1)}).
$$
\end{lemma}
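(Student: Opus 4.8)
The plan is to establish the two isomorphisms in turn, reading the displayed string from left to right. First I would rewrite $H^0(\nu_2^*)^{(1)}$ using the fact that, since $p>h$, the Steinberg-type tensor considerations apply and in particular $H^0(\nu_2^*)$ has a good filtration; more to the point, twisting by Frobenius sends $H^0(\nu_2^*)$ to a module whose cohomology we want to read off. The cleanest route is via the standard identity $\Ext^j_G(V(\mu), H^0(\la)) \cong H^j(G, H^0(\la)\otimes H^0(\mu^*)^{\,})$ coming from the fact that $V(\mu)^* \cong H^0(\mu^*)$ together with the tensor identity $H^*(G, H^0(\la)\otimes H^0(\mu^*)) \cong \Ext^*_G(H^0(\mu^*)^*, H^0(\la)) = \Ext^*_G(V(\mu), H^0(\la))$. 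Applying this with $\mu = \nu_2^{(1)}$ — i.e.\ replacing $V(\nu_2)$ by its Frobenius twist $V(\nu_2)^{(1)}$, which is legitimate since $(V(\nu_2)^{(1)})^* \cong H^0(\nu_2^*)^{(1)}$ — gives the first isomorphism $\opH^j(G,H^0(\nu_1)\otimes H^0(\nu_2^*)^{(1)}) \cong \Ext_{G}^j(V(\nu_2)^{(1)},H^0(\nu_1))$. This step is essentially formal and requires only the good filtration/duality formalism of \cite{Jan}, valid for all $p$.

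For the second isomorphism I would invoke the Lyndon--Hochschild--Serre spectral sequence for the normal subgroup scheme $G_1 \trianglelefteq G$,
$$
E_2^{s,t} = \Ext^s_{G/G_1}\!\big(V(\nu_2)^{(1)},\, \opH^t(G_1, H^0(\nu_1))\big) \Longrightarrow \Ext^{s+t}_G\!\big(V(\nu_2)^{(1)}, H^0(\nu_1)\big),
$$
where $G/G_1 \cong G^{(1)}$ acts on $\opH^t(G_1, H^0(\nu_1))$, so after untwisting this is $\Ext^s_G\big(V(\nu_2), \opH^t(G_1, H^0(\nu_1))^{(-1)}\big)$. The key point, and where the hypothesis $p>h$ enters essentially, is that this spectral sequence collapses: by the results on $G_1$-cohomology of induced modules (the relevant input being that $\opH^\bullet(G_1, H^0(\nu_1))$, suitably untwisted, has a good filtration as a $G$-module when $p>h$, so that $\Ext^s_G(V(\nu_2), -)$ of it vanishes for $s>0$ since $V(\nu_2)$ has a Weyl filtration and the coefficient module has a good filtration), one gets $E_2^{s,t}=0$ for $s>0$. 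Hence the edge map yields
$$
\Ext^j_G\big(V(\nu_2)^{(1)}, H^0(\nu_1)\big) \cong \Hom_G\big(V(\nu_2), \opH^j(G_1, H^0(\nu_1))^{(-1)}\big),
$$
which is the desired second isomorphism.

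The main obstacle is the collapsing of the LHS spectral sequence, i.e.\ the vanishing of the higher $\Ext^s_{G/G_1}$-terms; this is precisely the place where $p>h$ is needed, since it guarantees (via the linkage and translation principle arguments, or directly via Andersen--Jantzen type computations of $\opH^\bullet(G_1, H^0(\nu_1))$) that the untwisted $G_1$-cohomology carries a good filtration, making it acyclic for $\Hom_G(V(\nu_2), -)$ in positive degrees. Everything else — the duality rewriting in the first step and the identification of the $E_2$-page of the spectral sequence — is routine homological algebra within the framework of \cite{Jan}. I would also note in passing that this is exactly \cite[Lemma 3.1]{BNP}, so for the purposes of this paper the statement may simply be cited, with the above serving as a reminder of why it holds.
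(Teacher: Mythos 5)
Your proposal is correct and is exactly the argument underlying the cited result: the first isomorphism is the formal duality $\opH^j(G, M\otimes N) \cong \Ext^j_G(N^*, M)$ applied with $N = H^0(\nu_2^*)^{(1)}$ (whose dual is $V(\nu_2)^{(1)}$), and the second is the collapse of the LHS spectral sequence for $G_1 \trianglelefteq G$, which holds because for $p>h$ the module $\opH^t(G_1,H^0(\nu_1))^{(-1)}$ has a good filtration (Andersen--Jantzen, Kumar--Lauritzen--Thomsen), so $\Ext^s_G(V(\nu_2),-)$ vanishes in positive degree by the Weyl/good-filtration $\Ext$-vanishing. The paper itself only cites \cite[Lemma 3.1]{BNP} and notes the good-filtration input in the remark following the lemma, so your reconstruction matches the intended proof.
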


We remark that the aforementioned lemma would hold for arbitrary
$r$th-twists if it was known
that the cohomology group $\opH^j(G_r,H^0(\nu))^{(-r)}$ admits a
good filtration, which is
a long-standing conjecture of Donkin. For $p > h$, this is known
for $r = 1$ by
results of Andersen-Jantzen \cite{AJ} and
Kumar-Lauritzen-Thomsen \cite{KLT}.
In that case, the lemma is only needed when $\nu_1 = \nu_2$. 
For arbitrary $r$ we can often work inductively from the $r = 1$
case.
This requires slightly more general $\Ext$-computations and the
possibility that
$\nu_1 \neq \nu_2$.


\subsection{Dimensions for $r = 1$.} \label{SS:G1analysis}
From Lemma~\ref{L:G1cohom}, for $\nu \in X(T)_{+}$, the
cohomology group
$\opH^i(G,H^0(\nu)\otimes H^0(\nu^*)^{(1)})$
 can be identified with
$\Hom_G(V(\nu),\opH^i(G_1,H^0(\nu)^{(-1)})$.  
It is well-known that, from block considerations,
$\opH^i(G_1,H^0(\nu)) = 0$
unless $\nu = w\cdot 0 + p\mu$ for $w \in W$ and $\mu \in X(T)$.  For $p > h$, from \cite{AJ} and \cite{KLT}, we have
\begin{equation}\label{ind}
\opH^i(G_1,H^0(\nu))^{(-1)} = 
\begin{cases}
\ind_B^G(S^{\frac{i - \ell(w)}{2}}(\ul^*)\otimes\mu) &\text{ if
} \nu = w\cdot 0 + p\mu\\
0 & \text{ otherwise,}
\end{cases}
\end{equation}
where $\ul = \Lie(U)$. Note also that, since $p > h$ and $\nu$
is dominant,
$\mu$ must also be dominant.

For a dominant weight $\nu = p\mu + w\cdot 0$, observe that,
from Lemma~\ref{L:G1cohom}
and (\ref{ind}), we have
\begin{align*}
\opH^i(G,H^0(\nu)\otimes H^0(\nu^*)^{(1)}) &\cong
\Hom_G(V(\nu),\opH^i(G_1,H^0(\nu))^{(-1)})\\
&\cong \Hom_G(V(\nu),\ind_B^G(S^{\frac{i -
\ell(w)}{2}}(\ul^*)\otimes\mu))\\
&\cong \Hom_B(V(\nu), S^{\frac{i -
\ell(w)}{2}}(\ul^*)\otimes\mu).
\end{align*}
Hence, if $\opH^i(G,H^0(\nu)\otimes H^0(\nu^*)^{(1)}) \neq 0$, 
then $\nu - \mu = (p - 1)\mu + w\cdot 0$ must be a sum of 
$(i - \ell(w))/2$ positive roots.

For a weight $\nu$ and $n \geq 0$, let $P_n(\nu)$ denote the
dimension of the
$\nu$-weight space of $S^n(\ul^*)$. Equivalently, for $n > 0$,
$P_n(\nu)$ denotes the
number of times that $\nu$ can be expressed as a sum of exactly
$n$
positive roots, while $P_0(0) = 1$. The function $P_{n}$ is
often referred to as
{\em Kostant's Partition Function}. By using \cite[3.8]{AJ},
\cite[Thm 2]{KLT}, Lemma~\ref{L:G1cohom},
and (\ref{ind}), we can give an explicit formula for the
dimension of
$ \opH^i(G,H^0(\la)\otimes H^0(\la^*)^{(1)})$ (cf.
\cite[Proposition 3.2.1, Corollary 3.5.1]{BNP}).

\begin{proposition} \label{P:KostantPartCohom} Let $p >h$ and
$\la = p \mu + w\cdot 0 \in X(T)_+$.
Then 
$$
\dim \opH^i(G,H^0(\la)\otimes H^0(\la^*)^{(1)}) = \sum_{u \in W}
(-1)^{\ell(u)} P_{\frac{i-\ell(w)}{2}}( u\cdot\la - \mu).
$$
\end{proposition}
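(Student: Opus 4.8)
The plan is to derive the dimension formula by combining Lemma~\ref{L:G1cohom} and the Andersen--Jantzen/Kumar--Lauritzen--Thomsen description (\ref{ind}) with the classical Weyl/Kostant character formalism. First I would recall from the computation already displayed in Section~\ref{SS:G1analysis} that, for $\la = p\mu + w\cdot 0 \in X(T)_+$,
$$
\opH^i(G,H^0(\la)\otimes H^0(\la^*)^{(1)}) \cong \Hom_B\!\left(V(\la),\, S^{\frac{i-\ell(w)}{2}}(\ul^*)\otimes\mu\right),
$$
so the dimension we want is $\dim\Hom_B(V(\la),\,S^n(\ul^*)\otimes\mu)$ where $n = \frac{i-\ell(w)}{2}$ (and the group vanishes unless $i - \ell(w)$ is a nonnegative even integer, in which case every $P_n$ term below is interpreted with that $n$).

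Next I would compute this $\Hom_B$-space via characters. Since $p > h$ and $\la$ is dominant, $V(\la)$ has a Weyl character $\ch V(\la) = \chi(\la) = \sum_{u\in W}(-1)^{\ell(u)}\,e^{u\cdot\la}/\,(\text{Weyl denominator})$ — more precisely I would use the fact that, as a $B$-module, the multiplicity of a weight $\xi$ in $V(\la)$ equals the virtual weight multiplicity coming from the Weyl character formula, and that $\dim\Hom_B(V(\la), Q)$ for a $B$-module $Q$ with a suitable weight decomposition can be read off by pairing against the formal character, because $V(\la)$ has simple $B$-head of weight $\la$ but, more usefully here, because $S^n(\ul^*)\otimes\mu$ is a $B$-module all of whose weights are $\le$ (in the dominance order) the weight we are testing against. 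The cleanest route: $\dim\Hom_B(V(\la), S^n(\ul^*)\otimes\mu)$ equals the coefficient extracted by applying the alternating-sum-over-$W$ operator to the character of $S^n(\ul^*)\otimes\mu$ evaluated against $\la$; concretely, using that $\Hom_B(V(\la),-)$ picks out (via the universal property / the structure of Weyl modules and the fact that $\ul^*$ has only negative-root weights, shifted by $\mu$) one gets
$$
\dim\Hom_B(V(\la), S^n(\ul^*)\otimes\mu) = \sum_{u\in W}(-1)^{\ell(u)}\,\dim\big(S^n(\ul^*)\big)_{u\cdot\la-\mu},
$$
and the right-hand multiplicity is exactly $P_n(u\cdot\la-\mu)$ by the definition of Kostant's partition function recorded just before the proposition. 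Substituting $n=(i-\ell(w))/2$ then yields the claimed formula. This is essentially the content of \cite[Proposition 3.2.1, Corollary 3.5.1]{BNP}, so I would cite those for the detailed bookkeeping and reproduce only the key identifications here.

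The main obstacle is justifying the passage from $\dim\Hom_B(V(\la), S^n(\ul^*)\otimes\mu)$ to the alternating sum $\sum_{u\in W}(-1)^{\ell(u)} P_n(u\cdot\la-\mu)$: one must know that the ``error terms'' in naively pairing characters actually cancel in the alternating sum, i.e. that $\Hom_B(V(\la),-)$ on a $B$-module with weights bounded appropriately is computed by the Weyl-group-alternating character pairing. The standard argument uses that $V(\la)$ admits a filtration (or that its character is $\chi(\la)$) together with the Kempf/Andersen vanishing $\opH^{>0}(G/B, \cdot)=0$ for the relevant dominant line bundles, so that $\Hom_B(V(\la), S^n(\ul^*)\otimes\mu) = \Hom_G(V(\la), \ind_B^G(S^n(\ul^*)\otimes\mu))$ and the latter is computed by Euler characteristics; the alternating sum over $W$ is then just the Weyl character formula applied to $\ind_B^G$ of each weight space. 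I would state this reduction carefully (invoking $p>h$ to guarantee $S^n(\ul^*)$ has the needed structure and that $\mu$ is dominant, as noted after (\ref{ind})) and otherwise defer to \cite{AJ,KLT} and \cite[Section 3]{BNP} for the remaining verifications.
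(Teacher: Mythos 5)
Your argument correctly reconstructs the proof from \cite[Proposition 3.2.1, Corollary 3.5.1]{BNP}, which the present paper only cites: reduce via Lemma~\ref{L:G1cohom} and (\ref{ind}) to $\dim\Hom_B(V(\la), S^{(i-\ell(w))/2}(\ul^*)\otimes\mu)$, pass to $\Hom_G(V(\la), \ind_B^G(\cdot))$ by Frobenius reciprocity, and invoke the $p>h$ vanishing of higher cohomology from \cite{KLT} together with \cite[3.8]{AJ} (the Euler-characteristic/alternating-sum formula) to read off this dimension as $\sum_{u\in W}(-1)^{\ell(u)}P_{(i-\ell(w))/2}(u\cdot\la-\mu)$. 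This is the same route as the cited source; the only slip is cosmetic ($\ul^*$ has \emph{positive}-root weights, not negative-root weights, since $U$ is the unipotent radical of the Borel for the negative roots), and it does not affect the argument.
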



\subsection{\bf Degree Bounds.} \label{SS:degreebounds} 
The following gives a fundamental constraint on non-zero $i$
such that
$$\opH^i(G,H^0(\la)\otimes H^0(\la^*)^{(1)}) = \Ext_G^i(V(\la)^{(1)},H^0(\la))\neq 0.$$
It is stated in a more general $\Ext$-context as it will also 
be used in some inductive arguments for $r > 1$ (cf.
\cite[Proposition 3.4.1]{BNP}).

\begin{proposition} \label{P:degreebound1} Let $p > h$ with
$\ga_1, \ga_2 \in X(T)_+$, both non-zero, such that
$\ga_j = p \delta_j + w_j\cdot 0 $ with $\delta_j \in X(T)_+$
and $w_j \in W$
for $j=1, 2$. Assume $\Ext^i_G(V(\ga_2)^{(1)}, H^0(\ga_1)) \neq
0$.
\begin{itemize}
\item[(a)] Let $\si \in \Phi^+$. If $\Phi$ is of type $G_2$,
assume that $\si$
is a long root. Then
$p\langle \delta_2, \si^{\vee}\rangle - \langle \delta_1,
\si^{\vee}\rangle
+ \ell(w_1) + \langle w_2\cdot 0,  \si^{\vee}\rangle \leq i.$
\item[(b)] If $\ta$ denotes the longest root in $\Phi^+$, then
$p\langle \delta_2, \ta^{\vee}\rangle - 
        \langle \delta_1, \ta^{\vee}\rangle 
+ \ell(w_1) - \ell(w_2) - 1 \leq i.$
Equality requires that $\ga_2 - \delta_1 = ((i -
\ell(w_1))/2)\ta$ and
$\langle -w_2\cdot0,\ta^{\vee}\rangle = \ell(w_2) + 1$.
\item[(c)] If $\ga_1 = \ga_2 = p\delta + w\cdot 0$, then 
$i \geq (p-1)\langle\delta,\ta^{\vee}\rangle - 1$.
\end{itemize}
\end{proposition}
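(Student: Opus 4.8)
The plan is to derive all three bounds from a single weight-theoretic observation and then specialize. Suppose $\Ext^i_G(V(\ga_2)^{(1)},H^0(\ga_1)) \neq 0$. By Lemma~\ref{L:G1cohom} applied with $\nu_1 = \ga_1$, $\nu_2 = \ga_2$ (legitimate since $p > h$), and the fact that this $\Ext$-group can only be nonzero when $\ga_1 = w_1\cdot 0 + p\delta_1$, we have via (\ref{ind}) that
$$
\Ext^i_G(V(\ga_2)^{(1)},H^0(\ga_1)) \cong \Hom_B\!\left(V(\ga_2),\, S^{\frac{i-\ell(w_1)}{2}}(\ul^*)\otimes \delta_1\right),
$$
exactly as in the displayed computation of Section~\ref{SS:G1analysis}. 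Nonvanishing forces $i \equiv \ell(w_1) \pmod 2$ and, comparing highest weights, forces $\ga_2 - \delta_1$ to be a sum of $(i-\ell(w_1))/2$ positive roots; call this sum $\Sigma$. So $\Sigma = p\delta_2 + w_2\cdot 0 - \delta_1$ is a non-negative integer combination $\sum_{\beta \in \Phi^+} c_\beta\,\beta$ with $\sum c_\beta = (i-\ell(w_1))/2$.

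For part (a), I would pair $\Sigma$ against the coroot $\si^\vee$. Since $\langle \beta,\si^\vee\rangle \le 1$ for every positive root $\beta \neq \si$ when $\si$ is long (this is the standard fact that a long root is ``minuscule'' among positive roots for its own coroot pairing; the $G_2$ short-root exception is why we exclude it), we get $\langle \Sigma, \si^\vee\rangle \le \sum_\beta c_\beta = (i - \ell(w_1))/2 \cdot 1$ — wait, more carefully: $\langle \Sigma,\si^\vee\rangle = \sum c_\beta \langle\beta,\si^\vee\rangle \le 2c_\si + \sum_{\beta\neq\si} c_\beta \le 2\sum c_\beta = i - \ell(w_1)$. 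Rewriting $\langle\Sigma,\si^\vee\rangle = p\langle\delta_2,\si^\vee\rangle - \langle\delta_1,\si^\vee\rangle + \langle w_2\cdot 0,\si^\vee\rangle$ and rearranging gives precisely the claimed inequality $p\langle\delta_2,\si^\vee\rangle - \langle\delta_1,\si^\vee\rangle + \ell(w_1) + \langle w_2\cdot 0,\si^\vee\rangle \le i$.

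For part (b) I would take $\si = \ta$ the highest root and sharpen the estimate by controlling the ``defect'' term. The extra input is that $\langle -w_2\cdot 0,\ta^\vee\rangle \le \ell(w_2) + 1$: using Observation~\ref{O:uniquedecomp}, $-w_2\cdot 0$ is a sum of $\ell(w_2)$ distinct positive roots $\gamma_1,\dots,\gamma_{\ell(w_2)}$, and pairing with $\ta^\vee$ each term contributes at most $1$ except possibly one term equal to $\ta$ contributing $2$; so the pairing is at most $\ell(w_2)+1$. Feeding $\langle w_2\cdot 0,\ta^\vee\rangle \ge -\ell(w_2)-1$ into the inequality from (a) yields $p\langle\delta_2,\ta^\vee\rangle - \langle\delta_1,\ta^\vee\rangle + \ell(w_1) - \ell(w_2) - 1 \le i$. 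Tracking when all inequalities used become equalities gives the stated equality conditions: every $c_\beta$ with $\beta \neq \ta$ must vanish (forcing $\ga_2 - \delta_1 = \Sigma = \frac{i-\ell(w_1)}{2}\ta$) and the decomposition of $-w_2\cdot 0$ must actually achieve $\langle -w_2\cdot 0,\ta^\vee\rangle = \ell(w_2)+1$. Part (c) is then immediate: set $\ga_1 = \ga_2 = p\delta + w\cdot 0$ so $\delta_1 = \delta_2 = \delta$, $w_1 = w_2 = w$, and (b) reads $(p-1)\langle\delta,\ta^\vee\rangle + \ell(w) - \ell(w) - 1 \le i$, i.e. $i \ge (p-1)\langle\delta,\ta^\vee\rangle - 1$.

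The main obstacle is the root-combinatorial lemma underlying (a): that $\langle\beta,\si^\vee\rangle \le 1$ for all positive $\beta \neq \si$ when $\si$ is long. I would prove this by a short case analysis — $\langle\beta,\si^\vee\rangle \in \{0,\pm 1,\pm 2\}$ in general, and $\langle\beta,\si^\vee\rangle = 2$ would force $\beta = \si$ since $\beta$ and $\si$ would be proportional and both roots; the value $2$ with $\beta \neq \si$ can only occur when $\si$ is short (as in $G_2$), which is exactly the excluded case. The rest is bookkeeping with Observation~\ref{O:uniquedecomp} and the highest-weight comparison, which is routine.
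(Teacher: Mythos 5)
The paper records this result only as a citation to \cite[Proposition~3.4.1]{BNP}; there is no internal proof here to compare against, so I am assessing your argument on its own. Your route --- rewriting $\Ext^i_G(V(\ga_2)^{(1)},H^0(\ga_1))$ as $\Hom_B\bigl(V(\ga_2),\,S^{(i-\ell(w_1))/2}(\ul^*)\otimes\delta_1\bigr)$ via Lemma~\ref{L:G1cohom} and (\ref{ind}), deducing that $\ga_2-\delta_1$ is a nonnegative integer combination $\sum c_\beta\beta$ of positive roots with $\sum c_\beta=(i-\ell(w_1))/2$, and then pairing against $\si^\vee$ --- is the natural derivation and is correct in substance. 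Parts (b) and (c) do follow cleanly from (a) together with Observation~\ref{O:uniquedecomp}, and your tracking of the equality conditions in (b) is right.

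There is one place where your stated justification does not match what you actually use. In the closing paragraph you isolate the key root fact as ``$\langle\beta,\si^\vee\rangle\le 1$ for all positive $\beta\ne\si$ when $\si$ is long,'' and assert that the value $2$ with $\beta\ne\si$ only occurs when $\si$ is short ``as in $G_2$, which is exactly the excluded case.'' That is off on both counts: part (a) permits $\si$ to be a short root whenever $\Phi\ne G_2$, and in types $B$, $C$, $F_4$ one does have $\langle\beta,\si^\vee\rangle=2$ for a long root $\beta\ne\si$ paired against a short coroot $\si^\vee$ (e.g.\ $\langle\ta,\si^\vee\rangle=2$ with $\si$ a short simple root in $C_2$). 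The $G_2$-short exclusion is imposed because there the pairing can reach $3$, not because it can reach $2$. What your computation actually needs --- and what you in fact invoke in the middle of your argument --- is only that $\langle\beta,\si^\vee\rangle\le 2$ for every $\beta\in\Phi^+$, which does hold under the proposition's hypotheses (at most $1$ for $\beta\ne\si$ when $\si$ is long, at most $2$ when $\si$ is short outside $G_2$, and $3$ only in the excluded short-$G_2$ case). With that lemma stated correctly, the chain $\langle\Sigma,\si^\vee\rangle\le 2\sum c_\beta=i-\ell(w_1)$ and the rest of the argument are sound.
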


Proposition~\ref{P:degreebound1} can be generalized to the
following (cf. \cite[Proposition 4.3.1]{BNP}).

\begin{proposition} \label{P:degreebound2} Let $p > h$, $0 \neq
\la \in X(T)_+$ and $i \geq 0$.
If $\opH^i(G,H^0(\la)\otimes H^0(\la^*)^{(r)}) \neq 0$, then
there exists a sequence of non-zero weights
$\la = \ga_0, \ga_1, \dots, \ga_{r-1} , \ga_r= \la \in X(T)_+$
such that
$\ga_j = p \delta_j + u_j\cdot 0$ for some $u_j \in W$ and
nonzero $\delta_j \in X(T)_+$. Moreover, for each $1 \leq j \leq
r$, there exists a nonnegative integer
$l_j$ with $\Ext^{l_j}_G(V(\ga_j)^{(1)}, H^0(\ga_{j-1})) \neq 0$
and
 $\sum_{j = 1}^rl_j = i$.  Furthermore,
\begin{equation}\label{rbound}
i \geq  \left(\sum_{j=1}^r (p-1) \langle \delta_j, 
        \ta^{\vee}\rangle\right) - r.
\end{equation}
Equality requires that $p\delta_j - \delta_{j-1} + u_j\cdot 0 =((l_j - \ell(u_{j-1}))/2)\ta$ and that $\langle -u_j\cdot
0,\ta^{\vee}\rangle
= \ell(u_j) + 1$ for all $1 \leq j \leq r$.
\end{proposition}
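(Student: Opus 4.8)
The plan is to bootstrap Proposition~\ref{P:degreebound2} from Proposition~\ref{P:degreebound1} by an induction on $r$. The starting point is the filtration of $\cgr$ and the well-known ``telescoping'' identity $\cgr \cong \cgr[1] \otimes (\cgr[r-1])^{(1)}$ (a Frobenius-twisted composition of induction functors, established in \cite{BNP}). Combining this with Proposition~\ref{P:extcomparison} applied with $M = N = k$, one obtains a decomposition of $\opH^i(\gfpr,k)$ (equivalently of $\opH^i(G,\cgr)$) that factors through the $r=1$ case, but now allowing the two twisted factors to contribute weights $\nu_1 \ne \nu_2$. More precisely, I would first observe that if $\opH^i(G,H^0(\la)\otimes H^0(\la^*)^{(r)}) \ne 0$ then, analyzing the LHS spectral sequence for $G_1 \lhd G$ together with the good filtration on $\opH^j(G_1, H^0(\nu))^{(-1)}$ from \eqref{ind}, there must be a nonzero dominant weight $\ga_1 = p\delta_1 + u_1 \cdot 0$ and a degree $l_1$ with $\Ext^{l_1}_G(V(\ga_1)^{(1)}, H^0(\la)) \ne 0$ and a surviving contribution $\opH^{i - l_1}(G, H^0(\ga_1)\otimes H^0(\ga_1^*)^{(r-1)}) \ne 0$. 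The inductive hypothesis applied to $\ga_1$ with parameter $r-1$ then produces the remaining weights $\ga_1, \ga_2, \dots, \ga_{r-1}, \ga_r = \la$ and degrees $l_2, \dots, l_r$; the wrap-around $\ga_r = \ga_0 = \la$ is built into the construction since both ends of the telescoped tensor product carry the weight $\la$.

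The second half of the argument is the numerical estimate \eqref{rbound}. Here I would simply sum the degree bounds coming from Proposition~\ref{P:degreebound1}(b): for each $j$ with $1 \le j \le r$, applying part (b) to the pair $(\ga_{j-1}, \ga_j)$ (so that in the notation there $\ga_2 = \ga_j$, $\ga_1 = \ga_{j-1}$) gives
\[
l_j \;\ge\; p\langle \delta_j, \ta^\vee\rangle - \langle \delta_{j-1}, \ta^\vee\rangle + \ell(u_{j-1}) - \ell(u_j) - 1,
\]
with equality forcing $p\delta_j - \delta_{j-1} + u_j\cdot 0 = ((l_j - \ell(u_{j-1}))/2)\ta$ and $\langle -u_j\cdot 0, \ta^\vee\rangle = \ell(u_j) + 1$. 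Summing over $j$, the $\langle \delta_j, \ta^\vee\rangle$ terms telescope against the $\langle\delta_{j-1},\ta^\vee\rangle$ terms (using $\delta_0 = \delta_r$ since $\ga_0 = \ga_r = \la$), and likewise the $\ell(u_{j-1}) - \ell(u_j)$ terms telescope to zero, leaving
\[
i = \sum_{j=1}^r l_j \;\ge\; \sum_{j=1}^r (p-1)\langle \delta_j, \ta^\vee\rangle - r,
\]
which is exactly \eqref{rbound}. The equality clause follows because equality in the sum forces equality in each summand, hence each of the two local equality conditions of Proposition~\ref{P:degreebound1}(b) at every index $j$.

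One technical point I would be careful about: the telescoping identity and the reduction step must be set up so that the intermediate cohomology group really has the form $\opH^{i-l_1}(G, H^0(\ga_1)\otimes H^0(\ga_1^*)^{(r-1)})$ with matching weight on both tensor factors — this is where the symmetry $H^0(\nu)\otimes H^0(\nu^*)$ built into Proposition~\ref{P:sections} is essential, and it is why the sequence $\ga_0, \dots, \ga_r$ is forced to close up with $\ga_0 = \ga_r = \la$. The base case $r = 1$ is immediate from Proposition~\ref{P:degreebound1}(b) with $\ga_1 = \ga_2 = \la$. The main obstacle, and the step deserving the most care, is verifying that the Künneth/LHS analysis genuinely yields a \emph{single} dominant weight $\ga_1$ and a \emph{single} degree split $i = l_1 + (i - l_1)$ with both pieces nonzero — rather than a sum over many such decompositions — so that one may legitimately invoke the inductive hypothesis on one term; this requires tracking that $\opH^{l_1}(G, H^0(\la)\otimes H^0(\ga_1^*)^{(1)}) = \Ext^{l_1}_G(V(\ga_1)^{(1)}, H^0(\la))$ is nonzero for at least one choice and that the spectral sequence contribution it feeds into does not get killed by a differential, which is handled exactly as in the $r=1$ reduction of Section~\ref{SS:G1analysis} (using $p > h$ so that \eqref{ind} applies and all relevant $\mu$, $\delta_j$ are automatically dominant).
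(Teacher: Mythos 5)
The paper does not reprove this statement here; it cites it as \cite[Proposition 4.3.1]{BNP}. Judged on its own terms, your proposal has the right idea in the second half but a genuine gap in the first.

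The problem is in your reduction step. Starting from $\opH^i(G,H^0(\la)\otimes H^0(\la^*)^{(r)}) \cong \Ext^i_G(V(\la)^{(r)},H^0(\la))$ and running the LHS spectral sequence for $G_1 \lhd G$, the surviving $E_2^{k,l}$-term with $k+l=i$ is
$$
\Ext^k_{G}\bigl(V(\la)^{(r-1)},\opH^l(G_1,H^0(\la))^{(-1)}\bigr),
$$
and extracting a good-filtration section $H^0(\ga_1)$ of $\opH^l(G_1,H^0(\la))^{(-1)}$ gives $\Ext^{l_1}_G(V(\ga_1)^{(1)},H^0(\la))\neq 0$ (that part is fine) together with $\Ext^{i-l_1}_G(V(\la)^{(r-1)},H^0(\ga_1))\neq 0$. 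This second group equals $\opH^{i-l_1}(G,H^0(\ga_1)\otimes H^0(\la^*)^{(r-1)})$, \emph{not} $\opH^{i-l_1}(G,H^0(\ga_1)\otimes H^0(\ga_1^*)^{(r-1)})$ as you assert. The two tensor factors carry different weights ($\ga_1$ and $\la$) in general, so the intermediate term is not of the shape to which Proposition~\ref{P:degreebound2} applies, and the induction on $r$ as you set it up does not close. You flag this exact worry as a ``technical point,'' but your proposed resolution (appealing to the symmetry in the filtration of $\cgr$) does not apply: Proposition~\ref{P:sections} is about $\cgr$ as a whole, not about the individual Ext group that appears after one LHS step, which genuinely has $\la$ in one slot and $\ga_1$ in the other.

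The fix is to induct on the stronger, asymmetric statement: for all dominant $\mu,\nu$, if $\Ext^i_G(V(\nu)^{(r)},H^0(\mu))\neq 0$ then there is a chain $\mu=\ga_0,\ga_1,\dots,\ga_r=\nu$ with each $\ga_j=p\delta_j+u_j\cdot 0$, nonnegative $l_j$ with $\sum l_j=i$, and $\Ext^{l_j}_G(V(\ga_j)^{(1)},H^0(\ga_{j-1}))\neq 0$. Equivalently, iterate the LHS spectral sequence $r$ times directly, peeling off one $G_1$ at each stage, until the last stage forces $\ga_r=\nu$ via $\Ext^{l_r}_G(V(\nu)^{(1)},H^0(\ga_{r-1}))\neq 0$. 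Specializing $\mu=\nu=\la$ recovers the stated form with $\ga_0=\ga_r=\la$; this is exactly what enables the telescoping cancellations in your degree-bound sum (the terms $\langle\delta_0,\ta^\vee\rangle=\langle\delta_r,\ta^\vee\rangle$ and $\ell(u_0)=\ell(u_r)$ cancel). Once the chain is in hand, the rest of your argument --- applying Proposition~\ref{P:degreebound1}(b) termwise and summing, with the equality clause forcing equality in each summand --- is correct and is exactly how the bound \eqref{rbound} is obtained.

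Two smaller remarks. First, the ``telescoping identity'' you quote for $\cgr$ is a red herring here: what is actually being iterated is the Lyndon--Hochschild--Serre spectral sequence for a single section $H^0(\la)\otimes H^0(\la^*)^{(r)}$, not a decomposition of $\cgr$ itself. Second, one should note why every $\delta_j$ is nonzero: if some $\delta_j=0$, then $\ga_j=u_j\cdot 0$ is dominant only for $u_j=e$, giving $\ga_j=0$, which would force the preceding and following $\Ext$ groups to involve the trivial module and ultimately collapse the chain; this is part of why the hypothesis $\la\neq 0$ is needed.
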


Note that the assumption $\opH^i(G,H^0(\la)\otimes
H^0(\la^*)^{(r)}) \neq 0$
in the proposition can be replaced by
$$\Ext_{G/G_1}^k(V(\la)^{(r)},
\opH^l(G_1,H^0(\la)))\neq 0,$$ 
where $k+l =i$. In that case one arrives at the same
conclusions with $l_1 =l$.



\section{Vanishing Ranges in the Simply Laced Case}

In this section we obtain some general vanishing information for
those cases
when the root system $\Phi$ is simply laced.  In such cases, the longest root $\ta$ and the longest short root $\al_0$ coincide.
Following the discussion in Section 2, we want to consider when
$$\opH^i(G,H^0(\la)\otimes H^0(\la^*)^{(1)}) \neq 0$$ for $i >
0$
and $\la \in X(T)_+$.


\subsection{} 
To gain information on such cohomology groups, we will use
Lemma~\ref{L:G1cohom} and
(\ref{ind}). The following proposition will aid us in showing
that certain cohomology
groups are non-zero.

\begin{proposition} \label{P:Homcalculation} Let
$\tilde{\alpha}$ denote the longest root of $\Phi$ and $l$
be a nonnegative integer. Then
$$\Hom_G(V((l+1)\tilde{\alpha}), \ind_B^G( S^l
(\ul^{\ast})\otimes \ta)) \cong k.$$
\end{proposition}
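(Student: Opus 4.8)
The plan is to identify the Hom-space with a weight-space computation in a suitable induced module and then pin down the dimension via Weyl-module/induced-module duality together with the structure of $S^l(\ul^*)$ in top-degree weights. First I would observe that $V((l+1)\ta)$ is the Weyl module of highest weight $(l+1)\ta$, so by Frobenius reciprocity for $\ind_B^G$,
\[
\Hom_G\bigl(V((l+1)\ta),\ind_B^G(S^l(\ul^*)\otimes\ta)\bigr)\cong \Hom_B\bigl(V((l+1)\ta),S^l(\ul^*)\otimes\ta\bigr).
\]
The natural way to get a lower bound is to exhibit one nonzero map: the highest weight vector of $V((l+1)\ta)$ has weight $(l+1)\ta$, and $(l+1)\ta-\ta = l\ta$ is exactly the top weight of $S^l(\ul^*)$ (all positive roots are $\le\ta$, so $l\ta$ appears in $S^l(\ul^*)$ with multiplicity one, realized only by $\ta+\dots+\ta$). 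Dually, $\ta$ occurs in $S^1(\ul^*)=\ul^*$ with multiplicity one (as $\ta$ is the unique highest positive root), so $l\ta\otimes\ta = (l+1)\ta$ has multiplicity one as a $T$-weight of $S^l(\ul^*)\otimes\ta$, and it is a $B$-highest weight there. Mapping the highest weight generator of $V((l+1)\ta)$ to this vector and checking $B$-equivariance (the lower triangular unipotent part kills it for weight reasons since $(l+1)\ta$ is the top weight) produces a nonzero $B$-homomorphism, hence $\dim\ge 1$.

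For the upper bound I would argue that any $B$-homomorphism $V((l+1)\ta)\to S^l(\ul^*)\otimes\ta$ is determined by the image of the highest weight vector, which must land in the $(l+1)\ta$-weight space; since that weight space is one-dimensional, $\dim\le 1$. One subtlety: a priori the image of the highest weight vector could be zero while the map is nonzero on lower weight spaces — but $V((l+1)\ta)$ is generated as a $U^+$-module (equivalently as a $G$-module, and the relevant action here is through $B$ and the distribution algebra) by its highest weight vector, so a $B$-map vanishing on the highest weight vector vanishes identically. Alternatively, and perhaps more cleanly, I would use $V(\mu)^* \cong H^0(\mu^*)$ and rewrite the Hom as $\opH^0(B, H^0((l+1)\ta)^* \otimes S^l(\ul^*)\otimes\ta)$ — but the weight-counting argument above is the most direct.

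The main obstacle I expect is the upper bound — specifically, ruling out that the one-dimensional $(l+1)\ta$-weight space could receive contributions making $\Hom$ larger, or that there is some unexpected extra map. This reduces to the statement that $V((l+1)\ta)$ is cyclic on its highest weight vector as a module over $\operatorname{Dist}(B)$ (or over $U^+$), which is standard for Weyl modules but should be cited carefully from \cite{Jan}. The lower bound is essentially the nonvanishing content and is the part genuinely used downstream (to show certain $\opH^i$ are nonzero via Lemma~\ref{L:G1cohom} and (\ref{ind})), so if pressed for brevity I would emphasize constructing the explicit nonzero map and note that the one-dimensionality of the top weight space forces the Hom-space to be at most $k$.
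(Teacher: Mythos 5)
The gap is in your lower bound, not (as you suspected) the upper bound. You propose to send the highest-weight vector $v^+$ of $V((l+1)\ta)$ to the $(l+1)\ta$-weight vector $m_0$ of $S^l(\ul^*)\otimes\ta$ and assert that this gives a $B$-map because ``the lower triangular unipotent part kills it for weight reasons since $(l+1)\ta$ is the top weight.'' That claim is false. In the paper's conventions $B$ is the \emph{negative} Borel, so $U$ \emph{lowers} weights; $U$ therefore does not annihilate a top-weight vector, and in fact $V((l+1)\ta)$ is $\operatorname{Dist}(U)$-cyclic (not $U$-fixed) on $v^+$. Concretely, $m_0 = (e_\ta^*)^l \otimes 1_\ta$ is $U$-invariant only if $e_\ta^* \in \ul^*$ is, i.e.\ only if $e_\ta^*$ annihilates $[\ul,\ul]$; since $\ta$ is not a simple root (outside rank $1$), it is not. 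So $k\cdot m_0$ is \emph{not} a $B$-submodule, and a $B$-map $V((l+1)\ta)\to S^l(\ul^*)\otimes\ta$ sending $v^+\mapsto m_0$ does not come for free: one must verify that the entire annihilator of $v^+$ in $\operatorname{Dist}(B)$ kills $m_0$. This is exactly the nontrivial content of the proposition, so the argument as written is circular. (Your upper bound, by contrast, is essentially fine: $B$-cyclicity of $V((l+1)\ta)$ on $v^+$ gives an injection $\Hom_B(V((l+1)\ta),S^l(\ul^*)\otimes\ta)\hookrightarrow (S^l(\ul^*)\otimes\ta)_{(l+1)\ta}\cong k$.)

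The paper sidesteps the existence problem by building the map out of morphisms that are already known to exist. It embeds $V((l+1)\ta)\hookrightarrow V(\ta)^{\otimes(l+1)}$ using the Weyl filtration on tensor products of Weyl modules, identifies $V(\ta)\cong\mathfrak g^*$, and then uses the two natural $B$-surjections $\mathfrak g^*\twoheadrightarrow\ul^*$ (restriction to $\ul\subset\mathfrak g$) and $\mathfrak g^*\twoheadrightarrow(\mathfrak g_{-\ta})^*$ (a weight-$\ta$ one-dimensional quotient, since $\mathfrak g_{-\ta}$ is a $B$-submodule of $\mathfrak g$). Applying the first on $l$ factors and the second on the last gives $V(\ta)^{\otimes(l+1)}\twoheadrightarrow(\ul^*)^{\otimes l}\otimes\ta\twoheadrightarrow S^l(\ul^*)\otimes\ta$, and one checks the composite is nonzero by tracking the one-dimensional $(l+1)\ta$-weight space through each map (it maps isomorphically at each stage). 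Frobenius reciprocity then produces the nonzero $G$-map into the induced module. If you want to salvage your write-up, you should replace the ``define where $v^+$ goes'' step by this explicit composition (or by some other genuine construction, e.g.\ a character/multiplicity argument for the induced module); everything else you wrote is sound.
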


\begin{proof} The claim follows from the diagram below and the
fact that all modules in the commutative diagram below
have a one-dimensional highest weight space with weight $(l+1)
\tilde{\alpha}$. The first embedding is a consequence of the
fact that the module $\underbrace{V(\tilde{\alpha}) \otimes \cdots
\otimes V(\tilde{\alpha})}_{(l+1)\text{ times}}$ has a Weyl module
filtration. The Weyl module $V( \tilde{\alpha})$ is isomorphic
to the dual of the adjoint representation,
$\mathfrak{g}^{\ast}$. Clearly, $\mathfrak{g}^{\ast}$ maps onto
$\ul^{\ast}$ and $V(\ta)$ maps onto
$\phi_{-\ta}$ (of weight $\ta$) as $B$-modules. Hence, we obtain
the two $B$-surjections in the first line of the diagram. The
remaining maps and the commutativity of the diagram arise via
the universal property of the
induction functor.  
\begin{center}
\begin{picture}(340,60)(0,0)
\put(5,50){$V((l+1) \tilde{\alpha})$}
\put(63,50){$ \into$}
\put(80,50){$\underbrace{V(\tilde{\alpha}) \otimes \cdots \otimes
V(\tilde{\alpha})}_{(l+1)\text{ times}}$}
\put(190,50){$\underbrace{\ul^{\ast} \otimes \cdots \otimes
\ul^{\ast}}_{l \text{ times}}\otimes\; \ta$}
\put(170,50){$\onto$}
\put(272,50){$ \onto$}
\put(290,50){$ S^l (\ul^{\ast})\otimes \;\ta$}
\put(280,5){$ \ind_B^G(S^l (\ul^{\ast})  \otimes \ta)$}
\put(310,30){$ \uparrow$}
\put(170, 35){\vector(4,-1){100}}
\end{picture}
\end{center}
\end{proof}


\subsection{}
For a $G$-module $V$ and a dominant weight $\ga$ let
$[V]_{\gamma}$ denote the unique maximal summand of $V$ whose
composition factors have highest weights linked to $\gamma$.

\begin{lemma} \label{L:Simplylacedcohom} Assume that the root
system $\Phi$ of $G$ is simply laced. Let $\ta$ denote the
longest root and define $\la = p \ta + s_{\ta}\cdot 0 = (p-h+1)
\ta. $ Then
\begin{itemize}
\item[(a)] for any non-zero dominant weight $\mu$ linked to zero
we have\\ $\opH^i(G,H^0(\mu)\otimes H^0(\mu^*)^{(r)}) = 0$
whenever $ i < r(2p - 3)$;
\item[(b)] for any non-zero dominant weight $\mu$ linked to zero
we have\\
$\Ext_{G/G_1}^k(V(\mu)^{(r)}, \opH^l(G_1,H^0(\mu)))= 0$ whenever
$k+l < r(2p-3)$;
\item[(c)] $[\opH^i(G_1, H^0(\la))^{(-1)}]_0 \cong 
\begin{cases} H^0(\la) & \mbox{ if  } i = 2p-3\\ 
0 & \mbox{ if } 0 < i < 2p-3.
\end{cases}$ 
\end{itemize}
\end{lemma}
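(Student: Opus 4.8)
The plan is to prove the three parts in the order (c), then (a), then (b), since (c) contains the key computation and (a)–(b) will follow by an inductive argument using the $r>1$ degree bounds. For part (c), the starting point is the formula (\ref{ind}): for a weight $\nu$ linked to zero, $\opH^i(G_1,H^0(\nu))^{(-1)}$ is nonzero only when $\nu = w\cdot 0 + p\mu$, in which case it equals $\ind_B^G(S^{(i-\ell(w))/2}(\ul^*)\otimes\mu)$. For $\la = (p-h+1)\ta = p\ta + s_\ta\cdot 0$ one has $\mu = \ta$ and $w = s_\ta$, so $\ell(w) = 1$ (since $\ta$ is simple-reflection-length... more precisely $\ell(s_\ta)$; note $-s_\ta\cdot 0 = \ta$ so $\ell(s_\ta)$ equals the number of positive roots sent to negatives, which for the reflection in $\ta$ need not be $1$). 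The correct bookkeeping: since $-s_\ta\cdot 0 = (h-1)\ta - $ wait; rather use $s_\ta\cdot 0 = s_\ta(\rho)-\rho$ and $\langle\rho,\ta^\vee\rangle = h-1$, giving $s_\ta(\rho) = \rho - (h-1)\ta$, so $s_\ta\cdot 0 = -(h-1)\ta$ and indeed $\la = p\ta - (h-1)\ta = (p-h+1)\ta$. First I would pin down $\ell(s_\ta)$ and then observe that the $0$-linkage-class summand of $\ind_B^G(S^n(\ul^*)\otimes\ta)$ for $n = (i-\ell(s_\ta))/2$ is what must be analyzed; the claim is that this summand vanishes for $i < 2p-3$ and is $\cong H^0(\la)$ for $i = 2p-3$.

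For the $i = 2p-3$ case I would invoke Proposition~\ref{P:Homcalculation}: with $l = p-h$ (chosen so that $(l+1)\ta = (p-h+1)\ta = \la$), it gives $\Hom_G(V(\la), \ind_B^G(S^{p-h}(\ul^*)\otimes\ta)) \cong k$. I then need $n = p-h$ to correspond to the cohomological degree $i = 2p-3$, i.e. $(i-\ell(s_\ta))/2 = p-h$, which forces $\ell(s_\ta) = i - 2(p-h) = 2p-3-2p+2h = 2h-3$. So the first real task is to verify $\ell(s_\ta) = 2h-3$ (for $p>h$, $\Phi$ simply laced); this is a standard but non-trivial root-system fact — $s_\ta$ reverses sign on exactly $2h-3$ positive roots — which can be checked uniformly via $\langle\rho,\ta^\vee\rangle = h-1$ together with $\ell(s_\ta) = \langle\rho+\rho,\ta^\vee\rangle - 1 = 2(h-1)-1 = 2h-3$. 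With this in hand, the $k$-dimensional Hom combined with a Weyl-module/good-filtration argument on $\ind_B^G(S^{p-h}(\ul^*)\otimes\ta)$ identifies the $0$-block summand: its socle contains $L(\la)$ and, since everything of weight $\le\la$ linked to $0$ in this module turns out to contribute exactly a copy of $H^0(\la)$, one gets $[\opH^{2p-3}(G_1,H^0(\la))^{(-1)}]_0 \cong H^0(\la)$. The main obstacle will be ruling out other linked composition factors in lower degrees $0 < i < 2p-3$: here I would use Proposition~\ref{P:degreebound1}(c) or a direct Kostant-partition-function estimate (Proposition~\ref{P:KostantPartCohom}) to show that for $\nu = p\delta + w\cdot 0$ linked to $0$ and nonzero, one needs $(p-1)\langle\delta,\ta^\vee\rangle - 1 \le i$, and since $\langle\delta,\ta^\vee\rangle \ge 2$ for any nonzero dominant $\delta$ linked appropriately (using $p>h$ and the structure of the $0$-block), this gives $i \ge 2p-3$.

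Given (c), part (a) for $r = 1$ is immediate from Lemma~\ref{L:G1cohom}: $\opH^i(G,H^0(\mu)\otimes H^0(\mu^*)^{(1)}) \cong \Hom_G(V(\mu), \opH^i(G_1,H^0(\mu))^{(-1)})$, and since $V(\mu)$ has highest weight linked to $0$, this Hom factors through the $0$-block summand, which by (the general-$\mu$ analogue of) the estimate in (c) vanishes for $i < 2p-3$; the general-$\mu$ bound is exactly the content of the degree bound $i \ge (p-1)\langle\delta,\ta^\vee\rangle - 1$ with $\langle\delta,\ta^\vee\rangle\ge 2$. For $r > 1$, I would feed this into Proposition~\ref{P:degreebound2}: the hypothesis $\opH^i(G,H^0(\mu)\otimes H^0(\mu^*)^{(r)})\ne 0$ produces weights $\delta_j$ with $i \ge \sum_{j=1}^r (p-1)\langle\delta_j,\ta^\vee\rangle - r \ge r(2(p-1)-1) = r(2p-3)$, using $\langle\delta_j,\ta^\vee\rangle\ge 2$ for each $j$ (each $\delta_j$ nonzero dominant and, by the linkage/structure of the relevant block for $p>h$, pairing at least $2$ with $\ta^\vee$). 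Part (b) follows by the same argument applied to the variant of Proposition~\ref{P:degreebound2} noted at the end of Section~2, where the hypothesis is replaced by $\Ext^k_{G/G_1}(V(\mu)^{(r)}, \opH^l(G_1,H^0(\mu)))\ne 0$ with $k+l = i$; there one gets the identical inequality $i \ge r(2p-3)$. The one point demanding care throughout is the uniform lower bound $\langle\delta,\ta^\vee\rangle\ge 2$ for nonzero dominant $\delta$ that can occur as the "$\mu$-part" of a weight linked to $0$ — this uses $p > h$ and is where the simply-laced hypothesis ($\ta = \al_0$) keeps the combinatorics clean.
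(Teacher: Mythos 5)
Your proposal is correct and follows essentially the same route as the paper: degree bounds (via Proposition~\ref{P:degreebound2} and the remark in Section~\ref{SS:degreebounds}) combined with the observation $\langle\delta,\ta^\vee\rangle\ge 2$ for nonzero dominant $\delta$ in the root lattice handle (a) and (b); and for (c), the computation $\ell(s_\ta)=\langle 2\rho,\ta^\vee\rangle-1=2h-3$, Proposition~\ref{P:Homcalculation}, the minimality of $\la=(p-h+1)\ta$ in its linkage class (so that any larger linked $\mu$ exceeds the highest weight $(m+1)\ta\le\la$ of $\ind_B^G(S^m(\ul^*)\otimes\ta)$ for $m\le p-h$), and the good filtration of $\opH^{2p-3}(G_1,H^0(\la))^{(-1)}$ pin down the $0$-block summand. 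The only cosmetic difference is that you prove (c) before (a)/(b), whereas the paper proves (a)/(b) first and reuses (b) to kill the $\Hom_G(V(\la),\opH^i(G_1,H^0(\la))^{(-1)})$ terms for $0<i<2p-3$ — but since your treatment of (c) invokes the same degree-bound estimate, the arguments are logically equivalent.
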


\begin{proof} We apply Proposition~\ref{P:degreebound2}.  Note
that $\mu$ being linked to zero forces all the weights
$\delta_j$ of Proposition~\ref{P:degreebound2} to be in the root
lattice.
This forces $\langle \delta_j, \ta^{\vee} \rangle\geq 2$. Parts
(a) and (b) now follow from equation (\ref{rbound}) and the
remark in Section~\ref{SS:degreebounds}.

For part (c) we make use of Proposition~\ref{P:Homcalculation}
with $l+1 = p-h+1$ and conclude that
$$\Hom_G(V(\la), \ind_B^G( S^{(p-h)} (\ul^{\ast})\otimes \ta))
\cong k.$$
Note that in the simply laced case $\ell(s_{\ta}) = 2h-3$ which
combined with (\ref{ind}) yields $ \ind_B^G( S^{p-h}
(\ul^{\ast})\otimes \ta)) \cong \opH^{2p-3}(G_1,
H^0(\la)^{(-1)}).$

The weight $\la$ is the smallest non-zero dominant weight in the
zero linkage class. Any other non-zero weight $\mu$ in the
linkage class will be of the form $\mu =\la + \sigma= (p-h+1)\ta
+ \sigma$, where $\sigma$ is a non-zero sum of positive roots.
Clearly $\mu$ cannot be a weight of $S^{m} (\ul^{\ast})\otimes
\ta$ whenever $m \leq p-h.$ Hence,
$$\Hom_G(V(\mu),  \opH^{i}(G_1, H^0(\la))^{(-1)}) \cong
\Hom_G(V(\mu), \ind_B^G( S^{\frac{i-\ell(s_{\ta})}{2}}
(\ul^{\ast})\otimes \ta))=0$$
for all $0 < i \leq 2p-3.$ Since $\opH^{2p-3}(G_1,
H^0(\la))^{(-1)}$ has a good filtration one obtains
$$[\opH^{2p-3}(G_1, H^0(\la))^{(-1)}]_0 \cong H^0(\la).$$ 

Part (b) now implies that $[\opH^{i}(G_1, H^0(\la))]_0=0$
whenever $0 < i < 2p-3.$
\end{proof}

\begin{theorem} \label{T:firstnontrivial} Assume that the root
system of $G$ is simply laced. Then
$$\operatorname{H}^{r(2p-3)}(\gfpr, k) \neq 0.$$
\end{theorem}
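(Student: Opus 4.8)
The plan is to deduce the non-vanishing from the machinery already set up, in particular from Lemma~\ref{L:Simplylacedcohom} together with the block-decomposed version of Theorem~\ref{T:nonvanishing} described at the end of Section~\ref{SS:nonvanishing}. The key observation is that Lemma~\ref{L:Simplylacedcohom}(c) gives, at least on the principal block, a single explicit weight $\la = (p-h+1)\ta$ which first contributes cohomology in degree $2p-3$ when $r=1$, and Lemma~\ref{L:Simplylacedcohom}(a) tells us that no other weight linked to zero contributes in any degree below $r(2p-3)$. So the strategy is: (i) identify a weight $\La$ (depending on $r$) lying in the principal block with $\opH^{r(2p-3)}(G,H^0(\La)\otimes H^0(\La^*)^{(r)})\neq 0$; (ii) show nothing in the principal block contributes in degrees $0<i<r(2p-3)$; (iii) show that no weight $\nu<\La$ in the principal block contributes in degree $r(2p-3)+1$; then conclude via the linkage-class form of Theorem~\ref{T:nonvanishing}.

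For step (i) with $r=1$ we simply take $\La=\la=(p-h+1)\ta$; Lemma~\ref{L:Simplylacedcohom}(c) combined with Lemma~\ref{L:G1cohom} gives $\opH^{2p-3}(G,H^0(\la)\otimes H^0(\la^*)^{(1)})\cong \Hom_G(V(\la),[\opH^{2p-3}(G_1,H^0(\la))^{(-1)}]_0)\cong \Hom_G(V(\la),H^0(\la))\cong k\neq 0$. For general $r$, I would build $\La$ inductively, essentially iterating the $r=1$ construction along the lines of Proposition~\ref{P:degreebound2}: one expects $\La$ of the form $p^{r-1}\la + p^{r-2}\la + \cdots$ — more precisely the weight whose associated sequence $\ga_0=\ga_r=\La$, $\delta_j=\ta$, $u_j=s_\ta$, $l_j=2p-3$ realizes equality in~(\ref{rbound}). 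One then shows $\opH^{r(2p-3)}(G,H^0(\La)\otimes H^0(\La^*)^{(r)})\neq 0$ by feeding the $r=1$ computation into the LHS spectral sequence for $G_1\trianglelefteq G$ and using Lemma~\ref{L:Simplylacedcohom}(b) to force the spectral sequence to collapse in the relevant total degree, so that the iterated $\Ext$ groups multiply up to a nonzero class. The minimality of $2p-3$ at each stage (part (c) for the bottom, part (a)/(b) for the rest) guarantees there is no room for differentials or for competing summands in total degree $r(2p-3)$.

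For steps (ii) and (iii), the vanishing in degrees below $r(2p-3)$ on the principal block is exactly Lemma~\ref{L:Simplylacedcohom}(a). For the degree $r(2p-3)+1$ condition on weights $\nu<\La$ linked to $\La$: since $\La$ is built from the minimal building block $\la$, any such $\nu$ is either not in the principal block (excluded) or is itself of the same iterated shape with at least one $\delta_j$ strictly larger, forcing — via the equality discussion in Proposition~\ref{P:degreebound2} — the relevant cohomological degree to jump by at least $2(p-1)$, well past $r(2p-3)+1$; alternatively one argues directly from the partition-function estimate that $\nu-\mu$ cannot be written as the requisite number of positive roots. Having verified the hypotheses, the block-decomposed Theorem~\ref{T:nonvanishing} yields $\opH^{r(2p-3)}(\gfpr,k)\neq 0$.

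The main obstacle I anticipate is step (i) for $r>1$: producing the weight $\La$ explicitly, controlling the LHS spectral sequence, and verifying that the product of the $r$ individual nonzero $\Ext^{2p-3}$-classes survives to a nonzero class in $\opH^{r(2p-3)}(G,H^0(\La)\otimes H^0(\La^*)^{(r)})$ rather than being killed by a differential or by a subtle linkage cancellation. This is precisely the kind of inductive bootstrapping flagged after Lemma~\ref{L:G1cohom} and in the remark following Proposition~\ref{P:degreebound2}, and getting the good-filtration bookkeeping right (so that $\Hom_G(V(-),-)$ behaves) is where the care is needed; the root-combinatorial parts (steps ii, iii) should follow cleanly from the degree bounds already in hand.
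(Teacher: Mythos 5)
Your plan is essentially the one the paper follows: control the principal block via Lemma~\ref{L:Simplylacedcohom}, promote the $r=1$ computation to general $r$ via the LHS spectral sequence for $G_1\trianglelefteq G$, and conclude with the linkage-class form of Theorem~\ref{T:nonvanishing}. The one point to clear up is your step (i). Your first instinct --- a weight $\La$ growing with $r$, roughly $p^{r-1}\la+\cdots$ --- is the wrong one; your ``more precise'' description with $\delta_j=\ta$, $u_j=s_{\ta}$ fed into $\ga_j=p\delta_j+u_j\cdot 0$ gives $\ga_j=(p-h+1)\ta=\la$ for \emph{every} $j$, so the constraints $\ga_0=\ga_r=\La$ force $\La=\la$. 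The paper indeed uses the fixed weight $\la=(p-h+1)\ta$ for all $r$; the dependence on $r$ enters only through the twist in $H^0(\la^*)^{(r)}$. Once this is settled, your step (iii) is trivial rather than delicate: $\la$ is the smallest nonzero dominant weight in the zero linkage class, so the only dominant $\nu<\la$ linked to it is $\nu=0$, and the degree-$(m+1)$ hypothesis in the linkage-class version of Theorem~\ref{T:nonvanishing} is automatic. The bootstrapping difficulty you flag for $r>1$ is handled exactly as you sketch: Lemma~\ref{L:Simplylacedcohom}(b) gives $E_2^{k,l}=0$ for $k+l<r(2p-3)$; Lemma~\ref{L:Simplylacedcohom}(c) gives $E_2^{k,l}=0$ for $0<l<2p-3$ and, since $\opH^{2p-3}(G_1,H^0(\la))^{(-1)}$ has a good filtration, exhibits $H^0(\la)$ as a direct summand of it. Hence $\Ext_G^{(r-1)(2p-3)}(V(\la)^{(r-1)},H^0(\la))$ appears as a summand of $E_2^{(r-1)(2p-3),\,2p-3}$, which by the two vanishings admits no nonzero incoming or outgoing differentials and so survives to $E_\infty$; induction on $r$ then closes the argument.
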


\begin{proof} Let $\mu$ be a weight in the zero linkage class.
From Lemma \ref{L:Simplylacedcohom}(a), we know that
$\opH^i(G,H^0(\mu)\otimes H^0(\mu^*)^{(1)}) = 0$
for $i < r(2p - 3)$. Let $\la = (p - h + 1)\ta$. We next show by
induction on $r$ that
$\opH^{r(2p - 3)}(G,H^0(\la)\otimes H^0(\la^*)^{(r)}) \neq 0$.
For $r = 1$, this
follows from Lemma~\ref{L:G1cohom} and
Lemma~\ref{L:Simplylacedcohom}(c).

For $r > 1$, we look at the Lyndon-Hochschild-Serre spectral
sequence
$$E_2^{k,l}=\Ext_{G/G_1}^k(V(\la)^{(r)}, \opH^l(G_1,H^0(\la)))
\Rightarrow
        \Ext^{k+l}_G(V(\la)^{(r)}, H^0(\la)).$$
Lemma~\ref{L:Simplylacedcohom}(b) implies that the $E_2^{k,l} =
0$ for $k+l < r(2p-3)$.
Note that
$$
E_2^{k,l} =
\Ext_{G}^{k}(V(\la)^{(r-1)},\opH^l(G_1,H^0(\la))^{(-1)}).
$$
Lemma~\ref{L:Simplylacedcohom}(c) implies that $E_2^{k,l} = 0$
for $l < 2p - 3$,
and, moreover, that $H^0(\la)$ is a summand of
$\opH^{2p-3}(G_1,H^0(\la))^{(-1)}$.
Hence, (cf. \cite[Lemma 5.4]{BNP}),
$$
E_2^{(r-1)(2p-3),2p-3} =
\Ext_{G}^{(r-1)(2p-3)}(V(\la)^{(r-1)},\opH^{2p -
3}(G_1,H^0(\la))^{(-1)})
$$
has a summand isomorphic to 
$\Ext_G^{(r-1)(2p-3)}(V(\la)^{(r-1)},H^0(\la))$.
By induction, this Ext-group is non-zero, and hence
$E_2^{(r-1)(2p-3),2p-3}\neq 0$
and transgresses to the $E_{\infty}$-page,
which implies that 
$$\Ext_{G}^{r(2p - 3)}(V(\la)^{(r)},H^0(\la))\neq 0.$$

Since $\la$ is the lowest non-zero dominant weight in the zero
linkage class, the claim now
follows by applying the argument given in
Section~\ref{SS:nonvanishing} to the weight $\la$ and
the zero linkage class.
\end{proof}


\subsection{Finite groups of adjoint type.}\label{S:adjoint} In this section we assume that $G$ is simply laced and of adjoint type. The fixed points of the $r$th iterated Frobenius map on $G$ will again be denoted by $\gfpr$. For example, if $G$ is the adjoint group of type $A$ then $\gfpr$ is the projective linear group with entries in the field with $q$ elements. Propositions \ref{P:extcomparison} and \ref{P:sections} can also be applied to groups of adjoint type. Note that the root lattice and
the weight lattice coincide in this case. Therefore all dominant weights of the
form $p \delta + w\cdot 0$ are automatically in the zero linkage
class. From Lemma~\ref{L:Simplylacedcohom},
Theorem~\ref{T:firstnontrivial}, and
Proposition~\ref{P:vanishing}, one obtains the following
corollary.

\begin{corollary} \label{C:simplylacedadjoint} Assume that the
root system $\Phi$ of $G$ is simply laced and that $G$ is of
adjoint type. Then
\begin{itemize}
\item[(a)]$\operatorname{H}^{i}(\gfpr, k) = 0$ for $0 < i <
r(2p-3)$;
\item[(b)]$\operatorname{H}^{r(2p-3)}(\gfpr, k) \neq 0$.
\end{itemize}
\end{corollary}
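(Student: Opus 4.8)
The proof of Corollary~\ref{C:simplylacedadjoint} follows almost immediately from the machinery already assembled. The plan is to exploit the special feature of the adjoint case noted just above the corollary: since $G$ is of adjoint type, the root lattice and weight lattice coincide, so \emph{every} dominant weight $\la \in X(T)_+$ already lies in the zero linkage class. In particular, every dominant weight of the form $p\delta + w\cdot 0$ is linked to zero, and the hypothesis ``$\mu$ linked to zero'' in Lemma~\ref{L:Simplylacedcohom}(a) becomes vacuous for the relevant weights.

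For part (a), I would argue as follows. By Proposition~\ref{P:vanishing}, it suffices to show that $\opH^m(G, H^0(\la)\otimes H^0(\la^*)^{(r)}) = 0$ for all non-zero $\la \in X(T)_+$ and all $m$ with $0 < m < r(2p-3)$. (Note that Proposition~\ref{P:vanishing} applies to the adjoint group via Propositions~\ref{P:extcomparison} and~\ref{P:sections}, which were observed to hold in the adjoint setting.) For $\la = 0$ the cohomology group is just $\opH^m(G,k) = 0$ for $m > 0$, so we may assume $\la \neq 0$. Any such weight contributing non-zero cohomology must be of the form $p\delta + w\cdot 0$ and hence, by the observation above, is linked to zero. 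Thus Lemma~\ref{L:Simplylacedcohom}(a) applies directly and gives the desired vanishing for $0 < m < r(2p-3)$.

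For part (b), I would invoke Theorem~\ref{T:firstnontrivial}, whose hypothesis is exactly that the root system of $G$ is simply laced — which holds here — and which concludes $\opH^{r(2p-3)}(\gfpr,k)\neq 0$. One should double-check that the proof of Theorem~\ref{T:firstnontrivial} goes through for adjoint $G$: it relies on Lemma~\ref{L:Simplylacedcohom} (which holds in the simply laced case) and on the linkage-class argument of Section~\ref{SS:nonvanishing}; since in the adjoint case all dominant weights of the form $p\delta+w\cdot 0$ sit in the zero linkage class, the weight $\la = (p-h+1)\ta$ is still the lowest non-zero dominant weight in its (namely, the zero) linkage class, so the argument applies verbatim.

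The only real subtlety — and thus the main point to be careful about — is verifying that all the structural inputs (Proposition~\ref{P:vanishing}, Lemma~\ref{L:Simplylacedcohom}, Theorem~\ref{T:firstnontrivial}) are genuinely valid for $G$ of adjoint type rather than simply connected. The filtration results Propositions~\ref{P:extcomparison} and~\ref{P:sections} are explicitly stated to transfer, and the degree-bound arguments behind Lemma~\ref{L:Simplylacedcohom} depend only on root-system combinatorics and on $p > h$, not on the isogeny type; the one place isogeny type enters is through which weights lie in the zero linkage class, and there the adjoint case is strictly more favorable. So I expect no genuine obstacle, only the bookkeeping of confirming that the simply-connected hypotheses in the cited statements can be relaxed to ``adjoint'' without altering the proofs.
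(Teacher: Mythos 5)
Your proposal is correct and takes essentially the same route as the paper, which reads off the corollary from Propositions~\ref{P:extcomparison}, \ref{P:sections}, \ref{P:vanishing}, Lemma~\ref{L:Simplylacedcohom}, and Theorem~\ref{T:firstnontrivial}, with the same key observation that, for adjoint $G$, the weight lattice equals the root lattice so every weight of the form $p\delta + w\cdot 0$ lies in the zero linkage class. One small imprecision: your opening claim that ``\emph{every} dominant weight $\la\in X(T)_+$ already lies in the zero linkage class'' overstates the case (a generic small weight need not lie in $W_p\cdot 0$); the correct statement — which you in fact immediately state and are the only one you use — is that every weight of the form $p\delta + w\cdot 0$ does, since $\delta$ is automatically in the root lattice.
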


Note that, for type $A_n$ and $q-1$ and $n+1$ being relatively prime, the adjoint and the universal types of the finite groups coincide.  In these cases the above claim was already observed in Theorems 6.5.1 and 6.14.1 of \cite{BNP}.

\begin{remark} Let $G$ be of type $A$, $D$, $E_6$ and of adjoint type. Let  $\sigma$ denote
an automorphism of the Dynkin diagram of $G$. Then $\sigma$  induces a group automorphism
of $G$ that commutes with the Frobenius morphism, which we will also
denote by  $\sigma$. Let $G_{\sigma}(\mathbb{F}_q)$ be the finite group consisting of the fixed points of  $\sigma$
composed with $F$. Note that $\sigma$ fixes the maximal root $\tilde{\alpha}$. Therefore the discussion in this section also applies to the twisted groups $G_{\sigma}(\mathbb{F}_q)$ of adjoint type. In particular, Corollary ~\ref{C:simplylacedadjoint} holds for these groups as well.
\end{remark}


\section{Type $D_n$, $n \geq 4$}

Assume throughout this section that $\Phi$ is of type $D_n$, $n
\geq 4$, and that
$p > h = 2n - 2$. Following Section 2, our goal is to find the
least $i > 0$
such that $\opH^i(G,H^0(\la)\otimes H^0(\la^*)) \neq 0$ for some
$\la$.


\subsection{\bf Restrictions.} \label{SS:initialrestrictions}
Suppose that
$\opH^i(G,H^0(\la)\otimes H^0(\la^*)^{(1)}) \neq 0$ for some
$i > 0$ and $\la = p\mu + w\cdot 0$ with $\mu \in X(T)_{+}$ and
$w \in W$.
From Proposition~\ref{P:degreebound1}(c), $i \geq
(p-1)\langle\mu,\ta^{\vee}\rangle - 1$.

For a fundamental dominant weight $\omega_j$,
$$
\langle\omega_j,\ta^{\vee}\rangle = 
\begin{cases}
1 &\text{ if } j = 1, n - 1, n\\
2 &\text{ if } 2 \leq j \leq n - 2.
\end{cases}
$$
Therefore, if $\mu \neq \omega_1, \omega_{n-1}, \omega_n$, we
will have
$\langle\mu,\ta^{\vee}\rangle \geq 2$ and $i \geq 2p - 3$. This
reduces us to
analyzing the cases when $\mu=\omega_1, \omega_{n-1}, \omega_n$,


\subsection{\bf The case of $\omega_1$.} \label{SS:caseomega1}
We consider first
the case that $\la = p\omega_1 + w\cdot 0$ and obtain the
following
restrictions.

\begin{lemma}\label{L:omega1} Suppose $\Phi$ is of type $D_n$
with $n \geq 4$ and
$p > 2n - 2$.
Suppose $\la = p\omega_1 + w\cdot 0 \in X(T)_+$ with $w \in W$.
Then
\begin{itemize}
\item[(a)] $\opH^i(G,H^0(\la)\otimes H^0(\la^*)^{(1)}) = 0$ for
$0 < i < 2p - 2n$;
\item[(b)] if $\opH^{2p - 2n}(G,H^0(\la)\otimes
H^0(\la^*)^{(1)}) \neq 0$,
then $\la = p\omega_1 - (2n - 2)\omega_1 = (p - 2n +
2)\omega_1$.
\end{itemize}
\end{lemma}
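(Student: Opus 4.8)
The plan is to apply the Kostant-partition-function formula of Proposition~\ref{P:KostantPartCohom} with $\mu = \omega_1$. Write $\la = p\omega_1 + w\cdot 0$. By that proposition,
$$
\dim \opH^i(G,H^0(\la)\otimes H^0(\la^*)^{(1)}) = \sum_{u \in W}
(-1)^{\ell(u)} P_{\frac{i-\ell(w)}{2}}( u\cdot\la - \mu),
$$
so a necessary condition for non-vanishing is that $\la - \omega_1 = (p-1)\omega_1 + w\cdot 0$ be expressible as a sum of exactly $(i-\ell(w))/2$ positive roots. First I would compute the minimal number of positive roots needed to write $(p-1)\omega_1 + w\cdot 0$: since $\omega_1$ pairs to $1$ with $\ta^\vee$ but every positive root pairs to at most $1$ with $\ta^\vee$ except $\ta$ itself (which pairs to $2$), and $w\cdot 0 = -(-w\cdot 0)$ where $-w\cdot 0$ is a sum of $\ell(w)$ distinct positive roots (Observation~\ref{O:uniquedecomp}), a height/pairing count against $\ta^\vee$ will bound $(i-\ell(w))/2$ from below. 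Pairing $(p-1)\omega_1 + w\cdot 0$ against $\ta^\vee$ gives $(p-1) + \langle w\cdot 0,\ta^\vee\rangle$, and since each of the $(i-\ell(w))/2$ summands contributes at most $2$, one gets $(i-\ell(w))/2 \ge \tfrac12\big((p-1)+\langle w\cdot 0,\ta^\vee\rangle\big)$; combined with the constraint $\langle w\cdot 0,\ta^\vee\rangle \ge -(2h-4)$ coming from $\ell(s_\ta)=2h-3$ in type $D_n$ (more precisely, using that $-w\cdot 0$ is a sum of $\ell(w)$ roots each pairing $\le 1$ with $\ta^\vee$, except possibly one pairing $2$), this should yield $i \ge 2p-2n$ after bookkeeping with $h = 2n-2$. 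That gives part (a).

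For part (b), I would analyze the equality case. When $i = 2p-2n$, the bound above must be tight at every step: $(i-\ell(w))/2 = (p-1) + \langle w\cdot 0,\ta^\vee\rangle$-related quantity must be achieved, which forces almost all of the $(i-\ell(w))/2$ summands of $\la - \omega_1$ to equal $\ta$, and forces $w\cdot 0$ to be as "efficient" as possible — i.e. $w$ must be chosen so that $-w\cdot 0$ is a single chain contributing maximally to the $\ta^\vee$-pairing, which in type $D_n$ pins down $w = s_\ta$ (length $2h-3 = 4n-5$, with $\langle -s_\ta\cdot 0,\ta^\vee\rangle = 2$, so $\langle s_\ta\cdot 0,\ta^\vee\rangle = -2$... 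I would recompute this carefully). Substituting $w = s_\ta$ and using that $s_\ta\cdot 0 = -(2n-2)\omega_1 = -h\omega_1$ wait — one should check $s_\ta\cdot 0$ directly; in any case the claim is $\la = (p-2n+2)\omega_1 = (p-h)\omega_1$, which is exactly $p\omega_1 + s_\ta\cdot 0$ once one verifies $s_\ta\cdot 0 = -(2n-2)\omega_1$. I would also need to rule out, via the alternating sum over $W$, that cancellation among the $u \ne 1$ terms kills the leading term — but since $\la - \omega_1$ has $\omega_1$-dominant leading behaviour and the competing weights $u\cdot\la - \mu$ for $u \ne 1$ are strictly smaller, the $u=1$ term $P_{(i-\ell(w))/2}((p-1)\omega_1 + w\cdot 0)$ should not be cancelled, paralleling the argument in Proposition~\ref{P:Homcalculation} / Lemma~\ref{L:Simplylacedcohom}(c).

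The main obstacle I anticipate is the precise combinatorial bookkeeping in the equality analysis: showing that the pairing-against-$\ta^\vee$ inequality, together with the requirement that $(p-1)\omega_1 + w\cdot 0$ actually be a sum of positive roots (not merely have the right $\ta^\vee$-pairing), forces $w = s_\ta$ and pins $\la$ exactly. One has to control not just the $\ta^\vee$-pairing but also how $-w\cdot 0$ interacts with the remaining simple-root directions (the $\omega_j$ with $2 \le j \le n-2$ and the spin weights $\omega_{n-1}, \omega_n$), since a sum of positive roots that pairs correctly with $\ta^\vee$ need not be dominated by a multiple of $\ta$ plus $w\cdot 0$. I expect this to require a short case-check on the possible reduced words for $w$ of the relevant length, using Observation~\ref{O:uniquedecomp} to get the unique decomposition of $-w\cdot 0$ and then matching it against the type-$D_n$ root data. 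Everything else — the formula, the height estimate, the non-cancellation — is routine given the machinery already set up in Section~2.
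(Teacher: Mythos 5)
Your general plan---reduce to showing that $\la - \omega_1 = (p-1)\omega_1 + w\cdot 0$ is a sum of $j = (i - \ell(w))/2$ positive roots, then bound $j$ from below by evaluating a fixed linear functional and counting---is the same as the paper's. However, the specific functional you propose, $\langle\,\cdot\,, \ta^\vee\rangle$, is too coarse to give the stated bound, and your identification of the extremal $w$ is incorrect.

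Pairing against $\ta^\vee$ only recovers the general bound of Proposition~\ref{P:degreebound1}(c), namely $i \geq p - 2$. Indeed, your chain of inequalities gives $i \geq (p-1) + \ell(w) + \langle w\cdot 0, \ta^\vee\rangle$, and $-w\cdot 0$ is a sum of $\ell(w)$ distinct positive roots of which at most one (namely $\ta$ itself) pairs to $2$ with $\ta^\vee$ and the rest pair to at most $1$; hence $\langle -w\cdot 0, \ta^\vee\rangle \leq \ell(w) + 1$, so $\ell(w) + \langle w\cdot 0, \ta^\vee\rangle \geq -1$, which yields only $i \geq p - 2$, not $i \geq 2p - 2n$. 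The problem is precisely the root you flag: a single summand equal to $\ta$ costs only $1$ towards $j$ but contributes $2$ to the $\ta^\vee$-pairing, so the pairing cannot force $j$ to be large.

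The paper instead evaluates the coefficient of $\al_1$ in the simple-root expansion (equivalently, pairs against the fundamental coweight dual to $\al_1$). Every positive root of $D_n$---including $\ta = \al_1 + 2\al_2 + \cdots + 2\al_{n-2} + \al_{n-1} + \al_n$---has $\al_1$-coefficient $0$ or $1$, and exactly $2n - 2$ roots have coefficient $1$. Writing $\ell(w) = a + b$, where $a$ of the roots appearing in the unique decomposition of $-w\cdot 0$ (Observation~\ref{O:uniquedecomp}) contain $\al_1$ and $b$ do not, the $\al_1$-coefficient of $\la - \omega_1$ is $(p-1) - a$, forcing $j \geq (p-1) - a$ and hence $i = 2j + \ell(w) \geq 2p - 2 - a + b \geq 2p - 2 - (2n-2) + 0 = 2p - 2n$. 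Equality forces $a = 2n - 2$ and $b = 0$: $-w\cdot 0$ is the sum of all $2n - 2$ positive roots containing $\al_1$, which equals $(2n-2)\omega_1$, so $\la = (p - 2n + 2)\omega_1$. This $w$ has length $2n - 2$, not $\ell(s_\ta) = 4n - 7$, and $s_\ta\cdot 0 = -(2n-3)\ta$, which is not a multiple of $\omega_1$; so your guess $w = s_\ta$ is wrong, and the equality case follows directly from the $\al_1$-count rather than from any case-check on reduced words.
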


\begin{proof} Following the discussion in Section \ref{SS:G1analysis}, $\la -
\omega_1
= (p - 1)\omega_1 + w\cdot 0$ must be a weight of $S^j(\ul^*)$
for
$j = \frac{i - \ell(w)}{2}$.  Recall that 
$\omega_1 = \al_1 + \al_2 + \cdots + \al_{n-2} +
\frac12\al_{n-1} + \frac12\al_{n}$.
Consider the decomposition of $-w\cdot 0$ into a sum of
$\ell(w)$ distinct positive roots
(see Observation \ref{O:uniquedecomp}). Write $\ell(w) = a + b$ where
$a$ is
the number of positive roots in this decomposition that contain
$\al_1$ and
$b$ is the number of roots in this decomposition that do not
contain $\al_1$.
Then $\la - \omega_1$ contains $p - 1 - a$ copies of $\al_1$.
Since any root contains at most one copy of $\al_1$, we
have
$$
\frac{i - \ell(w)}{2} = j \geq p - 1 - a.
$$
Replacing $\ell(w)$ by $a + b$ and simplifying gives
$$
i \geq 2p - 2 - a + b.
$$
The total number of positive roots containing an
$\al_1$ is $2n - 2$. Since we necessarily then have $a \leq 2n -
2$, we can
rewrite the above as
\begin{align*}
i &\geq 2p - 2 - (2n - 2) + b\\
  &= 2p - 2n + b\\
  &\geq 2p - 2n
\end{align*}
since $b \geq 0$. This proves part (a). Furthermore, we see that
$i = 2p - 2n$
if and only if $b = 0$ and $a = 2n - 2$. In other words, when
$-w\cdot 0$ is
expressed as a sum of distinct positive roots, it consists
precisely of all $2n -2$
roots which contain an $\al_1$. That is, $-w\cdot 0 = (2n -
2)\omega_1$, which
gives part (b).
\end{proof}


\subsection{\bf The case of $\omega_1$ continued.} We will show
in Proposition~\ref{P:omega1summary}
that $$\opH^{2p - 2n}(G,H^0(\la)\otimes H^0(\la^*)^{(1)}) \neq
0$$
for $\la = (p - 2n + 2)\omega_1$.
To do this, we will make use of
Proposition~\ref{P:KostantPartCohom}. We first make some
observations about
relevant partition functions. Note that $\omega_1 = \epsilon_1.$

For $\Phi$ of type $D_n$, with $n \geq 4$, and integers $m, k$,
we set
$$P(m,k,n) := \begin{cases} \sum_{u \in W} (-1)^{\ell (u) }
P_k(u\cdot m \epsilon_1)
 & \mbox{ if } m \geq 1, k\geq 0, \\
 1& \mbox{ if } m =0, k= 0, \\
 0 &\mbox{ else.}
 \end{cases}   $$
Note that 
$$P(m,k,n)=\dim \Hom_G(V(m\epsilon_1),H^0(G/B, S^{k}(\ul^*))) =
[\ch H^0(G/B, S^{k}(\ul^*)):\ch H^0(m\epsilon_1)],$$
when  $m \geq 0, k\geq 0, n \geq 4$.

\begin{lemma} \label{L:omega1combinatorics} Suppose $\Phi$ is of
type $D_n$ with $n \geq 4$ and $m \geq 0$.
\begin{itemize}
\item[(a)] $P(m,k,n) =0$ whenever $k<m$.
\item[(b)] $\sum_{u \in W} (-1)^{\ell (u) } P_k(u\cdot m
\epsilon_1-\epsilon_1) =
P(m-1,k,n). $ 
\item[(c)] $\sum_{u \in W} (-1)^{\ell (u) } P_k(u\cdot m
\epsilon_1+\epsilon_1) = P(m+1,k,n) - P(m+1,k,n-1)$, for $n
\geq 5.$
\item[(d)] 
$
\sum_{u \in W} (-1)^{\ell (u) } P_k(u\cdot m
\epsilon_1+\epsilon_1)
= P(m-1,k-2n+2,n).
$
\item[(e)] $P(m,k,n) = P(m,k,n-1) + P(m-2,k-2n+2,n)$, for $m
\geq 0, n \geq 5.$
\item[(f)] $P(m, m,n) = 1,$ for $n\geq 4$ and $m $ even.
\end{itemize}
\end{lemma}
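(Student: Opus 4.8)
The plan is to proceed by induction on $n$, using part (e) as the main engine and part (a) to kill the error terms. First I would dispose of the base case $n=4$ by a separate argument, since part (e) only reduces $n$ down to $4$. For the inductive step with $n\geq 5$, part (e) gives
$$
P(m,m,n) = P(m,m,n-1) + P(m-2,\, m-2n+2,\, n).
$$
The second term has $k = m-2n+2 < m-2$ (since $n\geq 5$ forces $2n-2 > 2$), actually more sharply $k < m-2 \leq m-2$, so by part (a) (applied with the shifted parameters $m' = m-2$, noting $m-2n+2 = (m-2) - (2n-4) < m-2$) this term vanishes. Hence $P(m,m,n) = P(m,m,n-1)$, and by induction this equals $P(m,m,4)$, reducing everything to the base case.

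For the base case $n=4$: here $\ta$ has $\langle\omega_1,\ta^\vee\rangle = 1$ and $\epsilon_1 = \omega_1$. I would interpret $P(m,m,4)$ via the character identity recorded after the definition of $P(m,k,n)$, namely $P(m,m,n) = [\ch H^0(G/B, S^m(\ul^*)) : \ch H^0(m\epsilon_1)]$, the multiplicity of the Weyl/induced character $H^0(m\omega_1)$ in the graded piece $S^m(\ul^*)$. Since $S^m(\ul^*)$ is spanned by products of $m$ positive roots, its highest weight (in the dominance order, after taking the $W$-alternating sum that defines $P$) that can contribute to an $H^0(\nu)$ with $\nu$ a multiple of $\omega_1$ is exactly $m\ta = m\omega_1$ when $n=4$ — larger dominant weights of the form $\ell\omega_1$ with $\ell>m$ cannot be written as a sum of $m$ positive roots because each positive root pairs to at most ... (here one uses that the only way to get $m$ copies of the $\omega_1$-component from $m$ roots is to use $m$ copies of $\ta$ itself, forcing $\nu = m\ta$). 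Concretely, Proposition~\ref{P:Homcalculation} with $l = m$ gives $\Hom_G(V((m+1)\ta), \ind_B^G(S^m(\ul^*)\otimes\ta)) \cong k$, and a shift/twist argument of the same flavor (or direct inspection that $m\epsilon_1$ occurs with multiplicity one as a weight of $S^m(\ul^*)$ and is maximal among dominant weights linked to $0$ of that shape) yields $P(m,m,4)=1$.

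The main obstacle I expect is the base case $n=4$: establishing that no dominant weight $\ell\omega_1$ with $\ell > m$ arises, and that $m\omega_1$ itself contributes with alternating multiplicity exactly $1$ rather than merely nonzero. The cleanest route is probably to show directly that $m\epsilon_1 = m\ta$ is a weight of $S^m(\ul^*)$ with multiplicity one (only the monomial $\phi_{-\ta}^m$ has this weight, since $\ta$ is the unique positive root with $\langle\,\cdot\,,\ta^\vee\rangle$ contributing a full unit toward $\epsilon_1 = \omega_1$ in type $D_4$), that it is a maximal weight, and hence $P(m,m,4) = P_m(m\epsilon_1) = 1$ with all other Weyl-conjugate terms $P_m(u\cdot m\epsilon_1)$ for $u\neq e$ vanishing because $u\cdot m\epsilon_1 \not\leq m\epsilon_1$ fails to be expressible as a sum of $m$ positive roots. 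The parity hypothesis "$m$ even" should enter only to guarantee that $m\epsilon_1$ lies in the root lattice (equivalently in the zero linkage class), so that the relevant cohomological interpretation is available; I would flag where this is used.
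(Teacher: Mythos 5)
Your reduction step is exactly the paper's: apply part (e) to get $P(m,m,n) = P(m,m,n-1) + P(m-2,\,m-2n+2,\,n)$, then kill the second term with part (a) since $m-2n+2 < m-2$ for $n\geq 5$, and iterate down to $n=4$. That part is fine.

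Your base case $n=4$, which you correctly flag as the real obstacle, is however not correct as sketched, for two reasons. First, you repeatedly identify $\epsilon_1$ with $\ta$ in type $D_4$ (e.g.\ ``$m\epsilon_1 = m\ta$''). In $D_4$ the highest root is $\ta = \epsilon_1+\epsilon_2 = \omega_2$, whereas $\epsilon_1 = \omega_1$; these are different weights, and $\langle\omega_1,\ta^\vee\rangle = 1$ does not make them equal. Consequently Proposition~\ref{P:Homcalculation} (which concerns $V((l+1)\ta)$) is not directly about the weight $m\epsilon_1$, and the proposed ``shift/twist of the same flavor'' is not supplied. Second, the claim that $m\epsilon_1$ is a weight of $S^m(\ul^*)$ with multiplicity one is false: to write $m\epsilon_1$ as a sum of $m$ positive roots of $D_4$, every summand must be of the form $\epsilon_1\pm\epsilon_j$ with the $\pm\epsilon_j$ contributions cancelling, so (for $m$ even) the number of such expressions is $\binom{m/2+2}{2}$, not $1$. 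Thus the asserted ``$P(m,m,4)=P_m(m\epsilon_1)=1$ with all other terms vanishing'' does not hold; the alternating sum over $W$ genuinely matters and does not collapse to the identity term. The observation that $m$ even is what makes $m\epsilon_1$ lie in the root lattice (so that $P_m(m\epsilon_1)$ can be nonzero) is correct, but it does not rescue the multiplicity claim.

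For comparison, the paper handles $n=4$ by noting that the recursion used in part (e) degenerates at $n=4$ to an identity expressing $P(m,m,4)$ as an alternating sum $\sum_{\{u : u(\epsilon_1)=\epsilon_1,\,u(\epsilon_2)=\epsilon_2\}}(-1)^{\ell(u)}P_m(u\cdot m\epsilon_2)$ over a parabolic subgroup of $W$, and then evaluates this sum to $1$ by a direct computation of the type carried out in \cite[Lemma 6.11]{BNP}. You would need either to carry out that kind of explicit alternating-sum computation or to find a correct structural argument; the one you propose does not work.
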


\begin{proof}
(a) The weight $\omega_1= \epsilon_1= \frac{1}{2}(\alpha_1 +
\ta)$, written as a sum of simple roots, contains one copy of
$\alpha_1$. Assume that $0 \leq k < m$ and
$n \geq 4$. Recall that $P(m,k,n) = \sum_{u \in W} (-1)^{\ell
(u) } P_k(u\cdot m \epsilon_1)$.
Note that $u\cdot m \epsilon_1 = u(m\epsilon_1) + u\cdot 0$ is
a sum of positive roots if and only if $u( \epsilon_1) =
\epsilon_1.$ Therefore, $P(m,k,n) = \sum_{\{u \in
W|u(\epsilon_1) = \epsilon_1\}} (-1)^{\ell (u) }
P_k(u\cdot m \epsilon_1)$. If $u( \epsilon_1) = \epsilon_1$
then $u\cdot m \epsilon_1 = m\epsilon_1 + u\cdot 0.$ In
addition $-u \cdot 0$, written as a sum of simple roots,
contains no $\alpha_1$. Hence, $u\cdot m \epsilon_1$, written
as a sum of simple roots, contains exactly $m$ copies of
$\alpha_1$. Each positive root of $\Phi$ contains at most one
copy of $\alpha_1$. Therefore at least $m$ positive roots are
needed to sum up to $u\cdot m \epsilon_1$. One concludes that
$P_k(u\cdot m \epsilon_1)=0$ for $k < m$.
\\\\
(b) Again $u\cdot m \epsilon_1- \epsilon_1$ is a sum of
positive roots only if $u(\epsilon_1) = \epsilon_1$. Therefore,
\begin{eqnarray*}
\sum_{u \in W} (-1)^{\ell (u) } P_k(u\cdot m \epsilon_1-
\epsilon_1) &=&\sum_{\{u \in W|u(\epsilon_1)
= \epsilon_1\}} (-1)^{\ell (u) } P_k(u\cdot m \epsilon_1-
\epsilon_1)\\
&=& \sum_{\{u \in W|u(\epsilon_1)=\epsilon_1\}} (-1)^{\ell (u) }
P_k((m-1) \epsilon_1+ u\cdot 0) \\
&=& \sum_{\{u \in W|u(\epsilon_1)=\epsilon_1\}} (-1)^{\ell (u) }
P_k(u \cdot (m-1) \epsilon_1) \\
  &=&P(m-1,k,n).
 \end{eqnarray*}
(c) For the expression $u\cdot m \epsilon_1+\epsilon_1$ to be a
sum of positive roots one needs either $u(\epsilon_1)
=\epsilon_1$ or $u(\epsilon_1)= \epsilon_2$ and $ u(\epsilon_2)=
\epsilon_1$.
Set $A=\sum_{u \in W} (-1)^{\ell (u) } P_k(u\cdot m
\epsilon_1 +\epsilon_1)$
Then
\begin{eqnarray*}
A&=&\sum_{\{u \in W|u(\epsilon_1)= \epsilon_1\}} (-1)^{\ell (u)
} P_k(u\cdot m \epsilon_1+\epsilon_1) + \sum_{\{u \in
W|u(\epsilon_1)= \epsilon_2, u(\epsilon_2)= \epsilon_1\}}
(-1)^{\ell (u) } P_k(u\cdot m \epsilon_1+\epsilon_1) \\
&=&\sum_{\{u \in W|u(\epsilon_1)= \epsilon_1\}} (-1)^{\ell (u) }
P_k((m+1) \epsilon_1+u\cdot 0)\\
&\ &\ \ \ \ \ \ \ \ \ \ \ \ \ \ \ \ \ \ \ \ \ \ \ \ \ \ +
\sum_{\{u \in W|u(\epsilon_1)= \epsilon_1, u(\epsilon_2)=
\epsilon_2\}} (-1)^{\ell (u)+1 } P_k(s_{\alpha_1}m \epsilon_1
+u \cdot 0 -\alpha_1+ \epsilon_1) \\
&\  &    \   \\
&=&\sum_{\{u \in W|u(\epsilon_1)= \epsilon_1\}} (-1)^{\ell (u) }
P_k(u \cdot (m+1) \epsilon_1) - \sum_{\{u \in W|u(\epsilon_1)=
\epsilon_1, u(\epsilon_2)= \epsilon_2\}} (-1)^{\ell (u) } P_k(u
\cdot (m+1)\epsilon_2) \\
&=&P(m+1,k,n) - P(m+1,k,n-1).
\end{eqnarray*}  
\\\\
(d) We make use of the fact that $\omega_1 = \epsilon_1$ is a
minuscule weight and obtain:
\begin{eqnarray*}
A&=&[(\sum_{i \geq 0} (-1)^i \ch H^i(G/B, S^k(\ul^*)\otimes -
\epsilon_1)):\ch H^0(m\epsilon_1)]\\
&=&[ \ch H^0(G/B, S^k(\ul^*)\otimes - \epsilon_1)):\ch
H^0(m\epsilon_1)]\\
&=&[ \ch H^0(G/B, S^{k-2n+2}(\ul^*)\otimes \epsilon_1)):\ch
H^0(m\epsilon_1)]
	\qquad (\text{by \cite[Lemma 6]{KLT}})\\
&=&\sum_{u \in W} (-1)^{\ell (u) } P_{k-2n-2}(u\cdot m
\epsilon_1- \epsilon_1)
		\qquad (\text{by \cite[3.8]{AJ}})\\
&=& P(m-1,k-2n+2,n) \qquad (\text{by (b)}).
\end{eqnarray*} 
\\\\
Part (e) now follows directly from (c) and (d).
\\\\
(f) If $n \geq 5$, it follows from (e) and (a) that
$P(m,m,n)=P(m,m,n-1).$
So the claim holds if it holds for $n = 4$.  
If $n=4$ and $k=m$, then (e) has to be replaced by 
$$P(m,m,4) = \sum_{\{u \in W|u(\epsilon_1)= \epsilon_1,
u(\epsilon_2)= \epsilon_2\}} (-1)^{\ell (u) } P_m(u \cdot m
\epsilon_2).$$
Note that the both sides of the equation are zero unless $m$ is
even.
For even $m$, a direct computation similar to that in the proof
of \cite[Lemma 6.11]{BNP} shows that
$$\sum_{\{u \in W|u(\epsilon_1)= \epsilon_1, u(\epsilon_2)=
\epsilon_2\}} (-1)^{\ell (u) } P_m(u \cdot m\epsilon_2)= 1.$$
\end{proof}

\begin{proposition} \label{P:omega1summary} Suppose $\Phi$ is of
type $D_n$ with $n \geq 4$. Assume that $p > 2n-2$. Let $\la =
(p - 2n+2) \omega_1.$ Then
$$\opH^{2p-2n}(G, H^0(\la) \otimes H^0(\la)^{(1)} )= k.$$
\end{proposition}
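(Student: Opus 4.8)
The plan is to compute the dimension of $\opH^{2p-2n}(G, H^0(\la)\otimes H^0(\la^*)^{(1)})$ directly via Proposition~\ref{P:KostantPartCohom}, reducing it to the combinatorial quantity $P(m,k,n)$ analyzed in Lemma~\ref{L:omega1combinatorics}. Here $\la = (p-2n+2)\omega_1 = p\omega_1 + s_\ta\cdot 0$, so in the notation of Section~\ref{SS:G1analysis} we have $\mu = \omega_1$ and $w = s_\ta$; note $\ell(s_\ta) = 2h - 3 = 4n - 7$ in type $D_n$. One checks that $2p - 2n - \ell(s_\ta) = 2p - 2n - 4n + 7 = 2(p - 3n + 3) + 1$, which is odd — so a naive application of Proposition~\ref{P:KostantPartCohom} would give a half-integer index to $P$. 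The resolution (and the first thing I would verify carefully) is that $w$ need not be $s_\ta$: the weight $\la = (p-2n+2)\omega_1$ can be written as $p\mu + w\cdot 0$ for other pairs, and the correct choice makes $i - \ell(w)$ even. Concretely, since $\omega_1 = \epsilon_1 = \frac12(\al_1+\ta)$ is minuscule, one has $p\omega_1 + w\cdot 0 = (p-2n+2)\omega_1$ precisely when $-w\cdot 0 = (2n-2)\omega_1$, which by Observation~\ref{O:uniquedecomp} means $-w\cdot 0$ is the sum of all $2n-2$ positive roots containing $\al_1$; the element $w$ realizing this has $\ell(w) = 2n-2$. Then $i - \ell(w) = 2p - 2n - (2n-2) = 2p - 4n + 2 = 2(p - 2n + 1)$ is even, with $\tfrac{i-\ell(w)}{2} = p - 2n + 1$.

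With that settled, Proposition~\ref{P:KostantPartCohom} gives
$$
\dim \opH^{2p-2n}(G, H^0(\la)\otimes H^0(\la^*)^{(1)}) = \sum_{u\in W} (-1)^{\ell(u)} P_{p-2n+1}\big(u\cdot\la - \omega_1\big).
$$
Now $\la - \omega_1 = (p-2n+1)\omega_1 + w\cdot 0$ where $-w\cdot 0 = (2n-2)\omega_1$; equivalently $\la - \omega_1 = (p-2n+1)\epsilon_1 - (\text{sum of }2n-2\text{ roots through }\al_1)$. I would like to recognize the right-hand side as a value of $P(m,k,n)$ for suitable $m,k$. The cleanest route: use Lemma~\ref{L:omega1combinatorics}(d) (or (b)) to absorb the extra copies of $\epsilon_1$. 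Writing $\la = (p-2n+2)\epsilon_1$, we want $\sum_u (-1)^{\ell(u)} P_{p-2n+1}(u\cdot(p-2n+2)\epsilon_1 - \epsilon_1)$, and part~(b) of Lemma~\ref{L:omega1combinatorics} identifies this immediately with $P(p-2n+1,\, p-2n+1,\, n)$. Setting $m = p - 2n + 1$, the answer is $P(m,m,n)$.

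The final step is to show $P(m,m,n) = 1$ for $m = p - 2n + 1$. By Lemma~\ref{L:omega1combinatorics}(f) this holds whenever $m$ is even — but $m = p - 2n + 1$ is \emph{odd}, since $p$ is an odd prime and $2n$ is even. So part~(f) as stated does not directly apply, and I expect this parity mismatch to be the main obstacle. I would resolve it by re-examining which $w$ actually contributes: the point is that for $\la = (p-2n+2)\omega_1$ to lie in $X(T)_+$ with the coefficient of $\al_1$ being a \emph{non-negative integer}, the contributions organize so that the relevant partition count is $P(p-2n+2, p-2n+2, n)$ with an even argument (note $p - 2n + 2$ is odd as well — so more care is needed). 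The honest fix is likely that $P(m,m,n) = 1$ for \emph{all} $m \geq 0$ with $P(m,k,n)$ defined, not just even $m$: parts (a)--(e) of Lemma~\ref{L:omega1combinatorics} hold for all $m$, and the reduction in (f) to the case $n=4$ goes through unchanged; only the final $n=4$ computation was phrased for even $m$. For odd $m$ in type $D_4$ the analogous direct computation — tracking the signed sum over $\{u : u\epsilon_1 = \epsilon_1, u\epsilon_2 = \epsilon_2\}$ of $P_m(u\cdot m\epsilon_2)$, which is a rank-2 ($A_1\times A_1$, i.e. $D_2$) partition-function calculation — again yields $1$. Once $P(m,m,n) = 1$ is in hand for $m = p - 2n + 1$, we conclude $\dim\opH^{2p-2n}(G,H^0(\la)\otimes H^0(\la^*)^{(1)}) = 1$, i.e. the cohomology group is $k$, as claimed.
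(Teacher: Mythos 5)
Your overall strategy is exactly the paper's: identify $\la = p\omega_1 + w\cdot 0$ with $\ell(w) = 2n-2$, apply Proposition~\ref{P:KostantPartCohom} with $k = (i-\ell(w))/2$, reduce the signed sum to $P(p-2n+1,k,n)$ via Lemma~\ref{L:omega1combinatorics}(b), kill everything with $k < p-2n+1$ by part (a), and then evaluate $P(p-2n+1,\,p-2n+1,\,n)$. The initial digression about $s_\ta$ is a detour (and indeed $-s_\ta\cdot 0 = (h-1)\ta$, not $(2n-2)\omega_1$, so $w$ is a different element), but you correct yourself and land on $\ell(w) = 2n - 2$, which is what is needed.

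The real problem is the parity slip at the final step. You assert that $m = p - 2n + 1$ is \emph{odd}, but $p$ is an odd prime and $2n$ is even, so $p - 2n$ is odd and $p - 2n + 1$ is \emph{even}. Thus Lemma~\ref{L:omega1combinatorics}(f) applies as stated and $P(m,m,n) = 1$ immediately. Because of the slip, you then go on to claim that $P(m,m,n) = 1$ for all $m \geq 0$, even odd $m$, by asserting the rank-2 $D_4$ computation again gives $1$. That claim is false: the paper's proof of (f) explicitly remarks that both sides of the $n=4$ identity vanish unless $m$ is even, so $P(m,m,4) = 0$ for odd $m$, and the reduction $P(m,m,n) = P(m,m,4)$ then propagates this vanishing to all $n \geq 4$. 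So the "honest fix" you propose is not a fix at all. Fortunately none of this is needed: once you note $m = p - 2n + 1$ is even, the argument closes cleanly and coincides with the paper's.
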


\begin{proof}From the previous discussion we know that $\la = (p
- 2n +2) \omega_1 $ is of the form $p\omega_1 + w \cdot 0$ with
$\ell(w)=2n-2$. Set $k = (i-l(w))/2$. From
Proposition~\ref{P:KostantPartCohom} and
Lemma~\ref{L:omega1combinatorics}(b), one
concludes
$$\dim \opH^i(G, H^0(\la) \otimes H^0(\la)^{(1)} )=[\ch H^0(G/B,
S^k(\ul ^*)\otimes \omega_1) : \ch H^0(\la)]= P(p-2n+1,k,n).$$
By Lemma~\ref{L:omega1combinatorics}(a), this expression is zero
unless $k\geq p-2n+1$ and Lemma~\ref{L:omega1combinatorics}(d) it follows that
$P(p-2n+1,p-2n+1,n)=1$. Replacing $k$ by $(i-2n+2)/2$ and
solving for $i$ yields the claim.
\end{proof}


\subsection{\bf The case of $\omega_{n-1}$ and $\omega_n$.} 
We now consider the case that $\la = p\omega_{n-1} + w\cdot 0$
or
$\la = p\omega_n + w\cdot 0$ for $w \in W$ with $\la \in
X(T)_{+}$.

\begin{lemma} \label{L:omegan}Suppose $\Phi$ is of type $D_n$
with $n \geq 4$ and
$p > 2n - 2$.
Suppose $\la = p\omega_{n-1} + w\cdot 0 \in X(T)_+$ or
$\la = p\omega_{n} + w\cdot 0 \in X(T)_{+}$ with $w \in W$,
and $\opH^i(G,H^0(\la)\otimes H^0(\la^*)^{(1)}) \neq 0$ for $i
\neq 0$. Then
\begin{itemize}
\item[(a)] $\displaystyle i \geq \frac{(p - n)n}{2}$;
\item[(b)] if $n \geq 5$, then $i \geq 2p - 2n + 2$.
\end{itemize}
\end{lemma}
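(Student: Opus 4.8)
The plan is to analyze the weight $\la = p\omega_n + w\cdot 0$ (the case of $\omega_{n-1}$ being entirely symmetric via the diagram automorphism exchanging $\al_{n-1}$ and $\al_n$), using the same strategy as in Lemma~\ref{L:omega1}: following Section~\ref{SS:G1analysis}, nonvanishing of $\opH^i(G,H^0(\la)\otimes H^0(\la^*)^{(1)})$ forces $\la - \omega_n = (p-1)\omega_n + w\cdot 0$ to be a weight of $S^j(\ul^*)$ with $j = (i - \ell(w))/2$, hence a sum of $j$ positive roots. I would write $\omega_n$ in terms of simple roots, recall that $\omega_n = \tfrac12(\al_1 + 2\al_2 + \dots + (n-2)\al_{n-2} + \tfrac{n-2}{2}\al_{n-1} + \tfrac{n}{2}\al_n)$ (up to the standard normalization; the key point is that its support involves all simple roots with the coefficient of $\al_n$ equal to $n/2$ and with large coefficients on the middle nodes), and then decompose $-w\cdot 0$ into $\ell(w)$ distinct positive roots via Observation~\ref{O:uniquedecomp}.

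The heart of the argument is a counting bound. Writing $\la - \omega_n = (p-1)\omega_n + \sum (\text{the distinct positive roots in }-w\cdot 0)$, I would look at how many positive roots are needed to realize this weight, exploiting that for type $D_n$ the positive roots have bounded coefficients on each simple root (at most $1$ on $\al_{n-1}$ and $\al_n$, at most $2$ on the middle nodes). Summing the total ``simple-root content'' of $\la - \omega_n$ and dividing by the maximal content a single root can contribute gives a lower bound on $j$; combined with $i = 2j + \ell(w)$ and $\ell(w) \le |\Phi^+|$ restrictions, this should yield $i \ge (p-n)n/2$ for part (a). More precisely, the coefficient of $\al_n$ (or $\al_{n-1}$) in $(p-1)\omega_n$ is roughly $(p-1)(n/2) \cdot \tfrac12$-type in scale, but since each positive root contributes at most a bounded amount and there are $n$ ``spine'' directions to fill, the quadratic $n$ appears naturally. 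For part (b), when $n \ge 5$ one wants the sharper bound $i \ge 2p - 2n + 2$: here I would refine the estimate by also tracking the coefficient of $\al_1$, arguing that filling the $\al_1$-coefficient already forces roughly $p - 1 - (\text{bounded})$ roots, and then showing that the additional constraint coming from the middle-node coefficients (which cannot all be absorbed by the same roots used for $\al_1$) bumps the count up by at least one more root compared to the $\omega_1$ bound $2p - 2n$, giving the extra $+2$.

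I expect the main obstacle to be the bookkeeping in part (b): one must show that the positive roots used to realize $(p-1)\omega_n + (-w\cdot 0)$ cannot be chosen too efficiently, i.e., that there is genuine ``competition'' between covering the $\al_1$-coefficient and covering the middle-node coefficients, so that the naive lower bounds cannot be simultaneously achieved. This requires a careful case analysis of which positive roots in $D_n$ can have coefficient $1$ on $\al_1$, coefficient $2$ on $\al_j$ for $2 \le j \le n-2$, and coefficient $1$ on $\al_n$ all at once (namely the roots $\epsilon_1 \pm \epsilon_j$ for various $j$), and an accounting of how many such roots are available versus how much coefficient-mass must be distributed. The bound $n \ge 5$ (as opposed to $n = 4$) presumably enters because for $n = 4$ there are too few middle nodes for this competition to force the extra root, consistent with the exceptional treatment of $D_4$ elsewhere in the paper.

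\begin{proof}[Proof sketch]
By the diagram automorphism interchanging $\al_{n-1}$ and $\al_n$ it suffices to treat $\la = p\omega_n + w\cdot 0$. Following Section~\ref{SS:G1analysis}, if $\opH^i(G,H^0(\la)\otimes H^0(\la^*)^{(1)}) \neq 0$ then $\la - \omega_n = (p-1)\omega_n + w\cdot 0$ is a weight of $S^j(\ul^*)$ with $j = (i - \ell(w))/2$, so it is a sum of $j$ positive roots; by Observation~\ref{O:uniquedecomp}, $-w\cdot 0$ is a sum of $\ell(w)$ distinct positive roots. One then estimates $j$ from below by comparing the total simple-root content of $(p-1)\omega_n + (-w\cdot 0)$ against the maximal content of a single positive root of type $D_n$, and uses $i = 2j + \ell(w)$ together with the bound $\ell(w) \le |\Phi^+|$; this gives (a). For (b) with $n \ge 5$, one sharpens the estimate by simultaneously tracking the coefficients of $\al_1$ and of the middle simple roots and observing that these cannot both be filled with maximal efficiency by the same roots, forcing at least one additional root beyond the count in Lemma~\ref{L:omega1}, whence $i \ge 2p - 2n + 2$.
\end{proof}
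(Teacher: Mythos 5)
Your overall framework (following Section~\ref{SS:G1analysis}, decomposing $-w\cdot 0$ via Observation~\ref{O:uniquedecomp}, then bounding $j=(i-\ell(w))/2$ from below) is the right one, but both parts of your estimate go wrong at the key step. For part (a), the paper does \emph{not} compare ``total simple-root content'' against the maximal content of a single root and then use $\ell(w)\le|\Phi^+|$. That version is strictly weaker: the height of $(p-1)\omega_n$ is $(p-1)n(n-1)/4$, the maximal root height in $D_n$ is $2n-3$, so your count yields $i\ge n(n-1)(p-4n+5)/(2(2n-3))$, which for $n\ge 4$ falls short of $(p-n)n/2$. The paper instead tracks \emph{only} the $\al_n$-coefficient: since the $\al_n$-coefficient of $\omega_n$ is $n/4$ and each positive root contains $\al_n$ at most once, writing $\ell(w)=a+b$ where $a$ is the number of roots in $-w\cdot 0$ that contain $\al_n$ gives $j\ge \tfrac{(p-1)n}{4}-a$ and hence $i\ge \tfrac{(p-1)n}{2}-a+b$. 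The decisive point is that $a\le\binom{n}{2}$ (the number of positive roots containing $\al_n$) and $b\ge 0$; bounding $\ell(w)=a+b$ by $|\Phi^+|$ controls neither $-a$ nor $+b$ usefully.

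For part (b), no additional combinatorics are needed at all: the paper simply verifies the arithmetic inequality $\tfrac{(p-n)n}{2}\ge 2p-2n+2$ using $p\ge 2n-1$ and $n\ge 5$ (it reduces to $n(n-5)\ge 0$), so (b) is a corollary of (a). Your proposed refinement via the $\al_1$-coefficient is both unnecessary and numerically off: the $\al_1$-coefficient of $(p-1)\omega_n$ is $(p-1)/2$, not on the order of $p-1$, so it does not force ``roughly $p-1$'' roots; and the reason $D_4$ is excluded is not some failure of competition between $\al_1$ and middle nodes, but simply that for $n=4$ the expression $\tfrac{(p-n)n}{2}$ equals $2p-8=2p-2n$, so the arithmetic bound cannot be pushed to $2p-2n+2$ (the paper explicitly remarks on this after the lemma).
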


\begin{proof} We consider the case of $\omega_n$. By symmetry,
the case of $\omega_{n-1}$ can
be dealt with in a similar manner. Following the discussion in
Section~\ref{SS:G1analysis},
$\la - \omega_n = (p - 1)\omega_n + w\cdot 0$ must be a weight
of $S^j(\ul^*)$ for
$j = \frac{i - \ell(w)}{2}$.  Recall that 
$$\omega_n = \frac12(\al_1 + 2\al_2 + 3\al_3 + \cdots +
(n-2)\al_{n-2}) + \frac{(n-2)}{4}\al_{n-1} +
\frac{n}{4}\al_{n}.$$
Consider the decomposition of $-w\cdot 0$ into a sum of distinct
positive roots
(cf. Observation \ref{O:uniquedecomp}). Write $\ell(w) = a + b$
where $a$ is
the number of positive roots in this decomposition which contain
$\al_n$ and
$b$ is the number of roots in this decomposition which do not
contain $\al_n$.
Then $\la - \omega_n$ contains $\frac{(p-1)n}{4} - a$ copies of
$\al_n$.
Since any root contains at most one copy of $\al_n$, we have
$$
\frac{i - \ell(w)}{2} = j \geq \frac{(p-1)n}{4} - a.
$$
Substituting $\ell(w) = a + b$, rewriting, and simplifying, we
get
$$
i \geq \frac{(p-1)n}{2} - a + b.
$$
The total number of positive roots containing
$\al_n$ is $\frac{(n-1)n}{2}$. Since we necessarily have $a \leq
\frac{(n-1)n}{2}$
and $b \geq 0$, we get
\begin{align*}
i &\geq \frac{(p-1)n}{2} - \frac{(n-1)n}{2} + b\\
  &= \frac{(p - n)n}{2} + b\\
  &\geq \frac{(p - n)n}{2}
\end{align*}
which gives part (a).

For part (b), assume that $n \geq 5$.  We want to show that
$$
\frac{(p-n)n}{2} \geq 2p - 2n + 2.
$$
This is equivalent to showing that $(p - n)n \geq 4p - 4n + 4$.  Consider the left hand side:
$$
(p - n)n = np - n^2 = 4p + (n - 4)p - n^2.
$$
Hence the problem is reduced to showing that $(n - 4)p - n^2
\geq -4n + 4$
or $(n - 4)p - n^2 + 4n - 4 \geq 0$. Since $p \geq 2n - 1$, we
have
\begin{align*}
(n-4)p - n^2 + 4n - 4 &\geq(n - 4)(2n - 1) - n^2 + 4n - 4\\
        &= n^2 - 5n = n(n - 5) \geq 0
\end{align*}
since $n \geq 5$.  Part (b) follows.
\end{proof}

Note that if $n = 4$,
$$
\frac{(p - n)n}{2} = \frac{4(p - 4)}{2} = 2p - 8 = 2p - 2n.
$$


\subsection{\bf Summary for type $D$.} 

The following two theorems summarize our
findings when the root system is of type $D_{n}$.

\begin{theorem}\label{T:Dsummary1} Suppose $\Phi$ is of type
$D_n$ with $n \geq 4$. Assume that $p > 2n-2$. Then
\begin{itemize}
\item[(a)] $\opH^{i}(\gfp, k )  = 0$ for $0 <i < 2p-2n$;  
\item[(b)] $\opH^{2p-2n}(\gfp, k ) = \begin{cases} k \mbox{ if }
n \geq 5\\
k\oplus k \oplus k \mbox{ if } n =4.
\end{cases}$
\end{itemize}
\end{theorem}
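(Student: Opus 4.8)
The plan is to get part (a) from the degree bounds already assembled in Sections~\ref{SS:initialrestrictions}--\ref{SS:caseomega1} together with the vanishing criterion (Proposition~\ref{P:vanishing}), and to get part (b) by identifying $\opH^{2p-2n}(\gfp,k)$ with a direct sum of the degree-$(2p-2n)$ contributions of the individual filtration factors $H^0(\la)\otimes H^0(\la^*)^{(1)}$, each of which is then evaluated using Proposition~\ref{P:omega1summary} (and, when $n=4$, the triality automorphism of $D_4$).

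\emph{Part (a).} By Proposition~\ref{P:extcomparison} (with $M=N=k$) and Proposition~\ref{P:vanishing}, it suffices to show $\opH^i(G,H^0(\la)\otimes H^0(\la^*)^{(1)})=0$ for every nonzero $\la\in X(T)_+$ and every $0<i<2p-2n$. Nonvanishing of this group forces $\la=p\mu+w\cdot 0$ for some $w\in W$ and dominant $\mu$ (block considerations, cf. Section~\ref{SS:G1analysis}), and $\mu\neq 0$ since $\la\neq 0$. If $\mu\notin\{\omega_1,\omega_{n-1},\omega_n\}$, then $\langle\mu,\ta^\vee\rangle\geq 2$, so Proposition~\ref{P:degreebound1}(c) gives $i\geq 2(p-1)-1=2p-3\geq 2p-2n$. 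The case $\mu=\omega_1$ is precisely Lemma~\ref{L:omega1}(a). For $\mu\in\{\omega_{n-1},\omega_n\}$, Lemma~\ref{L:omegan}(a) gives $i\geq (p-n)n/2$, which equals $2p-2n$ when $n=4$ and, by Lemma~\ref{L:omegan}(b), is at least $2p-2n+2$ when $n\geq 5$; in all cases $i\geq 2p-2n$. This exhausts the possibilities for $\la$ and proves (a).

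\emph{Part (b), $n\geq 5$.} By Proposition~\ref{P:extcomparison} and the filtration of Proposition~\ref{P:sections}, $\opH^{2p-2n}(\gfp,k)\cong\opH^{2p-2n}(G,\cgr)$. Since, by the computation in part (a), the cohomology of each factor $H^0(\la)\otimes H^0(\la^*)^{(1)}$ vanishes in degrees $1,\dots,2p-2n-1$, the long exact sequences attached to this filtration (organized over linkage classes as in Section~\ref{SS:nonvanishing}) give, as vector spaces,
$$
\opH^{2p-2n}(\gfp,k)\;\cong\;\bigoplus_{\la\in X(T)_+}\opH^{2p-2n}\bigl(G,H^0(\la)\otimes H^0(\la^*)^{(1)}\bigr),
$$
a finite sum. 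A factor can contribute at degree exactly $2p-2n$ only if $\mu\in\{\omega_1,\omega_{n-1},\omega_n\}$ (otherwise $i\geq 2p-3>2p-2n$), and for $n\geq 5$ the factors with $\mu\in\{\omega_{n-1},\omega_n\}$ are excluded by Lemma~\ref{L:omegan}(b). By Lemma~\ref{L:omega1}(b) the only contributing factor with $\mu=\omega_1$ is the one with $\la=(p-2n+2)\omega_1$, and Proposition~\ref{P:omega1summary} shows that this contribution is one-dimensional; hence $\opH^{2p-2n}(\gfp,k)\cong k$ when $n\geq 5$.

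\emph{Part (b), $n=4$.} Here one additionally uses the triality graph automorphism $\sigma$ of $D_4$, which permutes $\omega_1,\omega_3,\omega_4$ cyclically and induces an algebraic group automorphism of $G$ (hence of $\gfp$), compatible with the duality $*$ (indeed $*=\Id$ since $w_0=-1$ for $D_4$). This $\sigma$ identifies $\opH^{2p-8}(G,H^0(\la)\otimes H^0(\la^*)^{(1)})$ with $\opH^{2p-8}(G,H^0(\sigma\la)\otimes H^0((\sigma\la)^*)^{(1)})$, so transporting the $\mu=\omega_1$ statement shows the only contributing factors with $\mu\in\{\omega_1,\omega_3,\omega_4\}$ are those with $\la\in\{(p-6)\omega_1,(p-6)\omega_3,(p-6)\omega_4\}$, each contributing a copy of $k$. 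Summing these three distinct one-dimensional contributions gives $\opH^{2p-8}(\gfp,k)\cong k\oplus k\oplus k$. The main obstacle is making the $n=4$ count airtight: one must be certain that, apart from the three minuscule multiples $(p-6)\omega_j$, nothing else contributes at degree $2p-8$, that each contributes exactly $k$, and that the three summands are genuinely independent. The triality argument delivers all of this, but its validity hinges on checking compatibility with the duality $*$ and with the linkage-class decomposition, which is the delicate point to verify carefully; the alternative is to redo the combinatorics of Lemma~\ref{L:omega1combinatorics} and Proposition~\ref{P:omega1summary} separately for the minuscule weights $\omega_3$ and $\omega_4$.
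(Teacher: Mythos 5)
Your proof reaches the correct conclusions and follows essentially the same route as the paper: part~(a) is assembled from Section~\ref{SS:initialrestrictions}, Lemma~\ref{L:omega1}(a), Lemma~\ref{L:omegan}, and Proposition~\ref{P:vanishing}; and part~(b) identifies $(p-2n+2)\omega_1$ (and, for $n=4$, its triality conjugates) as the only contributing weight(s), each giving $k$ by Proposition~\ref{P:omega1summary}. The paper packages the passage from the factor-by-factor computation to $\opH^{2p-2n}(\gfp,k)$ into a single application of Theorem~\ref{T:nonvanishing}, invoking the fact that $(p-2n+2)\omega_1$ is minimal in its linkage class.

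The one step you should tighten is the displayed ``direct sum'' isomorphism. Your stated justification — that the cohomology of each factor vanishes in degrees $1,\dots,2p-2n-1$ — does not by itself yield a direct-sum decomposition in degree $m=2p-2n$. For a two-step filtration $0\to A\to M\to B\to 0$ with $\opH^i(G,A)=\opH^i(G,B)=0$ for $0<i<m$, the long exact sequence gives only an injection $\opH^m(G,A)\hookrightarrow\opH^m(G,M)$ and an exact sequence $\opH^m(G,M)\to\opH^m(G,B)\to\opH^{m+1}(G,A)$; the connecting map need not vanish. To get the isomorphism $\opH^m(G,\cgr)\cong\opH^m\bigl(G,H^0(\la)\otimes H^0(\la^*)^{(1)}\bigr)$ one must also use that (i) $\la$ is \emph{minimal} in its linkage class, so the factor $H^0(\la)\otimes H^0(\la^*)^{(1)}$ sits at the bottom of that linkage-class summand and injects into its cohomology, and (ii) every other $\nu\in X(T)_+$ satisfies $\opH^m(G,H^0(\nu)\otimes H^0(\nu^*)^{(1)})=0$, so the quotient contributes nothing. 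This is exactly what Theorem~\ref{T:nonvanishing}(c) encapsulates (its hypothesis on $\opH^{m+1}$ for $\nu<\la$ linked to $\la$ is vacuous here, by minimality of $\la$). Either invoke Theorem~\ref{T:nonvanishing} directly, as the paper does, or state the minimality role explicitly; as written, the justification of the direct-sum formula is a genuine overclaim, though the conclusion is correct for the reasons just given. Your triality discussion for $n=4$ corresponds to the paper's appeal to the ``symmetry of the root system,'' and your observation that $\la^*=\la$ (since $w_0=-1$ in $D_4$) is what makes the identification of the three classes clean; one could also, as you note, simply redo Lemma~\ref{L:omega1combinatorics} and Proposition~\ref{P:omega1summary} for $\omega_3,\omega_4$ — the two approaches cost about the same.
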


\begin{proof} Part (a) follows from
Section~\ref{SS:initialrestrictions}, Lemma~\ref{L:omega1}(a),
Lemma~\ref{L:omegan},
and Proposition~\ref{P:vanishing}.

For part (b), when $n \geq 5$, it follows from
Section~\ref{SS:initialrestrictions}, Lemma~\ref{L:omega1},
Proposition~\ref{P:omega1summary}
and Lemma~\ref{L:omegan} that $\la= (2p -2n +2)\omega_1$ is the
only dominant weight with
$\opH^{2p - 2n}(G,H^0(\la)\otimes H^0(\la^*)^{(1)}) \neq 0$.
Since $\la$
is the lowest weight in its linkage class, the claim follows
from
Theorem~\ref{T:nonvanishing}. For $n=4$ the symmetry of the root
system yields
$\opH^{2p-2n}(G, H^0(\la) \otimes H^0(\la)^{(1)} ) = k$
for the weights $\la =(p - 6) \omega_1$, $\la =(p- 6) \omega_3$
and
$\la =(p -6) \omega_4$, and those are the only weights with
non-zero $G$-cohomology
in degree $2p - 2n$. Each weight is minimal in its own linkage
class. The claim follows.
\end{proof}

Working inductively from the $r = 1$ case, we can obtain sharp
vanishing bounds
for arbitrary $r$.

\begin{theorem} \label{T:Dsummary2} Suppose $\Phi$ is of type
$D_n$ with $n \geq 4$. Assume that $p > 2n-2$. Then
\begin{itemize}
\item[(a)] $\opH^{i}(\gfpr, k )  = 0$ for $0 <i < r(2p-2n)$;  
\item[(b)] $\opH^{r(2p-2n)}(\gfpr, k ) = \begin{cases} k \mbox{
if } n \geq 5\\
k\oplus k \oplus k \mbox{ if } n = 4.
\end{cases}$
\end{itemize}
\end{theorem}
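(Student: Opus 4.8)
The plan is to mimic the inductive argument already used in the proof of Theorem~\ref{T:firstnontrivial}, replacing the simply-laced bound $2p-3$ with the type-$D$ bound $2p-2n$ and the weight $(p-h+1)\ta$ with $\la = (p-2n+2)\omega_1$ (and, for $n=4$, its two images $(p-6)\omega_3$, $(p-6)\omega_4$ under diagram symmetry). First I would upgrade Lemma~\ref{L:omega1}, Lemma~\ref{L:omegan}, and the argument in Section~\ref{SS:initialrestrictions} to a statement valid for all $r$: for every nonzero dominant $\mu$, $\opH^i(G,H^0(\mu)\otimes H^0(\mu^*)^{(r)})=0$ for $i<r(2p-2n)$, and similarly $\Ext_{G/G_1}^k(V(\mu)^{(r)},\opH^l(G_1,H^0(\mu)))=0$ for $k+l<r(2p-2n)$. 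This follows from Proposition~\ref{P:degreebound2} and the remark at the end of Section~\ref{SS:degreebounds}: the sequence of weights $\ga_j=p\delta_j+u_j\cdot 0$ it produces has each $\delta_j$ nonzero dominant, and the bound $i\ge(\sum_j(p-1)\langle\delta_j,\ta^\vee\rangle)-r$ together with $\langle\delta_j,\ta^\vee\rangle\ge 1$ gives $i\ge r(p-1)-r=r(p-2)$, which is not yet enough. To get $r(2p-2n)$ one has to argue, exactly as in Lemma~\ref{L:omega1} and Lemma~\ref{L:omegan}, that for each index $j$ either $\langle\delta_j,\ta^\vee\rangle\ge 2$ (giving a contribution $\ge 2p-3$) or $\delta_j\in\{\omega_1,\omega_{n-1},\omega_n\}$, in which case the finer $\al_1$- or $\al_n$-counting bound forces $l_j\ge 2p-2n$ for that single stage; summing gives $i\ge r(2p-2n)$. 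The first bullet of part (a) then follows from Proposition~\ref{P:vanishing} applied with $m=r(2p-2n)$.

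Next I would establish non-vanishing in degree $r(2p-2n)$ by induction on $r$, using the LHS spectral sequence
$$E_2^{k,l}=\Ext_{G/G_1}^k(V(\la)^{(r)},\opH^l(G_1,H^0(\la)))=\Ext_G^k(V(\la)^{(r-1)},\opH^l(G_1,H^0(\la))^{(-1)})\Rightarrow \Ext_G^{k+l}(V(\la)^{(r)},H^0(\la))$$
for $\la=(p-2n+2)\omega_1$. The base case $r=1$ is Proposition~\ref{P:omega1summary} together with Lemma~\ref{L:G1cohom}. For the inductive step I need the analogue of Lemma~\ref{L:Simplylacedcohom}(c): that $[\opH^i(G_1,H^0(\la))^{(-1)}]_0=0$ for $0<i<2p-2n$ and that $H^0(\la)$ is a summand of $[\opH^{2p-2n}(G_1,H^0(\la))^{(-1)}]_0$. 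The vanishing half follows from the $r=1$ upgrade of part (a) just proved (cf. the last line of the proof of Lemma~\ref{L:Simplylacedcohom}); the summand statement follows from $\ell(w)=2n-2$, equation~(\ref{ind}), Proposition~\ref{P:omega1summary}, and the fact that $\opH^{2p-2n}(G_1,H^0(\la))^{(-1)}$ has a good filtration with $H^0(\la)$ appearing with multiplicity one (here $\la$ is the minimal dominant weight of $S^{p-2n+1}(\ul^*)\otimes\omega_1$ that is linked to $0$, so no smaller linked weight can interfere). Granting this, the spectral sequence has $E_2^{k,l}=0$ for $k+l<r(2p-2n)$ and for $l<2p-2n$, so the corner term $E_2^{(r-1)(2p-2n),\,2p-2n}$ survives to $E_\infty$ and, by \cite[Lemma 5.4]{BNP}, contains $\Ext_G^{(r-1)(2p-2n)}(V(\la)^{(r-1)},H^0(\la))$ as a summand, which is nonzero by induction. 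Hence $\Ext_G^{r(2p-2n)}(V(\la)^{(r)},H^0(\la))\neq 0$, i.e. $\opH^{r(2p-2n)}(G,H^0(\la)\otimes H^0(\la^*)^{(r)})\neq 0$.

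Finally, for the precise value in part (b) I would show $\la=(p-2n+2)\omega_1$ is the \emph{only} dominant weight (when $n\ge 5$) with $\opH^{r(2p-2n)}(G,H^0(\mu)\otimes H^0(\mu^*)^{(r)})\neq 0$. Any such $\mu$ lies in the zero linkage class, and by the refined degree analysis of the first paragraph, forcing equality $i=r(2p-2n)$ in Proposition~\ref{P:degreebound2} pins down each $\delta_j$ to be $\omega_1$ (not $\omega_{n-1},\omega_n$, since for those the sharper Lemma~\ref{L:omegan}-type bound $\ge 2p-2n+2$ for $n\ge 5$ is strict) and, tracing the equality conditions $p\delta_j-\delta_{j-1}+u_j\cdot 0=\tfrac{l_j-\ell(u_{j-1})}{2}\ta$ together with the $n=1$-stage computation of Proposition~\ref{P:omega1summary}, forces $\mu=\la$; then Theorem~\ref{T:nonvanishing}(c) gives $\opH^{r(2p-2n)}(\gfpr,k)\cong\opH^{r(2p-2n)}(G,H^0(\la)\otimes H^0(\la^*)^{(r)})=k$. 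For $n=4$ the diagram symmetry sends $\omega_1\mapsto\omega_3\mapsto\omega_4$, producing three minimal weights in three distinct linkage classes each contributing a single $k$, hence the $k\oplus k\oplus k$. I expect the main obstacle to be the bookkeeping in this last uniqueness step: controlling, for general $r$, that the only way to achieve the extremal degree through the $r$-fold sequence of $\Ext$-stages is to take $\delta_j=\omega_1$ at every stage, which requires carefully propagating the equality cases of Proposition~\ref{P:degreebound2} through the induction rather than re-deriving them one prime-power level at a time.
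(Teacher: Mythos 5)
Your high-level plan is the same as the paper's: upgrade the $r=1$ restrictions to degree bounds for general $r$ via Proposition~\ref{P:degreebound2}, establish nonvanishing with the LHS spectral sequence by induction on $r$, and pin down the unique contributing weight to conclude part (b). Two points, however, need repair.

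First, the assertion that ``the finer $\al_1$- or $\al_n$-counting bound forces $l_j\ge 2p-2n$ for that single stage'' is false, and this is not a mere bookkeeping issue. The per-stage inequality one actually gets from the $\al_1$-count (with $N(\ga)$ the coefficient of $\al_1$ and $\ell(u_j)=a_j+b_j$ as in the paper) is
\begin{equation*}
l_j \ \geq\ 2p\,N(\delta_j) - 2N(\delta_{j-1}) - 2a_j + a_{j-1} + b_{j-1},
\end{equation*}
which has cross-terms referring to the previous stage. Taking, say, $\delta_j=\omega_1$ and $\delta_{j-1}=2\omega_1$ with $a_j=a_{j-1}=2n-2$, $b_j=b_{j-1}=0$ yields $l_j\ge 2p-2n-2 < 2p-2n$. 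The correct conclusion $i\ge r(2p-2n)$ emerges only after summing over $j$ and using the periodicity $\delta_0=\delta_r$, $u_0=u_r$ so the cross-terms telescope:
\begin{equation*}
i\ \geq\ \sum_{j=1}^r\bigl[(2p-2)N(\delta_j) - a_j + b_j\bigr] \ \geq\ r(2p-2n)+\textstyle\sum b_j.
\end{equation*}
You also need to first justify that all $\delta_j$ lie in the \emph{same} coset of $\mathbb{Z}\Phi$ (the paper does this by tracking $p\delta_j-\delta_{j-1}\in\mathbb{Z}\Phi$), so the $\al_1$-count and the $\al_{n-1}/\al_n$-count arguments don't mix. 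For $\delta_0\in\{\omega_{n-1}+\mathbb{Z}\Phi\}\cup\{\omega_n+\mathbb{Z}\Phi\}$ there is an additional parity step ($p$ odd) showing each $\delta_j$ contains an odd total of $\omega_{n-1},\omega_n$, which your sketch omits.

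Second, the weight $\la=(p-2n+2)\omega_1$ is \emph{not} linked to $0$: $\la-w\cdot 0=p\omega_1$, and $\omega_1\notin\mathbb{Z}\Phi$. The correct statement (the paper's equation~(\ref{D2p2n})) is about the $\la$-linkage summand $[\opH^i(G_1,H^0(\la))^{(-1)}]_{\la}$, not the zero-linkage summand $[\cdot]_0$. This does not ultimately derail the spectral-sequence argument, but the justification you give for why no smaller weight interferes should be phrased within the $\la$-linkage class.

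With those two corrections the argument matches the paper's proof in both method and conclusion.
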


\begin{proof} For part (a), we need to show that
$\opH^i(G,H^0(\la)\otimes H^0(\la^*)^{(r)}) = 0$
for $0 < i < r(2p - 2n)$ and $\la \in X(T)_+$. If that is true,
then the claim follows
from Proposition~\ref{P:vanishing}. For part (b), we require
precise information
on those dominant weights $\la$ for which $\opH^{r(2p -
2n)}(G,H^0(\la)\otimes H^0(\la^*)^{(r)}) \neq 0$. The root
lattice
$\mathbb{Z}\Phi$ has four cosets within $X(T)$:
$\mathbb{Z}\Phi$,
$\{\omega_{1}+\mathbb{Z}\Phi\}$,
$\{\omega_{n-1}+\mathbb{Z}\Phi\}$,
and $\{\omega_{n}+\mathbb{Z}\Phi\}$.  
If $\la$ is a weight in the root lattice claim (a) follows from
Lemma~\ref{L:Simplylacedcohom}(a).
Furthermore, no such weights can contribute to cohomology in
degree $r(2p - 2n)$.

Assume that 
$\opH^i(G,H^0(\la)\otimes H^0(\la^*)^{(r)}) \neq 0$ for some $i
> 0$ and apply Proposition~\ref{P:degreebound2}.
Suppose first that $\la = p \delta_0 +u_0 \cdot 0$ with
$\delta_0=\delta_r \in \{\omega_{1}+\mathbb{Z}\Phi\} $.
For each $1 \leq j \leq r$, in order to have 
$\Ext_G^{l_j}(V(\ga_j)^{(1)},H^0(\ga_{j-1})) \neq 0$, then 
$\ga_j - \delta_{j - 1} = p\delta_j + u_j\cdot 0 - \delta_{j -
1}$ must
be a weight in $S^{\frac{l_j - \ell(u_{j-1})}{2}}(\ul^*)$. This
implies that
$p\delta_j - \delta_{j - 1}$ must lie in the positive root
lattice.
Since $\delta_r \in \{\omega_{1}+\mathbb{Z}\Phi\}$, we
necessarily have
$p\delta_r \in \{\omega_{1}+\mathbb{Z}\Phi\} $.  Since
$p\delta_r - \delta_{r-1} \in \mathbb{Z}\Phi$, it then follows
that
$\delta_{r-1} \in \{\omega_{1}+\mathbb{Z}\Phi\}$. Inductively
one concludes
that $\delta_j \in \{\omega_{1}+\mathbb{Z}\Phi\}$ for all $j$. 

For a weight $\ga$, when expressed as a sum of simple roots, let
$N(\ga)$ denote the
number of copies of $\al_1$ that appear. Since a positive root
contains at most one
copy of $\al_1$, we have
$$
\frac{l_j - \ell(u_{j-1})}{2} \geq pN(\delta_j) - N(-u_j\cdot 0)
- N(\delta_{j-1}).
$$
From Observation~\ref{O:uniquedecomp}, we know that $-u_j\cdot
0$ can be expressed uniquely as
$\ell(u_j)$ distinct positive roots. Write $\ell(u_j) = a_j +
b_j$ where $a_j$ denotes
the number of roots containing an $\al_1$ and $b_j$ the number
that do not. Then
$N(u_j\cdot 0) = -a_j$, and rewriting the above gives
$$
l_j \geq 2pN(\delta_j) - 2N(\delta_{j - 1}) - 2a_j + a_{j-1} +
b_{j - 1}.
$$
Hence, summing over $j$ gives
$$
i = \sum_{j = 1}^r l_j \geq 
\sum_{j = 1}^r(2p - 2)N(\delta_j) - \sum_{j = 1}^r a_j + \sum_{j
= 1}^r b_j.
$$
Recall that the $\delta_j$ are non-zero dominant weights. By the
assumption on $\delta_j$, $N(\delta_j) \geq 1$. The total number
of
positive roots containing an $\al_1$ is $2n -2$. Hence, $a_j
\leq 2n - 2$.
With this, we get
\begin{equation}\label{D.5cond}
i \geq r(2p - 2) - r(2n - 2) + \sum_{j = 1}^r b_j = r(2p - 2n) +
\sum_{j = 1}^r b_j\\
	\geq r(2p - 2n),
\end{equation}
since $b_j \geq 0$. This gives the necessary condition for part
(a) for the coset
$\{\omega_1 + \mathbb{Z}\Phi\}$. Before considering the
remaining two cosets,
towards addressing part (b), we consider when equality can hold
in (\ref{D.5cond}).

As in Section~\ref{SS:caseomega1} we see that equality holds in
(\ref{D.5cond}) if and only if
$N(\delta_j)=1$ and $\ell(u_j) = 2n-2,$ which forces
$\la=\gamma_j = (p-2n+2)\omega_1$
for all $j$. Moreover, one obtains from the discussion above and
Proposition~\ref{P:omega1summary}
for $\la = (p-2n+2)\omega_1$ that
\begin{equation}\label{D2p2n}
[H^i(G_1, H^0(\la))^{(-1)}]_{\la} \cong 
\begin{cases} H^0(\la) & \mbox{ if  } i = 2p-2n\\ 
0 & \mbox{ if } 0 < i < 2p-2n.
\end{cases}
\end{equation}
Using the spectral sequence argument in the proof of
Theorem~\ref{T:firstnontrivial}
(see also the proof of \cite[Lemma 5.4]{BNP}), we can show that
$\opH^{r(2p - 2n)}(G,H^0(\la)\otimes H^0(\la^*)^{(r)}) = k$. We prove this by induction on $r$ with the $r = 1$ case being
Theorem~\ref{T:Dsummary1}.
Consider the Lyndon-Hochschild-Serre spectral sequence
\begin{align*}
E_2^{k,l} &= \Ext_{G/G_1}^k(V(\la)^{(r)},\opH^l(G_1,H^0(\la)))\\&\cong \Ext_{G}^k(V(\la)^{(r-1)},\opH^l(G_1,H^0(\la))^{(-1)})
\Rightarrow
	\Ext_{G}^{k + l}(V(\la)^{(r)},H^0(\la)).
\end{align*}
From the remarks in Section~\ref{SS:degreebounds} and the
discussion above, $E_2^{k,l} = 0$ for $k + l < r(2p - 2n)$.
Furthermore, from (\ref{D2p2n}), $E_2^{k,l} = 0$ for $l < 2p -
2n$.
Finally, if $E_2^{k,l} \neq 0$ and $k + l = r(2p - 2n)$, then,
from the
above conclusion that $\ga_j = \la$ for each $j$, we must have
$l = 2p - 2n$.
Hence, the $E_2^{(r-1)(2p - 2n),2p - 2n}$-term survives to
$E_{\infty}$
and is the only term to contribute in degree $r(2p - 2n)$.
Hence,
by (\ref{D2p2n}) and our inductive hypothesis, 
\begin{align*}
\Ext_{G}^{r(2p - 2n)}(V(\la)^{(r)},H^0(\la)) &\cong
\Ext_{G}^{(r-1)(2p - 2n)}(V(\la)^{(r-1)},\opH^{2p -
2n}(G_1,H^0(\la))^{(-1)})\\
& \cong
\Ext_{G}^{(r-1)(2p - 2n)}(V(\la)^{(r-1)},H^0(\la)) \cong k.
\end{align*}

To complete the proof of (a) we need to consider the case that
$\la=p\delta_0+u_0
\cdot 0$ with $\delta_0=\delta_r \in
\{\omega_{n-1}+\mathbb{Z}\Phi\}\cup
\{\omega_n+\mathbb{Z}\Phi\}$.
As above, $p\delta_j + u_j\cdot 0 - \delta_{j - 1}$
must lie in the positive root lattice. Since $u_j\cdot 0$ does,
this implies that
$p\delta_j - \delta_{j - 1}$ must lie in the positive root
lattice. When expressed
as a sum of simple roots $\omega_{n-1} = \frac12\al_1 + \cdots$
(as does $\omega_n$).
Whereas, for $1 \leq j \leq n - 2$, $\omega_j = \al_1 + \cdots$.  Since $\delta_0 \in \{\omega_{n-1}+\mathbb{Z}\Phi\}\cup
\{\omega_n+\mathbb{Z}\Phi\}$,
for $p\delta_1 - \delta_0$ to lie in the positive root lattice,
when expressed
as a sum of fundamental weights, $p\delta_{1}$ must contain an
odd number of copies
of $\omega_{n-1}$ and $\omega_n$ in total. Since $p$ is odd,
this also holds for
$\delta_1$.  Inductively, every $\delta_j$ has this property.  

We may assume therefore that each $\delta_j$ contains at least
one copy of $\omega_n$ or
one copy of $\omega_{n-1}$. Proceed as above, but let
$N_{\al_n}(\ga)$ and $N_{\al_{n-1}}(\ga)$ denote the number of
copies of
$\al_n$ and $\al_{n-1}$, respectively, appearing in $\ga$. Set
$N(\ga)=\max\{N_{\al_{n}}(\ga), N_{\al_{n-1}}(\ga)\}$.
Note that, for the weights $\ga$ that appear in what follows,
both
$N_{\al_{n}}(\ga)$ and $N_{\al_{n-1}}(\ga)$ are nonnegative.
Again, a positive root contains at most one 
copy of $\al_n$ or $ \al_{n-1}$. 
Just as above, we get
\begin{eqnarray*}
l_j &\geq& 2pN_{\al_n}(\delta_j) - 2N_{\al_n}(-u_j\cdot 0) -
2N_{\al_n}(\delta_{j-1})+\ell(u_{j-1})
\end{eqnarray*}
and the corresponding dual statement for $\al_{n-1}.$
By choosing the appropriate root we obtain
\begin{eqnarray*}
l_j &\geq& 2pN(\delta_j) - 2N_{\al_n}(-u_j\cdot 0) -
2N_{\al_n}(\delta_{j-1})+\ell(u_{j-1})
\end{eqnarray*}
or
\begin{eqnarray*}
l_j &\geq& 2pN(\delta_j) - 2N_{\al_{n-1}}(-u_j\cdot 0) -
2N_{\al_{n-1}}(\delta_{j-1})+\ell(u_{j-1}).
\end{eqnarray*}
Either one will result in
\begin{eqnarray*}
l_j &\geq& 2pN(\delta_j) - 2N(-u_j\cdot 0) -
2N(\delta_{j-1})+\ell(u_{j-1}).
\end{eqnarray*}
From earlier arguments we know that $\ell(u_{j-1}) \geq
N(-u_{j-1}\cdot 0)$. Hence
\begin{eqnarray*}
l_j &\geq& 2pN(\delta_j) - 2N(-u_j\cdot 0) -
2N(\delta_{j-1})+N(-u_{j-1}\cdot 0).
\end{eqnarray*}
Summing over $j$, one obtains
$$
i = \sum_{j = 1}^r l_j \geq 
\sum_{j = 1}^r\left[(2p - 2)N(\delta_j) - N(-u_j\cdot 0)\right].$$
Clearly, $N(-u_j\cdot 0) \leq N(-w_0\cdot 0) =
\frac{n(n-1)}{2}.$ Moreover, we can say that $N(\delta_j) \geq
\frac{n}{4}$.
Substituting this gives
\begin{align*}
i &\geq r(2p - 2)\left(\frac{n}{4}\right) - \frac{rn(n-1)}{2}\\
	&= r\left(\frac{(p-1)(n)}{2} - \frac{n(n-1)}{2}\right)\\
	&\geq r(2p - 2n),
\end{align*}
where the last inequality follows as in the proof of
Lemma~\ref{L:omegan}.
Thus part (a) follows.  For $n \geq 5$, the last inequality is 
strict. Hence, $\la = (p - 2n + 2)\omega_1$ is the only dominant
weight for which $\opH^{r(2p - 2n)}(G,H^0(\la)\otimes
H^0(\la^*)^{(r)}) \neq 0$.
As in the proof of Theorem~\ref{T:Dsummary1}, since $\la$ is
minimal in its linkage class,
part (b) follows. Similarly, for $n = 4$, by symmetry, part (b)
follows.
\end{proof}


\section{Type $E$}


\subsection{\bf Type $E_6$.} 
Assume for this subsection that $\Phi$ is of type $E_6$ with
$p > h = 12$ (so $p \geq 13$).
The only dominant weights $\mu$ 
with $\langle \mu, \ta^{\vee} \rangle < 2$ are $\omega_1$ and
$\omega_6$. One concludes from Proposition~\ref{P:degreebound1}
and Proposition~\ref{P:vanishing} that
$\operatorname{H}^i(\gfp, k) =0$ for all $0 < i < 2p-3$ unless
there exists a weight
$\la$ of the form $p \omega_1 + w \cdot 0$ or of the form $p
\omega_6 + w \cdot 0$ with
$\opH^i(G, H^0(\la) \otimes H^0(\lambda^*)^{(1)}) \neq 0$ for
some $ 0 < i < 2p-3.$

\begin{lemma} \label{L:E6vanishing}
Suppose $\Phi$ is of type $E_6$, $p \geq 13$ and $\la \in X(T)_+$
is of the form $p \omega_1 + w \cdot 0$ or $p \omega_6 + w \cdot
0$ with $w \in W$. Assume in addition that $p \neq 13$, $19$.
Then $\opH^i(G, H^0(\la) \otimes H^0(\lambda^*)^{(1)}) = 0$ for
all $ 0 < i < 2(p-1).$
\end{lemma}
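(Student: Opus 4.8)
The plan is to run the same kind of combinatorial analysis used for type $D_n$ (Lemma~\ref{L:omega1} and Lemma~\ref{L:omegan}) but now for the two minuscule cosets of $E_6$ represented by $\omega_1$ and $\omega_6$. Since these two weights are swapped by the diagram automorphism of $E_6$, it suffices to treat $\la = p\omega_1 + w\cdot 0$. Following Section~\ref{SS:G1analysis}, the hypothesis $\opH^i(G,H^0(\la)\otimes H^0(\la^*)^{(1)}) \neq 0$ forces $\la - \omega_1 = (p-1)\omega_1 + w\cdot 0$ to be a weight of $S^j(\ul^*)$ for $j = (i - \ell(w))/2$. Writing $\omega_1$ as a sum of simple roots and letting $N(\gamma)$ denote the coefficient of $\al_1$ (equivalently, of whichever simple root is extremal for the minuscule node), one gets, exactly as in the $D_n$ argument, a lower bound of the shape
$$
i \;\geq\; 2p\,N(\omega_1) - 2 a + b,
$$
where $-w\cdot 0$ decomposes (via Observation~\ref{O:uniquedecomp}) into $\ell(w) = a + b$ distinct positive roots, $a$ of which involve $\al_1$. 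Since $N(\omega_1) = 1$ for a minuscule node and $a$ is bounded by the total number of positive roots through that node, one obtains $i \geq 2(p-1) + (\text{a small correction}) \geq 2(p-1)$ unless $p$ lies in a short exceptional list; the arithmetic of the correction term (which depends on the exact count of positive roots through $\al_1$ in $E_6$, namely $16$) is precisely what produces the excluded primes $p = 13, 19$.

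Concretely: the coefficient of $\al_1$ in each positive root is $0$ or $1$, and the number of positive roots with coefficient $1$ is $16$ (the roots of the minuscule representation $V(\omega_6)$, of dimension $27$, restricted appropriately — more precisely the count follows from the $E_6$ root data). So $a \leq 16$, giving
$$
i \;\geq\; 2p - 2a + b \;\geq\; 2p - 32 + b.
$$
This is weaker than $2(p-1)$ in general, so the crude bound alone is not enough; one must, as in Lemma~\ref{L:omega1}(b), feed back the equality case. When $i$ is small the decomposition of $-w\cdot 0$ is forced to be ``all roots through $\al_1$,'' i.e. $-w\cdot 0 = 16\,\omega_1$ is impossible since $\omega_1$ is a fundamental weight and that is not a sum of distinct positive roots of the right shape — so one refines the bound on $a$ and on the multiplicity $N(-w\cdot 0)$ of $\al_1$ in $-w\cdot 0$ using the structure of $W$-orbits, exactly the $u(\epsilon_1) = \epsilon_1$ type stabilizer argument of Lemma~\ref{L:omega1combinatorics}(a). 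Combining the refined inequality with the dominance of $\mu = \omega_1$ and the constraint $p \geq 13$ yields $i \geq 2(p-1)$ whenever $p \neq 13, 19$; the two exceptional primes are exactly those for which the inequality $2p - 2a + b \geq 2(p-1)$ can fail in the boundary configurations.

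The main obstacle is the bookkeeping: unlike $D_n$, the positive roots of $E_6$ with a given $\al_1$-coefficient do not have a clean uniform description, so pinning down the exact value of $a_{\max}$, the possible values of $N(-w\cdot 0)$, and which configurations $(a,b)$ actually arise (so as to locate exactly where the bound degrades to give $p = 13, 19$) requires a careful case analysis of short Weyl group elements or a direct partition-function computation via Proposition~\ref{P:KostantPartCohom}. I would handle this by invoking Proposition~\ref{P:degreebound1}(a) with a well-chosen coroot $\si^\vee$ (not just $\ta^\vee$) to sharpen the estimate — choosing $\si$ so that $\langle\omega_1,\si^\vee\rangle$ is as large as possible relative to $\langle w\cdot 0,\si^\vee\rangle$ — and then closing the remaining gap for generic $p$ with the $S^j(\ul^*)$-weight argument above, accepting the loss at $p = 13, 19$.
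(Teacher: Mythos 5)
Your proposed proof hinges on the claim that ``$N(\omega_1) = 1$ for a minuscule node,'' and this is where it goes wrong. In type $D_n$ one indeed has $\omega_1 = \al_1 + \al_2 + \cdots + \al_{n-2} + \tfrac12\al_{n-1} + \tfrac12\al_n$, so the coefficient of $\al_1$ is $1$. But in $E_6$,
$$\omega_1 \;=\; \tfrac13\bigl(4\al_1 + 3\al_2 + 5\al_3 + 6\al_4 + 4\al_5 + 2\al_6\bigr),$$
so the coefficient of $\al_1$ is $\tfrac43$, not $1$. (Being a ``minuscule node'' means $\al_1$ occurs with coefficient $1$ in the highest root $\ta$; it says nothing about the coefficient of $\al_1$ in $\omega_1$.) The extra factor $\tfrac43$ is precisely what makes the $E_6$ estimate close without any further refinement: with the correct coefficient, $\la - \omega_1$ contains at least $\tfrac43(p-1) - N$ copies of $\al_1$ (where $N$ counts the $\al_1$-roots in the decomposition of $-w\cdot 0$), and since each positive root contributes $\al_1$ at most once, $\tfrac43(p-1) - N \leq (i-\ell(w))/2$. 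Using $\ell(w) \geq N$ and $N \leq 16$, this gives
$$i \;\geq\; \tfrac83(p-1) - 2N + \ell(w) \;\geq\; \tfrac83(p-1) - N \;\geq\; 2(p-1) + \tfrac23(p-1) - 16,$$
which is $\geq 2(p-1)$ exactly when $p \geq 25$. Coupled with the constraint $3 \mid (p-1)$ (needed for $(p-1)\omega_1 + w\cdot 0$ to lie in the root lattice), the only failing primes in the range $[13,25)$ are $p = 13$ and $p = 19$.

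Because you plugged in $N(\omega_1) = 1$, you obtained the much weaker bound $i \geq 2p - 32 + b$ and then — correctly observing it was insufficient — proposed to close the gap with an unspecified ``refinement,'' case analysis of short Weyl elements, or a sharper choice of coroot in Proposition~\ref{P:degreebound1}(a). None of this is necessary: the paper's proof is a single direct calculation with the correct $\al_1$-coefficient, and no equality-case feedback, Weyl-stabilizer analysis, or alternate coroot is needed. As written, your argument does not establish the lemma; the gap is the incorrect coefficient, and the proposed workaround is too vague to be evaluated and in any case is not the route that succeeds.
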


\begin{proof} We prove the assertion for $\la =p \omega_1 + w
\cdot 0, w \in W.$ Let $N$ denote the number of times that
$\alpha_1$ appears in $- w\cdot 0$ when written as a sum of
simple roots. Note that all positive roots of $\Phi$ contain the
simple root $\alpha_1$ at most once. This implies that $N \leq
\ell(w)$. Moreover, there are exactly $16$ distinct positive
roots containing $\alpha_1$. Hence, $N \leq 16$.

Using $\omega_1 = 1/3(4\alpha_1 + 3 \alpha_2+ 5 \alpha_3 +
6\alpha_4 + 4\alpha_5 + 2\alpha_6)$, we note that
$\la-\omega_1$, written as a sum of simple roots contains at
least $4/3(p-1) - N$ copies of $\alpha_1$. From
Section~\ref{SS:G1analysis}
we know that $\opH^i(G, H^0(\la) \otimes H^0(\lambda^*)^{(1)})
\neq 0$ and $i>0$ imply that $\la-\omega_1$ is a sum of
$(i-\ell(w))/2$ many positive roots. Note that this can only
happen if $(p-1)$ is divisible by $3$. Again using the fact that
$\alpha_1$ appears at most once in each positive root, one
obtains the inequality:
$$\frac{4}{3}(p-1)-N \leq \frac{i- \ell(w)}{2}.$$
Solving for $i$ yields
$$i \geq \frac{8}{3}(p-1)- 2N+\ell(w)\geq 2(p-1)
+\frac{2}{3}(p-1)-N \geq 2(p-1)+ \frac{2}{3}(p-1)-16.$$
Note that equality holds if and only if $N= \ell(w) = 16$. 

One obtains the desired claim $i \geq 2(p-1)$ for all primes
except those of the form $p = 3t+1$ with $13 \leq p \leq 25,$ i.e.,
the primes $p=13$ and $p=19.$
\end{proof}

\begin{theorem} \label{T:E6p-results} Suppose $\Phi$ is of type
$E_6$ and $p\geq 13$.
\begin{itemize}
\item[(a)] If $p \neq 13$, then
\begin{itemize}
\item[(i)] $\opH^{i}(\gfp, k )  = 0$ for $0 <i < 2p-3$;  
\item[(ii)] $\opH^{2p-3}(\gfp, k ) \neq 0.$
\end{itemize}
\item[(b)] If $p = 13$, then
\begin{itemize}
\item[(i)] $\opH^{i}(\gfp, k )  = 0$ for $0 <i < 16$;  
\item[(ii)] $\opH^{16}(\gfp, k ) \neq 0.$
\end{itemize}
\end{itemize}
\end{theorem}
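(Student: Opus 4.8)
The plan is to treat the two cases $p \neq 13$ and $p = 13$ separately, using the general vanishing-and-non-vanishing machinery from Section~2 together with the root-system combinatorics already established for type $E_6$.

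For part (a), assume $p \geq 17$. I would first invoke Proposition~\ref{P:degreebound1} together with the computation preceding Lemma~\ref{L:E6vanishing}: any $\la \in X(T)_+$ with $\opH^i(G,H^0(\la)\otimes H^0(\la^*)^{(1)}) \neq 0$ for $0 < i < 2p-3$ must be of the form $p\omega_1 + w\cdot 0$ or $p\omega_6 + w\cdot 0$ (since these are the only fundamental weights $\mu$ with $\langle \mu, \ta^\vee\rangle < 2$). By Lemma~\ref{L:E6vanishing}, for $p \neq 13, 19$ such cohomology in fact vanishes for all $0 < i < 2(p-1)$, and since $2(p-1) > 2p-3$, we get the vanishing in (a)(i) via Proposition~\ref{P:vanishing}; for $p = 19$ one needs the complementary argument (presumably handled in a separate lemma in the paper, or by checking directly via Proposition~\ref{P:KostantPartCohom} that the relevant partition-function alternating sum vanishes in degrees below $2p-3 = 35$). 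Since $E_6$ is simply laced, $\la = (p - h + 1)\ta = (p-11)\ta$ lies in the zero linkage class, and by Theorem~\ref{T:firstnontrivial} (with $r = 1$), $\opH^{2p-3}(\gfp, k) \neq 0$, giving (a)(ii).

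For part (b), $p = 13$, the point is that $2p-3 = 23$ but the bound $2(p-1) = 24$ from Lemma~\ref{L:E6vanishing} fails precisely because $p - 1 = 12$ is divisible by $3$, so there genuinely is a weight $\la$ of the form $p\omega_1 + w\cdot 0$ with $\ell(w) = N = 16$ realizing $i = \tfrac{8}{3}(p-1) - 2N + \ell(w) = 32 - 16 = 16 < 23$. I would identify this $w$ explicitly: $-w\cdot 0$ must be the sum of all $16$ positive roots containing $\alpha_1$, which by Observation~\ref{O:uniquedecomp} forces $\ell(w) = 16$ and pins down $w$ uniquely; then $\la = p\omega_1 + w\cdot 0 = (p - 16/3 \cdot \text{(coefficient data)})\dots$ — more precisely $\la - \omega_1 = (p-1)\omega_1 + w\cdot 0$ is realized as a sum of exactly $8$ positive roots (so $i = 2\cdot 8 + \ell(w) = 16$). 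To get (b)(i), I would show $16$ is the minimal such degree over all linkage classes: combine the reduction to $\mu \in \{\omega_1, \omega_6\}$, the refined inequality $i \geq \tfrac{8}{3}(p-1) - 2N + \ell(w)$ from the proof of Lemma~\ref{L:E6vanishing} (which for $p=13$ gives $i \geq 16$ with equality iff $N = \ell(w) = 16$), and Lemma~\ref{L:Simplylacedcohom}(a) to rule out the root-lattice (zero) linkage class contributing below $2p-3 = 23$. For (b)(ii), I would verify $\opH^{16}(G, H^0(\la)\otimes H^0(\la^*)^{(1)}) \neq 0$ using Proposition~\ref{P:KostantPartCohom}: the dimension is $\sum_{u\in W}(-1)^{\ell(u)} P_{8-?}(u\cdot\la - \omega_1)$, which by an argument paralleling Lemma~\ref{L:omega1combinatorics}(a),(b) (only Weyl elements fixing the relevant minuscule direction contribute) reduces to a single positive term; then check $\la$ is minimal in its linkage class and apply Theorem~\ref{T:nonvanishing} (or the linkage-class refinement in Section~\ref{SS:nonvanishing}).

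The main obstacle I anticipate is part (b)(ii): showing the degree-$16$ $G$-cohomology is actually nonzero for $p=13$. The degree-bound arguments only give necessary conditions, so one must produce an explicit nonvanishing class — this requires either a clean $\Hom$-computation in the spirit of Proposition~\ref{P:Homcalculation} (embedding the Weyl module $V(\la)$ into an induced module built from $S^8(\ul^*)$ tensored with an appropriate weight) or a direct evaluation of the alternating sum of Kostant partition values. The combinatorics here is genuinely $E_6$-specific and does not follow from the uniform simply-laced results, since $\la$ is not in the zero linkage class; establishing that the alternating sum collapses to $1$ (or at least is nonzero) is the delicate step, likely handled by restricting the sum to the stabilizer of the minuscule weight $\omega_1$ exactly as in Lemma~\ref{L:omega1combinatorics}. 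The secondary obstacle is cleanly disposing of $p = 19$ in part (a), which sits in the same exceptional residue class mod $3$ and presumably needs its own short combinatorial check that no cohomology appears in degrees $1$ through $2p-4 = 34$.
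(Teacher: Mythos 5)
Your overall strategy matches the paper's: reduce to $\mu \in \{\omega_1,\omega_6\}$ via $\langle\mu,\ta^\vee\rangle$, invoke Lemma~\ref{L:E6vanishing} and Theorem~\ref{T:firstnontrivial} for $p \neq 13, 19$, treat $p=19$ by direct computation, and for $p=13$ locate the extremal weight with $N = \ell(w) = 16$. You also correctly identify that $p=19$ needs a case-by-case check of the alternating Kostant sums (the paper does exactly this, by MAGMA, for the three candidate weights $7\omega_1, 7\omega_1+\omega_2, 7\omega_1+\omega_4$).

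However, your treatment of (b)(ii) contains an arithmetic error that obscures the crucial simplification. You assert that for $p=13$ the extremal $\la - \omega_1 = (p-1)\omega_1 + w\cdot 0$ is ``a sum of exactly $8$ positive roots (so $i = 2\cdot 8 + \ell(w) = 16$).'' With $\ell(w) = 16$ this would give $i = 32$, not $16$; in fact $k = (i - \ell(w))/2 = 0$. More to the point: $-w\cdot 0$ is the sum of all $16$ positive roots containing $\alpha_1$, which equals $12\omega_1 = (p-1)\omega_1$, so $\la - \omega_1 = 0$ and $\la = \omega_1$. This is what makes part (b)(ii) essentially free: the degree-$16$ $G_1$-cohomology is $\ind_B^G(S^0(\ul^*)\otimes\omega_1) = H^0(\omega_1)$, so $\opH^{16}(G,H^0(\la)\otimes H^0(\la^*)^{(1)}) = \Hom_G(V(\omega_1),H^0(\omega_1)) = k$ — no Kostant-partition alternating sum or analogue of Lemma~\ref{L:omega1combinatorics} is required. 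The ``delicate step'' you flag as the main obstacle evaporates once you notice $k=0$. The claim that $\la = \omega_1$ is minimal in its linkage class then closes the argument via the refinement in Section~\ref{SS:nonvanishing}, exactly as you propose. Aside from this, your proposal is sound; the $p=19$ verification is the only piece you leave genuinely open, and the approach you sketch for it is the one the paper takes.
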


\begin{proof}
For $p \neq 19$, part (a) follows immediately from Lemma~\ref{L:E6vanishing},
Proposition~\ref{P:vanishing}, and
Theorem~\ref{T:firstnontrivial}.

For the proof of part (b), set $p=13$. Part (i) follows from the
proof of Lemma \ref{L:E6vanishing}. Let $W_I$ denote the subgroup of $W$ generated by the
simple reflections $s_{\alpha_2} , ... , s_{\alpha_6}$ and let
$w$ denote the distinguished representative of the left coset
$w_0W_I$. Then $\ell(w) =16 $ and $-w \cdot 0$ equals the sum of
all positive roots in $\Phi$ that contain $\alpha_1$, which
equals the weight $12\omega_1$. Let $\la = p\omega_1 + w \cdot
0= \omega_1.$ Clearly,
\begin{eqnarray*}
k \cong \Hom_G(V(\la), H^0(\la)) &\cong& \Hom_G(V(\la), \ind_B^G
(S^0( \ul^{*}) \otimes \omega_1))
\\
 &\cong& 
\Hom_G(V(\la), \ind_B^G (S^{(16 - \ell(w))/2}( \ul^{*}) \otimes
\omega_1))
\\
 &\cong&\Hom_G(V(\la), \opH^{16}(G_1, H^0(\la))^{(-1)})
\\
&\cong&  \opH^{16}(G, H^0(\la)\otimes H^0(\la^*)^{(1)}).
\end{eqnarray*}
Since $\la$ is the smallest dominant weight in its linkage class
the assertion follows from the remarks in
Section~\ref{SS:nonvanishing}.

For $p = 19$, part (a)(ii) follows from Theorem~\ref{T:firstnontrivial}.
It remains to show part (a)(i).
If $\opH^i(G,H^0(\la)\otimes
H^0(\la^*)^{(1)}) \neq 0$ for
$i < 35 = 2p - 3$ and $\la \in X(T)_{+}$, then $\la = 19\omega_1
+ w\cdot 0$ or
$\la = 19\omega_6 + w\cdot 0$ for some $w \in W$. From the proof
of Lemma \ref{L:E6vanishing},
one can see that $i \geq 32$. Consider the case that $\la =
19\omega_1 + w\cdot 0$.
The $\omega_6$ case is dual and analogous. One can explicitly,
with the aid of MAGMA,
identify all $w \in W$ such that $\la \in X(T)_{+}$ and $\la -
\omega_1$ lies in the
positive root lattice. By considering the number of copies of
$\al_1$ appearing in
$\la - \omega_1$ (as in the proof of Lemma \ref{L:E6vanishing}), one can
identify the least $k$ such
that $\la - \omega_1$ is a weight in $S^k(\ul^*)$, and hence the
least possible value
of $i$. The three weights which can give a value of $i < 35$ are
listed in the following table
along with the minimum possible value of $k$.

\vskip.4cm
\begin{center}
\begin{tabular}{|c|c|c|c|}\hline
$\la$ & $\ell(w)$ & $k$ & $i = 2k + \ell(w)$ \\
\hline 
$7\omega_1 + \omega_4$ & 14 & 10 & 34\\
\hline
$7\omega_1 + \omega_2$ & 15 & 9 & 33\\
\hline
$7\omega_1$ & 16 & 8 & 32\\
\hline
\end{tabular}
\end{center}

For these weights, one can use MAGMA to explicitly compute
$$
\sum_{u \in W} (-1)^{\ell(u)} P_{k}( u\cdot\la - \omega_1)
$$
in order to apply Proposition~\ref{P:KostantPartCohom}. For $\la
= 7\omega_1$, one finds that
in fact
$$
\dim\opH^{32}(G,H^0(\la)\otimes H^0(\la^*)^{(1)}) = 
\sum_{u \in W} (-1)^{\ell(u)} P_{8}( u\cdot\la - \omega_1) = 0$$
and
$$
\dim\opH^{34}(G,H^0(\la)\otimes H^0(\la^*)^{(1)}) = 
\sum_{u \in W} (-1)^{\ell(u)} P_{9}( u\cdot\la - \omega_1) = 0.$$
So, for $\la = 7\omega_1$, we have $i \geq 36$.

For $\la = 7\omega_1 + \omega_2$ one finds
$$
\dim\opH^{33}(G,H^0(\la)\otimes H^0(\la^*)^{(1)}) = 
\sum_{u \in W} (-1)^{\ell(u)} P_{9}( u\cdot\la - \omega_1) = 0.$$
Therefore, $i \geq 35$ in this case. 

Finally, for $\la = 7\omega_1 + \omega_4$, one finds
$$
\dim\opH^{34}(G,H^0(\la)\otimes H^0(\la^*)^{(1)}) = 
\sum_{u \in W} (-1)^{\ell(u)} P_{10}( u\cdot\la - \omega_1) = 0.$$
Therefore, $i \geq 36$ in this case and the claim follows.
\end{proof}

We now consider the situation for arbitrary $r$.  Sharp 
vanishing can be obtained for primes about twice the Coxeter
number.

\begin{theorem} \label{T:E6q-results} Suppose $\Phi$ is of type
$E_6$ and $p\geq 13 $.
\begin{itemize}
\item[(a)] If $p \neq 13, 19$ or $p \neq 17$ when $r$ is even,
then
\begin{itemize}
\item[(i)] $\opH^{i}(\gfpr, k )  = 0$ for $0 <i < r(2p-3)$;  
\item[(ii)] $\opH^{r(2p-3)}(\gfpr, k ) \neq 0.$
\end{itemize}
\item[(b)] If $p = 13$, then
\begin{itemize}
\item[(i)] $\opH^{i}(\gfpr, k )  = 0$ for $0 <i < 16r$;  
\item[(ii)] $\opH^{16r}(\gfpr, k ) \neq 0.$
\end{itemize}
\item[(c)] If $p =17$ and $r$ is even, then
\begin{itemize}
\item[(i)] $\opH^{i}(\gfpr, k )  = 0$ for $0 <i < 27r$;  
\item[(ii)] $\opH^{31r}(\gfpr, k ) \neq 0.$
\end{itemize}
\item[(d)] If $p =19$, then
\begin{itemize}
\item[(i)] $\opH^{i}(\gfpr, k )  = 0$ for $0 <i < 32r$;  
\item[(ii)] $\opH^{35r}(\gfpr, k ) \neq 0.$
\end{itemize}
\end{itemize}
\end{theorem}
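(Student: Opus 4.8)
The plan is to run the inductive machinery of the proof of Theorem~\ref{T:Dsummary2}, using the $r=1$ statements of Theorem~\ref{T:E6p-results} as the base of the induction. Throughout one works block by block: by Proposition~\ref{P:sections} the isomorphism $\opH^i(\gfpr,k)\cong\opH^i(G,\cgr)$ decomposes over the three cosets of $\mathbb{Z}\Phi$ in $X(T)$, namely $\mathbb{Z}\Phi$, $\omega_1+\mathbb{Z}\Phi$ and $\omega_6+\mathbb{Z}\Phi$, and Proposition~\ref{P:vanishing} turns vanishing of the $G$-cohomology of the factors $H^0(\la)\otimes H^0(\la^*)^{(r)}$ into vanishing of $\gfpr$-cohomology.

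\emph{Non-vanishing.} In parts (a)(ii), (c)(ii) and (d)(ii) the asserted degree equals $r(2p-3)$ --- note $2p-3=31$ when $p=17$ and $2p-3=35$ when $p=19$ --- so these are immediate from Theorem~\ref{T:firstnontrivial}. For (b)(ii), with $p=13$, the asserted degree $16r$ lies strictly below $r(2p-3)=23r$, so Theorem~\ref{T:firstnontrivial} no longer suffices; instead I would copy the Lyndon--Hochschild--Serre argument from the proof of Theorem~\ref{T:firstnontrivial}, now carried by the single weight $\la=\omega_1$. The base case $\opH^{16}(G,H^0(\omega_1)\otimes H^0(\omega_1^*)^{(1)})\cong k$ is established inside the proof of Theorem~\ref{T:E6p-results}(b), and the additional input, playing the role of Lemma~\ref{L:Simplylacedcohom}(c) and of equation~(\ref{D2p2n}), is that $\opH^i(G_1,H^0(\omega_1))^{(-1)}=0$ for $0<i<16$ and $\cong H^0(\omega_1)$ for $i=16$. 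This follows from~(\ref{ind}) once one notes that $12\omega_1$ is precisely the sum of the $16$ positive roots containing $\alpha_1$ --- so the element $w$ with $-w\cdot0=12\omega_1$ and $\ell(w)=16$ realizes $\omega_1=13\omega_1+w\cdot0$ --- and that no other decomposition $\omega_1=w'\cdot0+13\mu$ with $\ell(w')<16$ is possible, since $13\omega_6-\omega_1$ has non-integral $\alpha_1$-coefficient and hence does not lie in $\mathbb{Z}\Phi$.

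\emph{Vanishing.} One must bound $i$ whenever $\opH^i(G,H^0(\la)\otimes H^0(\la^*)^{(r)})\neq0$. If $\la\in\mathbb{Z}\Phi$, Lemma~\ref{L:Simplylacedcohom}(a) already gives $i\geq r(2p-3)$, which exceeds every threshold in the theorem, so such weights neither interfere below those thresholds nor contribute at them. If $\la\in\omega_1+\mathbb{Z}\Phi$ or $\la\in\omega_6+\mathbb{Z}\Phi$, apply Proposition~\ref{P:degreebound2} to produce a chain $\la=\ga_0,\ga_1,\dots,\ga_r=\la$ with $\ga_j=p\delta_j+u_j\cdot0$, $0\neq\delta_j\in X(T)_+$, and run the coefficient-counting of Lemma~\ref{L:E6vanishing} and of the proof of Theorem~\ref{T:Dsummary2}: since $\omega_1=\frac{1}{3}(4\alpha_1+3\alpha_2+5\alpha_3+6\alpha_4+4\alpha_5+2\alpha_6)$ and, by the diagram automorphism, $\omega_6$ has $\alpha_6$-coefficient $\frac{4}{3}$, one counts the copies of $\alpha_1$ (resp.\ $\alpha_6$), using that each positive root contains at most one copy and that exactly $16$ positive roots contain $\alpha_1$, and telescopes the resulting per-step inequalities for the $l_j$ with $\sum l_j=i$ to a bound of order $r\bigl(\frac{4}{3}(2p-2)-16\bigr)$ whenever all of the $\delta_j$ lie in one fixed nonzero coset. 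The essential bookkeeping is which cosets the $\delta_j$ may occupy: from $p\delta_j\equiv\delta_{j-1}\pmod{\mathbb{Z}\Phi}$ and $\delta_0=\delta_r$ one gets $(p^r-1)\delta_r\equiv0\pmod{\mathbb{Z}\Phi}$, so if $p\equiv1\pmod3$ every $\delta_j$ lies in the coset of $\la$; if $p\equiv2\pmod3$ and $r$ is odd then $\delta_r\in\mathbb{Z}\Phi$, so the two nonzero cosets contribute nothing (this is exactly why $p=17$ is admissible in (a) when $r$ is odd); and if $p\equiv2\pmod3$ with $r$ even the $\delta_j$ alternate between $\omega_1+\mathbb{Z}\Phi$ and $\omega_6+\mathbb{Z}\Phi$, which weakens the estimate and forces the separate treatment in (c).

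\emph{Assembling the cases, and the main difficulty.} For $p\equiv1\pmod3$ with $p\neq13,19$ (hence $p\geq31$) one checks $\frac{4}{3}(2p-2)-16\geq2p-3$, which gives (a)(i); for $p\equiv2\pmod3$ with $r$ even and $p\geq23$ the alternating-coset estimate again yields $\geq r(2p-3)$, completing (a)(i), while for $p=17$ it gives only the bound $27r$ of (c)(i). For $p=13$ the count gives exactly $16r$, and equality in the telescoping forces $\delta_j=\omega_1$ and $\ga_j=\omega_1$ throughout, which both confirms $16r$ as a vanishing range and shows $\omega_1$ is the unique weight contributing in degree $16r$, completing (b). For $p=19$ the count gives $32r$ --- here, unlike in the $r=1$ argument of Theorem~\ref{T:E6p-results}, the MAGMA computations cannot be propagated along a chain of length $r$, so $32r$ rather than $35r$ is what remains, giving (d)(i). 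I expect the delicate part to be exactly this telescoped coefficient count for arbitrary $r$ in the two minuscule cosets: keeping track of the mod-$3$ realizability of the chains $(\delta_j)$, and tightening the constants enough to isolate $p=13,17,19$ --- and, for $p=17$, the parity of $r$ --- as the only exceptional situations. Once these degree bounds are in hand, Proposition~\ref{P:vanishing} yields the vanishing ranges and Theorem~\ref{T:firstnontrivial} (or the $\omega_1$ spectral sequence when $p=13$) yields the non-vanishing.
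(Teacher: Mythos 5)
Your proposal follows essentially the same route as the paper's proof: decompose over the three cosets of $\mathbb{Z}\Phi$, handle $\mathbb{Z}\Phi$ by Lemma~\ref{L:Simplylacedcohom}, apply Proposition~\ref{P:degreebound2} and count $\al_1$- (or $\al_6$-) coefficients along the chain $(\delta_j)$, split into the ``consistent'' $(3\mid p-1)$ and ``alternating'' $(3\mid p-2,\ r$ even$)$ cases, and use Theorem~\ref{T:firstnontrivial} for non-vanishing at $r(2p-3)$ together with the LHS spectral-sequence induction carried by $\la=\omega_1$ for the sharp degree $16r$ when $p=13$. One small point worth tightening: for $p=23$ in the alternating case the raw bound $r\bigl(\frac{4}{3}(2p-2)-16\bigr)=\frac{128}{3}r$ is strictly less than $r(2p-3)=43r$, so it does not ``again yield $\geq r(2p-3)$'' without invoking integrality of $i$ (as the paper does); you should make that extra step explicit rather than absorbing $p=23$ into the same inequality as larger primes.
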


\begin{proof} The validity of parts (a)(ii), (c)(ii), and
(d)(ii) follows from Theorem~\ref{T:firstnontrivial}.
For part (a)(i), we need to show that 
$\opH^i(G,H^0(\la)\otimes H^0(\la^*)^{(r)}) = 0$ for all
dominant
weights $\lambda$ and all $0 < i < r(2p - 3)$.  
We argue along lines similar to that of the proof
of Theorem~\ref{T:Dsummary2}.  The root lattice 
$\mathbb{Z}\Phi$ has three cosets within $X(T)$:
$\mathbb{Z}\Phi$,
$\{\omega_{1}+\mathbb{Z}\Phi\}$, and
$\{\omega_{6}+\mathbb{Z}\Phi\}$.
If $\la$ is a weight in the root lattice, claim (a)(i) follows
from Lemma~\ref{L:Simplylacedcohom}(a).
 
Assume that 
$\opH^i(G,H^0(\la)\otimes H^0(\la^*)^{(r)}) \neq 0$ for some $i
> 0$ and
apply Proposition~\ref{P:degreebound2}. From above, we may
assume that
$\la = p \delta_0 +u_0 \cdot 0$ with $\delta_0=\delta_r \in
\{\omega_{1}+\mathbb{Z}\Phi\} \cup \{\omega_{6}+\mathbb{Z}\Phi\}
$.
For each $1 \leq j \leq r$, in order to have 
$\Ext_G^{l_j}(V(\ga_j)^{(1)},H^0(\ga_{j-1})) \neq 0$, then 
$\ga_j - \delta_{j - 1} = p\delta_j + u_j\cdot 0 - \delta_{j -
1}$ must
be a weight in $S^{\frac{l_j - \ell(u_{j-1})}{2}}(\ul^*)$. This
implies that
$p\delta_j - \delta_{j - 1}$ must lie in the positive root
lattice.
Since $\delta_r \in \{\omega_{1}+\mathbb{Z}\Phi\} \cup 
\{\omega_{6}+\mathbb{Z}\Phi\} $, we necessarily have
$p\delta_r \in \{\omega_{1}+\mathbb{Z}\Phi\} \cup
\{\omega_{6}+\mathbb{Z}\Phi\}$.
Since
$p\delta_r - \delta_{r-1} \in \mathbb{Z}\Phi$, it then follows
that
$\delta_{r-1} \in \{\omega_{1}+\mathbb{Z}\Phi\} \cup
\{\omega_{6}+\mathbb{Z}\Phi\}$.
Inductively one concludes  
that $\delta_j \in \{\omega_{1}+\mathbb{Z}\Phi\} \cup
\{\omega_{6}+\mathbb{Z}\Phi\}$
for all $j$. 

Before continuing, we investigate this condition on $\delta_j$ a
bit further.
Recall that $\omega_1 = \frac43\al_1 + \cdots + \frac23\al_6$
and
$\omega_6 = \frac23\al_1 + \cdots + \frac43\al_6$.
Suppose that $\delta_j \in \{\omega_{1}+\mathbb{Z}\Phi\}$ and
$\delta_{j-1} \in \{\omega_{1}+\mathbb{Z}\Phi\}$.  In order for $p\delta_j - \delta_{j-1}$ to lie in the root lattice, 
$\frac43p - \frac43 = \frac43(p-1)$ would need to be an integer. In other words, $p-1$ must be divisible by 3. The same argument
holds if we assume
that both $\delta_j$ and $\delta_{j-1}$ lie in
$\{\omega_{6}+\mathbb{Z}\Phi\}$.
On the other hand, suppose that $\delta_j \in
\{\omega_{1}+\mathbb{Z}\Phi\}$
and $\delta_{j-1} \in \{\omega_{6}+\mathbb{Z}\Phi\}$ (or vice
versa).
Then $\frac43p - \frac23 = \frac23(2p - 1)$ (or $\frac23p -
\frac43 = \frac23(p - 2)$,
respectively) must be an integer which implies that $p - 2$ is
divisible by 3.
Since $p$ is a prime greater than three, either $p-1$ is
divisible by 3 or $p - 2$ is
divisible by 3.  
Summarizing, if $3 | (p - 1)$, then either each 
$\delta_j \in \{\omega_{1}+\mathbb{Z}\Phi\}$ or each
$\delta_j \in \{\omega_{6}+\mathbb{Z}\Phi\}$. We refer to this
as the ``consistent'' case.
Whereas, if $3 | (p - 2)$, then we have an ``alternating''
situation with the $\delta_j$s
alternately lying in $\{\omega_{1}+\mathbb{Z}\Phi\}$ or
$\{\omega_{6}+\mathbb{Z}\Phi\}$.
Note further that since $\delta_0 = \delta_r$, the alternating
case can only occur if
$r$ is {\it even}.

Consider first the consistent case (when $3 | (p-1)$). Suppose without loss of generality that each
$\delta_j \in \{\omega_{1}+\mathbb{Z}\Phi\}$.  
For a weight $\ga$, when expressed as a sum of simple roots, let
$N(\ga)$ denote
the number of copies of $\al_1$ that appear.  
Since a positive root contains at most one 
copy of $\al_1$, we have
$$
\frac{l_j - \ell(u_{j-1})}{2} \geq pN(\delta_j) - N(-u_j\cdot 0)
- N(\delta_{j-1}).
$$
Rewriting this and using the fact that (see
Observation~\ref{O:uniquedecomp})
$\ell(u_{j-1}) \geq N(-u_{j-1}\cdot 0)$ gives
\begin{align*}
l_j &\geq 2pN(\delta_j) - 2N(-u_j\cdot 0) - 2N(\delta_{j-1}) +
\ell(u_{j-1})\\
&\geq 2pN(\delta_j) - 2N(-u_j\cdot 0) - 2N(\delta_{j-1}) +
N(-u_{j-1}\cdot 0).
\end{align*}
Therefore, 
\begin{align*}
i &= \sum_{j = 1}^rl_j
\geq \sum_{j = 1}^r (2pN(\delta_j) - 2N(-u_j\cdot 0) -
2N(\delta_{j-1}) + N(-u_{j-1}\cdot 0))\\
&= \sum_{j = 1}^r((2p - 2)N(\delta_j) - N(-u_j\cdot 0)).
\end{align*}
There are only 16 positive roots which contain an $\al_1$.
Hence,
$N(-u_j\cdot 0) \leq 16$. Since $N(\delta_j) \geq \frac 43$, we
get
$$
i \geq \sum_{j = 1}^r\left(\frac43(2p - 2) - 16\right) =
r\left(\frac43(2p - 2) - 16\right)\\
	= r\left(2p - 2 + \frac13(2p - 2) - 16\right).
$$
For $p \geq 25$, we get $i \geq r(2p-2)$ as desired. Note that
for $p = 17$ and
$p = 23$, $3 \nmid (p - 1)$, and so the only ``bad'' cases are
$p = 13$ and $p = 19$.
For $p = 13$, we conclude that $i \geq 16r$, and for $p = 19$,
we conclude that
$i \geq 32r$.

Now consider the alternating case (which requires $p - 2$ being
divisible by 3).
Analogous to the proof of Theorem~\ref{T:Dsummary2} for the type
$D_n$ case, for a weight $\ga$,
let $N_{\al_1}(\ga)$ (or $N_{\al_6}(\ga)$) denote the
coefficient of $\al_1$
(or $\al_6$) when $\ga$ is expressed as a sum of simple roots.
And then
set $N(\ga) =
\operatorname{max}\{N_{\al_1}(\ga),N_{\al_6}(\ga)\}$
(where the max is considered only in cases where the quantities
involved are
nonnegative). Then we reach the same conclusion on $i$ as above.
In this case, $p = 13$ and $p = 19$ cannot occur. Moreover, $p =
17$ and
$p = 23$ are potentially ``bad.'' However, for $p = 23$, since
$i$ is an
integer, we still conclude that $i \geq r(2p - 3)$ as needed.  
For $p = 17$, we conclude that $i \geq 27r$.

That completes the proof of all parts except for part (b)(ii)
with $p = 13$.
This follows inductively from the $r = 1$ case by using the the
spectral
sequence argument as in the proofs of
Theorem~\ref{T:firstnontrivial} and Theorem~\ref{T:Dsummary2}.
\end{proof}

For $p = 17$ when $r$ is even and $p = 19$, the theorem does not give a sharp vanishing bound.


\subsection{\bf Type $E_7$.} 
Assume for this subsection that $\Phi$ is of type $E_7$ with
$p > h = 18$ (so $p \geq 19$.)
The only dominant weight $\mu$ with 
$\langle \mu, \ta \rangle < 2$ is $\omega_7$. Again we conclude
from
Proposition~\ref{P:degreebound1} and
Proposition~\ref{P:vanishing} that
$\operatorname{H}^i(\gfp, k) =0$ for all $0 < i < 2p-3$ unless
there exists a weight
$\la$  of the form $p \omega_7 + w \cdot 0$  with 
$\opH^i(G, H^0(\la) \otimes H^0(\lambda^*)^{(1)}) \neq 0$ for
some $ 0 < i < 2p-3.$

\begin{lemma} \label{L:E7vanishing}
Suppose $\Phi$ is of type $E_7$, $p\geq 19$ and $\la \in X(T)_+$
is of the form $p \omega_7 + w \cdot 0$ with $w \in W$. Assume
in addition that $p \neq 19$, $23$. Then $\opH^i(G, H^0(\la)
\otimes H^0(\lambda^*)^{(1)}) = 0$ for all $ 0 < i < 2(p-1).$
\end{lemma}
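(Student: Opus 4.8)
The plan is to follow the strategy of the proof of Lemma~\ref{L:E6vanishing}, replacing the role played there by the node $\al_1$ and its fundamental weight $\om_1$ in $E_6$ with the node $\al_7$ and the weight $\om_7$ in $E_7$. So suppose $\opH^i(G,H^0(\la)\otimes H^0(\la^*)^{(1)})\neq 0$ with $i>0$ and $\la=p\om_7+w\cdot 0\in X(T)_+$ for some $w\in W$; I will bound $i$ from below by counting copies of $\al_7$.

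First I would record the two combinatorial inputs. Written as a sum of simple roots, $\om_7=\al_1+\tfrac32\al_2+2\al_3+3\al_4+\tfrac52\al_5+2\al_6+\tfrac32\al_7$, so the coefficient of $\al_7$ in $\om_7$ is $3/2$. Moreover, since $\al_7$ appears with coefficient $1$ in the highest root, every positive root of $E_7$ contains $\al_7$ at most once, and the number of positive roots containing $\al_7$ is $|\Phi^+(E_7)|-|\Phi^+(E_6)|=63-36=27$ (removing $\al_7$ from the Dynkin diagram leaves an $E_6$ subsystem). Now set $N$ equal to the coefficient of $\al_7$ in $-w\cdot 0$ expressed as a sum of simple roots. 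By Observation~\ref{O:uniquedecomp}, $-w\cdot 0$ is a sum of $\ell(w)$ distinct positive roots, whence $N\le\ell(w)$, and by the count above $N\le 27$. Consequently the coefficient of $\al_7$ in $\la-\om_7=(p-1)\om_7+w\cdot 0$ equals $\tfrac32(p-1)-N$. By the discussion in Section~\ref{SS:G1analysis}, the assumption $\opH^i\neq 0$ with $i>0$ forces $\la-\om_7$ to be a weight of $S^{(i-\ell(w))/2}(\ul^*)$, i.e.\ a sum of $(i-\ell(w))/2$ positive roots; since each contributes at most one $\al_7$ we get
$$\frac{3}{2}(p-1)-N\le\frac{i-\ell(w)}{2},$$
hence $i\ge 3(p-1)-2N+\ell(w)\ge 3(p-1)-N\ge 3(p-1)-27$, with equality only if $N=\ell(w)=27$. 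Finally $3(p-1)-27\ge 2(p-1)$ is equivalent to $p\ge 28$; since $p\ge 19$ is prime with $p\neq 19,23$ we have $p\ge 29$, so $i\ge 2(p-1)$, which is the assertion.

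I do not expect a real obstacle: the argument is entirely combinatorial, and the only care needed is in getting the two numbers ($3/2$ and $27$) correct and in checking which small primes are not covered. The one point worth flagging, by contrast with the $E_6$ case, is that there is no auxiliary divisibility condition on $p$ here: in $E_6$ the coefficient $4/3$ forced $3\mid(p-1)$, while the coefficient $3/2$ of $\al_7$ in $\om_7$ only requires $2\mid(p-1)$, which is automatic, so the genuinely exceptional primes are exactly $19$ and $23$ (for which $3(p-1)-27$ equals $27$ and $39$, both strictly below $2(p-1)$).
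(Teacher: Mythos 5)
Your argument is correct and is essentially the paper's own proof: you bound the coefficient of $\al_7$ in $\la-\om_7$ using the facts that $\om_7$ contributes $\tfrac32(p-1)$, each positive root contributes at most one $\al_7$, and $N\le\ell(w)$ with $N\le 27$, then solve for $i$ and check which small primes escape the bound. (Your full expansion of $\om_7$ and the count $63-36=27$ check out; the paper records only the $3/2$ coefficient and the 27.) The closing remark contrasting the $2\mid(p-1)$ parity condition here with the $3\mid(p-1)$ condition in the $E_6$ case is a nice observation but does not change the route.
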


\begin{proof} Assume $\la =p \omega_7 + w \cdot 0, w \in W.$ Let
$N$ denote the number of times that $\alpha_7$ appears in $-
w\cdot 0$ when written as a sum of simple roots. Note that all
positive roots of $\Phi$ contain the simple root $\alpha_7$ at
most once. This implies that $N \leq \ell(w)$. Moreover, there
are exactly $27$ distinct positive roots containing $\alpha_7$.
Hence, $N \leq 27$.

When writing $\omega_7$ as a sum of simple roots the coefficient
for $\alpha_7$ is $3/2$. Therefore $\la-\omega_7$, written as a
sum of simple roots contains at least $3/2(p-1) - N$ copies of
$\alpha_7$. From Section~\ref{SS:G1analysis}, we know that
$\opH^i(G, H^0(\la) \otimes H^0(\lambda^*)^{(1)}) \neq 0$ and
$i>0$ imply that $\la-\omega_1$ is a sum of $(i-\ell(w)/2$ many
positive roots. Using the the fact that $\alpha_7$ appears at
most once in each positive root, one obtains the inequality:
$$\frac{3}{2}(p-1)-N \leq \frac{i- \ell(w)}{2}.$$
Solving for $i$ yields
$$i \geq 3(p-1)- 2N+\ell(w)\geq 2(p-1) +p-1-N \geq 2(p-1)+
p-1-27.$$
Note that equality holds if and only if $N= \ell(w) = 27$. 

Hence, $i \geq 2(p-1)$ for all primes except for $18 <p \leq
28,$ i.e., the primes $p=19$ and $p=23.$
\end{proof}

\begin{theorem} \label{T:E7summary} Suppose $\Phi$ is of type
$E_7$ and $p\geq 19 $.
\begin{itemize}
\item[(a)] If $p \neq  19, 23$, then
\begin{itemize}
\item[(i)] $\opH^{i}(\gfp, k )  = 0$ for $0 <i < 2p-3$;  
\item[(ii)] $\opH^{2p-3}(\gfp, k ) \neq 0.$
\end{itemize}
\item[(b)] If $p = 19$, then
\begin{itemize}
\item[(i)] $\opH^{i}(\gfp, k )  = 0$ for $0 <i < 27$;  
\item[(ii)] $\opH^{27}(\gfp, k ) \neq 0.$
\end{itemize}
\item[(c)] If $p = 23$, then
\begin{itemize}
\item[(i)] $\opH^{i}(\gfp, k )  = 0$ for $0 <i < 39$;  
\item[(ii)] $\opH^{39}(\gfp, k ) \neq 0.$
\end{itemize}
\end{itemize}
\end{theorem}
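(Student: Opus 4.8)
\textbf{Proof proposal for Theorem~\ref{T:E7summary}.}
The plan is to mirror the structure of the proof of Theorem~\ref{T:E6p-results}, using the vanishing information from Lemma~\ref{L:E7vanishing} together with the non-vanishing mechanism of Theorem~\ref{T:firstnontrivial} and the vanishing criterion of Proposition~\ref{P:vanishing}. First I would observe that part (a)(ii), part (b)(ii), and part (c)(ii) are immediate: since the root system $E_7$ is simply laced, Theorem~\ref{T:firstnontrivial} gives $\opH^{2p-3}(\gfp,k)\neq 0$ for every $p > h$, and $2p - 3 = 27$ when $p = 19$ and $2p - 3 = 39$ when $p = 23$. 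So only the vanishing statements (i) require real work.

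For part (a)(i), by the discussion preceding Lemma~\ref{L:E7vanishing}, Proposition~\ref{P:degreebound1} together with the observation that $\omega_7$ is the unique fundamental weight with $\langle\mu,\ta^\vee\rangle < 2$ reduces the question to weights $\la = p\omega_7 + w\cdot 0$. For $p\neq 19,23$, Lemma~\ref{L:E7vanishing} gives $\opH^i(G,H^0(\la)\otimes H^0(\la^*)^{(1)}) = 0$ for $0 < i < 2(p-1)$; since $2(p-1) > 2p - 3$, combining this with Theorem~\ref{T:firstnontrivial} and Proposition~\ref{P:vanishing} yields (a)(i).

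For part (b), set $p = 19$. Part (i) follows directly from the proof of Lemma~\ref{L:E7vanishing}: tracing through the inequality $i \geq 2(p-1) + (p-1) - 27$ with $p = 19$ shows $i \geq 27$ for every relevant $\la = 19\omega_7 + w\cdot 0$, so $\opH^i(\gfp,k) = 0$ for $0 < i < 27$ by Proposition~\ref{P:vanishing}. (Part (ii) was already noted above.) For part (c), set $p = 23$; the same inequality now reads $i \geq 2(p-1) + (p-1) - 27 = 44 + 22 - 27 = 39$, so again $\opH^i(\gfp,k) = 0$ for $0 < i < 39$ by Proposition~\ref{P:vanishing}, and part (ii) follows from Theorem~\ref{T:firstnontrivial} since $39 = 2p-3$. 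I expect the main subtlety—as in the $E_6$ case with $p = 19$—to be checking whether the crude bound coming from counting copies of $\al_7$ is in fact attained, or whether, as for $E_6$ at $p=19$, one must supplement it with an explicit (e.g. MAGMA-assisted) computation of the alternating sum $\sum_{u\in W}(-1)^{\ell(u)}P_k(u\cdot\la - \omega_7)$ from Proposition~\ref{P:KostantPartCohom} to rule out a few exceptional weights lying just below the claimed range; the statement of the theorem as given suggests no such correction is needed here, so the counting bound should suffice.
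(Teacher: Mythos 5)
Your handling of part~(a) and of the vanishing statements~(b)(i) and~(c)(i) is essentially the paper's argument: for $p\neq 19,23$ use Lemma~\ref{L:E7vanishing} to reduce to the simply-laced bound, and for $p=19,23$ trace the inequality $i\geq 2(p-1)+(p-1)-27$ through the proof of Lemma~\ref{L:E7vanishing}. That much is fine.

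The genuine gap is in~(b)(ii) and~(c)(ii), and it comes from an arithmetic slip that then hides a missing idea. You assert that $2p-3=27$ when $p=19$ and $2p-3=39$ when $p=23$, but in fact $2\cdot19-3=35$ and $2\cdot23-3=43$. Theorem~\ref{T:firstnontrivial} therefore gives $\opH^{35}(\gfp,k)\neq0$ for $p=19$ and $\opH^{43}(\gfp,k)\neq0$ for $p=23$; it says nothing about degrees $27$ or $39$. That is exactly why $p=19,23$ are excluded from part~(a): the first non-vanishing class occurs \emph{below} the generic degree $2p-3$, and establishing that non-vanishing requires a separate argument. In the paper, for $p=19$ one takes the distinguished representative $w$ of $w_0W_I$ with $W_I=\langle s_{\alpha_1},\dots,s_{\alpha_6}\rangle$, so that $\ell(w)=27$, $-w\cdot0=18\omega_7$, and $\la=p\omega_7+w\cdot0=\omega_7$; one then computes directly that $\opH^{27}(G,H^0(\la)\otimes H^0(\la^*)^{(1)})\cong k$ and invokes the minimality of $\la$ in its linkage class. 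For $p=23$, one must list the four candidate weights $\la=23\omega_7+w\cdot0$ that could give cohomology in degree $<43$ and evaluate the alternating Kostant-partition sums (with MAGMA); the computation shows $\dim\opH^{39}(G,H^0(5\omega_7)\otimes H^0((5\omega_7)^*)^{(1)})=1$ and that no smaller linked weight can contribute in degree $40$. So, contrary to your closing expectation, the explicit verification is indispensable here, and for $p=19$ an ad hoc construction replaces the appeal to Theorem~\ref{T:firstnontrivial} entirely.
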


\begin{proof}
Part (a) follows from Lemma \ref{L:E7vanishing},
Proposition~\ref{P:vanishing}, and
Theorem~\ref{T:firstnontrivial}.

For the proof of part (b), set $p=19$. Part (i) follows from the
proof of Lemma~\ref{L:E7vanishing}. Let $W_I$ denote the
subgroup of $W$ generated by the simple reflections
$s_{\alpha_1} , ... , s_{\alpha_6}$ and let $w$ denote the
distinguished representative of the left coset $w_0W_I$. Then
$\ell(w) =27 $ and $-w \cdot 0$ equals the sum of all positive
roots in $\Phi$ that contain $\alpha_7$, which equals the weight
$18\omega_7$. Let $\la = p\omega_7 + w \cdot 0= \omega_7.$
Using the same argument as for $E_6$, we obtain $\opH^{27}(G_1,
H^0(\la))^{(-1)} \cong
H^0(\la)$ and hence $\opH^{27}(G, H^0(\la)\otimes
H^0(\la^*)^{(1)}) \cong k.$
Again $\la$ is the smallest dominant weight in its linkage class
and the assertion follows from the remarks in
Section~\ref{SS:nonvanishing}.

For part (c), set $p = 23$. If $\opH^i(G,H^0(\la)\otimes
H^0(\la^*)^{(1)}) \neq 0$ for
$i < 43 = 2p - 3$ and $\la \in X(T)_{+}$, then $\la = 23\omega_7
+ w\cdot 0$ for some $w \in W$.
From the proof of Lemma \ref{L:E7vanishing}, one can see that $i \geq 39$.  
One can explicitly, with the aid of MAGMA,
identify all $w \in W$ such that $\la \in X(T)_{+}$ and $\la -
\omega_7$ lies in the
positive root lattice. By considering the number of copies of
$\al_7$ appearing in
$\la - \omega_7$ (as in the proof of Lemma \ref{L:E7vanishing}), one can
identify the least $k$ such
that $\la - \omega_7$ is a weight in $S^k(\ul^*)$, and hence the
least possible value
of $i$. The four weights which can give a value of $i < 43$ are
listed in the following table
along with the minimum possible value of $k$.

\vskip.4cm
\begin{center}
\begin{tabular}{|c|c|c|c|}\hline
$\la$ & $\ell(w)$ & $k$ & $i = 2k + \ell(w)$ \\
\hline 
$5\omega_7 + \omega_4$ & 24 & 9 & 42\\
\hline
$5\omega_7 + \omega_3$ & 25 & 8 & 41\\
\hline
$5\omega_7 + \omega_1$ & 26 & 7 & 40\\
\hline
$5\omega_7$ & 27 & 6 & 39\\
\hline
\end{tabular}
\end{center}

For these weights, one can use MAGMA to explicitly compute
$$
\sum_{u \in W} (-1)^{\ell(u)} P_{k}( u\cdot\la - \omega_7)
$$
in order to apply Proposition~\ref{P:KostantPartCohom}. For $\la
= 5\omega_7$, one finds that
in fact
$$
\dim\opH^{39}(G,H^0(\la)\otimes H^0(\la^*)^{(1)}) = 
\sum_{u \in W} (-1)^{\ell(u)} P_{6}( u\cdot\la - \omega_7) = 1.$$
Since there are no weights less than $\la$ which can give
cohomology in degree 40,
$\opH^{39}(\gfp,k) \neq 0$.
\end{proof}

We next consider the situation for arbitrary $r$.

\begin{theorem}\label{T:E7summary-r} Suppose $\Phi$ is of type $E_7$ and $p\geq 19$.\begin{itemize}
\item[(a)] If $p \neq 19, 23$, then
\begin{itemize}
\item[(i)] $\opH^{i}(\gfpr, k )  = 0$ for $0 <i < r(2p-3)$;  
\item[(ii)] $\opH^{r(2p-3)}(\gfpr, k ) \neq 0.$
\end{itemize}
\item[(b)] If $p = 19$, then
\begin{itemize}
\item[(i)] $\opH^{i}(\gfpr, k )  = 0$ for $0 <i < 27r$;  
\item[(ii)] $\opH^{27r}(\gfpr, k ) \neq 0.$
\end{itemize}
\item[(c)] If $p = 23$, then
\begin{itemize}
\item[(i)] $\opH^{i}(\gfpr, k )  = 0$ for $0 <i < 39r$;  
\item[(ii)] $\opH^{39r}(\gfpr, k ) \neq 0.$
\end{itemize}
\end{itemize}
\end{theorem}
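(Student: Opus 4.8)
The plan is to follow the pattern established for types $D_n$ and $E_6$ (Theorems~\ref{T:Dsummary2} and~\ref{T:E6q-results}). For the non-vanishing assertion (a)(ii), the claimed degree $r(2p-3)$ is the generic simply-laced bound, so $\opH^{r(2p-3)}(\gfpr,k)\neq 0$ is immediate from Theorem~\ref{T:firstnontrivial}. Everything else reduces, via Proposition~\ref{P:vanishing} and the linkage-class refinement of Section~\ref{SS:nonvanishing}, to controlling $\opH^i(G,H^0(\la)\otimes H^0(\la^*)^{(r)})$ for all dominant $\la$.

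For the vanishing statements I would partition the dominant weights according to the two cosets of $\mathbb{Z}\Phi$ in $X(T)$ (the quotient being $\mathbb{Z}/2$ in type $E_7$). Weights in $\mathbb{Z}\Phi$ are handled by Lemma~\ref{L:Simplylacedcohom}(a), which already gives vanishing below $r(2p-3)$. For $\la = p\delta_0 + u_0\cdot 0$ with $\delta_0 = \delta_r$ in the nontrivial coset $\{\omega_7 + \mathbb{Z}\Phi\}$, apply Proposition~\ref{P:degreebound2} to produce the sequence $\delta_0,\dots,\delta_r$; since $p$ is odd and $\mathbb{Z}/2$ has a unique nontrivial element, every $\delta_j$ lies in $\{\omega_7+\mathbb{Z}\Phi\}$ (there is no ``alternating'' case, unlike $E_6$). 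Writing $N(\ga)$ for the coefficient of $\al_7$ when $\ga$ is expressed as a sum of simple roots, and using that each positive root contains $\al_7$ at most once, that exactly $27$ positive roots contain $\al_7$, and that $N(\delta_j)\geq\tfrac{3}{2}$ for every nonzero dominant weight in $\{\omega_7+\mathbb{Z}\Phi\}$ (the relevant fundamental weights $\omega_2,\omega_5,\omega_7$ have $\al_7$-coefficients $\tfrac{3}{2},\tfrac{5}{2},\tfrac{3}{2}$), the same manipulation as in Lemma~\ref{L:E7vanishing} and the proof of Theorem~\ref{T:E6q-results} yields
\[
i = \sum_{j=1}^r l_j \;\geq\; r\Bigl(\tfrac{3}{2}(2p-2) - 27\Bigr) \;=\; r(3p-30).
\]
For $p\geq 29$ (equivalently $p\neq 19,23$) this is at least $r(2p-3)$, which together with the root-lattice bound and Proposition~\ref{P:vanishing} gives (a)(i); for $p=19$ it is $27r$ and for $p=23$ it is $39r$, giving (b)(i) and (c)(i).

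For the remaining non-vanishing statements (b)(ii) and (c)(ii), set $\la=\omega_7$ when $p=19$ and $\la=5\omega_7$ when $p=23$. In both cases $\la = p\omega_7 + w\cdot 0$ with $\ell(w)=27$, so by (\ref{ind}), $\opH^i(G_1,H^0(\la))^{(-1)}\cong\ind_B^G(S^{(i-27)/2}(\ul^*)\otimes\omega_7)$. Tracking $\al_7$-coefficients exactly as above (now via Proposition~\ref{P:degreebound1}, and, for $p=23$, invoking the $r=1$ computation $\dim\opH^{39}(G,H^0(5\omega_7)\otimes H^0((5\omega_7)^*)^{(1)})=1$ from Theorem~\ref{T:E7summary}(c)), one obtains the analogue of (\ref{D2p2n}): $[\opH^i(G_1,H^0(\la))^{(-1)}]_{\la}=0$ for $0<i<D$ and $[\opH^{D}(G_1,H^0(\la))^{(-1)}]_{\la}\cong H^0(\la)$, where $D=27$ (resp. $39$). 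One then runs the Lyndon--Hochschild--Serre spectral sequence
\begin{align*}
E_2^{k,l} &= \Ext_{G/G_1}^k(V(\la)^{(r)},\opH^l(G_1,H^0(\la)))\\
&\cong \Ext_G^k(V(\la)^{(r-1)},\opH^l(G_1,H^0(\la))^{(-1)}) \Rightarrow \Ext_G^{k+l}(V(\la)^{(r)},H^0(\la))
\end{align*}
precisely as in the proofs of Theorem~\ref{T:firstnontrivial} and Theorem~\ref{T:Dsummary2}(b): the degree bound forces $E_2^{k,l}=0$ for $k+l<rD$, the analogue of (\ref{D2p2n}) forces $E_2^{k,l}=0$ for $l<D$, so the only term contributing in total degree $rD$ is $E_2^{(r-1)D,\,D}$, which is nonzero by induction on $r$; hence $\opH^{rD}(G,H^0(\la)\otimes H^0(\la^*)^{(r)})\neq 0$, and, $\la$ being minimal in its linkage class, Section~\ref{SS:nonvanishing} gives $\opH^{rD}(\gfpr,k)\neq 0$.

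The main obstacle is the bookkeeping in the equality case of the degree bound: one must check that $i=r(3p-30)$ forces every $\ga_j = p\delta_j + u_j\cdot 0$ produced by Proposition~\ref{P:degreebound2} to equal $\la$. Here $N(\delta_j)=\tfrac{3}{2}$ leaves only $\delta_j\in\{\omega_2,\omega_7\}$, and the requirement that the equality forces $\ga_j = p\delta_j - 18\omega_7$ to be dominant rules out $\delta_j=\omega_2$; thus the sequence collapses to the constant sequence $\ga_j=(p-18)\omega_7=\la$, so a single $E_2$-term survives and the induction closes. The only other non-formal ingredient is the MAGMA-assisted $r=1$ input for $p=23$; for all $p\geq 29$ the argument is purely the degree-bound computation and needs nothing new.
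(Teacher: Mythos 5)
Your proposal is correct and follows the same route as the paper's proof: Theorem~\ref{T:firstnontrivial} for (a)(ii); the two-coset decomposition with Lemma~\ref{L:Simplylacedcohom}(a) on the root-lattice side and the $\al_7$-coefficient bound $i\geq r\bigl(\tfrac32(2p-2)-27\bigr)$ from Proposition~\ref{P:degreebound2} on the $\omega_7$-coset side; and the inductive Lyndon--Hochschild--Serre argument (via the $r=1$ input from Theorem~\ref{T:E7summary} and the analogue of~\eqref{D2p2n}) for (b)(ii), (c)(ii). The only difference is that you spell out the equality-case bookkeeping (that $N(\delta_j)=\tfrac32$ together with dominance forces $\ga_j=(p-18)\omega_7$ for every $j$), which the paper leaves implicit by referring back to the proofs of Theorems~\ref{T:firstnontrivial} and~\ref{T:Dsummary2}.
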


\begin{proof} The validity of part (a)(ii) follows from
Theorem~\ref{T:firstnontrivial}.
For part (a)(i), we need to show that 
$\opH^i(G,H^0(\la)\otimes H^0(\la^*)^{(r)}) = 0$ for all
dominant
weights $\lambda$ and all $0 < i < r(2p - 3)$.  
An argument similar to that in the proof of
Theorem~\ref{T:E6q-results} works here as well.
The root lattice $\mathbb{Z}\Phi$ has two cosets within $X(T)$:
$\mathbb{Z}\Phi$
and $\{\omega_{7}+\mathbb{Z}\Phi\}$.  
If $\la$ is a weight in the root lattice claim (a)(i) follows
from Lemma~\ref{L:Simplylacedcohom}(a).

Consider then the case that $\la \in \{\omega_{7}+\mathbb{Z}\Phi\}$ and apply 
Proposition \ref{P:degreebound2}. As before, one finds that each 
$\delta_j \in \{\omega_{7}+\mathbb{Z}\Phi\}$. Further, if we let
$N(\ga)$ denote the coefficient of $\al_7$, when $\ga$ is
expressed as
a sum of simple roots, then we again conclude that 
$$
i \geq \sum_{j = 1}^r((2p - 2)N(\delta_j) - N(-u_j\cdot 0)).
$$
Here, since $\omega_7 = \al_1 + \cdots + \frac32\al_7$, we have
$N(\delta_j) \geq \frac32$.
Furthermore, there are 27 positive roots which contain an
$\al_7$, and so
$N(-u_j\cdot 0) \leq 27$.  Therefore, we get
$$
i \geq r\left(\frac32(2p - 2) - 27\right) = r\left(2p - 3 + p -
27\right).
$$
For $p \geq 27$, we have $i \geq r(2p - 3)$ as needed, which
completes part (a).

For $p = 19$ and $p = 23$, we conclude only that $i \geq 27r$ or
$i \geq 39r$,
respectively, which gives parts (b)(i) and (c)(i). Parts (b)(ii)
and (c)(ii) again
follows inductively from the $r = 1$ case by the spectral
sequence argument in
Theorem~\ref{T:firstnontrivial} and Theorem~\ref{T:Dsummary2}.
\end{proof}


\subsection{\bf Type $E_8$.} 
Assume for this subsection that $\Phi$ is of type $E_8$ with
$p > h = 30$ (so $p \geq 31$).
Here the weight lattice and root lattice always coincide. 
From Corollary~\ref{C:simplylacedadjoint} we obtain the
following.

\begin{theorem} \label{T:E8summary} Suppose $\Phi$ is of type
$E_8$ and $p\geq 31$.  Then
\begin{itemize}
\item[(a)] 
 $\opH^{i}(\gfpr, k )  = 0$ for $0 <i < r(2p-3)$;  
\item[(b)] $\opH^{r(2p-3)}(\gfpr, k ) \neq 0.$
\end{itemize}
\end{theorem}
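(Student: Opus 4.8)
The plan is to observe that type $E_8$ is the easiest of all the exceptional cases precisely because its weight lattice $X(T)$ coincides with its root lattice $\mathbb{Z}\Phi$. First I would note that $E_8$ is simply laced, so that the general machinery of Section~3 applies directly: the longest root $\ta$ equals the longest short root $\al_0$, and Lemma~\ref{L:Simplylacedcohom} and Theorem~\ref{T:firstnontrivial} are available verbatim for $p > h = 30$. Since $X(T) = \mathbb{Z}\Phi$ in type $E_8$, every dominant weight is automatically in the root lattice, hence in the zero linkage class; in particular every dominant weight of the form $p\delta + w\cdot 0$ lies in the zero linkage class. This is exactly the hypothesis under which Lemma~\ref{L:Simplylacedcohom}(a) and Theorem~\ref{T:firstnontrivial} were designed to run, and it is the observation already recorded in Section~\ref{S:adjoint} that adjoint-type simply laced groups behave this way --- but here the group is simultaneously simply connected and adjoint, so no separate adjoint argument is needed.

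For part (a), I would invoke Lemma~\ref{L:Simplylacedcohom}(a): for any non-zero dominant weight $\mu$ (which, as just noted, is necessarily linked to zero), $\opH^i(G,H^0(\mu)\otimes H^0(\mu^*)^{(r)}) = 0$ whenever $i < r(2p-3)$. Combined with Proposition~\ref{P:vanishing}, this immediately yields $\opH^i(\gfpr,k) = 0$ for $0 < i < r(2p-3)$. For part (b), I would apply Theorem~\ref{T:firstnontrivial}, which asserts $\opH^{r(2p-3)}(\gfpr,k) \neq 0$ for any $G$ with simply laced root system --- no coincidence of lattices is even required for this direction, so it applies to $E_8$ at once. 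Alternatively, as the text indicates, both parts follow in one stroke from Corollary~\ref{C:simplylacedadjoint} since $E_8$ is of adjoint type (being self-dual on the level of lattices).

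There is essentially no obstacle here: unlike $E_6$, $E_7$, and the $D_n$ case, there are no ``extra'' cosets of the root lattice in $X(T)$ to analyze, so none of the delicate coset-by-coset degree-bound arguments (nor the MAGMA-assisted small-prime exceptions at $p = 13, 17, 19$ in $E_6$ or $p = 19, 23$ in $E_7$) are needed. The only thing to check is the numerical input $p > h = 30$, i.e., $p \geq 31$, which is the standing hypothesis. Thus the proof is simply: cite that $X(T) = \mathbb{Z}\Phi$ for $E_8$, then quote Lemma~\ref{L:Simplylacedcohom}(a), Theorem~\ref{T:firstnontrivial}, and Proposition~\ref{P:vanishing} (equivalently, Corollary~\ref{C:simplylacedadjoint}).
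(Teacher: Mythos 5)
Your proposal matches the paper's proof exactly: the paper likewise observes that for $E_8$ the weight lattice equals the root lattice and simply cites Corollary~\ref{C:simplylacedadjoint}, whose ingredients are Lemma~\ref{L:Simplylacedcohom}, Theorem~\ref{T:firstnontrivial}, and Proposition~\ref{P:vanishing}. Nothing further is needed.
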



\section{Type $B_n$, $n \geq 3$}

Assume throughout this section that $\Phi$ is of type $B_n$, $n
\geq 3$, and that
$p > h = 2n$. Note that type $B_2$ is equivalent to type $C_2$
which
was discussed in \cite{BNP}. However, for certain inductive
arguments, at points we will
allow $n = 1, 2$. Following Section 2, our goal is to find the
least $i > 0$
such that $\opH^i(G,H^0(\la)\otimes H^0(\la^*)^{(1)}) \neq 0$
for some $\la$.
From Proposition \ref{P:degreebound1}, we know that $i \geq p -
2$.

\subsection{\bf Restrictions.}\label{SS:B1} Suppose that 
$\opH^i(G,H^0(\la)\otimes H^0(\la^*)^{(1)}) \neq 0$ for some
$i > 0$ and $\la = p\mu + w\cdot 0$ with $\mu \in X(T)_{+}$ and
$w \in W$.
In this case, the longest root $\ta = \omega_2$. From
Proposition \ref{P:degreebound1},
$i \geq (p-1)\langle\mu,\ta^{\vee}\rangle - 1$.

For a fundamental dominant weight $\omega_j$,
$$
\langle\omega_j,\ta^{\vee}\rangle = 
\begin{cases}
1 &\text{ if } j = 1, n\\
2 &\text{ if } 2 \leq j \leq n - 1.
\end{cases}
$$
Therefore, if $\mu \neq \omega_1, \omega_n$, we will have 
$\langle\mu,\ta^{\vee}\rangle \geq 2$ and $i \geq 2p - 3$.

The following lemma shows that if $n$ is sufficiently large,
and $\la = p\omega_n + w\cdot 0$, then one also has $i \geq 2p -
3$.
In fact strictly greater.

\begin{lemma}\label{L:bwn} Suppose $\Phi$ is of type $B_n$ with
$n \geq 7$ and
$p > 2n$.
Suppose $\la = p\omega_n + w\cdot 0 \in X(T)_+$ with $w \in W$
and $\opH^i(G,H^0(\la)\otimes H^0(\la^*)^{(1)}) \neq 0$. 
Then $i > 2p - 3$.
\end{lemma}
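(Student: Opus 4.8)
The plan is to imitate the argument used for type $D_n$ in Section~\ref{SS:caseomega1} and for the $\omega_n$ case in Lemma~\ref{L:omegan}, counting copies of the simple root $\al_n$. Following Section~\ref{SS:G1analysis}, the hypothesis $\opH^i(G,H^0(\la)\otimes H^0(\la^*)^{(1)})\neq 0$ with $\la = p\omega_n + w\cdot 0$ forces $\la - \omega_n = (p-1)\omega_n + w\cdot 0$ to be a weight of $S^j(\ul^*)$ with $j = (i - \ell(w))/2$; that is, $(p-1)\omega_n + w\cdot 0$ must be a sum of $j$ positive roots. First I would write $\omega_n$ as a sum of simple roots for type $B_n$ — here $\omega_n = \tfrac12(\al_1 + 2\al_2 + \cdots + n\al_n)$, so the coefficient of $\al_n$ is $n/2$ — and decompose $-w\cdot 0$ into $\ell(w)$ distinct positive roots via Observation~\ref{O:uniquedecomp}, splitting $\ell(w) = a + b$ where $a$ is the number of those roots containing $\al_n$ and $b$ the number not containing $\al_n$.

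Next I would observe that every positive root of type $B_n$ contains $\al_n$ at most once (the short roots $\epsilon_i$ and the long roots $\epsilon_i \pm \epsilon_j$ have $\al_n$-coefficient $0$ or $1$), so $\la - \omega_n$ contains exactly $\tfrac{(p-1)n}{2} - a$ copies of $\al_n$, and hence
\[
\frac{i - \ell(w)}{2} = j \geq \frac{(p-1)n}{2} - a.
\]
Substituting $\ell(w) = a + b$ and rearranging gives $i \geq (p-1)n - a + b$. Since the total number of positive roots containing $\al_n$ is $n^2$ (there are $n$ short roots and $n(n-1)$ of the form $\epsilon_i \pm \epsilon_j$ with the appropriate one containing $\al_n$ — this count needs to be pinned down carefully, but it is some explicit quadratic in $n$, call it $M(n)$), we have $a \leq M(n)$ and $b \geq 0$, so
\[
i \geq (p-1)n - M(n).
\]
The final step is the arithmetic: using $p \geq 2n+1$, show that $(p-1)n - M(n) > 2p - 3$ whenever $n \geq 7$. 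This reduces to a polynomial inequality in $n$ and $p$ of the same flavor as the estimate $\tfrac{(p-n)n}{2} \geq 2p - 2n + 2$ in Lemma~\ref{L:omegan}(b); one substitutes the worst case $p = 2n+1$ and checks that the resulting quadratic in $n$ is positive for $n \geq 7$ (and one should double-check $n = 7,\dots$ near the boundary, since that is presumably why the hypothesis is $n \geq 7$ rather than $n \geq 4$).

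The main obstacle I anticipate is getting the exact value of $M(n)$, the number of positive roots of type $B_n$ containing $\al_n$, correct — this is where an off-by-something error would break the boundary case $n = 7$ — and then verifying the final polynomial inequality sharply enough to see that $n \geq 7$ is exactly the right threshold (for $n = 4,5,6$ the bound $(p-1)n - M(n)$ presumably only gives $i \geq 2p-3$ or weaker, which is why those cases are excluded here and handled separately). Everything else is the same bookkeeping as in the type $D_n$ lemmas, so I would model the write-up closely on the proof of Lemma~\ref{L:omegan}.
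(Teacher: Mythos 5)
Your proposal contains a genuine error that invalidates the key inequality. You assert that every positive root of type $B_n$ contains $\al_n$ at most once, but this is false: the long roots $\epsilon_i + \epsilon_j = \al_i + \cdots + \al_{j-1} + 2(\al_j + \cdots + \al_n)$ for $i < j$ contain $\al_n$ with coefficient $2$. (Only the short roots $\epsilon_i$ have $\al_n$-coefficient exactly $1$, and the roots $\epsilon_i - \epsilon_j$ have coefficient $0$.) This changes the counting in two ways. First, to obtain $N$ copies of $\al_n$ from positive roots you need only $\lceil N/2\rceil$ roots, not $N$, so your inequality $\frac{i-\ell(w)}{2} \geq \frac{(p-1)n}{2} - a$ should instead carry an extra factor of $\tfrac12$ on the right-hand side. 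Second, a binary split $\ell(w) = a + b$ is no longer fine enough to track the $\al_n$-coefficient of $-w\cdot 0$, which lies anywhere between $a$ and $2a$ when $a$ roots ``contain $\al_n$'' in your sense; even the generous estimate $2a$ leads only to $i \geq \frac{(p-1)n}{2} - a + b$ with $a$ bounded by the total number of roots meeting $\al_n$, which is $n(n+1)/2$ (not $n^2$), and that yields $i \geq \frac{(p-n-2)n}{2}$ --- too weak for the claimed bound at $n=7$.

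The fix, which is the route the paper takes, is a three-way decomposition $\ell(w) = a + b + c$ where $a$ counts roots in $-w\cdot 0$ containing $2\al_n$, $b$ counts those containing exactly one $\al_n$, and $c$ counts those with none. Then $\la - \omega_n$ has $\al_n$-coefficient $\frac{(p-1)n}{2} - 2a - b$, and since each positive root contributes at most $2$ copies of $\al_n$,
\[
\frac{i - \ell(w)}{2} \;\geq\; \frac12\left(\frac{(p-1)n}{2} - 2a - b\right),
\]
which simplifies to $i \geq \frac{(p-1)n}{2} - a + c$. Only the roots $\epsilon_i + \epsilon_j$ contain $2\al_n$, so $a \leq \binom{n}{2} = n(n-1)/2$, giving $i \geq \frac{(p-n)n}{2}$. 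Substituting $p \geq 2n+1$ then gives $i \geq 2p + \frac{n^2 - 7n - 4}{2}$, which is $\geq 2p - 2 > 2p - 3$ precisely when $n \geq 7$. So your outline is structurally on the right track, but the root-coefficient fact you invoke and the resulting two-way split need to be replaced by the finer analysis.
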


\begin{proof} Following the discussion in Section
\ref{SS:G1analysis}, $\la - \omega_n
= (p - 1)\omega_n + w\cdot 0$ must be a weight of $S^j(\ul^*)$
for
$j = \frac{i - \ell(w)}{2}$.  Recall that 
$2\omega_n = \al_1 + 2\al_2 + 3\al_3 + \cdots + n\al_n$.  
Consider the decomposition of $-w\cdot 0$ into a sum of distinct
positive roots
(cf. Observation \ref{O:uniquedecomp}). Write $\ell(w) = a + b +
c$ where $a$ is
the number of positive roots in this decomposition which contain
$2\al_n$,
$b$ is the number of roots in this decomposition which contain
$\al_n$, and
$c$ is the number of roots in this decomposition that do not
contain $\al_n$.
Then $\la - \omega_n$ contains 
$$
\left(\frac{p - 1}{2}\right)n - 2a - b
$$
copies of $\al_n$. Since any root contains at most 2 copies of
$\al_n$, we
have
$$
\frac{i - \ell(w)}{2} = j \geq \frac12\left(\left(\frac{p -
1}{2}\right)n - 2a - b\right).
$$
Replacing $\ell(w)$ by $a + b + c$ and simplifying gives
$$
i \geq \left(\frac{p - 1}{2}\right)n - a + c.
$$
The total number of positive roots which contain $2\al_n$ is
$n(n-1)/2$.
Hence, $a \leq n(n-1)/2$ and $c \geq 0$.  So we have
\begin{equation}\label{E:omegan}
i \geq \left(\frac{p - 1}{2}\right)n - \left(\frac{n -
1}{2}\right)n =
\left(\frac{p - n}{2}\right)n = 2p + \left(\frac{n}{2} -
2\right)p - \frac{n^2}{2}.
\end{equation}
Finally, using the assumption that $p \geq 2n + 1$, one 
finds
$$
i \geq 2p + \left(\frac{n}{2} - 2\right)(2n + 1) - \frac{n^2}{2}
=
        2p + \frac{n^2}{2} - \frac72n - 2.
$$
For $n \geq 7$, we have $i \geq 2p - 2$ as claimed.
\end{proof}

For $3 \leq n \leq 6$, the lemma is in fact false. These cases
will be
discussed in Sections \ref{SS:B3} - \ref{SS:B6}.


\subsection{\bf The case of $\omega_1$.}\label{SS:bw1} In this
section we investigate
the case that $\la = p\omega_1 + w\cdot 0$.  
Throughout this section $\Phi$ is of type $B_n$. In order to
make use of some inductive arguments we allow $n \geq 1$.
We will frequently switch between the bases consisting of the
simple roots $\{ \alpha_1, \dots, \alpha_n\}$, the fundamental
weights $\{ \omega_1, \dots , \omega_n\}$, and the canonical
basis $\{\epsilon_1, \dots , \epsilon_n \}$ of ${\mathbb R}^n$.
Following \cite{Hum1} we have $\alpha_i = \epsilon_i -
\epsilon_{i+1}$, for $1 \leq i \leq n-1$, and $\alpha_n =
\epsilon_n$. Note that $ \epsilon_1=\alpha_0$ is the maximal
short root. The fundamental weights are $\omega_j = \epsilon_1 +
\cdots + \epsilon_j$, for $1\leq j \leq n-1$, and $\omega_n =
1/2(\epsilon_1 + \cdots + \epsilon_n)$. In particular, $\omega_1
= \epsilon_1$.

\begin{definition}\label{D:PandT} For $\Phi$ of type $B_n$, with
$n \geq 1$, we define
\begin{itemize}
\item[]
 $$P(m,k,j,n) := 
\begin{cases} \sum_{u \in W} (-1)^{\ell (u) } P_k(u\cdot(m
\epsilon_1 + (\epsilon_1 + \dots + \epsilon_j)))
 & \mbox{ if } m \geq 0, k\geq 0, \\
 & \qquad \mbox{ and } 1 \leq j \leq n,\\
 1& \mbox{ if } m =-1, k= 0, \\
  & \qquad \mbox{ and } j=1,\\
 0 &\mbox{ else;}
 \end{cases}   $$
\item[]
 $$T(m,k,j,n) := 
\begin{cases} \dim \Hom_G (V(m\epsilon_1 +(\epsilon_1 + \cdots +
\epsilon_j)),H^0(G/B, S^{k}(\ul^*))\otimes H^0(\epsilon_1))\\
 \hspace{2in}\mbox{ if } m \geq 0, k\geq 0, 
  \mbox{ and } 1 \leq j \leq n,\\
 0 \hspace{1.9in}\mbox{ else.}
 \end{cases}   $$
\end{itemize}
\end{definition}
Note that for  $p > 2n$,
$$P(m,k,j,n)=\dim \Hom_G (V(m\epsilon_1 +(\epsilon_1 + ... +
\epsilon_j)),H^0(G/B, S^{k}(\ul^*))),$$ which equals the
multiplicity of $H^0(m\epsilon_1 +(\epsilon_1 + \cdots +
\epsilon_j))$ in a good filtration of $H^0(G/B, S^{k}(\ul^*))$
(cf. \cite[3.8]{AJ}).
In particular, $P(m,k,j,n) \geq 0$ for all $m, k, j$, and $n$.

\begin{lemma}\label{L:Prelations} Suppose $\Phi$ is of type
$B_n$ with $n \geq 1$ and $p > 2n$. If $m \geq 0$, $k\geq 0$,
and $1 \leq j \leq n$, then
\begin{itemize}
\item[(a)] $\sum_{u \in W} (-1)^{\ell (u) } P_k(u\cdot(m
\epsilon_1 + (\epsilon_1 + ... + \epsilon_j))- \epsilon_1) =
P(m-1,k,j,n) + P(m,k,j-1,n-1)$;
\item[(b)] $\sum_{u \in W} (-1)^{\ell (u) } P_k(u\cdot m
\epsilon_1 + \epsilon_1) = P(m,k,1,n) - P(m,k,1,n-1)$;
\item[(c)] $P(m,k,j,n) =0$ whenever $k<m+1$;
\item[(d)] $T(m,k,j,n) = \sum_{i=1}^{2n}P(m-1,k-i +1,j,n)
+\sum_{i=1}^{2n}P(m,k-i +1,j-1,n-1) + P(m,k-n,j,n)$;
   \item[(e)] for $n \geq 2$ and $1 \leq j \leq n-1$,\\
  $T(m,k,j,n) \geq P(m-1,k,j,n)  + P(m,k,j+1,n)+P(m,k,j-1,n)
$;
  \item[(f)] 
  $T(m,k,n,n) \geq P(m-1,k,n,n)  + P(m,k,n,n)+P(m,k,n-1,n)
$;
  \item[(g)] for $l \geq 0$, $P(2l,k,1,n)
= P(2l-1,k-n,1,n)$;
\item[(h)] for $l \geq 0$, $\sum_{u \in W} (-1)^{\ell (u) }
P_k(u\cdot (2l+1) \epsilon_1 + \epsilon_1) = P(2l,k-n,1,n).$

 \end{itemize}
 
\end{lemma}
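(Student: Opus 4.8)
The plan is to prove the eight identities/inequalities by systematically exploiting the same mechanism used throughout Section~5: a signed sum $\sum_{u\in W}(-1)^{\ell(u)}P_k(u\cdot \nu + \xi)$ can be interpreted, via \cite[3.8]{AJ}, as a Euler characteristic $\sum_i(-1)^i[\ch \opH^i(G/B, S^k(\ul^*)\otimes \xi):\ch H^0(\nu)]$, which in good-filtration situations collapses to the degree-zero term. First I would establish the basic combinatorial reduction underlying (a)--(c): for $u\cdot\nu+\xi$ to be a sum of positive roots when $\nu = m\epsilon_1+(\epsilon_1+\cdots+\epsilon_j)$, the element $u$ must fix enough of the $\epsilon_i$'s; one then splits the surviving $u$'s into those fixing $\epsilon_1$ (contributing a copy of the ``$B_n$ at level $m-1$'' sum) and those sending $\epsilon_1\mapsto\epsilon_2$ (contributing, after peeling off $s_{\al_1}$ and shifting, a ``$B_{n-1}$ at one lower $j$'' sum). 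This is exactly the bookkeeping in the proof of Lemma~\ref{L:omega1combinatorics}(b),(c), adapted to the extra $(\epsilon_1+\cdots+\epsilon_j)$ tail, and it yields (a) directly; (b) is the $j$-absent special case; (c) follows by counting copies of $\al_1$ as in Lemma~\ref{L:omega1combinatorics}(a), since $\nu$ contains $m+1$ copies of $\al_1$ and each positive root contains at most one.

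Next, for (d) I would use the tensor-identity/character approach: $S^k(\ul^*)\otimes H^0(\epsilon_1)$ has a good filtration (as $p>2n$), and $H^0(\epsilon_1)=V(\epsilon_1)$ is the $(2n+1)$-dimensional ``vector'' representation with weights $\pm\epsilon_i$ and $0$. Tensoring the character of $S^k(\ul^*)$ by this and reading off the multiplicity of $H^0(m\epsilon_1+(\epsilon_1+\cdots+\epsilon_j))$ via \cite[3.8]{AJ} breaks into: $2n$ shifts by $-\epsilon_i$ or $+\epsilon_i$ reducing the ``$m$'' index by one (the $\sum P(m-1,k-i+1,j,n)$ term), $2n$ shifts crossing into the $B_{n-1}$ pattern (the $\sum P(m,k-i+1,j-1,n-1)$ term, using (a) to repackage), and the zero-weight contribution $P(m,k-n,j,n)$ — the shift by $n$ in $k$ coming from the degree normalization between $S^k$ and the relevant induced module, parallel to the use of \cite[Lemma 6]{KLT} in Lemma~\ref{L:omega1combinatorics}(d). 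Items (e) and (f) then follow by discarding all but three conveniently chosen nonnegative summands from (d) (using $P\geq 0$, which the remark after Definition~\ref{D:PandT} records); one picks the $i$ that makes the $k$-index land on $k$, together with one term from each of the three families, and for (f) the $j=n$ case needs the observation that $\epsilon_1+\cdots+\epsilon_{n}$ and $2\omega_n$ differ only by scaling so the ``$j+1$'' term is absent and replaced appropriately. Finally (g) and (h) are the ``alternating/consistent parity'' identities: writing $2l\epsilon_1$ versus $(2l-1)\epsilon_1$ and tracking that $\epsilon_1=\al_0$ is the maximal \emph{short} root, one applies \cite[Lemma 6]{KLT} in its short-root form to trade a factor of $\epsilon_1$ against a shift of $n$ in the symmetric power degree, exactly as in Lemma~\ref{L:omega1combinatorics}(d),(e); (h) is then (g) combined with (b).

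The main obstacle I expect is item (d): getting the bookkeeping exactly right when tensoring by $H^0(\epsilon_1)$, in particular justifying the single ``$+P(m,k-n,j,n)$'' term for the zero weight of the vector representation (rather than a naive $P(m,k,j,n)$) and confirming the $k\mapsto k-i+1$ versus $k\mapsto k-n$ normalizations are mutually consistent. This requires carefully matching the grading conventions for $S^k(\ul^*)$ against those of $\opH^{\bullet}(G_1,-)^{(-1)}$ as in (\ref{ind}), and checking that the short-root version of \cite[Lemma 6]{KLT} applies since here the relevant minuscule-type weight $\epsilon_1=\al_0$ is short rather than, as in type $D_n$, equal to the highest root. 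The remaining items are then routine once (a) and (d) are in place.
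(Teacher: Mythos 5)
Your outline for parts (a)--(c) matches the paper's argument: split the Weyl group sum according to whether $u(\epsilon_1)=\epsilon_1$ or $u(\epsilon_2)=\epsilon_1$, recognize the two contributions, and count copies of $\al_1$ for the vanishing in (c). But there are two serious problems in the remainder.

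First, your route to (e) and (f) --- ``discard all but three conveniently chosen nonnegative summands from (d)'' --- cannot work, because the summands of (d) all have the form $P(\cdot,\cdot,j,n)$ or $P(\cdot,\cdot,j-1,n-1)$, whereas (e) requires the terms $P(m,k,j+1,n)$ and $P(m,k,j-1,n)$, with \emph{last argument $n$} and with the index $j$ both raised and lowered. Neither term appears in the right-hand side of (d). The paper instead proves (e) and (f) by an entirely different mechanism: it decomposes $V(m\epsilon_1+(\epsilon_1+\cdots+\epsilon_j))\otimes V(\epsilon_1)$ into Weyl modules (a Pieri-type expansion with five summands), then uses adjointness $\Hom_G(V(\nu),H^0(G/B,S^k(\ul^*))\otimes H^0(\epsilon_1)) \cong \Hom_G(V(\nu)\otimes V(\epsilon_1),H^0(G/B,S^k(\ul^*)))$ and drops the two extra Weyl summands. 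This is a genuinely different argument from (d), not a corollary of it.

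Second, your appeal to ``the short-root version of \cite[Lemma~6]{KLT}'' for (d), (g), (h) is a misconception. In type $D_n$ that lemma applies because $\epsilon_1=\omega_1$ is \emph{minuscule}; in type $B_n$, $\epsilon_1 = \omega_1 = \al_0$ is the highest short root and is not minuscule (the minuscule weight is $\omega_n$), so there is no off-the-shelf short-root analogue to invoke. The paper builds the required dimension-shift machinery directly from Koszul short exact sequences $0\to S^{k-1}(\ul^*)\otimes\alpha\to S^k(\ul^*)\to S^k(\ul_\alpha^*)\to 0$ for minimal parabolics, iterating along the chain $\epsilon_1,\dots,\epsilon_n$, with the crucial degree shift $k\mapsto k-n$ coming from the Bott-type jump in cohomological degree at the short simple root $\al_n$ (where $\langle -\epsilon_n,\al_n^\vee\rangle=-2$). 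For (g), the paper additionally uses Kostant's theorem to identify $\sum_k P(2l-1,k,1,n)=\sum_k P(2l,k,1,n)$ via zero-weight spaces of symmetric powers of the natural representation, a step your sketch omits entirely; and (h) then rests on the vanishing of $\Hom_G(V(2l\epsilon_1+\epsilon_1),H^0(G/B,S^k(\ul_{\al_n}^*)))$ extracted from the proof of (g), not on a direct character computation. You correctly anticipate that (d) is the hardest part, but the difficulty is not the one you identify, and the tool you lean on is not available in this type.
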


\begin{proof}  
(a) $P(m-1,k,j,n) = \sum_{u \in W} (-1)^{\ell (u) }
P_k(u\cdot((m-1) \epsilon_1+ (\epsilon_1 + \cdots +
\epsilon_j)))$. The expression $u\cdot((m-1) \epsilon_1+
(\epsilon_1 + \cdots + \epsilon_j) )= u((m-1) \epsilon_1+
(\epsilon_1 + \cdots + \epsilon_j)) + u \cdot 0$ will be a sum
of positive roots only if either $u(\epsilon_1) = \epsilon_1$ or
$u(\epsilon_2) = \epsilon_1$. If $u$ is of the second type, then
$u s_{\alpha_1}$ stabilizes $\epsilon_1$. Setting $v = u
s_{\alpha_1}$, one obtains
 \begin{eqnarray*}
P(m-1,k,j,n) 
&=&
\sum_{\{u \in W|u(\epsilon_1)= \epsilon_1\}} (-1)^{\ell (u) }
P_k(u\cdot((m-1) \epsilon_1 + (\epsilon_1 + \cdots +
\epsilon_j))) \\
&& \quad + 
\sum_{\{u \in W|u(\epsilon_2)= \epsilon_1\}} (-1)^{\ell (u) }
P_k(u\cdot((m-1) \epsilon_1 + (\epsilon_1 + \cdots +
\epsilon_j))) \\
&=&
\sum_{u \in W} (-1)^{\ell (u) } P_k(u\cdot(m \epsilon_1 +
(\epsilon_1 + \cdots + \epsilon_j))- \epsilon_1) \\
&& \quad - \sum_{v \in W} (-1)^{\ell (v) } P_k(v\cdot(m
\epsilon_2+ (\epsilon_2 + \cdots + \epsilon_j))).
\end{eqnarray*} 
The second term is just $P(m,k,j-1,n-1)$ as claimed. Note that
the formula also holds for $m=0$.

(b) The expression $\sum_{u \in W} (-1)^{\ell (u) }
P_k(u\cdot m \epsilon_1+ \epsilon_1)$ will be a sum of positive
roots only if either $u(\epsilon_1) = \epsilon_1$ or
$u(\epsilon_2) = \epsilon_1$. Arguing as above one obtains
 \begin{eqnarray*}
\sum_{u \in W} (-1)^{\ell(u)} P_k(u\cdot m \epsilon_1+
\epsilon_1)&=&
\sum_{\{u \in W|u(\epsilon_1)= \epsilon_1\}} (-1)^{\ell (u) }
P_k(u\cdot m \epsilon_1+ \epsilon_1) \\
&& \quad + 
\sum_{\{u \in W|u(\epsilon_2)= \epsilon_1\}} (-1)^{\ell (u) }
P_k(u\cdot m \epsilon_1+ \epsilon_1)\\
&=&
\sum_{u \in W} (-1)^{\ell (u) } P_k(u\cdot(m \epsilon_1 +
\epsilon_1)) \\
&& \quad - \sum_{v \in W} (-1)^{\ell (v) } P_k(v\cdot(m
\epsilon_2+ \epsilon_2)).
\end{eqnarray*} 
The first term is $P(m,k,1,n)$ and the second term is just
$P(m,k,1,n-1)$.

(c) Assume that $0 \leq k < m+1$. Part (a) implies that
$$P(m,k,j,n) \leq \sum_{u \in W} (-1)^{\ell (u) }
P_k(u\cdot((m+1) \epsilon_1+ (\epsilon_1 + \cdots + \epsilon_j))
-\epsilon_1).$$
Note that $u\cdot((m+1) \epsilon_1+ (\epsilon_1 + \cdots +
\epsilon_j)) -\epsilon_1$ is a sum of positive roots if and only
if $u( \epsilon_1) = \epsilon_1.$ If $u( \epsilon_1) =
\epsilon_1$ then $u\cdot((m+1) \epsilon_1+ (\epsilon_1 + \cdots
+ \epsilon_j)) -\epsilon_1= (m+1)\epsilon_1 + u\cdot (\epsilon_2
+ \cdots + \epsilon_j)$ and $-u\cdot (\epsilon_2 + \cdots +
\epsilon_j)$, written as a sum of simple roots, contains no
$\alpha_1$. However, $(m+1) \epsilon_1$, written as a sum of
simple roots, contains exactly $m+1$ copies of $\alpha_1$. Each
positive root of $\Phi$ contains at most one copy of $\alpha_1$.
Therefore at least $m+1$ positive roots are needed to obtain the
weight $u\cdot((m+1) \epsilon_1+ (\epsilon_1 + \cdots +
\epsilon_j)) -\epsilon_1$. One concludes that $P_k(u\cdot((m+1)
\epsilon_1+ (\epsilon_1 + \cdots + \epsilon_j)) -\epsilon_1)=0$
and the assertion follows.

(d) 
For a simple root $\alpha$, let $P_{\alpha}$ denote the minimal
parabolic subgroup corresponding to $\alpha$, and let
$\ul_{\alpha}$ denote the Lie algebra of the unipotent radical
of $P_{\alpha}$. From the short exact sequence
\begin{equation*} 
0 \to \alpha \to \ul^* \to \ul_{\alpha}^* \to 0
\end{equation*} 
one obtains the Koszul resolution
\begin{equation*} 
0 \to S^{k-1}(\ul^*) \otimes \alpha \to S^k (\ul^*) \to
S^k(\ul_{\alpha}^*) \to 0.
\end{equation*} 
Tensoring with a weight $\mu$ yields
\begin{equation*} 
0 \to S^{k-1}(\ul^*) \otimes \alpha \otimes \mu \to S^k
(\ul^*)\otimes \mu \to S^k(\ul_{\alpha}^*)\otimes \mu \to 0.
\end{equation*} 
Induction from $B$ to $G$ yields the long exact sequence
\begin{equation}\label{b1}
\cdots \to H^i(G/B, S^{k-1}(\ul^*) \otimes \alpha \otimes \mu)
\to H^i(G/B, S^k (\ul^*)\otimes \mu) \to H^i(G/B,
S^k(\ul_{\alpha}^*)\otimes \mu) \to \cdots.
\end{equation}
We apply (\ref{b1}) with $\alpha = \alpha_j = \epsilon_j -
\epsilon_{j+1}$ and $\mu = -\epsilon_{j}$, where $1 \leq j \leq
n-1$, giving
\begin{equation}\label{b2}
\cdots \to H^i(G/B, S^{k-1}(\ul^*) \otimes - \epsilon_{j+1}) \to
H^i(G/B, S^k (\ul^*)\otimes -\epsilon_j) \to H^i(G/B,
S^k(\ul_{\alpha}^*)\otimes -\epsilon_j) \to \cdots.
\end{equation}
Note that, $\langle -\epsilon_j, \alpha_j^{\vee} \rangle = -1$,
forces $H^i(P_{\al}/B,-\epsilon_j)= 0$ for all $i$. The spectral
sequence
$$H^{r}(G/P_{\al}, S^k(\ul_{\alpha}^*))\otimes
H^s(P_{\al}/B,\mu) \Rightarrow H^{r+s}(G/B,
S^k(\ul_{\alpha}^*)\otimes \mu)$$ yields
$H^i(G/B, S^k(\ul_{\alpha}^*)\otimes -\epsilon_j) = 0$ for all
$i$.
Therefore, from (\ref{b2}), one obtains for $1\leq j \leq n-1$
and $i\geq 0$
\begin{equation}\label{b3}
H^i(G/B, S^{k-1}(\ul^*) \otimes - \epsilon_{j+1}) \cong H^i(G/B,
S^k (\ul^*)\otimes -\epsilon_j).
\end{equation} 
Iterating this process yields
\begin{equation}\label{b4}
H^i(G/B, S^{k}(\ul^*) \otimes - \epsilon_{i}) \cong H^i(G/B,
S^{k-n+i} (\ul^*)\otimes -\epsilon_n).
\end{equation} 
Note that if $k < n - i$, the right hand side is identically
zero, and the isomorphism
still holds.

Next we apply (\ref{b1}) with $\alpha = \alpha_n= \epsilon_n $
and $\mu = -\epsilon_{n}$ in order to obtain
\begin{equation}\label{b5}
\cdots \to H^i(G/B, S^{k-1}(\ul^*)) \to H^i(G/B, S^k
(\ul^*)\otimes -\epsilon_n) \to H^i(G/B,
S^k(\ul_{\alpha}^*)\otimes -\epsilon_n) \to \cdots.
\end{equation}
Here $\langle -\epsilon_n, \alpha_n^{\vee} \rangle = -2$. Using
the spectral sequence
as above, one obtains  
\begin{equation}\label{b6}\begin{split}
H^i(G/B, S^k(\ul_{\alpha}^*)\otimes -\epsilon_n)  
&\cong H^{i-1}(G/P_{\al}, S^k(\ul_{\alpha}^*))\otimes
H^1(P_{\al}/B,-\epsilon_n) \\
&\cong H^{i-1}(G/P_{\al}, S^k(\ul_{\alpha}^*))\cong H^{i-1}(G/B,
S^k(\ul_{\alpha}^*)).
\end{split}\end{equation}
Since $H^0(G/B, S^k(\ul_{\al}^*)\otimes - \epsilon_n) = 0$, one
obtains via by (\ref{b5}),
\begin{equation}\label{b7}
H^0(G/B, S^{k-1}(\ul^*)) \cong H^0(G/B, S^k (\ul^*)\otimes
-\epsilon_n).
\end{equation}

From $H^i(G/B, S^{k}(\ul^*))=0$ for $i\geq 1$, using (\ref{b5}),
one concludes
\begin{equation}\label{b8}
H^i(G/B, S^k (\ul^*)\otimes -\epsilon_n) \cong H^{i-1}(G/B,
S^k(\ul_{\alpha}^*)) \mbox{ for } i\geq 1.
\end{equation}
Next, we apply (\ref{b1}) with $\alpha = \alpha_n= \epsilon_n $
and $\mu = 0$ in order to obtain
\begin{equation}\label{b9}
\cdots \to H^i(G/B, S^{k-1}(\ul^*)\otimes \epsilon_n) \to
H^i(G/B, S^k (\ul^*)) \to H^i(G/B, S^k(\ul_{\alpha}^*)) \to
\cdots.
\end{equation}
From $H^i(G/B, S^{k}(\ul^*))=0$ and $H^i(G/B,
S^{k}(\ul^*)\otimes \epsilon_n)=0$ for $i\geq 1$
\cite[2.8]{KLT}, one concludes
\begin{equation}\label{b10}
0 \to H^0(G/B, S^{k-1}(\ul^*)\otimes \epsilon_n) \to H^0(G/B,
S^k (\ul^*)) \to H^0(G/B, S^k(\ul_{\alpha}^*)) \to 0,
\end{equation}
and
\begin{equation}\label{b11}
H^i(G/B, S^k (\ul^*)\otimes -\epsilon_n) \cong H^{i-1}(G/B,
S^k(\ul_{\alpha}^*))=0\mbox{ for }i \geq 2.
\end{equation} 

Similarly, apply (\ref{b1}) with $\alpha = \alpha_j =
\epsilon_{j-1} - \epsilon_{j}$ and $\mu = \epsilon_{j}$, where
$2 \leq j \leq n$, to obtain
\begin{equation*} 
\cdots \to H^i(G/B, S^{k-1}(\ul^*) \otimes \epsilon_{j-1}) \to
H^i(G/B, S^k (\ul^*)\otimes \epsilon_j) \to H^i(G/B,
S^k(\ul_{\alpha}^*)\otimes \epsilon_j) \to \cdots.
\end{equation*}
As before this yields
\begin{equation}\label{b12}
H^i(G/B, S^{k-1}(\ul^*) \otimes \epsilon_{j-1}) \cong H^i(G/B,
S^k (\ul^*)\otimes \epsilon_j).
\end{equation} 
Iterating this process results in
\begin{equation}\label{b13}
H^i(G/B, S^{k-i+1}(\ul^*) \otimes \epsilon_{1}) \cong H^i(G/B,
S^k (\ul^*)\otimes \epsilon_i).
\end{equation}
Again, this isomorphism holds even if $k < i - 1$ (when the left
side is identically zero).

For any finite $B$-module $M$, we denote its Euler
characteristic by
$$\chi(M)=\sum_{i \geq 0} (-1)^i \ch H^i(G/B,M).$$ 
From the above, one obtains
\begin{eqnarray*}
&&\hspace{-.5in}\ch (H^0(G/B, S^{k}(\ul^*))\otimes
H^0(\epsilon_1))= \chi (S^{k}(\ul^*)\otimes H^0(\epsilon_1)) \\
&=& \sum_{i=1}^{n} \chi (S^{k}(\ul^*)\otimes \epsilon_i) +\chi
(S^{k}(\ul^*))
+ \sum_{i=1}^{n} \chi (S^{k}(\ul^*)\otimes -\epsilon_i)  \\
&=& \sum_{i=1}^{n} \chi (S^{k-i+1}(\ul^*)\otimes \epsilon_1)
+\chi (S^{k}(\ul^*))
+ \sum_{i=1}^{n} \chi (S^{k-i+1}(\ul^*)\otimes -\epsilon_n)
\mbox{ (by \ref{b4}, \ref{b13})}\\
&=& \sum_{i=1}^{n} \ch H^0(G/B,S^{k-i+1}(\ul^*)\otimes
\epsilon_1)
+\ch H^0(G/B, S^{k}(\ul^*))\mbox{ (by \cite[2.8]{KLT})}\\
&& \quad + \sum_{i=1}^{n} \ch H^0(G/B,S^{k-i+1}(\ul^*)\otimes
-\epsilon_n)
- \sum_{i=1}^{n} \ch H^1(G/B,S^{k-i+1}(\ul^*)\otimes
-\epsilon_n) \\
& & \hskip4.0in \mbox{ (by \ref{b8}, \ref{b11})}\\
&=& \sum_{i=1}^{n} \ch H^0(G/B,S^{k-i+1}(\ul^*)\otimes
\epsilon_1)
+\ch H^0(G/B, S^{k}(\ul^*))\\
&& \quad + \sum_{i=1}^{n} \ch H^0(G/B,S^{k-i}(\ul^*)) 
+\sum_{i=1}^{n} \ch H^0(G/B,S^{k-i-n+1}(\ul^*)\otimes
\epsilon_1)\\
&& \quad - \sum_{i=1}^{n} \ch H^0(G/B,S^{k-i+1}(\ul^*)) \mbox{
(by \ref{b7}, \ref{b8}, \ref{b10}, \ref{b13})}\\
&=& \sum_{i=1}^{2n} \ch H^0(G/B,S^{k-i+1}(\ul^*)\otimes
\epsilon_1)
+\ch H^0(G/B, S^{k-n}(\ul^*)).
\end{eqnarray*}
The last equality yields
\begin{eqnarray*}
T(m,k,j,n) &=& 
\dim \Hom_G (V(m\epsilon_1 +(\epsilon_1 + \cdots +
\epsilon_j)),H^0(G/B, S^{k}(\ul^*))\otimes H^0(\epsilon_1))\\
&=&\sum_{i=1}^{2n} \dim \Hom_G (V(m\epsilon_1 +(\epsilon_1 +
\cdots + \epsilon_j)),H^0(G/B, S^{k-i+1}(\ul^*)\otimes
\epsilon_1))\\
&&\quad + \dim \Hom_G (V(m\epsilon_1 +(\epsilon_1 + \cdots +
\epsilon_j)),H^0(G/B, S^{k-n}(\ul^*)).
\end{eqnarray*}
The assertion follows now from part (a).

(e) A direct computation shows that 
\begin{eqnarray*}
&&\hspace{-.4in}\ch (V(m\epsilon_1 +(\epsilon_1 + \cdots +
\epsilon_j))\otimes V(\epsilon_1))
= 
\ch V((m-1)\epsilon_1 +(\epsilon_1 + \cdots + \epsilon_j)) \\
&+&
\ch V(m\epsilon_1 +(\epsilon_1 + \cdots + \epsilon_{j+1}))
+
\ch V(m\epsilon_1 +(\epsilon_1 + \cdots + \epsilon_{j-1}))\\
 &+&
\ch V((m+1)\epsilon_1 +(\epsilon_1 + \cdots + \epsilon_j))
+
\ch V((m-1)\epsilon_1 +(\epsilon_1 + \epsilon_2)+(\epsilon_1 +
\cdots + \epsilon_j)).
\end{eqnarray*}
It follows that
\begin{eqnarray*}
T(m,k,j,n) &=& 
\dim \Hom_G (V(m\epsilon_1 +(\epsilon_1 + \cdots +
\epsilon_j)),H^0(G/B, S^{k}(\ul^*))\otimes H^0(\epsilon_1))\\
&=& \dim \Hom_G (V(m\epsilon_1 +(\epsilon_1 + \cdots +
\epsilon_j))\otimes V(\epsilon_1)),H^0(G/B, S^{k}(\ul^*)))\\
&\geq& P(m-1,k,j,n)  + P(m,k,j+1,n)+P(m,k,j-1,n),
\end{eqnarray*}
as claimed. 

Part (f) follows in similar fashion.

(g) It is well-known that, for $m \geq 2$,
$\ch(H^0(m\omega_1))$ is equal to the difference of the $m$th
and the $(m-2)$nd symmetric power of the natural representation.
The natural representation has one-dimensional weight spaces and
includes the zero weight space. One concludes that the dimension
of the zero weight space of the $2l$th symmetric power equals
the dimension of the zero weight space of the $(2l+1)$st
symmetric power. The same is true for the pair $H^0(2l\omega_1)$
and $H^0((2l+1)\omega_1)$. It follows from Kostant's Theorem
\cite[24.2]{Hum1} that
\begin{equation}\label{b14}
\sum_{k\geq 0}P(2l-1,k,1,n)= \sum_{k\geq 0}P(2l,k,1,n).
\end{equation}
From (\ref{b10}) and (\ref{b13}) one obtains
\begin{equation*} 
0 \to H^0(G/B, S^{k-n}(\ul^*)\otimes \epsilon_1) \to H^0(G/B,
S^k (\ul^*)) \to H^0(G/B, S^k(\ul_{\alpha_n}^*)) \to 0.
\end{equation*}
All three modules have good filtrations. 
Moreover, by part (i)\\ 
$\dim \Hom_G (V(2l\epsilon_1 +\epsilon_1),H^0(G/B,
S^{k-n}(\ul^*)\otimes \epsilon_1)) = P(2l-1,k-n,1,n)$. Hence for
$l \geq 0$
$$
P(2l,k,1,n)
= P(2l-1,k-n,1,n)\\
+ \dim \Hom_G (V(2l\epsilon_1 +\epsilon_1),H^0(G/B,
S^{k}(\ul_{\alpha_n}^*))).
$$
Summing over all $k\geq 0$ yields 
\begin{equation*}
\sum_{k\geq 0}P(2l-1,k,1,n)= \sum_{k\geq 0}P(2l,k,1,n)+
\sum_{k\geq 0}\dim \Hom_G (V(2l\epsilon_1 +\epsilon_1)),H^0(G/B,
S^{k}(\ul_{\alpha_n}^*))).
\end{equation*}
Comparing with (\ref{b14}) yields $\dim \Hom_G (V(2l\epsilon_1
+\epsilon_1),H^0(G/B, S^{k}(\ul_{\alpha_n}^*)))=0$, which forces
$P(2l,k,1,n)
= P(2l-1,k-n,1,n)$, for all $k \geq 0$.

(h) Following \cite[3.8]{AJ}, the multiplicity of $\ch
V(2l\epsilon_1 +\epsilon_1)$ in
$\chi (S^{k}(\ul^*)\otimes - \epsilon_1)$ equals 
$$\sum_{u \in W}(-1)^{\ell (u) } P_k(u\cdot((2l+1) \epsilon_1) + \epsilon_1).$$

Moreover, by (\ref{b11}) and (\ref{b4}),
$$\chi (S^{k}(\ul^*)\otimes - \epsilon_1) = \ch H^0(G/B,
S^{k}(\ul^*)\otimes - \epsilon_1) - \ch H^1(G/B,
S^{k}(\ul^*)\otimes - \epsilon_1)).$$
In addition, from (\ref{b8}) and (\ref{b4}), the vanishing of
$\Hom_G (V(2l\epsilon_1 +\epsilon_1),H^0(G/B,
S^{k}(\ul_{\alpha_n}^*)))$ forces the vanishing of
$\Hom_G (V(2l\epsilon_1 +\epsilon_1),H^1(G/B,
S^{k}(\ul^*)\otimes - \epsilon_1))$, for all $k$. Hence, $\sum_{u
\in W} (-1)^{\ell (u) } P_k(u\cdot((2l+1) \epsilon_1) +
\epsilon_1) = \dim \Hom_G (V(2l\epsilon_1 +\epsilon_1),H^0(G/B,
S^{k}(\ul^*)\otimes - \epsilon_1))$, which equals
$P(2l,k-n,1,n)$ by (\ref{b4}) and (\ref{b7}).
\end{proof}


\begin{proposition}\label{P:Pvanishing} Suppose $\Phi$ is of
type $B_n$ with $n \geq 1$ and $p >2n$. For $m \geq 0$, $k\geq
0$, and $ 1 \leq j \leq n$,
define
\begin{equation}\label{b15}
t (m,j,n)= \begin{cases} 
m+ \frac{j+1}{2} &\mbox{ if } j\mbox{ is odd and } m \mbox{ is
odd},\\
m+ \frac{j}{2} &\mbox{ if } j \mbox{ is even and } m \mbox{ is
even},\\
m+1+ \frac{2n-j}{2} &\mbox{ if } j \mbox{ is even and } m \mbox{
is odd},\\
m+1+ \frac{2n-j-1}{2} &\mbox{ if } j \mbox{ is odd and } m
\mbox{ is even}.
\end{cases}
\end{equation}
Then
$P(m,k,j,n) = 0 \mbox{ whenever } 
k  < t(m,j,n).$
\end{proposition}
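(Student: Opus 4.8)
The plan is to prove the vanishing by a double induction, the outer one on $n$ and the inner one on $j$ (from $j=1$ up to $j=n$), with $m$ and $k$ unrestricted at each stage; the nonnegativity of all the quantities $P(m',k',j',n')$ noted after Definition~\ref{D:PandT} is used throughout to turn the identities of Lemma~\ref{L:Prelations} into one-sided estimates. The base cases are handled as follows. If $n=1$ then necessarily $j=1$ and $t(m,1,1)=m+1$ for either parity of $m$, so the assertion is exactly Lemma~\ref{L:Prelations}(c). For arbitrary $n$ and $j=1$: when $m$ is odd, $t(m,1,n)=m+1$ and we again quote Lemma~\ref{L:Prelations}(c); when $m=2l$ is even, $t(m,1,n)=m+n$, and Lemma~\ref{L:Prelations}(g) gives $P(2l,k,1,n)=P(2l-1,k-n,1,n)$, which vanishes for $k-n<2l$, i.e.\ $k<2l+n=m+n$, by Lemma~\ref{L:Prelations}(c) applied to the odd parameter $2l-1$.

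For the inductive step (passing from $j$ to $j+1$, with $n\ge 2$) I would combine parts (d) and (e)/(f) of Lemma~\ref{L:Prelations}. From
\[
P(m-1,k,j,n)+P(m,k,j+1,n)+P(m,k,j-1,n)\ \le\ T(m,k,j,n)
\]
and the formula for $T(m,k,j,n)$, the $i=1$ summand of $\sum_{i=1}^{2n}P(m-1,k-i+1,j,n)$ cancels the term $P(m-1,k,j,n)$, and discarding the nonnegative $P(m,k,j-1,n)$ we obtain
\[
P(m,k,j+1,n)\ \le\ \sum_{i=2}^{2n}P(m-1,k-i+1,j,n)+\sum_{i=1}^{2n}P(m,k-i+1,j-1,n-1)+P(m,k-n,j,n).
\]
By the inductive hypothesis the right-hand side vanishes for $k<t(m,j+1,n)$ provided the three numerical inequalities $t(m,j+1,n)\le t(m-1,j,n)+1$, $t(m,j+1,n)\le t(m,j-1,n-1)$ and $t(m,j+1,n)\le t(m,j,n)+n$ hold. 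A direct check of the cases in (\ref{b15}) shows that all three hold whenever the target pair $(m,j+1)$ is of ``opposite parity'' (that is, $j+1$ and $m$ have opposite parities), so the induction advances $j$ in those cases.

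The main obstacle is the ``same-parity'' targets, where the first of the three inequalities fails by exactly one (for instance, with $j$ even and $m$ odd one has $t(m,j+1,n)=m+\tfrac j2+1$ but $t(m-1,j,n)+1=m+\tfrac j2$); these are precisely the cases carrying the smaller, ``combinatorial'' value of $t$. I would treat them by a separate argument based on the parity-changing identities (b), (g), (h) of Lemma~\ref{L:Prelations}, which exchange a change in the parity of $m$ for an additive shift of $n$ in $k$, together with identity (a) read in the form in which its left-hand side is recognized as the nonnegative multiplicity $\dim\Hom_G\big(V(m\epsilon_1+(\epsilon_1+\cdots+\epsilon_j)),H^0(G/B,S^k(\ul^*)\otimes\epsilon_1)\big)$. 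Combining these relates a same-parity instance of $P(m,k,j+1,n)$ to instances with the parity of $m$ reversed, which are opposite-parity cases already established, and whose strictly larger value of $t$ makes up the missing unit. With the $j=1$ base, the opposite-parity transitions $j\to j+1$, and this auxiliary step, all four parity classes are reached, completing the double induction.
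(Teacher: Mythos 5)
The first half of your argument lines up closely with the paper's: the $n=1$ and $j=1$ base cases (with Lemma~\ref{L:Prelations}(c) and (g) for the even-$m$ case), the combination of Lemma~\ref{L:Prelations}(d) and (e), the observation that the $i=1$ summand cancels $P(m-1,k,j,n)$, and the verification that the forward step $j\to j+1$ works whenever $m$ and $j+1$ have opposite parities. This is essentially the paper's Step~1.

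The gap is the same-parity case, which you leave as a sketch. You propose handling it by ``parity-changing identities (b), (g), (h)'' together with a positivity reading of identity (a); but (g) and (h) are statements \emph{only about $j=1$}, and there is no analogous relation in Lemma~\ref{L:Prelations} that trades the parity of $m$ for a shift of $n$ in $k$ at a general value of $j$. Identity (a) does have a nonnegative interpretation, as you note, but it says $P(m-1,k,j,n)+P(m,k,j-1,n-1)\ge 0$, which is vacuous since both summands are already known to be nonnegative. So the proposed ``separate argument'' has no actual lever for $j\ge 2$, and the same-parity half of the proposition remains unproved.

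The paper's solution to exactly this obstruction is different and is the crux of the proof: rather than keep pushing $j$ upward, it first establishes the single same-parity case $j=n$ (using the special inequality coming from Lemma~\ref{L:Prelations}(f), which estimates $P(m,k,n,n)+P(m,k,n-1,n)$ rather than $P(m,k,j+1,n)+P(m,k,j-1,n)$), and then runs the induction \emph{downward} from $j=n$, extracting the $P(m,k,j-1,n)$ term from (\ref{b17}) instead of the $P(m,k,j+1,n)$ term, with the outer induction on $n$ supplying $t(m,j-1,n-1)$. The numerical inequalities that fail by one in the upward direction become equalities or better in the downward direction. Without this change of direction (or some genuine substitute), your induction does not close.
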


\begin{proof} By Lemma \ref{L:Prelations}(c), $P(m,k,j,n) = 0$
if $k < m + 1$.
We will prove the slightly stronger statement in the proposition
inductively.
To do so, we make some general observations.  Define 
$$T'(m,k,j,n) = \sum_{i=2}^{2n}P(m-1,k-i +1,j,n)
+\sum_{i=1}^{2n}P(m,k-i +1,j-1,n-1) + P(m,k-n,j,n).$$
Observe that by Lemma \ref{L:Prelations}(d), $T'(m,k,j,n) =
T(m,k,j,n)$.
Note further that if $r$ is the smallest value of $k$ for which
$T'(m,k,j,n) \neq 0$,
then $P(m-1,r-1,j,n) + P(m,r,j-1,n-1) + P(m,r-n,j,n) \neq 0$. 

Suppose that $P(m-1,k,j,n)=0$ whenever $k < t(m-1,j,n)$ and that
$P(m,k,j-1,n-1)=0$
whenever $k < t(m,j-1,n-1)$, then one could conclude that
\begin{equation}\label{b16}
T'(m,k,j,n) = 0 \mbox{ whenever } k < \min\{t(m-1,j,n)+1,
t(m,j-1,n-1), m+1+n\}.
\end{equation}
Moreover, parts (d) and (e) of Lemma \ref{L:Prelations} would
imply that, for $ 2 \leq j \leq n-1$,
  \begin{equation}\label{b17}
P(m,k,j+1,n)+P(m,k,j-1,n) =0 \mbox{ whenever } T'(m,k,j,n)=0,   \end{equation}
   and from Lemma \ref{L:Prelations}(f)
  \begin{equation}\label{b18}
    P(m,k,n,n)+P(m,k,n-1,n)=0 \mbox{ whenever }  T'(m,k,n,n)=0.
 \end{equation}

In order to prove the proposition, we will use induction on $n$
and or $j$.
If $n=1$ the claim follows from part (c) of Lemma
\ref{L:Prelations}. Moreover, parts (c) and (d) of the Lemma~\ref{L:Prelations} 
imply that the claim holds for $j=1$ and $n \geq 1$.
\vskip .25cm 
\noindent 
{\bf Step 1:} Here we will show that $P(m,k,j,n)=0$, whenever $k
< t(m,j,n)$ and $m +j$
is odd.  We will use induction on $j$. 
\vskip .25cm 
\noindent 
{\bf Assumption: } $P(m,k,l,n)=0$, whenever $k < t(m,l,n)$, $m
+l$ is odd, and $l\leq j$.

Suppose that $m+j+1$ is odd. Then $m + j - 1$ is also odd and
the induction assumption implies that
 (\ref{b16}) holds. Together with  (\ref{b17})   one obtains 
\begin{equation*}
P(m,k,j+1,n) = 0 \mbox{ whenever } k < \min\{t(m-1,j,n)+1,
t(m,j-1,n-1), m+1+n\}.
\end{equation*}
It suffices therefore to verify that 
\begin{equation}\label{b19}
t(m,j+1,n) \leq \min\{t(m-1,j,n)+1, t(m,j-1,n-1), m+1+n\}.
\end{equation}
From (\ref{b15}) it follows that
\begin{equation*}
t(m,j+1,n)= \begin{cases} 
m+1+\frac{2n-(j+1)-1}{2} = m+n - \frac{j}{2}&\mbox{ if } j
\mbox{ is even and } m \mbox{ is even},\\
m +1 + \frac{2n-(j+1)}{2}=m +n -\frac{j-1}{2} &\mbox{ if } j
\mbox{ is odd and } m \mbox{ is odd},
\end{cases}
\end{equation*}
while
\begin{equation*}
t(m-1,j,n)+1 = \begin{cases} 

m+\frac{2n-j}{2}+1=m+n-\frac{j}{2} + 1 &\mbox{ if } j \mbox{ is
even and } m \mbox{ is even},\\
m+\frac{2n-j-1}{2}+1= m+n -\frac{j-1}{2} &\mbox{ if } j \mbox{
is odd and } m \mbox{ is odd},
\end{cases}
\end{equation*}
 and
\begin{equation*}
t(m,j-1,n-1) = \begin{cases} 

m+1+\frac{2n-2-(j-1)-1}{2}=m+n-\frac{j}{2} &\mbox{ if } j \mbox{
is even and } m \mbox{ is even},\\
m +1 + \frac{2n-2-(j-1)}{2}= m+n -\frac{j-1}{2}&\mbox{ if } j
\mbox{ is odd and } m \mbox{ is odd}.
\end{cases}
\end{equation*} 
Inequality (\ref{b19}) indeed holds and Step 1 is complete.
\vskip .25cm 
\noindent 
{\bf Step 2:} Here we will show that $P(m,k,n,n)=0$, whenever $k
< t(m,n,n)$ and $m+n$ is even.

Suppose that $m+n$ is even. Step 1 implies that 
(\ref{b16}) holds for $j=n$. Together with (\ref{b18}) one
obtains

$P(m,k,n,n)=0$ whenever $k < \min\{t(m-1,n,n)+1, t(m,n-1,n-1),
m+1+n\}.$
\\
It suffices therefore to verify that 
\begin{equation*}
t(m,n,n) \leq \min\{t(m-1,n,n)+1, t(m,n-1,n-1), m+1+n\}.
\end{equation*}
This can easily be done by looking at (\ref{b15}). It is left to
the interested reader.
\vskip .25cm 
\noindent 
{\bf Step 3:} Here we will show that $P(m,k,j,n)=0$ whenever $k
< t(m,j,n)$ and $m +j$
is even. We use induction on $n$ and on $j$. For $j$ we work in
decreasing order. The case $j=n$ was settled above.
\vskip .25cm 
\noindent 
{\bf Assumption: } We assume that $P(m,k,l,n-1)=0$ whenever $k <
t(m,l,n-1)$. In addition, we assume that $P(m,k,l,n)=0$ whenever
$k < t(m,l,n)$, $m +l$ is even, and $l\geq j$.

Suppose that $m+j-1$ is even. The induction assumptions imply
that
 (\ref{b16}) holds.  
By (\ref{b17})   one obtains 
\begin{equation*}
P(m,k,j-1,n) = 0 \mbox{ whenever } k < \min\{t(m-1,j,n)+1,
t(m,j-1,n-1), m+1+n\}.
\end{equation*}
It suffices therefore to verify that 
\begin{equation}\label{b20}
t(m,j-1,n) \leq \min\{t(m-1,j,n)+1, t(m,j-1,n-1), m+1+n\}.
\end{equation}
From (\ref{b15}) one obtains:
\begin{equation*}
t(m,j-1,n) = \begin{cases} 
m + \frac{j-1}{2}&\mbox{ if } j \mbox{ is odd and } m \mbox{ is
even},\\
m +\frac{j}{2} &\mbox{ if } j \mbox{ is even and } m \mbox{ is
odd}.
\end{cases}
\end{equation*}
while
\begin{equation*}
t(m-1,j,n)+1 = \begin{cases} 
m+\frac{j-1}{2}+1 &\mbox{ if } j \mbox{ is odd and } m \mbox{ is
even},\\
m +\frac{j}{2} &\mbox{ if } j \mbox{ is even and } m \mbox{ is
odd},
\end{cases}
\end{equation*}
and
 \begin{equation*}
t(m,j-1,n-1) = \begin{cases} 
m+\frac{j-1}{2} &\mbox{ if } j \mbox{ is odd and } m \mbox{ is
even},\\
m + \frac{j}{2}&\mbox{ if } j \mbox{ is even and } m \mbox{ is
odd}.
\end{cases}
\end{equation*} 
This proves inequality (\ref{b20}). 
\end{proof}


\begin{theorem}\label{T:bw1} Suppose $\Phi$ is of type $B_n$
with $n \geq 2$. Assume that $p > 2n$. Let $\la = p\omega_1 +
w\cdot 0$ be a dominant weight. Then
\begin{itemize}
\item[(a)]
$\opH^i(G, H^0(\la) \otimes H^0(\la)^{(1)} )=0$ for $0 < i <
2p-2,$
whenever $\ell(w)$ is even;
\item[(b)]
$\opH^i(G, H^0(\la) \otimes H^0(\la)^{(1)} )=0$ for $0 < i <
2p-3,$
whenever $\ell(w)$ is odd;
\item[(c)]
$\opH^{2p-3}(\gfp, k )\neq0$.
\end{itemize}
\end{theorem}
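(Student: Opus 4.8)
The plan is to reduce the statement, via Proposition~\ref{P:KostantPartCohom}, Lemma~\ref{L:Prelations} and Proposition~\ref{P:Pvanishing}, to the combinatorics of the functions $P(m,k,j,n)$ of Definition~\ref{D:PandT}. The first step is to pin down which weights actually occur. Since $\mu=\omega_1=\epsilon_1$ and $w\cdot 0=w(\rho)-\rho$, dominance of $\la=p\omega_1+w\cdot 0$ forces $w^{-1}\al>0$ for every $\al\in I:=\{\al_2,\dots,\al_n\}$ (the condition at $\al_1$ being automatic for $p>2n$), i.e.\ $w$ is the minimal-length representative of its coset in $W_I\backslash W$, where $W_I=\langle s_{\al_2},\dots,s_{\al_n}\rangle=\operatorname{Stab}_W(\epsilon_1)$. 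There are $|W_I\backslash W|=2n$ such $w$, indexed by $w^{-1}(\epsilon_1)\in\{\pm\epsilon_1,\dots,\pm\epsilon_n\}$, and a direct evaluation of $w(\rho)-\rho$ shows each resulting $\la$ has the shape $\la=m\epsilon_1+(\epsilon_1+\cdots+\epsilon_j)$: for $w^{-1}(\epsilon_1)=\epsilon_j$ one gets $m=p-j$ and $\ell(w)=j-1$, and for $w^{-1}(\epsilon_1)=-\epsilon_j$ one gets $m=p-2n+j-1$ and $\ell(w)=2n-j$; thus $\ell(w)$ runs over $0,1,\dots,2n-1$.

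For each such $\la$, Proposition~\ref{P:KostantPartCohom} gives
$$
\dim\opH^i(G,H^0(\la)\otimes H^0(\la)^{(1)})=\sum_{u\in W}(-1)^{\ell(u)}P_k\!\bigl(u\cdot\la-\epsilon_1\bigr),\qquad k=\tfrac{i-\ell(w)}{2},
$$
and Lemma~\ref{L:Prelations}(a) rewrites this as $P(m-1,k,j,n)+P(m,k,j-1,n-1)$. Since $p$ is odd, one checks the parities of the first and the ``$j$'' arguments in each summand (the two families of Step~1 behave slightly differently) and applies Proposition~\ref{P:Pvanishing}: both summands vanish for $k$ below an explicit threshold $t$, and a short case analysis on whether $\ell(w)=j-1$ or $2n-j$ and on the parity of $j$ shows $i=2k+\ell(w)\ge 2p-2$ when $\ell(w)$ is even and $i\ge 2p-3$ when $\ell(w)$ is odd. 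This proves (a) and (b).

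For (c), take $w$ to be the distinguished representative of $w_0W_I$, namely $w=s_{\epsilon_1}$: then $-w\cdot 0$ is the sum of all $2n-1$ positive roots containing $\al_1$, which equals $(2n-1)\omega_1$, so $\la_0:=p\omega_1+w\cdot 0=(p-2n+1)\omega_1$ with $\ell(w)=2n-1$ odd, and $\la_0$ is the smallest nonzero dominant weight in the principal linkage class. The previous step with $j=1$, $m=p-2n$ reduces $\dim\opH^{2p-3}(G,H^0(\la_0)\otimes H^0(\la_0)^{(1)})$ to $P(p-2n-1,p-n-1,1,n)$, and Lemma~\ref{L:Prelations}(g) identifies this with $P(p-2n-2,\,p-2n-1,\,1,n)$, i.e.\ with the value of $P(2l-1,2l,1,n)$ (where $2l=p-2n-1$) at the very bottom of its support. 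The nonvanishing of this one number --- equivalently, that $2l$ is the lowest generalized exponent of the $SO_{2n+1}$-module $V(2l\omega_1)$ --- is the genuinely nonformal point; I would establish it by a direct partition-function computation in the spirit of Lemma~\ref{L:omega1combinatorics}(f) (for instance the case $n=2$, $l=1$ already gives $P(1,2,1,2)=1$). Granting it, $\opH^{2p-3}(G,H^0(\la_0)\otimes H^0(\la_0)^{(1)})\ne 0$; combining (a), (b) with Section~\ref{SS:B1}, Proposition~\ref{P:degreebound1}, Lemma~\ref{L:bwn} and the small-$n$ analysis of the $\omega_n$-case shows that no dominant weight in the principal linkage class contributes to $G$-cohomology in degrees $0<i<2p-3$, so $2p-3$ is the minimal such degree for that class; since $\la_0$ is minimal there, the criterion at the end of Section~\ref{SS:nonvanishing} yields $\opH^{2p-3}(\gfp,k)\ne 0$.

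The main obstacle, as indicated, is precisely the final nonvanishing $P(2l-1,2l,1,n)\ne 0$: Proposition~\ref{P:Pvanishing} only supplies vanishing below the threshold, so establishing nonvanishing exactly at the threshold requires separate work. The bookkeeping in the first step --- matching each of the $2n$ dominant weights to its pair $(m,j)$ and to $\ell(w)$, keeping track of the parity differences between the two families --- is also fiddly, but routine.
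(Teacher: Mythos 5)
Your proposal follows essentially the same route as the paper: write the dominant weights $p\omega_1+w\cdot 0$ in the $\epsilon$-basis as $m\epsilon_1+(\epsilon_1+\cdots+\epsilon_j)$ indexed by $\ell(w)$, reduce (a)/(b) to the thresholds in Proposition~\ref{P:Pvanishing} via Lemma~\ref{L:Prelations}(a) and Proposition~\ref{P:KostantPartCohom}, and reduce (c) to the single nonvanishing $P(p-2n-1,\,p-n-1,\,1,n)\ne 0$. Your bookkeeping for the two families ($m=p-j$, $\ell(w)=j-1$ vs.\ $m=p-2n+j-1$, $\ell(w)=2n-j$) matches the paper's parametrization exactly, and the reduction via Lemma~\ref{L:Prelations}(g) to $P(2l-1,2l,1,n)$ with $2l=p-2n-1$ is also the paper's.

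However, there is a genuine gap in (c), and you correctly flag it but then wave it away. You write that the nonvanishing $P(2l-1,2l,1,n)\ne 0$ is ``the genuinely nonformal point'' and that you would ``establish it by a direct partition-function computation.'' That is not how the paper closes this: no direct computation is done, nor would one be easy to carry out in general rank. Instead the paper combines Lemma~\ref{L:Prelations}(b) and (h) to obtain the rank-reducing recursion
\begin{equation*}
P(2l+1,k,1,n)=P(2l,k-n,1,n)+P(2l+1,k,1,n-1),
\end{equation*}
which together with the trivial base case $P(2l-1,2l,1,1)=1$ and an induction on $n$ gives $P(2l-1,2l,1,n)\neq 0$ for all $n\ge 0$, $l\ge 0$; Lemma~\ref{L:Prelations}(g) then converts this into the needed nonvanishing. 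Parts (b), (g), (h) of Lemma~\ref{L:Prelations} were set up precisely to run this induction, so appealing to a generic ``direct computation'' bypasses the actual mechanism. Until that induction (or some substitute) is supplied, your proof of (c) is incomplete. The remaining deferred step in (a)/(b) --- the four-way case analysis on parities against the threshold $t(m,j,n)$ --- is indeed routine and matches the paper's explicit tabulation, and your reference to Lemma~\ref{L:bwn} and the $\omega_n$-case in the final paragraph is superfluous since $\omega_n\notin\mathbb{Z}\Phi$ in type $B_n$, so those weights lie outside the zero linkage class.
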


\begin{proof}
The set of dominant weights of the form $\la =p \omega_1 + w
\cdot 0$, written in the $\epsilon$-basis, are
\begin{equation*}
(p-\ell(w)-1)\epsilon_1 + (\epsilon_1 + \cdots +
\epsilon_{\ell(w)+1}), \mbox{ with } 0 \leq \ell(w) \leq n-1,
\end{equation*}  
and
\begin{equation*}
(p-\ell(w)-1)\epsilon_1 + (\epsilon_1 + \cdots +
\epsilon_{2n-\ell(w)}), \mbox{ with } n \leq \ell(w) \leq 2n-1 .
 \end{equation*}

Using Proposition \ref{P:Pvanishing} and Lemma~\ref{L:Prelations}(a) , a direct computation shows that
$$\sum_{u \in W} (-1)^{\ell (u) } P_k(u\cdot((p-\ell(w)-1)
\epsilon_1+ (\epsilon_1+ \cdots +
\epsilon_{\ell(w)+1}))-\epsilon_1) = 0 \mbox{ whenever }
k  < t, 
$$
where
$$t =\begin{cases} 
(p-1)- \frac{\ell(w)}{2} &\mbox{ for } 0 \leq \ell(w) \leq n-1
\mbox{ and } \ell(w) \mbox{ even},\\
(p-1)- \frac{\ell(w)+1}{2} &\mbox{ for } 0 \leq \ell(w) \leq n-1
\mbox{ and } \ell(w) \mbox{ odd},
\end{cases}
$$ 
and
$$\sum_{u \in W} (-1)^{\ell (u) } P_k(u\cdot((p-\ell(w)-1)
\epsilon_1+ (\epsilon_1+ .... +
\epsilon_{2n-\ell(w)}))-\epsilon_1) = 0 \mbox{ whenever }
k  < t, 
$$
where
$$t =\begin{cases} 
(p-1)- \frac{\ell(w)}{2} &\mbox{ for } n \leq \ell(w) \leq 2n-1
\mbox{ and } \ell(w) \mbox{ even},\\
(p-1)- \frac{\ell(w)+1}{2} &\mbox{ for } n \leq \ell(w) \leq
2n-1 \mbox{ and } \ell(w) \mbox{ odd}.
\end{cases}
$$ 
Parts (a) and (b) follow from Proposition
\ref{P:KostantPartCohom}. Note that $i = 2k + \ell(w)$.

Let $\la$ be the lowest dominant weight of the form $p \omega_1
+ w\cdot0$. Then $\la= (p-2n+1)\epsilon_1$ and $\ell(w)=2n-1$.
We will show that
\begin{equation}\label{b21}
\sum_{u \in W} (-1)^{\ell (u) } P_{p-n-1}(u\cdot((p-2n+1)
\epsilon_1)-\epsilon_1) \neq 0.
\end{equation}
By Lemma \ref{L:Prelations}(a) this is equivalent to showing
that $P(p-2n-1,p-n-1,1,n)$ is not zero. Lemma
\ref{L:Prelations}(b) and (h) imply that
\begin{equation}\label{b22}
P(2l+1,k,1,n)=P(2l,k-n,1,n) + P(2l+1,k,1,n-1).
\end{equation}
Note that (\ref{b22}) also holds for $l=-1$. Obviously $
P(2l-1,2l,1,1)=1$. It follows inductively from (\ref{b22}) that
$ P(2l-1,2l,1,n)\neq 0,$ for all $n\geq 0,$ $l \geq 0.$ From
Lemma \ref{L:Prelations}(g) one obtains now that $
P(2l,2l+n,1,n)\neq 0.$ Setting $2l = p-2n-1$ yields
$P(p-2n-1,p-n-1,1,n)\neq 0$. Hence, (\ref{b21}) holds.
In Proposition \ref{P:KostantPartCohom}, $i=2k -\ell(w)=2(p-n-1)
+ 2n-1=2p-3$ and one obtains $\opH^{2p-3}(G, H^0(\la) \otimes
H^0(\la)^{(1)} )\neq 0$. The weight $\la$ is the lowest non-zero
weight in its linkage class. Part (c) of the theorem follows
now from the discussion after Theorem \ref{T:nonvanishing}.
\end{proof}


\subsection{\bf Type $B_3$.}\label{SS:B3} Let $\Phi$ be of type
$B_3$ with $p > h = 6$ (so $p \geq 7$).
From the discussion in Section \ref{SS:B1} and Theorem
\ref{T:bw1}, in order to have
$\opH^i(G,H^0(\la)\otimes H^0(\la^*)^{(1)}) \neq 0$ for $0 < i <
2p -3$,
we must have $\la = p\omega_3 + w\cdot 0$ for
$w \in W$. With the aid of MAGMA \cite{BC,BCP} or other
software, one can
explicitly compute all $w\cdot 0$ and determine which resulting
$\la$ are dominant.
Further, $\la - \omega_3$ must be a weight of $S^{\frac{i -
\ell(w)}{2}}(\ul^*)$.
By direct computation, one can determine the least possible
value of
$k$ for which $\la - \omega_3$ can be a weight of $S^k(\ul^*)$.
The following table
summarizes the weights which can give a value of $i < 2p - 6$.

\vskip.4cm
\begin{center}
\begin{tabular}{|c|c|c|c|}\hline
$\la = p\omega_3 + w\cdot 0$ & $\ell(w)$ & $k$ & $i = 2k + \ell(w)$ \\
\hline
$(p - 6)\omega_3 + 2\omega_2$ & 3 & $p-5$ & $2p - 7$\\
\hline
$(p - 6)\omega_3 + \omega_1$ & 5 & $p - 6$ & $2p - 7$\\
\hline
$(p-6)\omega_3$ & 6 & $p-7$ & $2p - 8$\\
\hline
\end{tabular}
\end{center}

\begin{lemma}\label{L:B3A} Suppose that $\Phi$ is of type $B_3$
with $p\geq 7$.
Let $\la = p\mu + w\cdot 0 \in X(T)_+$ with $\mu \in X(T)_{+}$
and $w \in W$.
\begin{itemize}
\item[(a)] $\opH^i(G,H^0(\la)\otimes H^0(\la^*)^{(1)}) = 0$ for
$0 < i < 2p - 8$.
\item[(b)] If $\opH^{2p - 8}(G,H^0(\la)\otimes H^0(\la^*)^{(1)})
\neq 0$, then
$\la = (p - 6)\omega_3$.
\item[(c)] $\opH^{2p-8}(G,H^0((p-6)\omega_3)\otimes
H^0((p-6)\omega_3^*)^{(1)}) = k.$
\item[(d)] If $\opH^{2p - 7}(G,H^0(\la)\otimes H^0(\la^*)^{(1)})
\neq 0$, then
$\la = (p - 6)\omega_3 + \omega_1$ or $\la = (p-6)\omega_3 +
2\omega_2$.
\item[(e)] $\opH^{2p-8}(\gfp,k) = k$.
\end{itemize}
\end{lemma}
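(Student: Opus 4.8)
The plan is to derive parts (a), (b) and (d) directly from the restrictions already in place for type $B_3$ (Section~\ref{SS:B1}, Theorem~\ref{T:bw1}, and the table preceding the lemma), to establish (c) by evaluating one Kostant-partition sum via Proposition~\ref{P:KostantPartCohom}, and to deduce (e) from (a)--(d) through the non-vanishing criterion of Theorem~\ref{T:nonvanishing}. For (a), (b), (d): if $\mu\notin\{\omega_1,\omega_3\}$ then $\langle\mu,\ta^\vee\rangle\ge 2$, so $i\ge 2p-3$ by Proposition~\ref{P:degreebound1}; if $\mu=\omega_1$ then $\opH^i(G,H^0(\la)\otimes H^0(\la^*)^{(1)})=0$ for $0<i<2p-3$ by Theorem~\ref{T:bw1}. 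Hence only $\mu=\omega_3$ can contribute in degrees $i<2p-3$, and the table preceding the lemma lists every $\la=p\omega_3+w\cdot 0$ for which the minimal admissible degree is less than $2p-6$; all other weights have $\opH^i=0$ for $0<i<2p-6$. For the three tabulated weights the minimal admissible degree is $2p-8$ for $\la=(p-6)\omega_3$ (where $\ell(w)=6$, so the admissible $i$ are even, whence $\opH^{2p-7}=0$ for this $\la$) and $2p-7$ for $\la=(p-6)\omega_3+\omega_1$ and $\la=(p-6)\omega_3+2\omega_2$. Parts (a), (b), (d) follow immediately.

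For part (c), write $\la=(p-6)\omega_3=p\omega_3+w\cdot 0$, where $-w\cdot 0=6\omega_3$ is the unique sum of distinct positive roots equal to $6\omega_3$, namely the six positive roots with nonzero $\alpha_3$-coefficient; thus $\ell(w)=6$ and $(i-\ell(w))/2=p-7$ for $i=2p-8$. Proposition~\ref{P:KostantPartCohom} gives
\[
\dim\opH^{2p-8}(G,H^0(\la)\otimes H^0(\la^*)^{(1)})
  =\sum_{u\in W}(-1)^{\ell(u)}P_{p-7}\bigl(u\cdot(p-6)\omega_3-\omega_3\bigr),
\]
which by Lemma~\ref{L:G1cohom} and (\ref{ind}) also equals $\dim\Hom_B\bigl(V((p-6)\omega_3),\,S^{p-7}(\ul^*)\otimes\omega_3\bigr)$. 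I would evaluate this sum using that $\omega_3$ is minuscule (the spin weight of $B_3$): its stabilizer in $W$ is the parabolic generated by $s_{\alpha_1},s_{\alpha_2}$, so the terms with $u$ outside that subgroup can be collapsed by the shift argument of \cite[Lemma~6]{KLT} and \cite[3.8]{AJ} used in the proof of Lemma~\ref{L:omega1combinatorics}(d), reducing the alternating sum to a Kostant count over the $A_2$-subsystem $\langle\alpha_1,\alpha_2\rangle$ that equals $1$; as a consistency check, for $p=7$ the weight is $\la=\omega_3$ and the sum collapses to $\sum_{u\in W}(-1)^{\ell(u)}P_0(u\cdot\omega_3-\omega_3)=1$. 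Pinning this alternating sum down to exactly $1$ — rather than merely showing it is positive — is the main obstacle; if the minuscule reduction proves awkward, the value can instead be confirmed by a direct MAGMA computation together with the fact that it is independent of $p$ for $p\ge 7$.

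For part (e), Proposition~\ref{P:vanishing} and part (a) give $\opH^i(\gfp,k)\cong\opH^i(G,\cgr)=0$ for $0<i<2p-8$. Now apply Theorem~\ref{T:nonvanishing} with $r=1$, $m=2p-8$ (the least degree in which any $\nu\in X(T)_+$ has $\opH^m(G,H^0(\nu)\otimes H^0(\nu^*)^{(1)})\ne 0$, by (a) and (c)), and $\la=(p-6)\omega_3$. The hypothesis in degree $m+1=2p-7$ is satisfied: by (d) the only dominant weights with $\opH^{2p-7}(G,H^0(\nu)\otimes H^0(\nu^*)^{(1)})\ne 0$ are $(p-6)\omega_3+\omega_1$ and $(p-6)\omega_3+2\omega_2$, each of which differs from $\la$ by a nonzero sum of positive roots and so is not $<\la$. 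Theorem~\ref{T:nonvanishing}(b) gives $\opH^{2p-8}(\gfp,k)\ne 0$, and since by (b) the weight $\la$ is the unique dominant weight with nonzero $G$-cohomology in degree $2p-8$, Theorem~\ref{T:nonvanishing}(c) combined with (c) yields $\opH^{2p-8}(\gfp,k)\cong\opH^{2p-8}(G,H^0(\la)\otimes H^0(\la^*)^{(1)})=k$.
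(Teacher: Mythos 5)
Parts (a), (b), (d) and the deduction of (e) from Theorem~\ref{T:nonvanishing} together with Theorem~\ref{T:bw1} follow the paper's route exactly, and your handling of the degree-$(2p-7)$ hypothesis in Theorem~\ref{T:nonvanishing} (the two candidate weights are $>\la$, and $\omega_1$-type weights are excluded below $2p-3$ by Theorem~\ref{T:bw1}) is correct.

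The gap is part (c). Everything hinges on the identity
\[
\sum_{u \in W}(-1)^{\ell(u)}P_{m}\bigl(u\cdot((m+1)\omega_3) - \omega_3\bigr) = 1
\qquad (m = p-7 \text{ even}),
\]
and you do not prove it: you sketch a reduction via the minuscule property of $\omega_3$ and the shift argument of \cite[Lemma 6]{KLT} as used in Lemma~\ref{L:omega1combinatorics}(d), but that argument was tailored to $\omega_1=\epsilon_1$ in type $D_n$, and it is not clear that an analogous shift exists for the spin weight of $B_3$, nor is the claimed collapse to ``an $A_2$-count equal to $1$'' substantiated (your $p=7$ check only treats $m=0$, where the sum is trivially $1$). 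The fallback of ``MAGMA plus independence of $p$'' does not close the gap either: the sum is $P_{p-7}$ of weights growing with $p$, so its constancy in $p$ is precisely what must be proved, and a finite computation cannot do that. This is where the paper invests its real work: Lemma~\ref{L:B3B} proves the identity for all even $m$ by an explicit analysis in the $\epsilon$-basis, identifying the twelve Weyl group elements $u$ with $u\cdot((m+1)\omega_3)-\omega_3$ in the positive root lattice, cancelling four pairs of opposite parity with equal partition counts (using Observations~\ref{O:B31} and~\ref{O:B32}), and then showing the four surviving terms combine to $M_1+M_2+1-N_1-M_2-M_1+N_1=1$. Without an argument of this kind (or a worked-out version of your proposed minuscule reduction), part (c) — and hence the precise value $\opH^{2p-8}(\gfp,k)=k$ in part (e), as opposed to mere non-vanishing — remains unproved.
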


\begin{proof} Parts (a), (b), and (d) follow from the discussion
preceding the lemma.
Part (c) follows from Proposition \ref{P:KostantPartCohom} and 
Lemma \ref{L:B3B} below with $m = p - 7$.   Since 
the weights in part (d) are larger than $(p - 6)\omega_3$, 
by Theorem \ref{T:nonvanishing} and Theorem \ref{T:bw1}, we obtain part (e).
\end{proof}

\begin{remark} The weights in part (d) also appear to give
cohomology classes
as verified for $p = 7, 11, 13$ by computer.  For $p = 7$, 
$\la = (p - 6)\omega_3 + \omega_1$ gives a one-dimensional
cohomology group.
But for $p = 11, 13$, one gets a two-dimensional cohomology
group.
For all three primes, the weight $\la = (p - 6)\omega_3 +
2\omega_2$ gives a
one-dimensional cohomology group.
\end{remark}

\begin{lemma}\label{L:B3B} Suppose that $\Phi$ is of type $B_3$.
Let $m \geq 0$ be
an even integer. Then
$$
\sum_{u \in W}(-1)^{\ell(u)}P_m(u\cdot((m+1)\omega_3) -
\omega_3) = 1.
$$
\end{lemma}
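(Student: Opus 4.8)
The plan is to identify the sum $\sum_{u \in W}(-1)^{\ell(u)}P_m(u\cdot((m+1)\omega_3) - \omega_3)$ with a Weyl-module multiplicity and then compute that multiplicity directly. By \cite[3.8]{AJ}, the alternating sum over $W$ of Kostant partition function values at $u\cdot\nu - \omega_3$ computes $[\chi(S^m(\ul^*)\otimes(-\omega_3)) : \ch H^0(\nu)]$ for $\nu = (m+1)\omega_3$; equivalently, using that $\omega_3$ is the spin weight with $\langle\omega_3,\al^\vee\rangle \le 2$ for all positive roots (so $\omega_3$-related vanishing arguments as in Lemma~\ref{L:Prelations} apply), this is $\dim\Hom_G(V((m+1)\omega_3), H^0(G/B, S^m(\ul^*)\otimes(-\omega_3)))$. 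So the first step is to rewrite the claim as: this Hom-space is one-dimensional for even $m \ge 0$.

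Next I would compute the relevant character/Euler-characteristic identity. Since $\omega_3 = \tfrac12(\epsilon_1+\epsilon_2+\epsilon_3)$ is minuscule-like for the short roots, one can use a Koszul/short-exact-sequence argument paralleling the proof of Lemma~\ref{L:Prelations}(d),(g): tensoring the Koszul resolution of $S^k(\ul^*)$ by $-\omega_3$ and pushing down to $G$, combined with the fact that $\langle -\omega_3, \al_3^\vee\rangle = -1$ kills certain parabolic cohomology, should collapse $H^i(G/B, S^m(\ul^*)\otimes(-\omega_3))$ to $H^0$ and relate it to $H^0(G/B, S^{m-j}(\ul^*)\otimes(\text{something}))$ for a controlled shift. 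Alternatively — and this may be cleaner for a $B_3$-specific statement — one reduces to a rank computation: the weight $(m+1)\omega_3 - \omega_3 = m\omega_3$ expressed in simple roots has $\al_3$-coefficient $\tfrac{m\cdot 3}{2}$ (nonintegral unless $m$ even, which explains the parity hypothesis), and the minimal number of positive roots summing to it forces $k \ge m$; equality $k = m$ then picks out a unique term, giving the value $1$.

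The main obstacle I anticipate is pinning down the \emph{uniqueness} (that the alternating sum is exactly $1$ rather than merely nonzero or bounded): the crude "minimal number of positive roots" bound of the type used in Lemma~\ref{L:omega1} and Proposition~\ref{P:Pvanishing} gives a lower bound on $k$ and shows the leading term contributes, but controlling cancellation among the $u \in W$ with $u(\epsilon_1+\epsilon_2+\epsilon_3)$ still a sum of positive roots requires care. I expect the cleanest route is the one flagged in the proof of Lemma~\ref{L:omega1combinatorics}(f): a \emph{direct computation similar to that in the proof of \cite[Lemma 6.11]{BNP}}, enumerating the small set of relevant Weyl group elements (those fixing enough of the $\epsilon_i$) and checking the signed count of partitions of $m\omega_3 + u\cdot 0$ into exactly $m$ positive roots sums to $1$. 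Since $B_3$ is small, this enumeration is finite and tractable, and for even $m$ the combinatorics should stabilize into a single surviving configuration; the parity hypothesis is exactly what guarantees $m\omega_3$ lands in the root lattice so that nonzero terms can occur at all.
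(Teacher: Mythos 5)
Your preferred route---direct enumeration of the relevant Weyl group elements in the $\epsilon$-basis, as signaled by the reference to \cite[Lemma~6.11]{BNP} in the proof of Lemma~\ref{L:omega1combinatorics}(f)---is indeed what the paper does, and your explanation of the parity hypothesis (namely that $u\cdot((m+1)\omega_3) - \omega_3 \equiv m\omega_3 \pmod{\mathbb{Z}\Phi}$, which lies in $\mathbb{Z}\Phi$ only for $m$ even) is correct. The Koszul-resolution route you float first is not pursued here: the shifting identities in Lemma~\ref{L:Prelations}(d)--(h) are tailored to $\omega_1$ and a new analysis of $\langle -\omega_3, \alpha_i^{\vee}\rangle$ along each simple root would be needed, with no evident gain over the hands-on count for a fixed small rank.

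The substantive gap is that the proposal stops exactly where the proof starts. You predict that ``the combinatorics should stabilize into a single surviving configuration,'' but that is not how the argument goes, and nothing in the sketch forces it. Setting $m = 2n$, there are \emph{twelve} $u \in W$ for which $x_u := u\cdot((m+1)\omega_3) - \omega_3$ can be a nonnegative sum of roots, not one. The proof first pairs eight of them into four opposite-parity pairs with equal $P_{2n}(x_u)$, using Observations~\ref{O:B31} and~\ref{O:B32} to identify a forced root in any length-$2n$ decomposition that can be stripped off identically from both members of a pair. What remains, $P_{2n}(x_{(1)}) - P_{2n}(x_{(12)}) - P_{2n}(x_{(23)}) + P_{2n}(x_{(123)})$, is then collapsed by a further case-split of $P_{2n}(x_{(1)})$ into $M_1 + M_2 + M_3$ (according to whether the decomposition contains $\ep_1 + \ep_2$, else $\ep_1 + \ep_3$, else neither) and matching identities $P_{2n}(x_{(12)}) = N_1 + M_2$, $P_{2n}(x_{(23)}) = M_1$, $P_{2n}(x_{(123)}) = N_1$. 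Everything cancels except $M_3$, which equals $1$ because the only decomposition of $n\ep_1 + n\ep_2 + n\ep_3$ into $2n$ positive roots avoiding both $\ep_1 + \ep_2$ and $\ep_1 + \ep_3$ is $n$ copies of $\ep_2 + \ep_3$ plus $n$ copies of $\ep_1$. So the ``unique configuration'' you anticipate lives inside one of three sub-counts of one of the twelve terms; establishing the cancellations that expose it is the entire content of the lemma, and it is missing from the proposal.
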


\begin{proof} Let $n$ be such that $m = 2n$. For $n = 0$, the
claim readily follows,
so we assume that 
$n \geq 1$.\footnote{Indeed, for small values of $n$ the claim
can be verified by hand,
and it has been verified for $n \leq 6$ using MAGMA.} We work
with the epsilon basis
for the root system. Then the positive roots are $\epsilon_1$,
$\ep_2$, $\ep_3$,
$\ep_1 + \ep_2$, $\ep_1 + \ep_3$, $\ep_2 + \ep_3$, $\ep_1 -
\ep_2$, $\ep_1 - \ep_3$,
and $\ep_2 - \ep_3$. Further $\omega_3 = \frac12(\ep_1 + \ep_2 +
\ep_3)$.
Relative to the $\epsilon$ basis, for any $u \in W$, $u(\ep_i) =
\pm\ep_j$.
That is, $u$ permutes the $\ep_i$ up to a sign.  

For $u \in W$, set $x_u := u\cdot((m+1)\omega_3) - \omega_3$.  
Using the fact that $2\rho = 5\ep_1 + 3\ep_2 + \ep_3$, one finds
that
\begin{equation}\label{E:uonw3}
x_u = u((n+3)\ep_1 + (n+2)\ep_2 + (n+1)\ep_3) - 3\ep_1 - 2\ep_2
- \ep_1.
\end{equation}
By direct calculation, one can identify all $u \in W$ for which $x_u$ is a positive root.  There are twelve such 
elements which are summarized in the following table (using
permutation notation)
along with the parity of their lengths. An element marked with
a superscript
negative sign denotes the operation which consists of the given
permutation of the $\ep_i$s
followed by sending $\ep_3$ to $-\ep_3$. For example, let $u =
(123)^{-}$.
Then $u(\ep_1) = \ep_2$, $u(\ep_2) = - \ep_3$, and $u(\ep_3) =
\ep_1$.

\medskip
\begin{center}
\begin{tabular}{|c|c|c|c|c|c|c|c|c|c|c|c|c|}
\hline
$u$ & (1) & (12) & (13) & (23) & (123) & (132) & $(1)^{-}$ &
$(12)^{-}$ & $(13)^{-}$ & $(23)^{-}$ & $(123)^{-}$ &
$(132)^{-}$\\
\hline
$\ell(u)$ & even & odd & odd & odd & even & even & odd & even &
even & even & odd & odd\\
\hline
\end{tabular}
\end{center}

\medskip
For these twelve $u$, using (\ref{E:uonw3}), one can explicitly
compute $x_u$.
The values are summarized in the following table. Recall that $m
= 2n$. For small values
of $n$, some of these cannot be sums of positive roots. The
necessary condition on $n$ is
given in the third column.

\medskip
\begin{center}
\begin{tabular}{|c|c|c|}
\hline
$u$ & $x_u := u\cdot ((m+1)\omega_3) - \omega_3$ & positive root
sum\\
\hline
(1) & $n\ep_1 + n\ep_2 + n\ep_3$ & $n \geq 1$\\
\hline
(12) & $(n-1)\ep_1 + (n+1)\ep_2 + n\ep_3$ & $n \geq 1$\\
\hline
(13) & $(n-2)\ep_1 + n\ep_2 + (n+2)\ep_3$ & $n \geq 2$\\
\hline
(23) & $n\ep_1 + (n-1)\ep_2 + (n+1)\ep_3$ & $n \geq 1$\\
\hline
(123) & $(n-2)\ep_1 + (n+1)\ep_2 + (n + 1)\ep_3$ & $n \geq 2$\\
\hline
(132) & $(n-1)\ep_1 + (n-1)\ep_2 + (n + 2)\ep_3$ & $n \geq 1$\\
\hline
$(1)^{-}$ & $n\ep_1 + n\ep_2 - (n+2)\ep_3$ & $n \geq 2$\\
\hline
$(12)^{-}$ & $(n-1)\ep_1 + (n+1)\ep_2 - (n+2)\ep_3$ & $n \geq
2$\\
\hline
$(13)^{-}$ & $(n-2)\ep_1 + n\ep_2 - (n+4)\ep_3$ & $n \geq 6$\\
\hline
$(23)^{-}$ & $n\ep_1 + (n-1)\ep_2 - (n+3)\ep_3$ & $n \geq 4$\\
\hline
$(123)^{-}$ & $(n-2)\ep_1 + (n+1)\ep_2 - (n+3)\ep_3$ & $n \geq
4$\\
\hline
$(132)^{-}$ & $(n-1)\ep_1 + (n-1)\ep_2 - (n+4)\ep_3$ & $n \geq
6$\\
\hline
\end{tabular}
\end{center}

\medskip
We need to compute the appropriate alternating sum of the values
of $P_{2n}(x_u)$
for these twelve values of $u$. We show below that there are four pairs of $u$s for
which the
lengths have opposite parity and the values of $P_{2n}(x_u)$ are
the same.
Hence those cancel each other out. We will further show that there
is also
a relationship between the remaining partition functions that
will lead to the
desired claim. To see these relationships, we make a few
observations whose
proofs are left to the interested reader.

\begin{observation}\label{O:B31} Let $x = a_1\ep_1 + a_2\ep_2 +
a_3\ep_3$ with
$a_1 + a_2 + a_3 = 3n$. Suppose that $x$ is expressed as a sum
of $2n$ positive roots.
\begin{itemize}
\item[(a)]At least $n$ of the roots have the form $\ep_i +
\ep_j$ (not necessarily
all the same).  
\item[(b)] For any pair $i, j \in \{1, 2, 3\}$ (with $i \neq
j$), if $a_i + a_j = 2n + c$,
then the root sum decomposition contains 
at least $c$ copies of $\ep_i + \ep_j$.  
\end{itemize}
\end{observation}

\begin{observation}\label{O:B32} Let $x = a_1\ep_1 + a_2\ep_2 -
a_3\ep_3$
with $1 \leq a_1, a_2 < a_3$ and $a_1 + a_3 > 2n$. Suppose that
$x$ is expressed as
a sum of $2n$ positive roots.
Then at least one of the roots is $\ep_1 - \ep_3$ and at least
one is $\ep_2 - \ep_3$.
\end{observation}

We now identify the four pairs (of opposite parity) where
$P_{2n}(x_u)$ is the same.

\medskip\noindent
{\bf Case 1.} $(13)$ and $(132)$.

If $n = 1$, as noted above, $x_{(13)} = -\ep_1 + \ep_2 + 3\ep_3$
cannot be expressed
as a sum of positive roots. On the other hand, $x_{(123)} =
3\ep_3$ can be.
However, it cannot be expressed as a sum of $2n = 2$ positive
roots.
So we assume $n \geq 2$. 
If $x_{(13)}$ is expressed as a sum of $2n$ positive roots, by
Observation \ref{O:B31}(b),
at least one of the roots is $\ep_2 + \ep_3$ (in fact, at least
two).
Hence, $P_{2n}(x_{(13)}) = P_{2n-1}(x_{(13)} - (\ep_2 + \ep_3)) = 
P_{2n-1}((n-2)\ep_1 + (n-1)\ep_2 + (n+1)\ep_3)$. Similarly, if
$x_{(132)}$ is expressed
as a sum of $2n$ positive roots, at least one 
of the roots is $\ep_1 + \ep_3$ (and one is $\ep_2 + \ep_3$).
Hence, $P_{2n}(x_{(132)}) =
P_{2n-1}((n-2)\ep_1 + (n-1)\ep_2 + (n+1)\ep_3) =
P_{2n}(x_{(13)})$.

\medskip
For the remaining three pairs, if $n$ is not sufficiently large
for $x_u$ to admit
a positive root sum decomposition, then $P_{2n}(x_u) = 0$ in
both cases. So we assume
in what follows that $n$ is 
sufficiently large to admit a root sum decomposition.

\medskip\noindent
{\bf Case 2.} $(1)^{-}$ and $(12)^{-}$.

If $x_{(1)^{-}}$ is expressed as a sum of $2n$ positive roots,
by Observation \ref{O:B32},
at least one of the roots is $\ep_1 - \ep_3$. Hence, removing
this root,
$P_{2n}(x_{(1)^{-}}) = P_{2n-1}((n-1)\ep_1 + n\ep_2 - (n +
1)\ep_3)$.
Similarly, again by Observation \ref{O:B32},
if $x_{(12)^{-}}$ is expressed as a sum of $2n$ positive roots,
then at
least one of the roots is $\ep_2 - \ep_3$. Hence,
$P_{2n}(x_{(12)^{-}}) = P_{2n-1}((n-1)\ep_1 + n\ep_2 -
(n+1)\ep_3) = P_{2n}(x_{(1)^{-}}).$

\medskip\noindent
{\bf Case 3.} $(13)^{-}$ and $(132)^{-}$.

Similar to Case 2, by removing an $\ep_2 - \ep_3$ for $(13)^{-}$
and removing
an $\ep_1 - \ep_3$ for $(132)^{-}$, one finds that
$P_{2n}(x_{(12)^{-}}) =
P_{2n-1}((n-2)\ep_1 + (n-1)\ep_2 - (n + 3)\ep_3) =
P_{2n}(x_{(132)^{-}})$.

\medskip\noindent
{\bf Case 4.} $(23)^{-}$ and $(123)^{-}$.

Again, similar to Case 2 with a slight generalization of
Observation \ref{O:B32}, by removing
two copies of $\ep_1 - \ep_3$ for $(23)^{-}$ and two copies of
$\ep_2 - \ep_3$
for $(123)^{-}$, one finds that $P_{2n}(x_{(23)^{-}}) = 
P_{2n-2}((n-2)\ep_1 + (n-1)\ep_2 - (n + 1)\ep_3) =
P_{2n}(x_{(123)^{-}})$.

\medskip
From Cases 1-4, we have that
\begin{equation}\label{E:B3part}
\sum_{u \in W}(-1)^{\ell(u)}P_m(u\cdot((m+1)\omega_3) -
\omega_3) =
P_{2n}(x_{(1)}) - P_{2n}(x_{(12)}) - P_{2n}(x_{(23)}) +
P_{2n}(x_{(123)}).
\end{equation}
We now deduce several relationships among the terms on the right
hand side.
If $n = 1$, only the first three terms can be non-zero, and one
can readily
check that the claim holds. So we assume that $n \geq 2$. Note
that the following
argument does still hold even when $n = 1$. 

Consider the identity element $(1)$. Write $P_{2n}(x_{(1)}) =
M_1 + M_2 + M_3$ where
$M_1$ denotes the number of root sum decompositions which
contain an $\ep_1 + \ep_2$,
$M_2$ denotes the number which contain an $\ep_1 + \ep_3$ but 
not an $\ep_1 + \ep_2$, and $M_3$ denotes the number which
contain
neither an $\ep_1 + \ep_2$ nor an $\ep_1 + \ep_3$. For $M_1$, by
assumption, the
decomposition contains an $\ep_1 + \ep_2$. Removing this root
gives
\begin{equation}\label{E:B3M1}
M_1 = P_{2n-1}((n-1)\ep_1 + (n-1)\ep_2 + n\ep_3). 
\end{equation}
For $M_2$, by assumption, the decomposition contains an $\ep_1 +
\ep_3$.
Removing this root gives
\begin{equation}\label{E:B3M2}
M_2 = P_{2n-1}^*((n-1)\ep_1 + n\ep_2 + (n-1)\ep_3), 
\end{equation}
where $P^*$ denotes the fact that we are only counting
decompositions which
contain no copies of $\ep_1 + \ep_2$.
By assumption, $M_3$ is the number of root decompositions
of $n\ep_1 + n\ep_2 + n\ep_3$ (into $2n$ positive roots) which
do not contain an
$\ep_1 + \ep_2$ nor an $\ep_1 + \ep_3$. By Observation
\ref{O:B31}(a),
any such decomposition contains at least $n$ copies of $\ep_2 +
\ep_3$.
Removing those leaves $n\ep_1$ which 
must be expressed as a sum of $n$ positive roots without using
$\ep_1 + \ep_2$
nor $\ep_1 + \ep_3$.  There is clearly only one such 
decomposition (using $n$ copies of $\ep_1$).  Hence, $M_3 = 1$.

Consider now the word $(12)$.  Write
$P_{2n}(x_{(12)}) = N_1 + N_2$ where $N_1$ denotes the number of
root sum
decompositions which contain at least one copy of $\ep_1 +
\ep_2$ and $N_2$
denotes the number which do not contain an $\ep_1 + \ep_2$.  
In the first case, by removing an $\ep_1 + \ep_2$, we have 
\begin{equation}\label{E:B3N1}
N_1 = P_{2n-1}((n-2)\ep_1 + n\ep_2 + n\ep_3).
\end{equation}
In the second case (as well as the first case), by Observation
\ref{O:B31}(b),
any decomposition must include an $\ep_2 + \ep_3$. Removing
that, we see that
$$
N_2 = P^*_{2n-1}((n-1)\ep_1 + n\ep_2 + (n-1)\ep_3) = M_2
$$
from (\ref{E:B3M2}).

From Observation \ref{O:B31}(b), by removing an $\ep_1 +
\ep_3$,
$$
P_{2n}(x_{(23)}) = P_{2n-1}((n-1)\ep_1 + (n-1)\ep_2 + n\ep_3) =
M_1,
$$
where the second equality follows from (\ref{E:B3M1}).  
From Observation \ref{O:B31}(b), by removing an $\ep_2 + \ep_3$, 
$$
P_{2n}(x_{(123)}) = P_{2n-1}((n-2)\ep_1 + n\ep_2 + n\ep_3) =
N_1,
$$
where the second equality follows from
(\ref{E:B3N1})\footnote{If $n = 1$,
$x_{(123)}$ cannot be expressed as a sum of positive roots. 
However, in that case, $N_1$ is necessarily zero, and so we
still have
$P_{2n}(x_{(123)}) = N_1$.}.

From (\ref{E:B3part}) and the preceding relationships, we have
\begin{align*}
\sum_{u \in W}(-1)^{\ell(u)}P_m(u\cdot((m+1)\omega_3) -
\omega_3) &=
P_{2n}(x_{(1)}) - P_{2n}(x_{(12)}) - P_{2n}(x_{(23)}) +
P_{2n}(x_{(123)})\\
&= M_1 + M_2 + M_3 - N_1 - N_2 - P_{2n}(x_{(23)}) +
P_{2n}(x_{(123)})\\
&= M_1 + M_2 + 1 - N_1 - M_2 - M_1 + N_1\\
&= 1
\end{align*}
as claimed.

\end{proof}


\subsection{\bf Type $B_4$.}\label{SS:B4} Let $\Phi$ be of type
$B_4$ with $p > h = 8$ (so $p \geq 11$).
As discussed in Section \ref{SS:B3} for type $B_3$, in order to
have
$\opH^i(G,H^0(\la)\otimes H^0(\la^*)^{(1)}) \neq 0$ for $0 < i <
2p -3$,
we must have $\la = p\omega_4 + w\cdot 0$ for $w \in W$.  
Again, by direct computation with MAGMA, the following table
summarizes the weights which can give a value of $i < 2p - 6$.

\vskip.4cm
\begin{center}
\begin{tabular}{|c|c|c|c|}\hline
$\la = p\omega_4 + w\cdot 0$ & $\ell(w)$ & $k$ & $i = 2k + \ell(w)$ \\
\hline
$(p - 8)\omega_4 + 2\omega_2$ & $p-7$ & 7 & $2p - 7$\\
\hline
$(p - 8)\omega_4 + \omega_1$ & $p-8$ & 9 & $2p - 7$\\
\hline
$(p-8)\omega_4$ & $p-9$ & 10 & $2p - 8$\\
\hline
\end{tabular}
\end{center}

\begin{lemma}\label{L:B4A} Suppose that $\Phi$ is of type $B_4$
with $p \geq 11$.
Let $\la = p\mu + w\cdot 0 \in X(T)_+$ with $\mu \in X(T)_{+}$
and $w \in W$.
\begin{itemize}
\item[(a)] $\opH^i(G,H^0(\la)\otimes H^0(\la^*)^{(1)}) = 0$ for
$0 < i < 2p - 8$.
\item[(b)] If $\opH^{2p - 8}(G,H^0(\la)\otimes H^0(\la^*)^{(1)})
\neq 0$, then
$\la = (p - 8)\omega_4$.
\item[(c)] $\opH^{2p-8}(G,H^0((p-8)\omega_4)\otimes
H^0((p-8)\omega_4^*)^{(1)}) = k$.
\item[(d)] If $\opH^{2p - 7}(G,H^0(\la)\otimes H^0(\la^*)^{(1)})
\neq 0$, then
$\la = (p - 8)\omega_4 + \omega_1$ or $\la = (p-8)\omega_4 +
2\omega_2$.
\item[(e)] $\opH^{2p-8}(\gfp,k) = k$.
\end{itemize}
\end{lemma}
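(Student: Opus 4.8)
The plan is to follow the template used for type $B_3$ in Lemma~\ref{L:B3A}, deferring the essential combinatorial input to a companion lemma (stated as Lemma~\ref{L:B4B} below). Parts (a), (b), and (d) will be immediate from the discussion preceding the lemma. Combining the restrictions of Section~\ref{SS:B1}, which force $\mu \in \{\omega_1,\omega_4\}$ whenever $\opH^i(G,H^0(\la)\otimes H^0(\la^*)^{(1)})\neq 0$ with $0<i<2p-3$, together with Theorem~\ref{T:bw1}, which disposes of the case $\mu=\omega_1$, one is reduced to $\la = p\omega_4 + w\cdot 0$. The MAGMA computation recorded in the table then shows that the only such weights yielding non-zero cohomology in a degree $i<2p-6$ are $(p-8)\omega_4$ (in degree $2p-8$), $(p-8)\omega_4+\omega_1$ (in degree $2p-7$), and $(p-8)\omega_4+2\omega_2$ (in degree $2p-7$). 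Since $i = 2k+\ell(w)$, the weight $(p-8)\omega_4$, for which $\ell(w)=10$, contributes only in even degrees, giving (b), while the other two contribute only in odd degrees, giving (d); and (a) follows because none of these reaches below degree $2p-8$.

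For part (c), I would apply Proposition~\ref{P:KostantPartCohom} to $\la = (p-8)\omega_4 = p\omega_4 + w\cdot 0$ with $\ell(w)=10$, obtaining
\[
\dim\opH^{2p-8}(G,H^0(\la)\otimes H^0(\la^*)^{(1)}) = \sum_{u\in W}(-1)^{\ell(u)}\,P_{p-9}\bigl(u\cdot\la-\omega_4\bigr),
\]
so that (c) reduces to the identity $\sum_{u\in W}(-1)^{\ell(u)}P_m\bigl(u\cdot((m+1)\omega_4)-\omega_4\bigr)=1$ for $m\geq 0$ even, applied with $m=p-9$ (note $m$ is even since $p$ is odd, and $m\geq 2$ since $p\geq 11$).

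I would isolate this identity as Lemma~\ref{L:B4B} and prove it along the lines of Lemma~\ref{L:B3B}: work in the $\epsilon$-basis, express $x_u := u\cdot((m+1)\omega_4)-\omega_4$ in terms of $u$ and $2\rho$, enumerate the $u\in W$ for which $P_m(x_u)$ can be non-zero, pair off elements of opposite length-parity whose partition-function values agree after peeling off a forced common root, and evaluate the surviving terms by a bookkeeping argument analogous to the $M_i$/$N_i$ analysis there. For part (e), since $\la=(p-8)\omega_4$ is minimal in its linkage class while the weights of part (d) are larger, Theorem~\ref{T:nonvanishing} and Theorem~\ref{T:bw1} together with (c) give $\opH^{2p-8}(\gfp,k)=k$.

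The main obstacle is the combinatorial identity of Lemma~\ref{L:B4B}. In type $B_3$ there were exactly twelve relevant Weyl group elements and four cancelling pairs; for $B_4$ the Weyl group has order $2^4\cdot 4!=384$, so enumerating the relevant $u$, identifying the cancelling pairs, and carrying out the residual bookkeeping all become considerably more intricate, and one must track carefully how large $m$ must be for each $x_u$ to admit a decomposition into $m$ positive roots.
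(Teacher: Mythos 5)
Your proposal is correct and follows the paper's proof of Lemma~\ref{L:B4A} essentially verbatim: parts (a), (b), (d) from the preceding MAGMA table (with the columns for $\ell(w)$ and $k$ swapped, as you correctly interpreted, so that $\ell(w)=10$ and $k=p-9$ for $(p-8)\omega_4$), part (c) by reducing via Proposition~\ref{P:KostantPartCohom} to the companion identity Lemma~\ref{L:B4B} with $m=p-9$, and part (e) from minimality of $(p-8)\omega_4$ in its linkage class together with Theorems~\ref{T:nonvanishing} and~\ref{T:bw1}. Your sketch of Lemma~\ref{L:B4B} (restricting to the permutations $u\in S_4$, pairing off opposite-parity elements after peeling off forced roots, then a residual $M_i$/$N_i$ count) also matches the paper's actual argument for that lemma.
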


\begin{proof} Parts (a), (b), and (d) follow from the discussion
preceding the lemma.
Part (c) follows from Proposition \ref{P:KostantPartCohom} and
Lemma \ref{L:B4B} below
with $m = p - 9$.
Since the weights in part (d) are larger than $(p - 8)\omega_4$,
by Theorem
\ref{T:nonvanishing} and Theorem \ref{T:bw1}, we obtain part (e).  
\end{proof}

\begin{lemma}\label{L:B4B} Suppose that $\Phi$ is of type $B_4$.
Let $m \geq 0$ be
an even integer. Then
$$
\sum_{u \in W}(-1)^{\ell(u)}P_m(u\cdot((m+1)\omega_4) -
\omega_4) = 1.
$$
\end{lemma}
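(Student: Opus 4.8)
The plan is to follow exactly the template used in the proof of Lemma~\ref{L:B3B} for type $B_3$, adapting each step to the rank-4 situation where the long simple root is $\alpha_4 = \epsilon_4$ and $\omega_4 = \frac12(\epsilon_1 + \epsilon_2 + \epsilon_3 + \epsilon_4)$. Write $m = 2n$; the case $n = 0$ is immediate, so assume $n \geq 1$. Working in the $\epsilon$-basis, the positive roots of $B_4$ are the $\epsilon_i$ (four short roots) and the $\epsilon_i \pm \epsilon_j$ for $i < j$ (twelve long roots). Using $2\rho = 7\epsilon_1 + 5\epsilon_2 + 3\epsilon_3 + \epsilon_4$, set $x_u := u\cdot((m+1)\omega_4) - \omega_4$ for $u \in W$, so that
$$
x_u = u\big((n+4)\epsilon_1 + (n+3)\epsilon_2 + (n+2)\epsilon_3 + (n+1)\epsilon_4\big) - 3\epsilon_1 - 2\epsilon_2 - \epsilon_3.
$$
The first step is to determine, with the aid of MAGMA, precisely which $u \in W$ give an $x_u$ lying in the positive root lattice (necessarily those $u$ for which $u$ permutes $\epsilon_1, \epsilon_2, \epsilon_3$ among themselves up to signs in a restricted way, since the coefficient pattern of $\omega_4$ and $\rho$ forces most sign changes and transpositions to produce negative coefficients on $\epsilon_1$). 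I expect a short list of elements, grouped by the parity of $\ell(u)$, together with an explicit formula for each $x_u$ and the minimal $n$ for which $x_u$ admits a sum-of-positive-roots expression.

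The second step is to establish the analogues of Observations~\ref{O:B31} and~\ref{O:B32}: if $x = \sum a_i \epsilon_i$ with $\sum a_i = 4n$ is written as a sum of $2n$ positive roots, then at least $2n$ of the contributions must be long roots $\epsilon_i + \epsilon_j$ (since each short root $\epsilon_i$ and each root $\epsilon_i - \epsilon_j$ contributes total coordinate sum $\leq 1$, while $\epsilon_i + \epsilon_j$ contributes $2$, and $4n$ total forces all $2n$ summands to be of plus-type), and the coordinate-pair bound: if $a_i + a_j = 2n + c$ then the decomposition contains at least $c$ copies of $\epsilon_i + \epsilon_j$. Likewise, a mixed-sign version controls the required number of $\epsilon_i - \epsilon_4$ type roots when one coordinate is negative. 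Using these, I would peel off forced roots from each $x_u$ to reduce $P_{2n}(x_u)$ to a partition function of a smaller weight into fewer roots, and identify pairs of opposite-parity $u$'s whose reduced partition functions coincide, so that they cancel in the alternating sum $\sum_u (-1)^{\ell(u)} P_m(x_u)$.

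After the cancellations, the plan is that only a handful of terms survive — by analogy with $B_3$, something like $P_{2n}(x_{(1)})$ minus a few others — and one analyzes $P_{2n}(x_{(1)})$ where $x_{(1)} = n(\epsilon_1 + \epsilon_2 + \epsilon_3 + \epsilon_4)$ by splitting on which long plus-roots appear, exactly as the $M_1, M_2, M_3$ (and $N_1, N_2$, $\dots$) decomposition in Lemma~\ref{L:B3B}: the "generic'' pieces cancel against the corresponding pieces of the other surviving terms, and one isolated piece counts the unique decomposition of $n\epsilon_1$ (or similar) into $n$ copies of $\epsilon_1$, contributing the lone $1$. The main obstacle I anticipate is bookkeeping: $B_4$ has $16$ positive roots and a Weyl group of order $384$, so the list of relevant $u$ is larger than in $B_3$, the forced-root reductions require iterating the coordinate-pair observation two or three times, and matching up the surviving terms into cancelling pairs plus one leftover is combinatorially delicate. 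One should also watch the small-$n$ boundary cases (where some $x_u$ is not expressible, but the corresponding reduced partition function is automatically zero, so the identities still hold) — the footnote-style caveats in the $B_3$ proof will need rank-4 counterparts. I would organize the computation so that each reduction is justified by the stated Observations and the final tally is a single displayed \begin{align*}\dots\end{align*} chain collapsing to $1$, mirroring the end of the $B_3$ argument.
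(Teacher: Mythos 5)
Your plan is exactly the route the paper takes for $B_4$: reduce the alternating sum over $W$ to a short list of contributing $u$, establish the analogue of Observation~\ref{O:B31}, peel off forced roots $\epsilon_i+\epsilon_j$ to match opposite-parity terms in cancelling pairs, and finish with an $M_1,M_2,M_3$ / $N_1,N_2$ decomposition of the few survivors. One arithmetic slip to fix before you start: since $u\cdot\nu - \omega_4 = u(\nu+\rho) - (\rho+\omega_4)$ and $\rho+\omega_4 = 4\epsilon_1+3\epsilon_2+2\epsilon_3+\epsilon_4$, the constant being subtracted in your formula for $x_u$ should be $4\epsilon_1+3\epsilon_2+2\epsilon_3+\epsilon_4$, not $3\epsilon_1+2\epsilon_2+\epsilon_3$. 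You will also find one simplification relative to $B_3$: in rank $4$ every $u$ involving a sign change makes $x_u$ either not a sum of positive roots or only expressible using more than $2n$ of them, so the mixed-sign analogue of Observation~\ref{O:B32} is never needed and the contributing $u$ are just the $24$ permutations in $S_4\subset W$.
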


\begin{proof} The arguments to follow are quite similar to those
in the proof of
Lemma \ref{L:B3B}. Let $n$ be such that $m = 2n$. For $n = 0$,
the claim readily follows,
so we assume that $n \geq 1$. As with the proof of Lemma
\ref{L:B3B}, we work with
the epsilon basis for the root system. Then the positive roots
are
$\epsilon_1$, $\ep_2$, $\ep_3$, $\ep_4$, $\ep_1 + \ep_2$, $\ep_1
+ \ep_3$, $\ep_1 + \ep_4$,
$\ep_2 + \ep_3$, $\ep_2 + \ep_4$, $\ep_3 + \ep_4$, $\ep_1 -
\ep_2$, $\ep_1 - \ep_3$,
$\ep_1 - \ep_4$, $\ep_2 - \ep_3$, $\ep_2 - \ep_4$, and $\ep_3 -
\ep_4$.
Further $\omega_4 = \frac12(\ep_1 + \ep_2 + \ep_3 + \ep_4)$.
Relative to the $\epsilon$ basis, for any $u \in W$, $u(\ep_i) =
\pm\ep_j$.
That is, $u$ permutes the $\ep_i$ up to a sign.  

For $u \in W$, let 
$x_u := u\cdot((m+1)\omega_4) - \omega_4$.  Using the fact that $2\rho = 7\ep_1 + 5\ep_2 + 3\ep_3 + \ep_4$, one finds that
\begin{equation}\label{E:uonw4}
x_u = u((n+4)\ep_1 + (n+3)\ep_2 + (n+2)\ep_3 + (n+1)\ep_4)
- 4\ep_1 - 3\ep_2 - 2\ep_3 - \ep_1.
\end{equation}
By direct calculation, one finds that if $u$ sends any $\ep_i$
to $-\ep_j$ (any $j$),
then $x_u$ is either not expressible as a sum of 
positive roots or requires at least $2n+1$ roots to do so.
Therefore, the only
$u$ that can contribute to the alternating sum under
consideration are those $u$
for which $u(\ep_i) = \ep_j$. That is, $u$ is simply one of the
24 permutations of
the $\ep_i$s.  

Let $u \in S_4 \subset W$.  From (\ref{E:uonw4}), one finds that $x_u = a_1\ep_1 + a_2\ep_2 + a_3\ep_3 + a_4\ep_4$ where
$a_1 + a_2 + a_3 + a_4 = 4n$. 
Since the positive roots are of the form $\ep_i$, $\ep_i +
\ep_j$,
or $\ep_i - \ep_j$, for this to be expressed as a sum of $2n$
roots, each such
root must be of the form $\ep_i + \ep_j$. That is the other two
types of roots are not
allowable. Similar to the arguments in the proof of Lemma
\ref{L:B3B}, one can further
see the following.

\begin{observation}\label{O:B41} Suppose that $a_1\ep_1 +
a_2\ep_2 + a_3\ep_3 + a_4\ep_4$
is expressed as a sum of $2n$ positive roots where $a_1 + a_2 +
a_3 + a_4 = 4n$.
For any pair $i, j \in \{1,2,3,4\}$ 
(with $i \neq j$), if $a_i + a_j = 2n + c$, then the root sum
decomposition contains
at least $c$ copies of $\ep_i + \ep_j$.  
\end{observation}

Using Observation \ref{O:B41}, by direct calculation, one can
show that the 18
permutations $u$ for which $u(\ep_1) \neq \ep_1$ can be
separated into 9 pairs
of opposite parity having equal values of $P_{2n}(x_u)$.
Hence the terms for those values of $u$ cancel in the
alternating sum.
For example, consider the permutations $(12)$ and $(12)(43)$ of
opposite parity.
From (\ref{E:uonw4}), $x_{(12)} = (n-1)\ep_1 + (n+1)\ep_2 +
n\ep_3 + n\ep_4$.
By Observation \ref{O:B41}, a decomposition of $x_{(12)}$ must
contain at least one
copy of $\ep_2 + \ep_3$ (as well as a copy of $\ep_2 + \ep_4$).
Subtracting that root
shows that
$$
P_{2n}(x_{(12)}) = P_{2n-1}((n-1)\ep_1 + n\ep_2 + (n-1)\ep_3 +
n\ep_4).
$$
On the other hand, $x_{(12)(34)} = (n-1)\ep_1 + (n+1)\ep_2 +
(n-1)\ep_3 + (n+1)\ep_4$.
Here, $x_{(12)(34)}$ must contain a copy of $\ep_2 + \ep_4$ (in
fact, at least two copies).
Subtracting this root gives
$$
P_{2n}(x_{(12)(34)}) = P_{2n-1}((n-1)\ep_1 + n\ep_2 + (n-1)\ep_3
+ n\ep_4) =
P_{2n}(x_{(12)}).
$$
The eight other pairings (which may not be unique) are
$(13)$ with $(13)(24)$; $(14)$ with $(14)(23)$; $(123)$ with
$(1243)$;
$(132)$ with $(1342)$; $(124)$ with $(1234)$; $(142)$ with
$(1432)$;
$(134)$ with $(1324)$; and $(143)$ with $(1423)$. We leave the
details to the
interested reader.

That leaves the six values of $u$ for which $u(\ep_1) = \ep_1$:
$(1)$, $(23)$, $(24)$,
$(34)$, $(234)$, and $(243)$. However, as above, one can show
that
$P_{2n}(x_{(24)}) = P_{2n}(x_{(243)})$. So those terms cancel as
well and we are reduced to
$$
\sum_{u \in W}(-1)^{\ell(u)}P_m(u\cdot((m+1)\omega_4) -
\omega_4) =
P_{2n}(x_{(1)}) - P_{2n}(x_{(23)}) - P_{2n}(x_{(34)}) +
P_{2n}(x_{(234)}).
$$

From (\ref{E:uonw4}), we have
$x_{(1)} = n\ep_1 + n\ep_2 + n\ep_3 + n\ep_4$.
Write $P_{2n}(x_{(1)}) = M_1 + M_2 + M_3$ where $M_1$ denotes
the number of root sum
decompositions which contain at least one copy of $\ep_2 +
\ep_3$,
$M_2$ denotes the number which contain no copies of $\ep_2 +
\ep_3$ but contain
at least one copy of $\ep_1 + \ep_2$, and $M_3$ denotes the
number which
contain neither an $\ep_2 + \ep_3$ nor an $\ep_1 + \ep_2$. By
assumption,
subtracting a copy of $\ep_2 + \ep_3$, we have
\begin{equation}\label{E:B4M1}
M_1 = P_{2n-1}(n\ep_1 + (n-1)\ep_2 + (n-1)\ep_3 + n\ep_4).
\end{equation}
For $M_2$, subtracting a copy of $\ep_1 + \ep_2$ gives
\begin{equation}\label{E:B4M2}
M_2 = P_{2n-1}^*((n-1)\ep_1 + (n-1)\ep_2 + n\ep_3 + n\ep_4),
\end{equation}
where the $P^*$ denotes the fact that the sum is only over those
decompositions
which do not contain
a copy of $\ep_2 + \ep_3$.  For $M_3$, in order to 
get the $n\ep_2$ appearing in $x_{(1)}$, there must be exactly
$n$ copies of $\ep_2 + \ep_4$.
But then the remaining $n$ factors must all be $\ep_1 + \ep_3$.
In other words,
$M_3 = 1$.  

From (\ref{E:uonw4}), we have
$x_{(23)} = n\ep_1 + (n-1)\ep_2 + (n+1)\ep_3 + n\ep_4$.
Write $P_{2n}(x_{(23)}) = N_1 + N_2$ where $N_1$ denotes the
number of root
sum decompositions which contain at least one copy of $\ep_2 +
\ep_3$ and
$N_2$ denotes the number which contain no copies of $\ep_2 +
\ep_3$.
Subtracting a copy of $\ep_2 + \ep_3$, we have
\begin{equation}\label{E:B4N1}
N_1 = P_{2n-1}(n\ep_1 + (n-2)\ep_2 + n\ep_3 + n\ep_4).
\end{equation}
For $N_2$, by Observation \ref{O:B41}, any decomposition of
$x_{(23)}$ contains
at least one copy of $\ep_1 + \ep_3$ (as well as a copy of
$\ep_3 + \ep_4$).
Subtracting the $\ep_1 + \ep_3$ gives
$$
N_2 = P_{2n-1}^*((n-1)\ep_1 + (n-1)\ep_2 + n\ep_3 + n\ep_4) =
M_2
$$
from (\ref{E:B4M2}).

From (\ref{E:uonw4}), we have
$x_{(34)}= n\ep_1 + n\ep_2 + (n-1)\ep_3 + (n+1)\ep_4$.
From Observation \ref{O:B41}, any decomposition of $x_{(34)}$
contains at least
one copy of $\ep_2 + \ep_4$ (as well as a copy of $\ep_1 +
\ep_4$). Subtracting the
$\ep_2 + \ep_4$ gives
$$
P_{2n}(x_{(34)}) = P_{2n-1}(n\ep_1 + (n-1)\ep_2 + (n-1)\ep_3 +
n\ep_4) = M_1
$$
from (\ref{E:B4M1}).

From (\ref{E:uonw4}), we have
$x_{(234)} = n\ep_1 + (n-2)\ep_2 + (n+1)\ep_3 + (n+1)\ep_4$.
From Observation \ref{O:B41}, any decomposition of $x_{(234)}$
contains at least
one copy of $\ep_3 + \ep_4$ (in fact, at least two copies).
Subtracting this gives
$$
P_{2n}(x_{(234)}) = P_{2n-1}(n\ep_1 + (n-2)\ep_2 + n\ep_3 +
n\ep_4) = N_1
$$
from (\ref{E:B4N1}).

In summary, we have
\begin{align*}
\sum_{u \in W}(-1)^{\ell(u)}P_m(u\cdot((m+1)\omega_4) -
\omega_4) &=
P_{2n}(x_{(1)}) - P_{2n}(x_{(23)}) - P_{2n}(x_{(34)}) +
P_{2n}(x_{(234)})\\
&= M_1 + M_2 + M_3 - N_1 - N_2 - P_{2n}(x_{34}) +
P_{2n}(x_{234})\\
&= M_1 + M_2 + 1 - N_1 - M_2 - M_1 + N_1\\
&= 1
\end{align*}
as claimed.
\end{proof}


\subsection{\bf Type $B_5$.}\label{SS:B5} Let $\Phi$ be of type
$B_5$ with $p > h = 10$ (so $p \geq 11$).
As discussed in Section \ref{SS:B3} for type $B_3$, in order to
have
$\opH^i(G,H^0(\la)\otimes H^0(\la^*)^{(1)}) \neq 0$ for $0 < i <
2p-3$,
we must have $\la = p\omega_5 + w\cdot 0$ for $w \in W$. 
Specifically, substituting $n = 5$ into (\ref{E:omegan}) gives
\begin{equation}\label{E:B5condition}
i \geq 2p - \frac{p - 25}{2}.
\end{equation}
We obtain the following.

\begin{lemma}\label{L:B5} Suppose that $\Phi$ is of type $B_5$
with $p \geq 11$.
Let $\la = p\omega_5 + w\cdot 0 \in X(T)_{+}$ with $w \in W$.
\begin{itemize}
\item[(a)] If $p = 17$ or $p \geq 23$, then
$\opH^i(G,H^0(\la)\otimes H^0(\la^*)^{(1)}) = 0$
for $0 < i \leq 2p - 3$.
\item[(b)] Suppose $p = 11$. Then
\begin{itemize}
\item[(i)] $\opH^i(G,H^0(\la)\otimes H^0(\la^*)^{(1)}) = 0$ for
$0 < i < 2p - 7$;
\item[(ii)] if $\opH^{2p - 7}(G,H^0(\la)\otimes
H^0(\la^*)^{(1)}) \neq 0$, then
$\la = (p - 10)\omega_5 = \omega_5$;
\item[(iii)] $\opH^{2p-7}(G,H^0(\omega_5)\otimes
H^0(\omega_5^*)^{(1)}) = k$;
\item[(iv)] if $\opH^{2p - 6}(G,H^0(\la)\otimes
H^0(\la^*)^{(1)}) \neq 0$, then
$\la = \omega_1 + \omega_5$ or $\la = 2\omega_2 + \omega_5$;
\item[(v)] $\opH^{2p-6}(G,H^0(\la)\otimes H^0(\la^*)^{(1)}) = k$
for
$\la = \omega_1 + \omega_5$ or $\la = 2\omega_2 + \omega_5$;
\item[(vi)] $\opH^{2p-7}(\gfp,k) = k$.
\end{itemize}
\item[(c)] Suppose $p = 13$. Then
\begin{itemize}
\item[(i)] $\opH^i(G,H^0(\la)\otimes H^0(\la^*)^{(1)}) = 0$ for
$0 < i < 2p - 5$;
\item[(ii)] if $\opH^{2p - 5}(G,H^0(\la)\otimes
H^0(\la^*)^{(1)}) \neq 0$, then
$\la = (p - 10)\omega_5 = 3\omega_5$;
\item[(iii)] $\opH^{2p-5}(G,H^0(3\omega_5)\otimes
H^0(3\omega_5^*)^{(1)}) = k$;
\item[(iv)] if $\opH^{2p - 4}(G,H^0(\la)\otimes
H^0(\la^*)^{(1)}) \neq 0$, then
$\la = \omega_1 + 3\omega_5$ or $\la = 2\omega_2 + 3\omega_5$;
\item[(v)] $\dim\opH^{2p-4}(G,H^0(\la)\otimes H^0(\la^*)^{(1)})
= 2$ for
$\la = \omega_1 + 3\omega_5$;
\item[(vi)] $\opH^{2p-4}(G,H^0(\la)\otimes H^0(\la^*)^{(1)}) =
k$ for
$\la = 2\omega_2 + 3\omega_5$;
\item[(vii)] $\opH^{2p-5}(\gfp,k) = k$.
\end{itemize}
\item[(d)] Suppose $p = 19$. Then
\begin{itemize}
\item[(i)] $\opH^i(G,H^0(\la)\otimes H^0(\la^*)^{(1)}) = 0$ for
$0 < i < 2p - 3$;
\item[(ii)] if $\opH^{2p - 3}(G,H^0(\la)\otimes
H^0(\la^*)^{(1)}) \neq 0$, then
$\la = (p - 10)\omega_5 = 9\omega_5$;
\item[(iii)] $\dim\opH^{2p-3}(G,H^0(9\omega_5)\otimes
H^0(9\omega_5^*)^{(1)}) = 15$.
\end{itemize}
\end{itemize}
\end{lemma}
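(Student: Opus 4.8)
The plan is to follow, with $n=5$, the template already used for types $B_3$ and $B_4$ in Sections~\ref{SS:B3}--\ref{SS:B4} (Lemmas~\ref{L:B3A}, \ref{L:B4A}, \ref{L:B3B}, \ref{L:B4B}), splitting the four prime ranges according to whether the numerical degree bound already suffices. By Section~\ref{SS:B1}, Proposition~\ref{P:degreebound1} and Theorem~\ref{T:bw1}, any nonzero $\la$ with $\opH^i(G,H^0(\la)\otimes H^0(\la^*)^{(1)})\neq 0$ and $0<i\le 2p-3$ must lie in the coset $\om_5+\bZ\Phi$ and be of the form $\la=p\om_5+w\cdot 0$ (the type $B_5$ root system has exactly the two cosets $\bZ\Phi$ and $\om_5+\bZ\Phi$, and weights in $\bZ\Phi$ contribute only from degree $2p-3$, resp.\ $2p-2$, upward by Lemma~\ref{L:Simplylacedcohom}(a) and Theorem~\ref{T:bw1}). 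For such $\la$ the proof of Lemma~\ref{L:bwn} — that is, equation~(\ref{E:omegan}), specialized to $n=5$ in~(\ref{E:B5condition}) — gives $i\ge \tfrac{5(p-5)}{2}$; tracing through when equality can hold forces $-w\cdot 0$ to consist of all ten roots $\ep_i+\ep_j$ together with a suffix of $\{\ep_1,\dots,\ep_5\}$ (and this set must be an actual inversion set, by Observation~\ref{O:uniquedecomp}), which pins $\la$ down to $(p-10)\om_5$ up to the $\om_1$-, $\om_2$-type corrections already seen in the $B_3$, $B_4$ tables.

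For part~(a) with $p\ge 23$ one has $\tfrac{5(p-5)}{2}\ge 2p-2>2p-3$, so Proposition~\ref{P:vanishing} gives $\opH^i(\gfp,k)=0$ for $0<i\le 2p-3$ at once. For $p=17$ the numerical bound only yields $i\ge 30$, so it remains to kill degrees $i=30,31$: I would enumerate with MAGMA, exactly as in Sections~\ref{SS:B3}--\ref{SS:B4}, the finitely many dominant $\la=17\om_5+w\cdot 0$ with $\la-\om_5$ in the positive root lattice, record for each the least $k$ for which $\la-\om_5$ is a weight of $S^k(\ul^*)$ and hence the least $i=2k+\ell(w)$, and for those with $i\in\{30,31\}$ evaluate $\sum_{u\in W}(-1)^{\ell(u)}P_k(u\cdot\la-\om_5)$ from Proposition~\ref{P:KostantPartCohom}, checking it vanishes; Proposition~\ref{P:vanishing} then finishes part~(a).

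For parts~(b)--(d) the vanishing halves (b)(i), (c)(i), (d)(i) follow from the $n=5$ bound and Proposition~\ref{P:vanishing}, once one checks — by the ``matching'' combinatorics of writing $(p-11)\om_5=\la-\om_5$ as a sum of positive roots among the $\ep_i$ and $\ep_i+\ep_j$, or by MAGMA — that the unique candidate $\la=(p-10)\om_5$ actually needs $k=0,3,10$ roots for $p=11,13,19$, pushing $i$ up to $2p-7,\,2p-5,\,2p-3$, with the other weights in the small tables excluded in those degrees by vanishing partition sums. For the non-vanishing at the threshold, take $w$ to be the distinguished representative of $w_0W_I$ with $I=\Delta\setminus\{\al_5\}$; then $\ell(w)=|\Phi^+|-|\Phi_I^+|=25-10=15$, $-w\cdot 0=10\om_5$ is the sum of all positive roots containing $\al_5$, $\la=(p-10)\om_5$, and $\la-\om_5=(p-11)\om_5$. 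Lemma~\ref{L:G1cohom}, equation~(\ref{ind}) and Proposition~\ref{P:KostantPartCohom} then identify $\opH^{2k+15}(G,H^0(\la)\otimes H^0(\la^*)^{(1)})$ with $\Hom_B(V(\la),S^k(\ul^*)\otimes\om_5)$, of dimension $\sum_{u\in W}(-1)^{\ell(u)}P_k(u\cdot\la-\om_5)$; for $p=11$ this is $k=0$ and the value is visibly $1$, and MAGMA gives $1$ and $15$ for $p=13,19$ ($k=3,10$), the extra weights of (b)(iv)--(v) and (c)(iv)--(vi) and their dimensions (including the $\dim=2$ of (c)(v)) being found in the same way. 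For (b)(vi) and (c)(vii): $\la=(p-10)\om_5$ is the minimal dominant weight of the form $p\om_5+w\cdot 0$, and any dominant $\nu<\la$ in the same root-lattice coset is too small to be of the form $w'\cdot 0+p\mu$ and hence has $\opH^{>0}(G_1,H^0(\nu))=0$; thus no $\nu<\la$ linked to $\la$ contributes in degree $m+1$, so Theorem~\ref{T:nonvanishing}(a),(b) give $\opH^m(\gfp,k)\neq 0$ for $m=2p-7$ (resp.\ $2p-5$), and since (b)(i)--(ii) (resp.\ (c)(i)--(ii)) together with the first paragraph show $\la$ is the only dominant weight with nonzero $G$-cohomology in degree $m$, part~(c) of that theorem upgrades this to $\opH^m(\gfp,k)\cong\opH^m(G,H^0(\la)\otimes H^0(\la^*)^{(1)})=k$.

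I expect the real work to be the combinatorial bookkeeping that converts the clean estimate $i\ge\tfrac{5(p-5)}{2}$ into the \emph{sharp} thresholds of the statement — deciding which sums of distinct positive roots are genuinely inversion sets $-w\cdot 0$, and showing that $(p-11)\om_5$ cannot be written with fewer positive roots than the bare bound would permit — together with the MAGMA evaluations of the alternating partition-function sums for the stray weights, particularly for $p=17$, where, in contrast to $p=11,13$, all of those sums must vanish.
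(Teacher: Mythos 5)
Your plan is the paper's plan: invoke the specialized bound (\ref{E:B5condition}) for $p\geq 23$, reduce to weights of the form $p\om_5+w\cdot 0$ via Section~\ref{SS:B1} and Theorem~\ref{T:bw1}, enumerate the finitely many candidates for the small primes with MAGMA, compute the alternating Kostant sums of Proposition~\ref{P:KostantPartCohom}, and then obtain (b)(vi) and (c)(vii) from Theorem~\ref{T:nonvanishing} once the only weights contributing at the next degree are seen to be $\geq\la$. The details you supply (the distinguished coset representative $w$ with $\ell(w)=15$ and $-w\cdot 0=10\om_5$, the minimal values $k=0,3,10$ for $p=11,13,19$, and the visible $P_0$-computation at $p=11$) are all correct and are precisely the bookkeeping the paper leaves implicit behind ``follow by explicitly computing.''

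One small mis-citation: to dispose of the coset $\bZ\Phi$ you appeal to Lemma~\ref{L:Simplylacedcohom}(a), but that lemma is stated only for simply laced $\Phi$ and does not apply in type $B_5$. The correct references are Proposition~\ref{P:degreebound1}(c) (which already gives $i\geq 2p-3$ whenever $\langle\mu,\ta^\vee\rangle\geq 2$, covering every dominant $\mu\in\bZ\Phi$ other than $\om_1$) together with Theorem~\ref{T:bw1} for the $\mu=\om_1$ case — this is exactly how the discussion in Section~\ref{SS:B1} handles it. With that substitution the argument is sound, and it matches the route the paper takes.
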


\begin{proof} For $p \geq 23$, part (a) follows from
(\ref{E:B5condition}). Parts (b)(i)-(v), (c)(i)-(vi), and (d)
as well as part (a) for $p = 17$
follow by explicitly computing (with the aid of MAGMA) all
possible $w \cdot 0$,
and then computing partition functions by hand or with the aid
of MAGMA. For $p = 11$, since the weights in part (b)(iv)
are larger than that in
(b)(ii), part (b)(vi) follows from Theorem \ref{T:nonvanishing}
and Theorem \ref{T:bw1}.  Similarly, part (c)(vii) follows.
\end{proof}


\subsection{\bf Type $B_6$.}\label{SS:B6} Let $\Phi$ be of type
$B_6$ with $p > h = 12$ (so $p \geq 13$).
As discussed in Section \ref{SS:B3} for type $B_3$, in order to
have
$\opH^i(G,H^0(\la)\otimes H^0(\la^*)^{(1)}) \neq 0$ for $0 < i <
2p-3$,
we must have $\la = p\omega_6 + w\cdot 0$ for $w \in W$. 
Recall the arguments in Section \ref{SS:B1}.  Specifically, 
substituting $n = 6$ into (\ref{E:omegan}) gives
\begin{equation}\label{E:B6condition}
i \geq 2p + (p - 18).
\end{equation}
We obtain the following.

\begin{lemma}\label{L:B6} Suppose that $\Phi$ is of type $B_6$
with $p\geq 13$.
Let $\la = p\omega_6 + w\cdot 0 \in X(T)_{+}$ with $w \in W$.
\begin{itemize}
\item[(a)] If $p \geq 17$, then $\opH^i(G,H^0(\la)\otimes
H^0(\la^*)^{(1)}) = 0$
for $0 < i \leq 2p - 3$.
\item[(b)] Suppose $p = 13$. Then
\begin{itemize}
\item[(i)] $\opH^i(G,H^0(\la)\otimes H^0(\la^*)^{(1)}) = 0$ for
$0 < i < 2p - 5$;
\item[(ii)] if $\opH^{2p - 5}(G,H^0(\la)\otimes
H^0(\la^*)^{(1)}) \neq 0$, then
$\la = (p - 12)\omega_6 = \omega_6$;
\item[(iii)] $\opH^{2p-5}(G,H^0(\omega_6)\otimes
H^0(\omega_6^*)^{(1)}) = k$;
\item[(iv)] if $\opH^{2p - 4}(G,H^0(\la)\otimes
H^0(\la^*)^{(1)}) \neq 0$, then
$\la = \omega_1 + \omega_6$ or $\la = 2\omega_2 + \omega_6$;
\item[(v)] $\opH^{2p-4}(G,H^0(\la)\otimes H^0(\la^*)^{(1)}) = k$
for
$\la = \omega_1 + \omega_6$ or $\la = 2\omega_2 + \omega_6$;
\item[(vi)] $\opH^{2p - 5}(\gfp,k) = k$.
\end{itemize}
\end{itemize}
\end{lemma}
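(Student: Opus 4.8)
The plan is to follow the route already used for types $B_3$, $B_4$, and $B_5$ in Sections~\ref{SS:B3}--\ref{SS:B5}, combining the uniform degree bound of Lemma~\ref{L:bwn} with an explicit finite computation when $p$ is small. By Section~\ref{SS:B1} and Theorem~\ref{T:bw1}, for $0<i<2p-3$ the only dominant weights $\la$ that can have $\opH^i(G,H^0(\la)\otimes H^0(\la^*)^{(1)})\neq 0$ are those of the form $\la=p\omega_6+w\cdot 0$, which is precisely the case the lemma addresses; for such $\la$ the estimate (\ref{E:B6condition}), the $n=6$ specialization of (\ref{E:omegan}), says that $\opH^i(G,H^0(\la)\otimes H^0(\la^*)^{(1)})\neq 0$ with $i>0$ forces $i\geq 2p+(p-18)=3p-18$.

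For part (a), observe that $3p-18>2p-3$ exactly when $p>15$, so if $p\geq 17$ then any nonzero such cohomology lives in degree $\geq 3p-18>2p-3$; hence $\opH^i(G,H^0(\la)\otimes H^0(\la^*)^{(1)})=0$ for all $0<i\leq 2p-3$. For $p=13$ the same bound gives $i\geq 3\cdot 13-18=21=2p-5$, which is exactly part (b)(i). (Together with part (a) this covers every prime $p>h=12$.)

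For the remaining assertions in part (b) (so $p=13$, in the borderline degrees $2p-5=21$ and $2p-4=22$) I would argue as in Section~\ref{SS:B3}: using MAGMA, enumerate all $w\in W$ for which $\la=13\omega_6+w\cdot 0$ is dominant, and for each record $\ell(w)$ and the least $k$ with $\la-\omega_6$ a weight of $S^k(\ul^*)$, hence the least possible $i=2k+\ell(w)$. One expects the only contributor in degree $2p-5$ to be $\la=(p-12)\omega_6=\omega_6$, where $-w\cdot 0=12\omega_6$ is the sum of the $\binom{7}{2}=21$ positive roots containing $\alpha_6$, so by Observation~\ref{O:uniquedecomp} $\ell(w)=21$ and $k=0$; this gives (ii). Likewise the only candidates in degree $2p-4$ should be $\la=\omega_1+\omega_6$ (with $-w\cdot 0=12\omega_6-\epsilon_1$, so $\ell(w)=20$, $k=1$) and $\la=2\omega_2+\omega_6$ (with $-w\cdot 0=12\omega_6-2\omega_2$, so $\ell(w)=18$, $k=2$), giving (iv). Part (iii) is then immediate from Lemma~\ref{L:G1cohom} and (\ref{ind}): $\opH^{2p-5}(G,H^0(\omega_6)\otimes H^0(\omega_6^*)^{(1)})\cong\Hom_G(V(\omega_6),\ind_B^G(S^0(\ul^*)\otimes\omega_6))\cong\Hom_G(V(\omega_6),H^0(\omega_6))=k$. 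For (v) one evaluates $\sum_{u\in W}(-1)^{\ell(u)}P_{(i-\ell(w))/2}(u\cdot\la-\omega_6)$ via Proposition~\ref{P:KostantPartCohom}, either directly in MAGMA or by a hand computation in the $\epsilon$-basis analogous to Lemmas~\ref{L:B3B} and \ref{L:B4B}, obtaining $k$ in both cases. Finally, for (vi): $\omega_6$ is minuscule, hence the minimal dominant weight in its linkage class, and the weights of (iv) are strictly larger than $\omega_6$; so Theorem~\ref{T:nonvanishing} applies (using (i)--(v) together with Theorem~\ref{T:bw1}) to give $\opH^{2p-5}(\gfp,k)\cong\opH^{2p-5}(G,H^0(\omega_6)\otimes H^0(\omega_6^*)^{(1)})=k$.

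The main obstacle is the $p=13$ half: part (a) is settled uniformly by the degree bound, but (ii)--(v) require pinning down the exact dimensions of the cohomology groups in the two borderline degrees, an elaborate but finite partition-function bookkeeping. A conceptual hand proof of the equality $\dim\opH^{2p-4}(G,H^0(\la)\otimes H^0(\la^*)^{(1)})=k$ for $\la=\omega_1+\omega_6$ and $\la=2\omega_2+\omega_6$ in the style of Lemma~\ref{L:B4B} would be the most delicate step, though a MAGMA verification at $p=13$ is enough to conclude.
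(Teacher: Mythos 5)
Your proof is correct and follows essentially the same route as the paper's: part~(a) and the bound in (b)(i) come directly from the $n=6$ specialization~(\ref{E:B6condition}) of the degree estimate; parts (b)(ii)--(v) are settled by enumerating the dominant $13\omega_6 + w\cdot 0$ (via MAGMA or the explicit coset representatives you identify) and evaluating the partition-function formula of Proposition~\ref{P:KostantPartCohom}; and (b)(vi) follows from Theorem~\ref{T:nonvanishing} together with Theorem~\ref{T:bw1} because $\omega_6$ is minimal in its linkage class and the degree-$(2p-4)$ contributors $\omega_1+\omega_6$ and $2\omega_2+\omega_6$ lie strictly above it. Your observation that (b)(i) already follows from~(\ref{E:B6condition}) without computation, and your clean derivation of (b)(iii) from $k=0$ in~(\ref{ind}), are small but welcome refinements of the paper's terse citation to MAGMA.
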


\begin{proof} Part (a) follows from (\ref{E:B6condition}). Parts
(b)(i)-(v)
follow by explicitly computing (with the aid of MAGMA) all
possible $w \cdot 0$,
and then computing partition functions by hand or with the aid
of MAGMA. Since the weights in part (b)(iv)
are larger than that in part (b)(ii), part (b)(vi) follows from 
Theorem \ref{T:nonvanishing} and Theorem \ref{T:bw1}.
\end{proof}


\subsection{\bf Summary for type $B$.}\label{S:Bsumm}

\begin{theorem}\label{T:Bsumm} Suppose $\Phi$ is of type $B_n$ with $n \geq 3$.
Assume that $p > 2n$.
\begin{itemize}
\item[(a)] If $n \geq 7$ or $p > 13$ when $n \in \{5, 6\}$, then\begin{itemize}
\item[(i)] $\opH^i(\gfp,k) = 0$ for $0 < i < 2p - 3$;
\item[(ii)] $\opH^{2p - 3}(\gfp,k) \neq 0$.
\end{itemize}
\item[(b)] If $n \in \{5, 6\}$ and $p = 13$, then
\begin{itemize}
\item[(i)] $\opH^i(\gfp,k) = 0$ for $0 < i < 2p - 5$;
\item[(ii)] $\opH^{2p - 5}(\gfp,k) = k$.
\end{itemize}
\item[(c)] If $n = 5$ and $p = 11$, then 
\begin{itemize}
\item[(i)] $\opH^i(\gfp,k) = 0$ for $0 < i < 2p - 7$;
\item[(ii)] $\opH^{2p - 7}(\gfp,k) = k$.
\end{itemize}
\item[(d)] If $n \in \{3,4\}$, then 
\begin{itemize}
\item[(i)] $\opH^i(\gfp,k) = 0$ for $0 < i < 2p - 8$;
\item[(ii)] $\opH^{2p - 8}(\gfp,k) = k$.
\end{itemize}
\end{itemize}
\end{theorem}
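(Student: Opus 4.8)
The plan is to obtain Theorem~\ref{T:Bsumm} as a bookkeeping consequence of the case analysis carried out in Sections~\ref{SS:B1}--\ref{SS:B6}, combined with the vanishing criterion Proposition~\ref{P:vanishing} and the non-vanishing results of Section~\ref{SS:nonvanishing}. Recall from Section~\ref{SS:B1} that a dominant weight $\la = p\mu + w\cdot 0$ with $\opH^i(G,H^0(\la)\otimes H^0(\la^*)^{(1)}) \neq 0$ and $\mu \notin \{\omega_1,\omega_n\}$ forces $i \geq 2p-3$; thus for locating cohomology below degree $2p-3$ only the linkage classes of $\omega_1$ and $\omega_n$ matter. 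The weight $\mu = \omega_1$ is handled uniformly by Theorem~\ref{T:bw1}: parts (a) and (b) there give $\opH^i(G,H^0(\la)\otimes H^0(\la^*)^{(1)}) = 0$ for $0 < i < 2p-3$ for every $\la = p\omega_1 + w\cdot 0$, and part (c) supplies the non-vanishing class $\opH^{2p-3}(\gfp,k)\neq 0$. Hence in every case the first potential source of low-degree cohomology is the $\omega_n$ class, and the role of Sections~\ref{SS:B3}--\ref{SS:B6} is precisely to control it.

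For part (a), first suppose $n \geq 7$. Then Lemma~\ref{L:bwn} shows that any $\la = p\omega_n + w\cdot 0$ with non-zero cohomology has $i > 2p-3$, so combining with the $\omega_1$ and the ``$\mu\notin\{\omega_1,\omega_n\}$'' analyses and Proposition~\ref{P:vanishing} gives $\opH^i(\gfp,k)=0$ for $0 < i < 2p-3$; the non-vanishing in degree $2p-3$ is Theorem~\ref{T:bw1}(c). For $n \in \{5,6\}$ with $p > 13$ (so $p \geq 17$), the same conclusion follows by replacing Lemma~\ref{L:bwn} with Lemma~\ref{L:B5}(a),(d) (the cases $p=17$, $p=19$, $p\geq 23$) when $n=5$, and with Lemma~\ref{L:B6}(a) when $n=6$; each of these gives $\opH^i(G,H^0(\la)\otimes H^0(\la^*)^{(1)})=0$ for $0<i\leq 2p-3$ on the $\omega_n$ class. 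Parts (b) and (c) are the residual small-prime cases of rank $5$ and $6$: for $n\in\{5,6\}$ and $p=13$ one invokes Lemma~\ref{L:B5}(c) (resp. Lemma~\ref{L:B6}(b)), whose part (i) bounds the $\omega_n$-class cohomology below degree $2p-5$ and whose part (vii) (resp. (vi)) identifies $\opH^{2p-5}(\gfp,k)=k$; for $n=5$, $p=11$ one uses Lemma~\ref{L:B5}(b)(i),(vi) in the same way with bound $2p-7$. In each instance the vanishing of $\gfp$-cohomology below the stated degree follows from Proposition~\ref{P:vanishing} after noting that the $\omega_1$ class and the classes with $\mu\notin\{\omega_1,\omega_n\}$ contribute nothing below $2p-3$.

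For part (d), the ranks $n=3$ and $n=4$ are covered directly: Lemma~\ref{L:B3A}(a) (resp. Lemma~\ref{L:B4A}(a)) gives $\opH^i(G,H^0(\la)\otimes H^0(\la^*)^{(1)})=0$ for $0<i<2p-8$ for \emph{all} dominant $\la = p\mu + w\cdot 0$, so Proposition~\ref{P:vanishing} yields (d)(i), while Lemma~\ref{L:B3A}(e) (resp. Lemma~\ref{L:B4A}(e)) is exactly (d)(ii). There is no serious obstacle remaining at this stage: the substantive work was already done in establishing Theorem~\ref{T:bw1} (via Proposition~\ref{P:Pvanishing} on the partition functions $P(m,k,j,n)$) and the rank-$3$ through rank-$6$ lemmas (via explicit MAGMA computations and the partition-function identities of Lemmas~\ref{L:B3B} and \ref{L:B4B}). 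The only point requiring care in the assembly is to verify that the list of pairs $(n,p)$ exhausts all primes $p > 2n$ --- in particular that for $n\in\{5,6\}$ the primes split as $p=11$ (only $n=5$), $p=13$, and $p\geq 17$ --- and that in the borderline degrees no weight outside the $\omega_n$ linkage class contributes a class, which is guaranteed by the uniform bound $i\geq 2p-3$ from Section~\ref{SS:B1} together with Theorem~\ref{T:bw1}.
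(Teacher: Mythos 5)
Your proposal is correct and follows essentially the same route as the paper: the paper's proof of Theorem~\ref{T:Bsumm} is a one-line citation of Section~\ref{SS:B1}, Theorem~\ref{T:bw1}, and Lemmas~\ref{L:B3A}, \ref{L:B4A}, \ref{L:B5}, \ref{L:B6}, and you have accurately unpacked exactly how each cited result is applied (the $\mu\notin\{\omega_1,\omega_n\}$ reduction, the uniform $\omega_1$-class bound from Theorem~\ref{T:bw1}, the $\omega_n$-class analysis by rank, and the exhaustiveness of the $(n,p)$ cases) to obtain parts (a)--(d).
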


\begin{proof} This follows from the discussion in Section
\ref{SS:B1}, Theorem \ref{T:bw1},
Lemma \ref{L:B3A}, Lemma \ref{L:B4A}, Lemma \ref{L:B5}, and
Lemma \ref{L:B6}.
\end{proof}


\section{Type $G_2$}\label{typeG}

Assume throughout this section that $\Phi$ is of type $G_2$ and
that $p > h = 6$ (so $p \ge 7$). Following Section 2, our goal is to find the
least $i > 0$
such that $\opH^i(G,H^0(\la)\otimes H^0(\la^*)) \neq 0$ for some
$\la \in X(T)_+$.


\subsection{\bf Restrictions.}\label{Gres} Suppose that 
$\opH^i(G,H^0(\la)\otimes H^0(\la^*)^{(1)}) \neq 0$ for some
$i > 0$ and $\la = p\mu + w\cdot 0$ with $\mu \in X(T)_{+}$ and
$w \in W$.
From Proposition~\ref{P:degreebound1}(c), $i \geq
(p-1)\langle\mu,\ta^{\vee}\rangle - 1$.
Consider the two fundamental weights $\omega_1$ and $\omega_2$.
Note that
$\omega_1 = \al_0$ and $\omega_2 = \ta$.  Furthermore, we have
$\langle\omega_1,\ta^{\vee}\rangle = 1$ and
$\langle\omega_2,\ta^{\vee}\rangle = 2$.
Therefore, unless $\mu = \omega_1 = \al_0$, we have
$\langle\mu,\ta^{\vee}\rangle \geq 2$ and $i \geq 2p - 3$.

Suppose now that $\la = p\omega_1 + w\cdot 0$ for some $w \in
W$. In order to
have $\opH^i(G,H^0(\la)\otimes H^0(\la^*)^{(1)}) \neq 0$, as
discussed in Section~\ref{SS:G1analysis},
$\la$ must be dominant and $\la - \omega_1$ must be a weight of
$S^j(\ul^*)$ for some $j$.
In other words, 
$\la - \omega_1$ must be expressible as a non-negative linear
combination of positive roots.
By direct calculation (by hand or with the aid of MAGMA), one
can identify all possible $\la$.
These are listed in the following table. As usual, $s_i :=
s_{\al_i}$ and $e$ is the identity element.

\vskip.4cm
\begin{center}
\begin{tabular}{|c|c|c|c|c|}\hline
$w$ & $\ell(w)$ & $\la = p\omega_1 + w\cdot 0$\\
\hline 
$e$ & $0$ & $p\omega_1$\\
\hline
$s_1$ & $1$ & $(p - 2)\omega_1 + \omega_2$\\
\hline
$s_1s_2$ & $2$ & $(p - 5)\omega_1 + 2\omega_2$\\
\hline
$s_1s_2s_1$ & $3$ & $(p - 6)\omega_1 + 2\omega_2$\\
\hline
$s_1s_2s_1s_2$ & $4$ & $(p - 6)\omega_1 + \omega_2$\\
\hline
$s_1s_2s_1s_2s_1$ & $5$ & $(p - 5)\omega_1$\\
\hline
\end{tabular}
\end{center}

\vskip.4cm
Note that each $\la$ has the form $\la = a\omega_1 + b\omega_2$
for
$a \geq 1$ and $0 \leq b \leq 2$. From
Proposition~\ref{P:KostantPartCohom}, we know that
for $\la = p\mu + w\cdot 0$,
$$
\dim\opH^i(G,H^0(\la)\otimes H^0(\la^*)^{(1)}) = \sum_{u \in
W}(-1)^{\ell(u)}
	P_{\frac{i - \ell(w)}{2}}(u\cdot\la - \mu).
$$
In the next section, we consider such partition functions. Since
the prime
$p$ does not per se play a role in the partition function
computations, we
will work in a general setting.


\subsection{\bf Partitions I}\label{part1}

Let $\la = a\omega_1 + b\omega_2$ for $a \geq 1$ and
$0 \leq b \leq 2$. From the previous section, our goal is to
make computations of
\begin{equation}\label{sum}
\sum_{u \in W}(-1)^{\ell(u)}P_k(u\cdot \la - \omega_1).
\end{equation}

In particular, we will identify the least value of $k$ for which
this sum is non-zero
along with the value of the sum in that case. See
Proposition~\ref{P:G2prop1} and Proposition~\ref{P:G2prop2}.

In order for $P_k(u\cdot \la - \omega_1)$ to be non-zero,
$u\cdot\la - \omega_1$
must lie in the positive (more precisely, non-negative) root
lattice. By direct computation,
one finds that there are only four elements $u \in W$ for which
this is true
(under our assumptions on $a$ and $b$ above). This is summarized
in the following table.
The value of $u\cdot\la - \omega_1$ is given in the root basis.
\vskip.4cm
\begin{center}
\begin{tabular}{|c|c|c|c|c|}\hline
$u$ & $\ell(u)$ & $u\cdot\la - \omega_1$ \\
\hline 
$e$ & $0$ & $(2a + 3b - 2)\al_1 + (a + 2b - 1)\al_2$\\
\hline
$s_1$ & $1$ & $(a + 3b - 3)\al_1 + (a + 2b - 1)\al_2$\\
\hline
$s_2$ & $1$ & $(2a + 3b - 2)\al_1 + (a + b - 2)\al_2$\\
\hline
$s_1s_2$ & $2$ & $(a - 6)\al_1 + (a + b - 2)\al_2$\\
\hline
\end{tabular}
\end{center}
\vskip.4cm

Note that in some of the cases $a$ must be sufficiently large in
order for
$u\cdot\la - \omega_1$ to lie in the positive root lattice.  Specifically,
for $s_1$, one needs $a \geq 3$ or $b \geq 1$; for $s_2$, one
needs
$a \geq 2$ or $a \geq 1$ and $b \geq 1$ or $b \geq 2$;  and for
$s_1s_2$, one
needs $a \geq 6$.


\subsection{\bf Partitions II}\label{part2}
As noted in Section~\ref{part1}, our goal is to find the least
value of $k$
such that the sum (\ref{sum}) is non-zero. In this section, we
notice
some relationships among the partition functions which will
allow us to identify
a range under which the sum is zero.  

\begin{lemma} \label{L:G2comb1} Let $\la = a\omega_1 +
b\omega_2$ with $a \geq 3$ and $0 \leq b \leq 2$.
Suppose that $k \leq a + b - 2$. Then 
$$
P_k(\la - \omega_1) = P_k(s_2\cdot\la - \omega_1).
$$
\end{lemma}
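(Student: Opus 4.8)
The plan is to produce an explicit bijection between the set of $k$-element multisets of positive roots summing to $\la-\om_1$ and the one for $s_2\cdot\la-\om_1$; this forces the two values $P_k(\cdot)$ to agree.

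First I would record both weights in the simple-root basis using the table in Section~\ref{part1}: $\la-\om_1=(2a+3b-2)\al_1+(a+2b-1)\al_2$ and $s_2\cdot\la-\om_1=(2a+3b-2)\al_1+(a+b-2)\al_2$. Thus both have the same $\al_1$-coefficient, call it $N:=2a+3b-2$, while their $\al_2$-coefficients $m_1:=a+2b-1$ and $m_2:=a+b-2$ satisfy $m_1-m_2=b+1$. Among the six positive roots of $G_2$ — $\al_1$, $\al_2$, $\al_1+\al_2$, $2\al_1+\al_2$, $\be:=3\al_1+\al_2$, $\be':=3\al_1+2\al_2$ — the last two are the only ones with $\al_1$-coefficient equal to $3$, and they satisfy $\be'-\be=\al_2$. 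The bijection is then the obvious one: starting from a multiset of $k$ positive roots summing to $s_2\cdot\la-\om_1$, replace $b+1$ copies of $\be$ by $b+1$ copies of $\be'$; since $\be'-\be=\al_2$ the new multiset still has $k$ elements and now sums to $(s_2\cdot\la-\om_1)+(b+1)\al_2=\la-\om_1$. The inverse replaces $b+1$ copies of $\be'$ by $b+1$ copies of $\be$. These are mutually inverse provided every $k$-root decomposition of $s_2\cdot\la-\om_1$ uses at least $b+1$ copies of $\be$, and every $k$-root decomposition of $\la-\om_1$ uses at least $b+1$ copies of $\be'$.

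To verify these two multiplicity bounds I would parametrize a $k$-root decomposition of $N\al_1+m\al_2$ by the multiplicities $n_1,\dots,n_6$ of $\al_1,\al_2,\al_1+\al_2,2\al_1+\al_2,\be,\be'$, so that $\sum_i n_i=k$, $n_1+n_3+2n_4+3n_5+3n_6=N$, and $n_2+n_3+n_4+n_5+2n_6=m$. Subtracting the first equation from the third gives $n_6=n_1+m-k$; combining this with the second gives $n_5=2n_2+n_3+N-m-k$. For $m=m_2$ one has $N-m_2=a+2b$, so $n_5\ge (a+2b)-k\ge b+2$ using the hypothesis $k\le a+b-2$; for $m=m_1$ one has $n_6\ge m_1-k=(a+2b-1)-k\ge b+1$, again by $k\le a+b-2$. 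The same formulas, together with $a\ge 3$ and $0\le b\le 2$, show that the \emph{other} derived multiplicity is automatically nonnegative in each case (for $m=m_2$: $n_1\ge k-m_2\ge$ needs only $k-m_2\le 0$; for $m=m_1$: $n_5\ge (a+b-1)-k\ge 1$). Hence the bijection is well defined and $P_k(\la-\om_1)=P_k(s_2\cdot\la-\om_1)$.

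The argument is short once one notices that the only roots to be traded are the pair $\be,\be'$ of $\al_1$-height $3$; the only real content is that the bound $k\le a+b-2=m_2$ is \emph{exactly} strong enough to guarantee the two multiplicity inequalities, and there is no deep obstacle. Equivalently — and this may be the cleaner way to write it up — the displayed manipulation of the three linear equations shows that, under these hypotheses, any decomposition of either weight is uniquely determined by the quadruple $(n_1,n_2,n_3,n_4)\in\bZ_{\ge 0}^4$ subject to the single relation $2n_1+3n_2+2n_3+n_4=3k-N$ (the remaining $n_5,n_6$ being forced and automatically nonnegative), and this relation does not involve $m$; so both partition functions count the same set of quadruples.
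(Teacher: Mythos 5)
Your proof is correct and is essentially the same as the paper's: the paper reduces $P_k(\la-\om_1)$ and $P_k(s_2\cdot\la-\om_1)$ to a common third quantity $P_{k-b-1}((2a-5)\al_1+(a-3)\al_2)$ by deleting $b+1$ forced copies of $\ta$ (resp.\ $3\al_1+\al_2$), and your bijection ``swap $b+1$ copies of $\be$ for $\be'$'' is precisely the composite of those two reductions, with the same multiplicity bounds doing the work. Your closing reformulation — that once $n_5,n_6$ are solved for, the quadruple $(n_1,\dots,n_4)$ satisfies a single relation $2n_1+3n_2+2n_3+n_4=3k-N$ independent of $m$, with the nonnegativity of $n_5,n_6$ automatic under $k\le a+b-2$ — is a slightly slicker packaging of the same arithmetic, but not a genuinely different route.
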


\begin{proof} Recall the table in Section~\ref{part1}, and set
\begin{align*}
\ga_1 &:= \la - \omega_1 = (2a + 3b - 2)\al_1 + (a + 2b -
1)\al_2,\\
\ga_2 &:= s_2\cdot\la - \omega_1 = (2a + 3b - 2)\al_1 + (a + b -
2)\al_2.
\end{align*}
Consider a decomposition of $\ga_1$ into $k$ not necessarily
distinct positive
roots. Since $a + 2b - 1 = (a + b - 2) + (b + 1)$ and $k \leq a
+ b - 2$,
at least $b + 1$ of those roots must contain $2\al_2$. However,
the only root containing
$2\al_2$ is $\ta = 3\al_1 + 2\al_2$.
Hence, any decomposition of $\ga_1$ into $k$ roots must contain
at least $b + 1$
copies of $\ta$.  Therefore
\begin{equation}\label{rel1}
P_k(\ga_1) = P_{k - b - 1}(\ga_1 - (b+1)(3\al_1 + 2\al_2)) = 
	P_{k - b - 1}((2a - 5)\al_1 + (a - 3)\al_2).
\end{equation}

Now consider $\ga_2$ and the difference between the number of
$\al_1$s and $\al_2$s
appearing. Suppose $\ga_2 = \sum(m_i\al_1 + n_i\al_2)$ is
expressed as a sum of
$k$ positive roots.  Then
$$
\sum(m_i - n_i) = \sum m_i - \sum n_i = (2a + 3b - 2) - (a + b -
2) = a + 2b.
$$
Note that for each $i$, $m_i - n_i \in \{-1, 0, 1, 2\}$.
Since $k \leq a + b - 2$, for at least $b + 1$ values of $i$ (in
fact, at least $b + 2$ values),
we must have $m_i - n_i = 2$. However, the only root where that
occurs is $3\al_1 + \al_2$.
Hence, any decomposition of $\ga_2$ into $k$ roots must contain
at least $b + 1$ copies of
$3\al_1 + \al_2$. Therefore, 
$$
P_k(\ga_2) = P_{k - b - 1}(\ga_2 - (b + 1)(3\al_1 + \al_2)) = 
	P_k((2a - 5)\al_1 + (a - 3)\al_1).
$$
Combining this with (\ref{rel1}) gives the claim.
\end{proof}

\begin{lemma} \label{L:G2comb2} Let $\la = a\omega_1 +
b\omega_2$ with $a \geq 6$.
Suppose that $k \leq a + b - 2$. Then 
$$
P_k(s_1\cdot\la - \omega_1) = P_k(s_1s_2\cdot\la - \omega_1).
$$
\end{lemma}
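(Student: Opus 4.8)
The plan is to follow the template of the proof of Lemma~\ref{L:G2comb1}: use the explicit list of positive roots of $G_2$ to argue that, under the hypothesis $k \le a+b-2$, every way of writing each of the two relevant weights as a sum of $k$ positive roots is forced to contain a prescribed number of copies of a fixed root; after peeling those copies off, both partition functions reduce to one and the same function, evaluated at the same weight.

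First I would record the data. By the table in Section~\ref{part1},
\begin{align*}
\ga_1 &:= s_1\cdot\la - \omega_1 = (a+3b-3)\al_1 + (a+2b-1)\al_2,\\
\ga_2 &:= s_1s_2\cdot\la - \omega_1 = (a-6)\al_1 + (a+b-2)\al_2,
\end{align*}
and the positive roots of $G_2$ are $\al_1$, $\al_2$, $\al_1+\al_2$, $2\al_1+\al_2$, $3\al_1+\al_2$, and $\ta = 3\al_1+2\al_2$. Two observations about this list will be used: (i) $\ta$ is the only positive root whose $\al_2$-coefficient is $2$; (ii) for every positive root, the $\al_1$-coefficient minus the $\al_2$-coefficient lies in $\{-1,0,1,2\}$, and equals $-1$ only for $\al_2$.

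For $\ga_1$, counting copies of $\al_2$ in a decomposition into $k$ positive roots and using (i) together with $a+2b-1 = (a+b-2)+(b+1)$ and $k \le a+b-2$ forces at least $b+1$ copies of $\ta$, exactly as in the proof of Lemma~\ref{L:G2comb1}; peeling these off gives $P_k(\ga_1) = P_{k-b-1}((a-6)\al_1 + (a-3)\al_2)$. The residual weight has $\al_1$-coefficient minus $\al_2$-coefficient equal to $-3$, so by (ii) any decomposition of it contains at least $3$ copies of $\al_2$, and removing those yields $P_k(\ga_1) = P_{k-b-4}((a-6)(\al_1+\al_2))$. For $\ga_2$, the $\al_1$-coefficient minus $\al_2$-coefficient is $-(b+4)$, so by (ii) every decomposition of $\ga_2$ into $k$ roots contains at least $b+4$ copies of $\al_2$; removing those gives $P_k(\ga_2) = P_{k-b-4}((a-6)(\al_1+\al_2))$, matching the value found for $P_k(\ga_1)$. (In the degenerate range $k-b-4 < 0$ both sides vanish, consistently with the forced-root counts.)

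The main point to get right is the combinatorial bookkeeping --- tracking precisely which roots are forced and what weight remains after removal --- and verifying that the hypotheses enter exactly where claimed: $k \le a+b-2$ is used only in the step forcing $b+1$ copies of $\ta$ out of $\ga_1$, while $a \ge 6$ is needed only so that the residual weights lie in the positive root lattice. There is no essential difficulty beyond this; the argument is a direct transcription of the method already used for Lemma~\ref{L:G2comb1}.
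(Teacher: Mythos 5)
Your proof is correct and follows essentially the same approach as the paper's. The only difference is that you over-reduce: the paper stops once both partition functions equal $P_{k-b-1}\bigl((a-6)\al_1 + (a-3)\al_2\bigr)$ (peeling $(b+1)$ copies of $\ta$ from $s_1\cdot\la-\omega_1$ and $(b+1)$ copies of $\al_2$ from $s_1 s_2\cdot\la-\omega_1$), whereas you continue peeling three more copies of $\al_2$ from the first and go directly to $b+4$ copies from the second, landing at $P_{k-b-4}\bigl((a-6)(\al_1+\al_2)\bigr)$; this extra step is valid but unnecessary.
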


\begin{proof} Recall the table in Section \ref{part1}, and set
\begin{align*}
\ga_3 &:= s_1\cdot\la - \omega_1 = (a + 3b - 3)\al_1 + (a + 2b -
1)\al_2,\\
\ga_4 &:= s_1s_2\cdot\la - \omega_1 = (a - 6)\al_1 + (a + b -
2)\al_2.
\end{align*}
For $\ga_3$, the same argument as in the preceding lemma gives
\begin{equation}\label{rel2}
P_k(\ga_3) = P_{k - b - 1}(\ga_3 - (b+1)(3\al_1 + 2\al_2)) = 
	P_{k - b - 1}((a - 6)\al_1 + (a - 3)\al_2).
\end{equation}

Now consider $\ga_4$ and the difference between the number of
$\al_1$s and $\al_2$s
appearing. Suppose $\ga_4 = \sum(m_i\al_1 + n_i\al_2)$ is
expressed as a sum of
$k$ positive roots.  Then
$$
\sum(m_i - n_i) = \sum m_i - \sum n_i = (a - 6) - (a + b - 2) =
-b - 4.
$$
Note that for each $i$, $m_i - n_i \in \{-1, 0, 1, 2\}$.
For at least $b + 1$ values of $i$ (in fact, at least $b + 4$
values), we must
have $m_i - n_i = -1$. However, the only root where that occurs
is $\al_2$.
Hence, any decomposition of $\ga_4$ into $k$ roots must contain
at least $b + 1$ copies of
$\al_2$. Therefore
$$
P_k(\ga_4) = P_{k - b - 1}(\ga_2 - (b + 1)\al_2) = 
	P_{k-b-1}((a - 6)\al_1 + (a - 3)\al_2).
$$
Combining this with (\ref{rel2}) gives the claim.
\end{proof}

With the two aforementioned lemmas we can now prove the
following proposition.

\begin{proposition}\label{P:G2prop1} Let $\la = a\omega_1 +
b\omega_2$ for $a \geq 1$ and $0 \leq b \leq 2$.
For $k \leq a + b - 2$,
$$
\sum_{u \in W}(-1)^{\ell(u)}P_k(u\cdot \la - \omega_1) = 0.
$$
\end{proposition}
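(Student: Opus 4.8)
The plan is to collapse the alternating sum over all of $W$ into a four-term sum and then read off the cancellation from Lemmas~\ref{L:G2comb1} and~\ref{L:G2comb2}. First I would use the table in Section~\ref{part1}: among the twelve elements of $W$ only $e$, $s_1$, $s_2$, and $s_1s_2$ produce a weight $u\cdot\la-\omega_1$ lying in the nonnegative root lattice, so $P_k(u\cdot\la-\omega_1)=0$ for the other eight elements and every $k$. Matching these four elements, which have lengths $0,1,1,2$, to the weights $\ga_1=\la-\omega_1$, $\ga_2=s_2\cdot\la-\omega_1$ of Lemma~\ref{L:G2comb1} and $\ga_3=s_1\cdot\la-\omega_1$, $\ga_4=s_1s_2\cdot\la-\omega_1$ of Lemma~\ref{L:G2comb2}, the sum in the proposition becomes
$$
\sum_{u\in W}(-1)^{\ell(u)}P_k(u\cdot\la-\omega_1)=P_k(\ga_1)-P_k(\ga_2)-P_k(\ga_3)+P_k(\ga_4),
$$
with the standing convention that $P_m(\mu)=0$ whenever $m<0$ or $\mu$ does not lie in the nonnegative root lattice.

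For $a\ge 6$ the claim is immediate: under the hypothesis $k\le a+b-2$, Lemma~\ref{L:G2comb1} gives $P_k(\ga_1)=P_k(\ga_2)$ and Lemma~\ref{L:G2comb2} gives $P_k(\ga_3)=P_k(\ga_4)$, so the four terms cancel in pairs. For $3\le a\le 5$ the hypothesis of Lemma~\ref{L:G2comb1} still holds, so $P_k(\ga_1)=P_k(\ga_2)$ and the sum reduces to $P_k(\ga_4)-P_k(\ga_3)$; here $\ga_4=(a-6)\al_1+(a+b-2)\al_2$ has negative $\al_1$-coefficient, so $P_k(\ga_4)=0$, while the reduction step in the proof of Lemma~\ref{L:G2comb2} --- which uses only the $\al_2$-coefficient of $\ga_3$ and the bound $k\le a+b-2$, not $a\ge 6$ --- gives $P_k(\ga_3)=P_{k-b-1}\big((a-6)\al_1+(a-3)\al_2\big)=0$, again because $a-6<0$. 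Finally, for $1\le a\le 2$ one has $k\le a+b-2\le 2$, and each of $\ga_1,\ga_2,\ga_3,\ga_4$ either fails to lie in the nonnegative root cone or, by the same reduction arguments, has $P_k$ equal to $P_{k-b-1}$ of a weight with a negative coefficient; in either case all four terms vanish. Since $k\le 5$ over the whole range $1\le a\le 5$, these small cases can alternatively be dispatched by a direct computation using the explicit weights $\ga_i$ from the table in Section~\ref{part1}, by hand or with MAGMA.

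The one genuinely delicate point --- as opposed to routine bookkeeping --- is checking that the partition-function reductions behind Lemmas~\ref{L:G2comb1} and~\ref{L:G2comb2} persist below the stated thresholds on $a$ once the vanishing conventions are in force. Concretely, in any expression of $\ga_1$ (or of $\ga_3$) as a sum of $k$ positive roots, writing $n_2$ for the number of copies of $\ta=3\al_1+2\al_2$, one has $n_2\ge(a+2b-1)-k\ge b+1$, since the $\al_2$-coefficients of the summands add up to $a+2b-1$ and only $\ta$ contributes the value $2$; this inequality is insensitive to the size of $a$, and the parallel counts of copies of $3\al_1+\al_2$ in $\ga_2$ and of $\al_2$ in $\ga_4$ behave identically. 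With this in hand, the reductions $P_k(\ga_1)=P_k(\ga_2)=P_{k-b-1}\big((2a-5)\al_1+(a-3)\al_2\big)$ and $P_k(\ga_3)=P_k(\ga_4)=P_{k-b-1}\big((a-6)\al_1+(a-3)\al_2\big)$ hold for all $a\ge 1$ (with $0\le b\le 2$ and $k\le a+b-2$), and the four-term sum vanishes for a single uniform reason.
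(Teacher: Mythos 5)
Your proof is correct and follows essentially the same route as the paper: reduce to the four-term sum $P_k(\ga_1)-P_k(\ga_2)-P_k(\ga_3)+P_k(\ga_4)$, then for $a\geq 6$ apply Lemmas~\ref{L:G2comb1} and~\ref{L:G2comb2} directly, and for $a<6$ observe (from the proofs of those lemmas, with the standard vanishing conventions on $P_m$) that the surviving terms are zero. Your closing remark that both reductions actually persist uniformly for all $a\geq 1$ is a pleasant streamlining of the paper's case analysis, but is not a different method.
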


\begin{proof} From the discussion in Section~\ref{part1},
$$
\sum_{u \in W}(-1)^{\ell(u)}P_k(u\cdot \la - \omega_1) = 
	P_k(\la - \omega_1) - P_k(s_1\cdot\la - \omega_1) 
- P_k(s_2\cdot\la - \omega_1) + P_k(s_1s_2\cdot\la - \omega_1).$$
For $a \geq 6$, the claim follows from Lemma~\ref{L:G2comb1}
and Lemma~\ref{L:G2comb2} above.
For $a < 6$, one can see from the proof of Lemma~\ref{L:G2comb2}
that the 2nd and fourth
terms are zero. Hence, for $3 \leq a \leq 5$, the result follows
from Lemma~\ref{L:G2comb1}.
For $1 \leq a \leq 2$, one can see from the proof of
Lemma~\ref{L:G2comb1} that both the first and
third terms vanish, and so the result follows. When $a$ is
small, the claim could also be
readily verified by hand.
\end{proof}


\subsection{\bf Partitions III}\label{part3}
Let $\la = a\omega_1 + b\omega_2$ for $a \geq 1$ and $0 \leq b
\leq 2$.
The goal of this section is to determine
$$
\sum_{u \in W}(-1)^{\ell(u)}P_{a + b - 1}(u\cdot \la -
\omega_1).
$$
See Proposition~\ref{P:G2prop2}.

From the discussion in Section~\ref{part1} we need to consider
the following
weights (with notation following Section~\ref{part2}): 
\begin{align*}
\ga_1 &:= \la - \omega_1 = (2a + 3b - 2)\al_1 + (a + 2b -
1)\al_2,\\
\ga_2 &:= s_2\cdot\la - \omega_1 = (2a + 3b - 2)\al_1 + (a + b -
2)\al_2,\\
\ga_3 &:= s_1\cdot\la - \omega_1 = (a + 3b - 3)\al_1 + (a + 2b -
1)\al_2,\\
\ga_4 &:= s_1s_2\cdot\la - \omega_1 = (a - 6)\al_1 + (a + b -
2)\al_2.
\end{align*}
More precisely, 
\begin{equation}\label{partk}
\sum_{u \in W}(-1)^{\ell(u)}P_{a + b - 1}(u\cdot \la - \omega_1)
=
P_{a + b - 1}(\ga_1) - P_{a + b - 1}(\ga_2) - P_{a + b -
1}(\ga_3) + P_{a + b - 1}(\ga_4).
\end{equation}
We first make some reduction observations as done in the proofs
in Section~\ref{part2}. Note
that when $a$ is small, some of the statements are trivially
true since both sides are zero.
But we include them here (and in the following statements) for
simplicity of exposition.
Observe also that the right hand side is independent of the
value of $b$.

\begin{lemma}\label{L:G2comb3} Let $\la$, $\ga_1$, $\ga_2$,
$\ga_3$, and $\ga_4$ be as above, and
let $k = a + b - 1$.
Then
\begin{itemize}
\item[(a)] $P_k(\ga_1) = P_{a - 1}(2(a - 1)\al_1 + (a -
1)\al_2)$;
\item[(b)] $P_k(\ga_2) = P_{a - 2}((2a - 5)\al_1 + (a -
3)\al_2)$;
\item[(c)] $P_k(\ga_3) = P_{a - 1}((a - 3)\al_1 + (a -
1)\al_2)$;
\item[(d)] $P_k(\ga_4) = P_{a - 2}((a - 6)\al_1 + (a -
3)\al_2)$.
\end{itemize}
\end{lemma}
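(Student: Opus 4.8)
The plan is to prove all four identities by the same peeling argument already used in the proofs of Lemma~\ref{L:G2comb1} and Lemma~\ref{L:G2comb2}: in each case I show that every way of writing $\ga_i$ as a sum of $k = a+b-1$ positive roots is forced to contain a fixed number of copies of one particular positive root, remove exactly that many, and read off a bijection between such decompositions of $\ga_i$ and the decompositions of the resulting weight into the smaller number of roots. As the statement already signals, when $a$ is small several of the peeled weights fail to lie in the non-negative root lattice, in which case both sides of the claimed equality are $0$; so throughout I may assume $a$ is large enough that the right-hand side weight is non-negative, the small cases being vacuous.

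For parts (a) and (c) the relevant root is $\ta = 3\al_1 + 2\al_2$, the unique positive root of $\Phi$ whose $\al_2$-coefficient is $2$ (every other positive root has $\al_2$-coefficient at most $1$). Writing $\ga_1$ (resp.\ $\ga_3$) as a sum of $k = a+b-1$ positive roots and comparing $\al_2$-coefficients: the $\al_2$-coefficient $a+2b-1$ exceeds $k$ by exactly $b$, so at least $b$ of the roots must be copies of $\ta$. Removing exactly $b$ copies of $\ta$ yields $\ga_1 - b\ta = 2(a-1)\al_1 + (a-1)\al_2$ (resp.\ $\ga_3 - b\ta = (a-3)\al_1 + (a-1)\al_2$) written as a sum of $k-b = a-1$ positive roots, and adding the copies back is a two-sided inverse; this gives (a) and (c).

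For parts (b) and (d) I instead bookkeep the quantity $m_i - n_i$ over a decomposition $\sum(m_i\al_1 + n_i\al_2)$, exactly as in Lemma~\ref{L:G2comb1} and Lemma~\ref{L:G2comb2}: on positive roots $m_i - n_i$ takes values in $\{-1,0,1,2\}$, the value $2$ occurring only for $3\al_1+\al_2$ and the value $-1$ only for $\al_2$. For $\ga_2$ one has $\sum(m_i-n_i) = a+2b$ over $k = a+b-1$ roots, forcing at least $b+1$ copies of $3\al_1+\al_2$; peeling off $b+1$ of them leaves $(2a-5)\al_1 + (a-3)\al_2$ as a sum of $k-(b+1) = a-2$ roots, giving (b). For $\ga_4$ one has $\sum(m_i-n_i) = -(b+4)$, forcing at least $b+1$ copies of $\al_2$; peeling off $b+1$ of them leaves $(a-6)\al_1 + (a-3)\al_2$ as a sum of $a-2$ roots, giving (d).

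There is no genuine obstacle here: the four statements are parallel bookkeeping exercises of the kind already carried out in this section. The only points requiring a little care are (i) confirming that $3\al_1+\al_2$ and $\al_2$ are truly the \emph{unique} positive roots realizing the extremal values of $m_i-n_i$ invoked in (b) and (d); (ii) the arithmetic of the peeled weights and of the resulting root counts; and (iii) observing that the "forced number of copies" argument really is a bijection, so that one obtains an equality of partition-function values (not merely an inequality), and that this equality persists in the degenerate small-$a$ cases where both sides vanish.
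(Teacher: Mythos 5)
Your proposal is correct and follows the same peeling argument the paper uses: in each case you force a fixed number of copies of either $\ta$, $3\al_1+\al_2$, or $\al_2$ by comparing the $\al_2$-coefficient or the quantity $\sum(m_i - n_i)$ against $k = a+b-1$, remove them, and read off the resulting identity of partition functions, with the small-$a$ degeneracies handled by noting both sides vanish. This matches the paper's proof in substance; the only difference is that you spell out a bit more of the bookkeeping.
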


\begin{proof} (a) Suppose that $\ga_1$ is decomposed as a sum of
$k$ positive roots.
Similar to the argument in Lemma~\ref{L:G2comb1}, since 
$a + 2b - 1 = (a + b - 1) + b$, at least $b$ of those roots must
(contain $2\al_2$ and
hence) be $\ta = 3\al_1 + 2\al_2$. Hence, $P_k(\ga_1) = P_{k -
b}(\ga_1 - b\ta)$,
and the claim follows.

(b) Again, as in the proof of Lemma~\ref{L:G2comb1}, since the
difference in the
number of $\al_1$s and $\al_2$s appearing in $\ga_2$ is $a +
2b$, if $\ga_2$ is
expressed as $k$ roots, then at least $b + 1$ of them must be
$3\al_1 + \al_2$.
Hence, $P_k(\ga_2) = P_{k - b - 1}(\ga_2 - (b + 1)(3\al_1 +
\al_2))$, and the
claim follows.

(c) As in part (a), we must have $P_k(\ga_3) = P_{k - b}(\ga_3 -
b\ta)$, and
the claim follows.

(d) As in part (b), similar to the proof of
Lemma~\ref{L:G2comb2}, we consider the
difference in the number of $\al_1$s and $\al_2$s appearing in
$\ga_4$. Since
this number is $-b - 4$, we can in particular assume that if
$\ga_4$ is decomposed
into $k$ roots, then at least $b + 1$ of them are $\al_2$.
Hence,
$P_k(\ga_4) = P_{k - b - 1}(\ga_4 - (b + 1)\al_2)$, and the
claim follows.
\end{proof}

With the aid of Lemma \ref{L:G2comb3}, we now observe that there
are some relationships
among the $P_k(\ga_i)$. To this end, we introduce a bit of
notation.
Consider an arbitrary integer $k \geq 0$ and weight $\ga =
c\al_1 + d\al_2$ for
$c, d \geq 0$.  Any decomposition of $\ga$ into a sum of $k$
positive roots is of one of two types: either the sum contains
at least one copy of
$\ta$ or it does not contain any copies of $\ta$.
Correspondingly, let $P_{k,\ta}(\ga)$ and
$P_{k,\not\ta}(\ga)$ denote the number of such root sums.  Then
$P_k(\ga) = P_{k,\ta}(\ga) + P_{k,\not\ta}(\ga)$.  Observe that
\begin{equation}\label{rellong}
P_{k,\ta}(\ga) = P_{k-1}(\ga - \ta).
\end{equation}

\begin{lemma} \label{L:G2comb4} Let $\la$, $\ga_1$, $\ga_2$,
$\ga_3$, and $\ga_4$ be as above, and
let $k = a + b - 1$.
Then
\begin{itemize}
\item[(a)] $P_k(\ga_1) = P_k(\ga_2) + P_{a - 1,\not\ta}(2(a -
1)\al_1 + (a - 1)\al_2)$;
\item[(b)] $P_k(\ga_3) = P_k(\ga_4) + P_{a - 1,\not\ta}((a -
3)\al_1 + (a - 1)\al_2)$.
\end{itemize}
\end{lemma}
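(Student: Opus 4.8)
The plan is to deduce both identities directly from Lemma~\ref{L:G2comb3}, combined with the splitting $P_k(\ga) = P_{k,\ta}(\ga) + P_{k,\not\ta}(\ga)$ and the reduction identity (\ref{rellong}). Introduce the local abbreviations $\mu_1 := 2(a-1)\al_1 + (a-1)\al_2$ and $\mu_3 := (a-3)\al_1 + (a-1)\al_2$, the two weights that appear on the right-hand sides of (a) and (b). By parts (a) and (c) of Lemma~\ref{L:G2comb3}, $P_k(\ga_1) = P_{a-1}(\mu_1)$ and $P_k(\ga_3) = P_{a-1}(\mu_3)$, so it suffices to understand the ``$\ta$-part'' of each of these two partition numbers.

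For part (a), I would split $P_{a-1}(\mu_1) = P_{a-1,\ta}(\mu_1) + P_{a-1,\not\ta}(\mu_1)$ and then apply (\ref{rellong}) to the first summand: $P_{a-1,\ta}(\mu_1) = P_{a-2}(\mu_1 - \ta)$. Using $\ta = 3\al_1 + 2\al_2$, one computes $\mu_1 - \ta = (2a-5)\al_1 + (a-3)\al_2$, and Lemma~\ref{L:G2comb3}(b) identifies $P_{a-2}((2a-5)\al_1 + (a-3)\al_2)$ with $P_k(\ga_2)$. Substituting back gives $P_k(\ga_1) = P_k(\ga_2) + P_{a-1,\not\ta}(\mu_1)$, which is exactly (a).

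Part (b) is handled in the same way: split $P_{a-1}(\mu_3) = P_{a-1,\ta}(\mu_3) + P_{a-1,\not\ta}(\mu_3)$, use (\ref{rellong}) to get $P_{a-1,\ta}(\mu_3) = P_{a-2}(\mu_3 - \ta) = P_{a-2}((a-6)\al_1 + (a-3)\al_2)$, and then recognize this last quantity as $P_k(\ga_4)$ via Lemma~\ref{L:G2comb3}(d). This yields (b) at once.

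I do not anticipate any real obstacle here; the argument is a short bookkeeping exercise. The only points needing care are the two coefficient computations $\mu_1 - \ta$ and $\mu_3 - \ta$, and the observation that for small values of $a$ (where one or more of $\ga_1, \ga_2, \ga_3, \ga_4$, or of $\mu_1 - \ta$, $\mu_3 - \ta$, fail to lie in the non-negative root cone) all of the relevant partition functions vanish and the stated equalities hold trivially. Throughout, the hypotheses $a \geq 1$, $0 \leq b \leq 2$, and $k = a+b-1$ enter only via the already-established Lemma~\ref{L:G2comb3}.
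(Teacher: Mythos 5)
Your proof is correct and is essentially the paper's own argument: start from Lemma~\ref{L:G2comb3}(a) (resp. (c)), split $P_{a-1}$ into its $\ta$- and non-$\ta$-parts, apply (\ref{rellong}) to the $\ta$-part, and then recognize the result as $P_k(\ga_2)$ (resp. $P_k(\ga_4)$) via Lemma~\ref{L:G2comb3}(b) (resp. (d)). The coefficient computations $\mu_1 - \ta = (2a-5)\al_1 + (a-3)\al_2$ and $\mu_3 - \ta = (a-6)\al_1 + (a-3)\al_2$ check out, and your remark about small $a$ matches the paper's own caveat.
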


\begin{proof} (a) We have
\begin{align*}
P_k(\ga_1) &= P_{a - 1}(2(a - 1)\al_1 + (a - 1)\al_2) \hskip.5in
(\text{by Lemma~\ref{L:G2comb3}(a)})\\
&= P_{a-1,\ta}(2(a - 1)\al_1 + (a - 1)\al_2) +
P_{a-1,\not\ta}(2(a - 1)\al_1 + (a - 1)\al_2)\\
&= P_{a-2}((2a - 5)\al_1 + (a - 3)\al_2) + P_{a-1,\not\ta}(2(a -
1)\al_1 + (a - 1)\al_2)
			\hskip.3in (\text{by (\ref{rellong})})\\
	&= P_k(\ga_2) + P_{a-1,\not\ta}(2(a - 1)\al_1 + (a - 1)\al_2) 
			\hskip.5in (\text{by Lemma~\ref{L:G2comb3}(b)}).
\end{align*}

(b) We have
\begin{align*}
P_k(\ga_3) &= P_{a - 1}((a - 3)\al_1 + (a - 1)\al_2) \hskip.5in
(\text{by Lemma~\ref{L:G2comb3}(c)})\\
&= P_{a-1,\ta}((a - 3)\al_1 + (a - 1)\al_2) + P_{a-1,\not\ta}((a
- 3)\al_1 + (a - 1)\al_2)\\
&= P_{a-2}((a - 6)\al_1 + (a - 3)\al_2) + P_{a-1,\not\ta}((a -
3)\al_1 + (a - 1)\al_2)
			\hskip.5in (\text{by (\ref{rellong})})\\
	&= P_k(\ga_4) + P_{a-1,\not\ta}((a - 3)\al_1 + (a - 1)\al_2) 
			\hskip.5in (\text{by Lemma~\ref{L:G2comb3}(d)}).
\end{align*}
\end{proof}

From (\ref{partk}), Lemma~\ref{L:G2comb3} and
Lemma~\ref{L:G2comb4}, we see that

\begin{equation}\label{partk2}
\sum_{u \in W}(-1)^{\ell(u)}P_{a + b - 1}(u\cdot \la - \omega_1)
=
P_{a-1,\not\ta}(2(a - 1)\al_1 + (a - 1)\al_2) -
P_{a-1,\not\ta}((a - 3)\al_1 + (a - 1)\al_2).
\end{equation}

\begin{lemma} \label{L:G2comb5} Let $c \geq 0$.  Then
$$
P_{c,\not\ta}(2c\al_1 + c\al_2) - P_{c,\not\ta}((c-2)\al_1 +
c\al_2) =
	\left\lceil\frac{c+1}{3}\right\rceil,
$$
where $\lceil x \rceil$ denotes the least integer greater than
or equal to $x$.
\end{lemma}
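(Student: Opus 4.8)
The plan is to compute both counts $P_{c,\not\ta}(2c\al_1 + c\al_2)$ and $P_{c,\not\ta}((c-2)\al_1 + c\al_2)$ explicitly by classifying the positive roots of $G_2$ that may appear in a root-sum decomposition once $\ta = 3\al_1 + 2\al_2$ is forbidden. The positive roots of $G_2$ are $\al_1$, $\al_2$, $\al_1+\al_2$, $2\al_1+\al_2$, $3\al_1+\al_2$, and $3\al_1+2\al_2$; excluding $\ta$ leaves five roots. First I would set up, for a target weight $x = m\al_1 + n\al_2$ written as a sum of exactly $c$ of these five roots, the linear system counting how many copies $x_1,\dots,x_5$ of each root are used: one equation $x_1+\cdots+x_5 = c$ for the total number of roots, and the two coefficient equations in $\al_1$ and $\al_2$. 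This is a three-equation system in five nonnegative unknowns, so the solution set is naturally parametrized by two free parameters, and I would count the lattice points in the resulting polytope.

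The key simplification is that the ``exactly $c$ roots'' constraint is very rigid here. For $x = 2c\al_1 + c\al_2$: the $\al_2$-coefficient is $c$, and every root contributes at most one $\al_2$ except $\al_2$ itself (also one), while $\al_1$ and $2\al_1+\al_2$-type roots... — more precisely, among the five allowed roots, $\al_1$ contributes $(1,0)$, $\al_2$ contributes $(0,1)$, $\al_1+\al_2$ contributes $(1,1)$, $2\al_1+\al_2$ contributes $(2,1)$, $3\al_1+\al_2$ contributes $(3,1)$. Since the total $\al_1$-count is $2c$ and we use $c$ roots with average $\al_1$-weight $2$, and the total $\al_2$-count is $c$, I would show the decomposition is forced into a narrow family. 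Writing $x_i$ for the multiplicities of $\al_1,\al_2,\al_1+\al_2,2\al_1+\al_2,3\al_1+\al_2$ respectively, the equations $x_1+x_2+x_3+x_4+x_5=c$, $x_1+x_3+2x_4+3x_5=2c$, $x_2+x_3+x_4+x_5=c$ give, upon subtracting, $x_1 = x_2$ and then a one-parameter description; similarly for the other weight one gets a different one-parameter family. I would then count integer points subject to nonnegativity, which produces an arithmetic-progression count whose length is governed by $\lceil (c+1)/3 \rceil$ — the ``3'' entering because $3\al_1+\al_2$ is the root carrying the largest $\al_1$-excess per unit, so the number of admissible values of that multiplicity is roughly $c/3$.

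After obtaining closed forms for each of the two quantities (each being the number of lattice points in an interval, hence a floor/ceiling expression in $c$), I would subtract and simplify to obtain $\lceil (c+1)/3\rceil$. The routine-but-careful part is handling the endpoint/parity behavior of the floor and ceiling functions as $c$ runs through residues mod $3$ (and checking the small cases $c = 0, 1, 2$ directly, where both $P_{c,\not\ta}$ values can be read off by hand, to pin down the constant). \textbf{The main obstacle I anticipate} is bookkeeping: correctly imposing the ``no copies of $\ta$'' restriction throughout (it changes which decompositions are counted, and one must be sure the forced-root reductions used to pass from $P_k(\ga_i)$ to these $P_{c,\not\ta}$ terms in \eqref{partk2} are compatible with excluding $\ta$), and then verifying that the difference of two floor-type counts collapses cleanly to a single ceiling rather than leaving residual terms. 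A clean way to control this is to note that the map ``add one copy of $\al_1 - $ replace structurally'' — more concretely, exhibit an explicit near-bijection between decompositions of $(c-2)\al_1+c\al_2$ and a subset of those of $2c\al_1+c\al_2$, so that the difference counts exactly the decompositions of $2c\al_1+c\al_2$ that are ``not in the image,'' which should be visibly an arithmetic progression of length $\lceil (c+1)/3\rceil$.
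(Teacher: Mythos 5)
Your setup is the same as the paper's: forbid $\ta$, observe that the remaining five positive roots of $G_2$ each carry $\al_1$-coefficient in $\{0,1,2,3\}$ and $\al_2$-coefficient in $\{0,1\}$, and reduce to a constrained partition count. One algebra slip worth flagging: subtracting your third equation ($\al_2$-count $= c$) from the first (number of roots $= c$) gives $x_1 = 0$, not $x_1 = x_2$. That is, $\al_1$ itself can never appear in such a decomposition, because each of the $c$ roots must contribute exactly one $\al_2$ and $\al_1$ contributes none. Once $x_1 = 0$ is fixed, every root has the form $a\al_1 + \al_2$ with $a \in \{0,1,2,3\}$, and both $P_{c,\not\ta}(\eta_1)$ and $P_{c,\not\ta}(\eta_2)$ become, respectively, the number of ways to write $2c$ and $c-2$ as a sum of exactly $c$ integers each in $\{0,1,2,3\}$; this is a single-variable bounded-part partition count, not a two-parameter lattice polytope. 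Your primary plan (compute each count in closed form and subtract) would work but involves fiddly floor/ceiling bookkeeping mod $3$. The paper instead takes exactly the alternative route you sketch in your last sentences: it builds an explicit injection from the partitions of $c-2$ into those of $2c$ whose image is precisely the set of partitions of $2c$ containing the part $2$ at least twice, then counts the complement (partitions of $2c$ into $c$ parts from $\{0,1,2,3\}$ with at most one $2$), which equals $\lceil (c+1)/3 \rceil$. So your proposed bijection is the crux, and it does close the argument; the direct-subtraction version is also viable but strictly more work.
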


\begin{proof} Let $\eta_1 := 2c\al_1 + c\al_2$ and $\eta_2 := (c
- 2)\al_1 + c\al_2$.
Observe first that if $c < 2$, then $P_{c,\not\ta}(\eta_2) = 0$.
On the other hand,
we have $P_{0,\not\ta}(0) = 1$ and $P_{1,\not\ta}(2\al_1 +
\al_2) = 1$, and so the
claim holds for $c < 2$. Assume for the remainder of the proof
that $c \geq 2$.

Observe that if $\eta_i$ is expressed as a sum of $c$ positive
roots, none of
which are $\ta$, then each
root is necessarily of the form $a\al_1 + \al_2$ for $a \in \{0,
1, 2, 3\}$.
So the question of possible decompositions involves looking only
at the coefficients
of $\al_1$. For nonnegative integers $m, n$, let $P_m(n)$ denote
the number
of ways that $n$ can be expressed as a sum of $m$ integers
$$
n = n_1 + n_2 + \cdots + n_m
$$
where $n_i \in \{0,1,2,3\}$. With this notation,
$P_{c,\not\ta}(\eta_1) = P_c(2c)$,
$P_{c,\not\ta}(\eta_2) = P_c(c - 2)$, and our goal is to compute $P_c(2c) - P_c(c - 2)$ (when $c \geq 2$).  

For $m$, $n$ as above, let $S_m(n)$ denote the set of such
partitions of $n$
into $m$ integers.  
We first show that there is an injection $\varphi: S_c(c-2) \to
S_c(2c)$.
Let $\tau \in S_c(c - 2)$.  Say
$$
\tau: c - 2 = \tau_1 + \tau_2 + \cdots + \tau_c,
$$
where $\tau_i \in \{0,1,2,3\}$. Let $s$ denote the number of
$\tau_i$s which
equal 3. The remaining $c - s$ values must sum to $c - 2 - 3s$,
and hence
at most $c - 2 - 3s$ of those terms can be non-zero. In other
words, at least
$(c - s) - (c - 2 - 3s) = 2s + 2$ of the remaining terms are
zero.
That is, we may assume that $\tau$ has the form:
$$
c - 2 = \underbrace{3 + \cdots + 3}_{s \text{ times}} + 
\underbrace{0 + \cdots + 0}_{(2s + 2) \text{ times}} + \tau_{3s
+ 3} + \cdots + \tau_c,
$$
where, for $(3s + 3) \leq i \leq c$, $\tau_i \in \{0,1,2\}$. Let
$\varphi(\tau)$ be the
partition:
$$
2c = \underbrace{3 + \cdots + 3}_{2s \text{ times}} + 2 + 2 + 
\underbrace{0 + \cdots + 0}_{s \text{ times}} + (\tau_{3s + 3} +
1) +
		(\tau_{3s + 4} + 1) + \cdots + (\tau_c + 1).
$$
In words, the map $\varphi$ leaves the initial $s$ copies of $3$
fixed, sends $s$ of
the zeros to $3$, sends two of the zeros to $2$, leaves the
other $s$ zeros fixed,
and adds one to the unknown integers at the end. Note that those
unknown integers
are each at most $2$, so adding one is allowable. One can also
readily check that
the new sum does indeed add up to $2c$. It is clear that
$\varphi$ is an injection,
but we will explicitly construct an inverse below.

Observe that the resulting partition of $2c$ contains $2$ at
least twice. We claim
that the image of $\varphi$ is in fact precisely the subset $X
\subset S_c(2c)$ consisting
of those partitions where $2$ appears two or more times. Indeed,
we can define a
function $\psi: X \to S_c(c - 2)$ as follows. Let $\xi \in X$
and $s$ denote the
number of times that zero appears in $\xi$. The remaining $c -
s$ values in $\xi$
must sum to $2c$. We know that at least two of those have value
$2$. The remaining
$c - s - 2$ terms must sum to $2c - 4$. Since $2(c - s - 2) = 2c
- 4 - 2s$, at
least $2s$ of those terms must have value $3$. In other words,
$\xi$ has the
form:
$$
2c = \underbrace{0 + \cdots + 0}_{s \text{ times}} + 2 + 2 + 
\underbrace{3 + \cdots + 3}_{2s \text{ times}} + \xi_{3s + 3} +
\cdots + \xi_c,
$$
where (for $(3s + 3) \leq i \leq c$) $\xi_i \in \{1,2,3\}$. Let
$\psi(\xi)$
be the partition:
$$
c - 2 = \underbrace{0 + \cdots + 0}_{(2s + 2) \text{ times}} + 
\underbrace{3 + \cdots + 3}_{s \text{ times}} + (\xi_{3s + 3} -
1) +
			(\xi_{3s + 4} - 1) + \cdots + (\xi_c - 1).
$$
In words, the map $\psi$ leaves the zeros fixed, sends the two
$2$s to zero,
sends $s$ copies of $3$ to zero, leaves the other $s$ copies of
$3$ fixed, and subtracts
one from each of the remaining integers. Clearly $\psi$ is an
inverse to $\phi$.
Hence, $P_c(c - 2) = |X|$.

It remains to compute $P_c(2c) - |X|$. That is, we need to count
the number of
partitions
$$
2c = n_1 + n_2 + \cdots + n_c,
$$
where $n_i \in \{0,1,2,3\}$ but for which at most one value of
$n_i = 2$.
Write $c = 3m + t$ for $m \geq 0$ and $t < 3$. Then it is a
straightforward (but somewhat lengthy)
computation to show that the number of such partitions is $m +
1$. This is left
to the interested reader.  The lemma follows.
\end{proof}

Applying Lemma~\ref{L:G2comb5} with $c = a - 1$, we obtain the
following from (\ref{partk2}).

\begin{proposition}\label{P:G2prop2} Let $\la = a\omega_1 +
b\omega_2$ for $a \geq 1$ and $0 \leq b \leq 2$.
Then
$$
\sum_{u \in W}(-1)^{\ell(u)}P_{a + b - 1}(u\cdot \la - \omega_1)
=
	\left\lceil\frac{a}{3}\right\rceil.
$$
\end{proposition}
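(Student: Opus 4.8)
The plan is to derive Proposition~\ref{P:G2prop2} directly from two facts established just above it: the evaluation (\ref{partk2}) of the alternating partition sum, and the combinatorial identity of Lemma~\ref{L:G2comb5}. Once those are in hand, the proof is nothing more than a bookkeeping substitution of indices, so I do not expect any real obstacle to remain at this stage.

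Concretely, I would proceed as follows. First, recall (\ref{partk2}), which says
$$\sum_{u \in W}(-1)^{\ell(u)}P_{a + b - 1}(u\cdot \la - \omega_1) = P_{a-1,\not\ta}\bigl(2(a-1)\al_1 + (a-1)\al_2\bigr) - P_{a-1,\not\ta}\bigl((a-3)\al_1 + (a-1)\al_2\bigr);$$
note in passing that the right-hand side is already independent of $b$, which is why $b$ does not appear in the final answer. Second, set $c = a - 1$; since $a \geq 1$ we have $c \geq 0$, so Lemma~\ref{L:G2comb5} applies, and its left-hand side $P_{c,\not\ta}(2c\al_1 + c\al_2) - P_{c,\not\ta}((c-2)\al_1 + c\al_2)$ is exactly the right-hand side of the previous display (here $2c = 2(a-1)$, $c = a - 1$, and $c - 2 = a - 3$). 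Third, read off from Lemma~\ref{L:G2comb5} that this difference equals $\lceil (c+1)/3 \rceil = \lceil a/3 \rceil$. Combining the three steps yields the asserted formula.

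It is worth emphasizing where the genuine effort has already gone, since that is where the difficulty of this circle of statements really lies: the reduction of the sum to the four weights $\ga_1,\dots,\ga_4$ via the table of Section~\ref{part1}, the cancellation of the $\ta$-containing contributions (Lemma~\ref{L:G2comb3}), the pairing of $\ga_1$ with $\ga_2$ and of $\ga_3$ with $\ga_4$ modulo a $P_{\cdot,\not\ta}$-term (Lemma~\ref{L:G2comb4}), and above all the bijective count in Lemma~\ref{L:G2comb5} (the injection $\varphi\colon S_c(c-2)\to S_c(2c)$ with image the partitions using the part $2$ at least twice, together with the residue-class count $P_c(2c)-|X| = m+1$ when $c = 3m+t$). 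Given all of that, the only thing left to check in proving Proposition~\ref{P:G2prop2} itself is that the small cases $a = 1, 2$ (where several $P_{a-1,\not\ta}(\cdot)$ vanish) are already subsumed under the hypothesis $c \geq 0$ of Lemma~\ref{L:G2comb5}, so that no separate base-case analysis is required.
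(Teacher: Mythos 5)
Your argument is exactly the paper's: the paper derives Proposition~\ref{P:G2prop2} by substituting $c = a-1$ into Lemma~\ref{L:G2comb5} and combining with equation~(\ref{partk2}), precisely as you do. The proposal is correct and adds only some (accurate) commentary on where the real work was done upstream.
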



\subsection{\bf Vanishing Ranges}\label{SS:Gvan}
Suppose that
$\opH^i(G,H^0(\la)\otimes H^0(\la^*)^{(1)}) \neq 0$ for some $\la
\in X(T)_+$
and $i > 0$.  From the discussion in 
Section~\ref{Gres}, we know that if $i < 2p - 3$, then $\la$
must be of the
form $\la = p\omega_1 + w\cdot 0$, and more precisely, that it
must be one
of the weights listed in Table \ref{Gres}. For each such $\la$,
from
Proposition~\ref{P:G2prop1} and Proposition~\ref{P:G2prop2}, we
can identify the least value of
$k$ such that 
$$
\sum_{u \in W}(-1)^{\ell(u)}P_k(u\cdot \la - \omega_1) \neq 0,
$$
and moreover, identify the value of the sum. Further, from
Proposition~\ref{P:KostantPartCohom} (with
$k = (i - \ell(w))/2)$), we can then identify the least
non-negative $i$
with $\opH^i(G,H^0(\la)\otimes H^0(\la^*)^{(1)}) \neq 0$ along
with the dimension
of the cohomology group. This information is summarized in the
following table.
Here $k$ and $i$ are minimum possible values, and $\dim$ gives
the dimension of the
cohomology group (equivalently the value of (\ref{sum})).

\vskip.4cm
\begin{center}
\begin{tabular}{|c|c|c|c|c|c|}\hline
$w$ & $\ell(w)$ & $\la = p\omega_1 + w\cdot 0$ & $k$ & $i$ & $\dim$
\\
\hline 
&&&&&\\
$e$ & $0$ & $p\omega_1$ & $p - 1$ & $2p - 2$ &
$\left\lceil\frac{p}{3}\right\rceil$\\
&&&&&\\
\hline
&&&&&\\
$s_1$ & $1$ & $(p - 2)\omega_1 + \omega_2$ & $p - 2$ & $2p - 3$
& $\left\lceil\frac{p}{3}\right\rceil-1$\\
&&&&&\\
\hline
&&&&&\\
$s_1s_2$ & $2$ & $(p - 5)\omega_1 + 2\omega_2$ & $p - 4$ & $2p -
6$ & $\left\lceil\frac{p}{3}\right\rceil-2$\\
&&&&&\\
\hline
&&&&&\\
$s_1s_2s_1$ & $3$ & $(p - 6)\omega_1 + 2\omega_2$ & $p - 5$ &
$2p - 7$ & $\left\lceil\frac{p}{3}\right\rceil-2$\\
&&&&&\\
\hline
&&&&&\\
$s_1s_2s_1s_2$ & $4$ & $(p - 6)\omega_1 + \omega_2$ & $p - 6$ &
$2p - 8$ & $\left\lceil\frac{p}{3}\right\rceil-2$\\
&&&&&\\
\hline
&&&&&\\
$s_1s_2s_1s_2s_1$ & $5$ & $(p - 5)\omega_1$ & $p - 6$ & $2p - 7$
& $\left\lceil\frac{p}{3}\right\rceil-2$\\
&&&&&\\
\hline
\end{tabular}
\end{center}
\vskip.4cm

\begin{theorem} \label{T:G2summary} Suppose $\Phi$ is of type
$G_2$ and $p\geq 7$. 
\begin{itemize}
\item[(a)] $\opH^i(G,H^0(\la)\otimes H^0(\la^*)^{(1)}) = 0$ for $i < 2p - 8$.
\item[(b)] $\dim \opH^{2p - 8}(G,H^0(\la)\otimes H^0(\la^*)^{(1)}) = 
\begin{cases} \left\lceil\frac{p}{3}\right\rceil-2 &\mbox{ if }
\la = (p - 6)\omega_1 + \omega_2\\
0 &\mbox{ else. }
\end{cases}$
\item[(c)] $\dim \opH^{2p - 7}(G,H^0(\la)\otimes H^0(\la^*)^{(1)}) = 
\begin{cases} \left\lceil\frac{p}{3}\right\rceil-2 &\mbox{ if }
\la =(p - 5)\omega_1 \mbox{ or } (p-6)\omega_1 + 2\omega_2\\
0 &\mbox{ else. }
\end{cases}$
\item[(d)] $\opH^i(\gfp,k) = 0$ for $0 < i < 2p - 8$.
\end{itemize}
\end{theorem}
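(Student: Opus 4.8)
The plan is to obtain part (d) as an immediate consequence of parts (a)--(b) together with the vanishing criterion of Proposition~\ref{P:vanishing}. Recall that Proposition~\ref{P:vanishing} asserts: if $m$ is the least positive integer for which there exists $\la \in X(T)_+$ with $\opH^m(G,H^0(\la)\otimes H^0(\la^*)^{(1)}) \neq 0$, then $\opH^i(\gfp,k) \cong \opH^i(G,\cgr) = 0$ for all $0 < i < m$. So the only thing to check is that $m = 2p-8$ under the standing hypothesis $p \geq 7$.

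First I would invoke part (a), which states that $\opH^i(G,H^0(\la)\otimes H^0(\la^*)^{(1)}) = 0$ for every $\la \in X(T)_+$ and every $i < 2p-8$; this gives $m \geq 2p-8$. Next I would use part (b) with the specific weight $\la = (p-6)\omega_1 + \omega_2$, for which $\dim \opH^{2p-8}(G,H^0(\la)\otimes H^0(\la^*)^{(1)}) = \left\lceil \frac{p}{3}\right\rceil - 2$. Since $p \geq 7$ forces $\left\lceil \frac{p}{3}\right\rceil \geq 3$, this dimension is at least $1$, so the cohomology group is genuinely nonzero and therefore $m = 2p-8$. Substituting into Proposition~\ref{P:vanishing} yields $\opH^i(\gfp,k) = 0$ for $0 < i < 2p-8$, which is exactly the assertion of part (d).

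The real content of the theorem lies entirely in parts (a)--(c): those rest on the reduction in Section~\ref{Gres} to the six weights of the form $p\omega_1 + w\cdot 0$, the dimension formula of Proposition~\ref{P:KostantPartCohom}, and the partition-function identities of Sections~\ref{part1}--\ref{part3}, culminating in Proposition~\ref{P:G2prop1} and Proposition~\ref{P:G2prop2}. Given those inputs, part (d) carries no genuine obstacle; the only point requiring attention is the elementary observation that the relevant cohomology group in degree $2p-8$ does not vanish, i.e.\ that $\left\lceil \frac{p}{3}\right\rceil - 2 > 0$ whenever $p > h = 6$.
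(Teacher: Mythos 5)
Your derivation of part (d) from part (a) and Proposition~\ref{P:vanishing} matches the paper's proof, and your identification of the inputs for parts (a)--(c) (the table of weights $p\omega_1 + w\cdot 0$ in Section~\ref{Gres}, Proposition~\ref{P:KostantPartCohom}, Proposition~\ref{P:G2prop1}, Proposition~\ref{P:G2prop2}) is exactly what the paper cites. The only minor difference is that you additionally invoke part (b) to pin down $m = 2p-8$ exactly; the paper gets by with part (a) alone, since Proposition~\ref{P:vanishing} only needs the lower bound $m \geq 2p-8$ to yield the vanishing in degrees $0 < i < 2p-8$ — your extra step is harmless (and arguably tidier, as it confirms the minimum is attained) but not necessary for part (d).
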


\begin{proof} Part (a) follows from Proposition \ref{P:KostantPartCohom} and 
Proposition \ref{P:G2prop1}.  Parts (b) and (c) follow from the preceding table
and discussion.  Part (d) follows from part (a) and 
Proposition~\ref{P:vanishing}.
\end{proof}

One would like to apply Theorem~\ref{T:nonvanishing} to conclude that
$\opH^{2p - 8}(\gfp,k) \neq 0$.  However, the weight $(p-5)\omega_1$ is less than
and linked to the weight $(p-6)\omega_1 + \omega_2$ and so the Theorem is not applicable.
The non-zero cohomology from the weight $(p-5)\omega_1$ in degree 
$2p - 7$ could ``cancel''
some or all of the cohomology coming from the weight $(p-6)\omega_1 + \omega_2$. 
We refer the interested
reader to \cite[Section 2.7]{BNP} for discussion of this
interplay.  

In a similar manner, cohomology in degree $2p-6$ coming from the weight
$(p-6)\omega_1 + \omega_2$ could cancel that in degree $2p - 7$ coming 
from the weight $(p-6)\omega_1 + 2\omega_2$.  So it is not even possible
to conclude that $\opH^{2p-7}(\gfp,k) \neq 0$.  In summary, 
alternate methods are needed to determine  the precise 
vanishing bound.


\section{Type $F_4$}\label{typeF}

Assume throughout this section that $\Phi$ is of type $F_4$ and
that $p > h = 12$ (so $p \geq 13$). Following the strategy laid out in Section 2, 
our goal is to find the least
$i > 0$ such that $\opH^i(G,H^0(\la)\otimes H^0(\la^*)^{(1)}) \neq 0$
for some $\la\in X(T)_{+}$.


\subsection{\bf Restrictions.}\label{SS:Fres} Suppose that 
$\opH^i(G,H^0(\la)\otimes H^0(\la^*)^{(1)}) \neq 0$ for some
$i > 0$ and $\la = p\mu + w\cdot 0$ with $\mu \in X(T)_{+}$ and
$w \in W$. From Proposition \ref{P:degreebound1}, $i \geq
(p-1)\langle\mu,\ta^{\vee}\rangle - 1$.
For $1 \leq i \leq 3$, we have
$\langle\omega_i,\ta^{\vee}\rangle \geq 2$,
while $\langle\omega_4,\ta^{\vee}\rangle = 1$.  
Therefore, unless $\mu = \omega_4 = \al_0$, we have
$\langle\mu,\ta^{\vee}\rangle \geq 2$ and $i \geq 2p - 3$.

Suppose now that $\la = p\omega_4 + w\cdot 0$ for some $w \in
W$.
With the aid of MAGMA, one can identify all $w$ for which $\la$
is
in fact dominant. From Proposition \ref{P:degreebound1}(a), with
$\si = \al_0$ and
$\la = p\omega_4 + w\cdot 0$, since
$\langle\omega_4,\al_0^{\vee}\rangle = 2$,
we have
\begin{equation}\label{icond}
i \geq 2(p - 1) + \ell(w) + \langle w\cdot
0,\al_0^{\vee}\rangle.
\end{equation}
By checking all possible cases, one finds that 
$\ell(w) + \langle w\cdot 0,\al_0^{\vee}\rangle \geq - 7$.
Combining this with
(\ref{icond}), we conclude that $i \geq 2p - 9$. From
Proposition \ref{P:vanishing},
we get the following.

\begin{theorem}\label{T:F4van} Suppose $\Phi$ is of type $F_4$ and $p\geq 13$. Let
$\la \in X(T)_{+}$.  Then
\begin{itemize}
\item[(a)] $\opH^i(G,H^0(\la)\otimes H^0(\la^*)^{(1)}) = 0$ for
$0 < i < 2p - 9$;
\item[(b)] $\opH^i(\gfp,k) = 0$ for $0 < i < 2p - 9$.
\end{itemize}
\end{theorem}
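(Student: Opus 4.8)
The plan is to follow the strategy sketched in Section~\ref{SS:Fres}: deduce part (b) from part (a) by an appeal to Proposition~\ref{P:vanishing} (in the case $r=1$), and prove part (a) by reducing to a finite inspection of the dominant weights of the form $\la = p\mu + w\cdot 0$. First I would record this reduction precisely. By Lemma~\ref{L:G1cohom} and the block/linkage considerations recalled in Section~\ref{SS:G1analysis}, the group $\opH^i(G,H^0(\la)\otimes H^0(\la^*)^{(1)}) \cong \Hom_G(V(\la),\opH^i(G_1,H^0(\la))^{(-1)})$ vanishes unless $\la = p\mu + w\cdot 0$ for some $w\in W$ and some dominant $\mu\in X(T)_+$; and the case $\mu = 0$ forces $\la = 0$, which contributes nothing for $i>0$. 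So it suffices to treat $\la = p\mu + w\cdot 0$ with $\mu$ nonzero dominant.

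If $\mu \ne \omega_4$, then since $\langle\omega_4,\ta^\vee\rangle = 1$ while $\langle\omega_j,\ta^\vee\rangle \ge 2$ for $j=1,2,3$, expanding $\mu$ in the fundamental basis gives $\langle\mu,\ta^\vee\rangle \ge 2$; Proposition~\ref{P:degreebound1}(c) then yields $i \ge (p-1)\langle\mu,\ta^\vee\rangle - 1 \ge 2p-3 > 2p-9$, so such $\la$ cannot contribute in the stated range. The remaining case is $\mu = \omega_4$. Here I would invoke Proposition~\ref{P:degreebound1}(a) with $\si$ taken to be the highest short root $\al_0 = \omega_4$ (permissible since $\Phi$ is not of type $G_2$). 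Because $\langle\omega_4,\al_0^\vee\rangle = 2$, the bound becomes
\[
i \ \ge\ 2(p-1) + \ell(w) + \langle w\cdot 0,\al_0^\vee\rangle,
\]
which is exactly inequality~(\ref{icond}). Hence part (a), and with it the whole theorem, is reduced to the combinatorial assertion that $\ell(w) + \langle w\cdot 0,\al_0^\vee\rangle \ge -7$ for every $w\in W$ such that $\la = p\omega_4 + w\cdot 0$ is dominant.

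The main work is this finite check, and it is also the only real obstacle, though a purely bookkeeping one. I would first observe that for $p \ge 13$ the dominance condition on $p\omega_4 + w\cdot 0$ singles out a fixed finite set of Weyl group elements, independent of $p$ (the weight $w\cdot 0 = w\rho-\rho$ has bounded coefficients, while $p\omega_4$ sits deep inside the dominant chamber), and that the quantity $\ell(w) + \langle w\cdot 0,\al_0^\vee\rangle$ is itself independent of $p$. One then enumerates these $w$ — most transparently by writing $-w\cdot 0$ as its unique expression as a sum of $\ell(w)$ distinct positive roots via Observation~\ref{O:uniquedecomp} and summing $\langle\beta,\al_0^\vee\rangle$ over those roots — and verifies, with the aid of MAGMA as in the type $E$ computations, that the minimum value attained is $-7$. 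Substituting into~(\ref{icond}) gives $i \ge 2p-9$ in the $\mu=\omega_4$ case, completing the proof of (a). Part (b) is then immediate from Proposition~\ref{P:vanishing} (with $r=1$), since (a) shows that the least $m$ with $\opH^m(G,H^0(\la)\otimes H^0(\la^*)^{(1)})\ne 0$ for some $\la$ satisfies $m \ge 2p-9$.
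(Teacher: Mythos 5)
Your proposal is correct and follows essentially the same route as the paper: reduce to $\mu = \omega_4$ via Proposition~\ref{P:degreebound1}(c), apply Proposition~\ref{P:degreebound1}(a) with $\si = \al_0$ to obtain the bound $i \ge 2(p-1) + \ell(w) + \langle w\cdot 0,\al_0^\vee\rangle$, verify by a finite (MAGMA-assisted) enumeration of the dominant $p\omega_4 + w\cdot 0$ that $\ell(w) + \langle w\cdot 0,\al_0^\vee\rangle \ge -7$, and then invoke Proposition~\ref{P:vanishing} for part (b). The only difference is that you make the preliminary reduction to $\la = p\mu + w\cdot 0$ explicit, which the paper leaves implicit.
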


\subsection{} Based on the preceding discussion, the weights which could give
$\opH^i(G,H^0(\la)\otimes H^0(\la^*)^{(1)}) \neq 0$ for 
$i \leq 2p - 7$ are summarized in the following table.

\vskip.4cm
\begin{center}
\begin{tabular}{|c|c|c|c|}\hline
$\la = p\omega_4 + w\cdot 0$ & $\ell(w)$ & $\langle w\cdot 0,\al_0^{\vee}\rangle$ &
$i$\\
\hline 
$(p - 12)\omega_4 + \omega_2$ & 13 & $-20$ & $2p - 9$\\
\hline
$(p - 12)\omega_4 + \omega_3$ & 14 & $-21$ & $2p - 9$\\
\hline
$(p - 11)\omega_4$ & 15 & $-22$ & $2p - 9$\\
\hline
$(p - 11)\omega_4 + 3\omega_1$ & 10 & $-16$ & $2p - 8$\\
\hline
$(p - 12)\omega_4 + 2\omega_1 + \omega_3$ & 11 & $-17$ & $2p -
8$\\
\hline
$(p - 12)\omega_4 + \omega_1 + \omega_2$ & 12 & $-18$ & $2p -
8$\\
\hline
$(p - 11)\omega_4 + 2 \omega_1 + \omega_2$ & 9 & $-14$ & $2p -
7$\\
\hline
$(p - 12)\omega_4 + \omega_1 + \omega_2 + \omega_3$ & 10 & $-15$
& $2p - 7$\\
\hline
$(p - 12)\omega_4 + 2\omega_2$ & 11 & $-16$ & $2p - 7$\\
\hline
\end{tabular}
\end{center}
\vskip.4cm

As seen in Section \ref{SS:G1analysis}, $\la - \omega_4$ must be
a weight of
$S^{\frac{i - \ell(w)}{2}}(\ul^*)$, and hence $i$ is congruent
to $\ell(w)$ mod 2.
It follows that some of the above degree bounds are even higher.
For example,
consider $\la = (p - 12)\omega_4 + \omega_3 = p\omega_4 + w\cdot
0$. Since
$\ell(w) = 14$ but $2p - 9$ is odd, the least value $i$ could
take would
be $2p - 8$. A similar situation holds for $\la = (p -
12)\omega_4 + 2\omega_1 + \omega_3$
and $\la = (p - 12)\omega_4 + \omega_1 + \omega_2 + \omega_3$.
Similarly, for the other
weights in the above list, if the cohomology vanishes in the
degree $i$ listed, then the
next possible non-vanishing degree is $i + 2$. We summarize this
in the following lemma.

\begin{lemma}\label{L:F4nonvan} Suppose $\Phi$ is of type $F_4$, $p\geq 13$ and $\la \in X(T)_{+}$. Suppose that $\opH^i(G,H^0(\la)\otimes
H^0(\la^*)^{(1)}) \neq 0$.
Then
\begin{itemize}
\item[(a)] $i \geq 2p - 9$; 
\item[(b)] if $i = 2p - 9$, then $\la = (p - 12)\omega_4 +
\omega_2$ or $(p - 11)\omega_4$;
\item[(c)] if $i = 2p - 8$, then $\la = (p - 12)\omega_4 +
\omega_3$, $(p - 11)\omega_4 + 3\omega_1$, or
	$(p - 12)\omega_4 + \omega_1 + \omega_2$;
\item[(d)] if $i = 2p - 7$, then $\la = (p - 12)\omega_4 +
\omega_2$, $(p - 11)\omega_4$,
$(p - 12)\omega_4 + 2\omega_1 + \omega_3$, $(p - 11)\omega_4 +
2\omega_1 + \omega_2$, or
	$(p - 12)\omega_4 + 2\omega_2$.
\end{itemize}
\end{lemma}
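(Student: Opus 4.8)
The plan is to combine three ingredients that are already in place: the degree bound \eqref{icond} from Proposition~\ref{P:degreebound1}(a) specialized to $\sigma=\alpha_0$, the parity constraint coming from the discussion in Section~\ref{SS:G1analysis} (namely that $\la-\omega_4$ is a weight of $S^{(i-\ell(w))/2}(\ul^*)$, so $i\equiv\ell(w)\pmod 2$), and the explicit enumeration of dominant weights $\la=p\omega_4+w\cdot 0$ that MAGMA produces. Part~(a) is exactly Theorem~\ref{T:F4van}(a), so nothing new is needed there. For parts (b)--(d) the idea is: first I would use Section~\ref{SS:Fres} to reduce to the case $\la=p\omega_4+w\cdot 0$ (since for any other $\mu$ one has $\langle\mu,\ta^\vee\rangle\ge 2$, hence $i\ge 2p-3>2p-7$, so those weights are irrelevant in the stated range). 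Then I would run through the finite list of pairs $(\ell(w),\langle w\cdot 0,\alpha_0^\vee\rangle)$ occurring for dominant $\la$, which is the content of the table in Section~8.2.

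The key computational step is verifying that table: for each such $w$, compute $\ell(w)$ and $\langle w\cdot 0,\alpha_0^\vee\rangle$, form the lower bound $i\ge 2(p-1)+\ell(w)+\langle w\cdot 0,\alpha_0^\vee\rangle$ from \eqref{icond}, and then, if that bound has parity opposite to $\ell(w)$, increase it by one. This yields, for each dominant $\la$ of the form $p\omega_4+w\cdot 0$, the minimal degree $i$ in which $\opH^i(G,H^0(\la)\otimes H^0(\la^*)^{(1)})$ could be nonzero. The weights listed in (b), (c), (d) are then read off as precisely those whose minimal possible degree equals $2p-9$, $2p-8$, $2p-7$ respectively; all other dominant weights have minimal possible degree $\ge 2p-6$. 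I would present this as: ``By \eqref{icond} and the parity observation, for $\la=p\omega_4+w\cdot 0$ the cohomology vanishes below the value $i(\la)$ recorded in the table; the three weights with $i(\la)=2p-9$ are as in (b), the three with $i(\la)=2p-8$ as in (c), and the four with $i(\la)=2p-7$ as in (d).'' (Note one must double-check the count in (d): the first entry $(p-12)\omega_4+\omega_2$ and $(p-11)\omega_4$ reappear there because their bound $2p-9$ has the parity of $\ell(w)\in\{13,15\}$, so the next candidate degree is $2p-7$, not $2p-8$; likewise $(p-12)\omega_4+2\omega_1+\omega_3$ has $\ell(w)=11$ and bound $2p-8$, bumped to $2p-7$ by parity.)

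The main obstacle is making the bookkeeping airtight rather than merely plausible: one must be certain that the MAGMA enumeration of $\{w : p\omega_4+w\cdot 0\in X(T)_+\}$ is complete (equivalently, that every $w$ with $\la-\omega_4$ in the non-negative root lattice has been found), and that for each such $w$ the value $\ell(w)+\langle w\cdot 0,\alpha_0^\vee\rangle$ is computed correctly — the inequality $\ell(w)+\langle w\cdot 0,\alpha_0^\vee\rangle\ge -7$ asserted in Section~\ref{SS:Fres} is the crux and rests on a finite but not tiny case check in $W(F_4)$. I would organize this by sorting the candidate $w$ by $\ell(w)$ and, for each length, identifying the most negative value of $\langle w\cdot 0,\alpha_0^\vee\rangle$ that still keeps $\la$ dominant; since $\langle w\cdot 0,\alpha_0^\vee\rangle = \langle w^{-1}\alpha_0^\vee{}^{\,\ast}\rangle$-type quantities are bounded in terms of $\langle\rho,\alpha_0^\vee\rangle - $ (something), one gets an a priori ceiling on $\ell(w)$ that makes the search finite and short. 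Everything else — the reduction away from $\mu\ne\omega_4$, the parity shift, and reading off the three sublists — is routine once the table is certified.

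\medskip
\noindent\textit{Proof sketch.} Part~(a) is Theorem~\ref{T:F4van}(a). For (b)--(d), suppose $\opH^i(G,H^0(\la)\otimes H^0(\la^*)^{(1)})\ne 0$ with $i\le 2p-7$. By Section~\ref{SS:Fres}, writing $\la=p\mu+w\cdot 0$, if $\mu\ne\omega_4$ then $i\ge 2p-3$, a contradiction; so $\mu=\omega_4$. For $\la=p\omega_4+w\cdot 0$ dominant, Proposition~\ref{P:degreebound1}(a) with $\sigma=\alpha_0$ (and $\langle\omega_4,\alpha_0^\vee\rangle=2$) gives $i\ge 2(p-1)+\ell(w)+\langle w\cdot 0,\alpha_0^\vee\rangle$, and the discussion in Section~\ref{SS:G1analysis} forces $i\equiv\ell(w)\pmod 2$. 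A direct check over the (finitely many) $w$ with $p\omega_4+w\cdot 0\in X(T)_+$ — summarized in the table of Section~8.2, using $\ell(w)+\langle w\cdot 0,\alpha_0^\vee\rangle\ge -7$ — shows that the resulting minimal admissible degree exceeds $2p-7$ for every such $\la$ except those listed, and equals $2p-9$, $2p-8$, $2p-7$ exactly for the weights in (b), (c), (d) respectively. $\hfill\square$
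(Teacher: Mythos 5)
Your approach is exactly the paper's: reduce to $\mu=\omega_4$, invoke the bound \eqref{icond} from Proposition~\ref{P:degreebound1}(a), apply the parity constraint $i\equiv\ell(w)\pmod 2$, and read the answers off the finite table of dominant weights $p\omega_4+w\cdot 0$ produced by MAGMA. The only slip is in your parenthetical tallies: working out the parity bumps from the table, only \emph{two} weights have minimal admissible degree $2p-9$ (those with $\ell(w)=13,15$; the one with $\ell(w)=14$ is bumped to $2p-8$), three have $2p-8$, and \emph{five} have $2p-7$ (the two from (b) reappearing, the $\ell(w)=11$ weight bumped up from $2p-8$, plus two whose raw bound is $2p-7$ with matching parity), which is exactly the list in (d); you yourself flag the need to recheck this count, and once it is corrected the argument matches the paper's.
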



\subsection{Conjectures} 
In principle, one could use Proposition \ref{P:KostantPartCohom}
to compute the dimension of
$$\opH^i(G,H^0(\la)\otimes H^0(\la^*)^{(1)})$$ 
in terms of partition functions
for the weights in Lemma \ref{L:F4nonvan}. For small $p$, one can use MAGMA to make
this
computation. For $p = 13, 17,$ or $19$, one finds that the two
candidates in
degree $2p - 9$ have zero cohomology. They do give cohomology in
degree $2p - 7$.
And in degree $2p - 8$, the only one weight (of the three) which
has cohomology is
$(p - 12)\omega_4 + \omega_3$.  We make the following

\begin{conjecture}\label{C:F4conj1} Suppose that $\Phi$ is of type $F_4$, $p\geq 13$, and $\la = p\mu + w\cdot 0 \in X(T)_{+}$. Then
\begin{itemize}
\item[(a)] $\opH^i(G,H^0(\la)\otimes H^0(\la^*)^{(1)}) = 0$ for
$0 < i < 2p - 8$;
\item[(b)] $\opH^{2p - 8}(G,H^0(\la)\otimes H^0(\la^*)^{(1)})
\neq 0$ for
$\la = (p - 12)\omega_4 + \omega_3$.
\end{itemize}
\end{conjecture}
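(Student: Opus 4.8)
The plan is to reduce everything to alternating sums of Kostant partition functions via Proposition~\ref{P:KostantPartCohom} and then settle the resulting combinatorics by hand, in the spirit of the type $B_3$ and $B_4$ computations (Lemmas~\ref{L:B3B} and \ref{L:B4B}) and the type $G_2$ analysis of Sections~\ref{part1}--\ref{part3}. For part (a), Theorem~\ref{T:F4van}(a) already gives vanishing for $0 < i < 2p-9$, so by Lemma~\ref{L:F4nonvan}(b) the only remaining task is to show that the two weights $\la = (p-12)\omega_4 + \omega_2 = p\omega_4 + w\cdot 0$ (with $\ell(w)=13$) and $\la' = (p-11)\omega_4 = p\omega_4 + w'\cdot 0$ (with $\ell(w')=15$) contribute nothing in degree $i = 2p-9$, i.e.\ that
$$\sum_{u\in W}(-1)^{\ell(u)}P_{p-11}(u\cdot\la - \omega_4) = 0 = \sum_{u\in W}(-1)^{\ell(u)}P_{p-12}(u\cdot\la' - \omega_4).$$
For part (b), with $\la'' = (p-12)\omega_4 + \omega_3 = p\omega_4 + w''\cdot 0$ and $\ell(w'')=14$, one must show $\sum_{u\in W}(-1)^{\ell(u)}P_{p-11}(u\cdot\la'' - \omega_4) \neq 0$, and ideally compute it.

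First I would, working in the $\epsilon$-basis for $F_4$ (recall $\omega_4 = \al_0$ is the highest short root), determine for each of these three weights the finite set of $u \in W$ --- independent of $p$ for $p \geq 13$ --- for which $u\cdot\la - \omega_4$ lies in the non-negative root lattice; this is the $F_4$ analogue of the tables in Sections~\ref{SS:B3} and \ref{part1}. For each such $u$ I would then, exactly as in the proof of Lemma~\ref{L:B3B}, use the coefficients of the highest long root $\ta = \omega_1$ and of suitable simple roots appearing in $u\cdot\la - \omega_4$ to force a definite number of copies of certain roots into every positive-root-sum decomposition, reducing $P_k(u\cdot\la - \omega_4)$ to a partition function of a smaller weight with fewer parts. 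The aim is then to match surviving terms: for part (a), to pair the contributing $u$'s into couples of opposite length-parity whose reduced partition functions coincide, so that everything cancels; for part (b), to perform the same pairing but be left with a small explicit residue --- the analogue of the term ``$M_3 = 1$'' in Lemma~\ref{L:B3B} --- which one then shows is nonzero. As the $G_2$ outcome $\lceil p/3\rceil - 2$ in Proposition~\ref{P:G2prop2} suggests, this residue may well depend on $p$, in which case one computes it as an explicit function of $p$ and observes it is positive for all $p \geq 13$.

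A possible alternative for the non-vanishing half, part (b), is to produce the class geometrically: exhibit a $B$-module surjection onto $S^{p-11}(\ul^*) \otimes \omega_4$ from a module with a Weyl filtration having $\la''$ as highest weight, as in Proposition~\ref{P:Homcalculation}. The obstruction to copying that argument verbatim is that $\omega_4 = \al_0$ is the highest \emph{short} root rather than the highest root $\ta$, so one must first identify the right $B$-quotient of $\ul^*$ and the right $G$-module surjecting onto it --- presumably built from the $26$-dimensional module $V(\omega_4)$ rather than from $\mathfrak{g}^*$ --- which is not as transparent as in the simply laced case.

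The step I expect to be the main obstacle is precisely the partition-function bookkeeping: for $F_4$ there are $24$ positive roots of two lengths and $|W| = 1152$, so both enumerating the contributing $u$'s and tracking the forced roots and cancellations are considerably heavier than in types $B_3$, $B_4$, or $G_2$, and --- crucially --- the argument must be made uniform in $p$ rather than verified prime-by-prime with MAGMA as the authors currently do for $p = 13, 17, 19$. (The conjecture concerns only $G$-cohomology, so no passage to $\opH^{\bullet}(\gfp,k)$ is needed; were one wanted, it would require the linkage-class analysis of Section~\ref{SS:nonvanishing} and, as the $G_2$ discussion illustrates, care about interference from smaller linked weights.)
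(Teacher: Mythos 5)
The statement you were asked to prove is labeled a \emph{Conjecture} in the paper, and the paper supplies no proof of it. In Section~8.3 the authors note only that (i) for $p = 13, 17, 19$ the claim has been checked with MAGMA, and (ii) the conjecture would follow from the purely combinatorial Conjecture~8.3.2, which is itself left open. So there is no paper argument to compare yours against.

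That said, your proposal correctly recovers the reduction the authors sketch: starting from Theorem~\ref{T:F4van}(a) and Lemma~\ref{L:F4nonvan}, you identify the three weights $(p-12)\omega_4+\omega_2$, $(p-11)\omega_4$, $(p-12)\omega_4+\omega_3$ as the only ones that matter in degrees $2p-9$ and $2p-8$, and via Proposition~\ref{P:KostantPartCohom} you write down exactly the three alternating partition-function sums that appear (with a shift $m=p-12$ or $p-11$) as parts (a)--(c) of the authors' Conjecture~8.3.2. The method you then propose --- enumerate the $u\in W$ for which $u\cdot\la-\omega_4$ is a nonnegative root sum, force copies of specific roots into every decomposition using extreme coefficients, pair $u$'s of opposite length-parity, and exhibit a residue --- is precisely the technique of Lemmas~\ref{L:B3B} and~\ref{L:B4B}. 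But the heavy part, the enumeration and cancellation in $W(F_4)$ of order $1152$ over $24$ positive roots of two lengths, is not carried out, and you yourself flag this as the ``main obstacle.'' As a standalone argument this is therefore an outline, not a proof.

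One point worth sharpening: you suggest the residue in part~(b) ``may well depend on $p$,'' by analogy with the $G_2$ value $\lceil p/3\rceil - 2$ from Proposition~\ref{P:G2prop2}. The analogy is tempting because in both $G_2$ and $F_4$ the relevant fundamental weight is the highest \emph{short} root $\al_0$, which is exactly what makes the simply-laced argument of Proposition~\ref{P:Homcalculation} unavailable, as you observe. However, the authors' Conjecture~8.3.2(c) asserts that the $F_4$ sum equals $1$ for all $m\geq 1$, independent of $p$, i.e.\ that the combinatorics behaves like types $B_3$ and $B_4$ (Lemmas~\ref{L:B3B}, \ref{L:B4B}) rather than like $G_2$. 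If your proposed computation were to produce a $p$-dependent residue, that would contradict the conjecture rather than prove it, so this is the first thing to check once the bookkeeping is attempted.
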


If part (a) of the conjecture holds, then $\opH^i(\gfp,k) = 0$ for 
$0 < i < 2p - 8$ thus improving upon Theorem \ref{T:F4van}.  However,
even if part (b) of the conjecture also holds, it does not necessarily follow
that $\opH^{2p-8}(\gfp,k) \neq 0$.  Analogous to the situation for type $G_2$
(cf. Section \ref{SS:Gvan}), cohomology in degree $2p-7$ from the weight
$(p-11)\omega_4$ could cancel out the cohomology in degree $2p-8$ from
the weight $(p-12)\omega_4 + \omega_3$.

Conjecture \ref{C:F4conj1} is a special case of a more general conjecture on 
partition functions (known to hold for small values of $m$). 
Conjecture \ref{C:F4conj1}(a) would follow from parts (a) and (b)
while Conjecture \ref{C:F4conj1}(b) would follow from part (c).

\begin{conjecture}Suppose that $\Phi$ is of type $F_4$ and $m
\geq 1$. Then
\begin{itemize}
\item[(a)] $\displaystyle \sum_{u \in W}(-1)^{\ell(u)}P_{m +
1}(u\cdot(m\omega_4 + \omega_2) - \omega_4) = 0$;
\item[(b)] $\displaystyle \sum_{u \in
W}(-1)^{\ell(u)}P_{m-1}(u\cdot(m\omega_4) - \omega_4) =
\begin{cases}
0 &\text{ if } m \text{ is even},\\
1 &\text{ if } m \text{ is odd};
\end{cases}$
\item[(c)] $\displaystyle \sum_{u \in W}(-1)^{\ell(u)}P_{m + 1}(u\cdot(m\omega_4 +
\omega_3) - \omega_4) = 1.$
\end{itemize}
\end{conjecture}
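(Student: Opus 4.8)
The plan is to establish all three identities by the mechanism already used in this paper for types $B_3$, $B_4$ and $G_2$, namely an explicit reduction of the alternating sum of Kostant partition functions, rather than by a closed form. The basic point is that $\omega_4=\al_0$ is the highest short root of $F_4$, so it is convenient to realize $\Phi$ in the standard way inside $\mathbb R^4$ (long roots $\pm\epsilon_i\pm\epsilon_j$, short roots $\pm\epsilon_i$ and $\tfrac12(\pm\epsilon_1\pm\epsilon_2\pm\epsilon_3\pm\epsilon_4)$, with $\al_0=\epsilon_1$) and to carry out all of the coefficient bookkeeping in the $\epsilon$-basis. By Proposition~\ref{P:KostantPartCohom} (together with Section~\ref{SS:G1analysis}) each of the three sums is $\dim\opH^i(G,H^0(\la)\otimes H^0(\la^*)^{(1)})$ for the indicated weight $\la$ (with $\mu=\omega_4$ and $i=2k+\ell(w)$), so one can pass freely between the partition-function description and the $\Hom_B(V(\la),S^k(\ul^*)\otimes\omega_4)$ description.

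First I would cut the Weyl group down to size: exactly as in Section~\ref{part1} for $G_2$, only the $u\in W$ with $u\cdot\la-\omega_4$ in the non-negative root lattice contribute, and a finite check produces a small explicit list, which I would tabulate for each of $\la=m\omega_4+\omega_2$, $\la=m\omega_4$ and $\la=m\omega_4+\omega_3$, recording $\ell(u)\bmod 2$ and $u\cdot\la-\omega_4$ in the root basis. Next, for each surviving $u$ I would run the ``forced root'' reductions used in the $B_3$/$B_4$ arguments (compare the reductions around (\ref{rel1}) and Observations~\ref{O:B31} and~\ref{O:B41}): whenever a coefficient inequality forces every decomposition of $\gamma:=u\cdot\la-\omega_4$ into $k$ positive roots to contain a fixed root $\beta$ with multiplicity $\ge c$, one replaces $P_k(\gamma)$ by $P_{k-c}(\gamma-c\beta)$. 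The goal is to pair the contributing $u$ into pairs of opposite length parity whose reduced weights coincide, so those terms cancel in the alternating sum, leaving only a few residual terms. For part~(a) I expect the residual terms to cancel outright, as in Proposition~\ref{P:G2prop1}; for part~(b) the parity dichotomy should come down to a single extremal partition that survives exactly when $m$ is odd (compare the even-$m$ hypotheses in Lemmas~\ref{L:B3B} and~\ref{L:B4B}); and for part~(c) the residual count should reduce, via an explicit bijection between partition sets of the type in Lemma~\ref{L:G2comb5}, to the value $1$. As an alternative for part~(c), one could instead try to kill the higher cohomology $H^{>0}(G/B,S^{m+1}(\ul^*)\otimes -\omega_4)$ in the relevant linkage class by peeling off the simple-root directions with Koszul resolutions as in Lemma~\ref{L:Prelations}(d), so that the sum becomes an honest value of $\dim\Hom_G(V(m\omega_4+\omega_3),H^0(G/B,S^{m+1}(\ul^*)\otimes -\omega_4))$, and then produce the one-dimensional $\Hom$ space by a surjection built as in the proof of Proposition~\ref{P:Homcalculation}, adapted to the highest short root.

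The main obstacle is scale and loss of symmetry. In $G_2$ only four elements $u$ contribute; in $B_3$ twelve do, paired into six cases; for $F_4$, where $|W|=1152$ and $|\Phi|=48$, the list of contributing $u$ and the pairing combinatorics are substantially larger and much less symmetric. Moreover $F_4$ has two root lengths, so $P_k$ genuinely mixes the long roots $\epsilon_i\pm\epsilon_j$, the short roots $\epsilon_i$ and the ``spin'' short roots $\tfrac12(\pm\epsilon_1\pm\cdots\pm\epsilon_4)$; this makes the forced-root reductions more delicate than in the $B_n$ case and makes the extremal partition behind~(b), on which the parity dichotomy (and hence the even-$m$ instance needed for Conjecture~\ref{C:F4conj1}) rests, harder to isolate cleanly. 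It is quite possible that a purely hand-combinatorial argument becomes unmanageable and that one must instead find a uniform generating-function identity valid for all $m$ — which is presumably why the present paper records the statement only as a conjecture checked by computer for small $m$.
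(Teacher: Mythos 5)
The paper does not prove this statement: it appears there precisely as a \emph{conjecture}, recorded with the remark that it has been verified only for small values of $m$ by machine computation, and it is used to motivate Conjecture~\ref{C:F4conj1} rather than to establish any theorem. So there is no ``paper's own proof'' to compare against, and your submission, as you yourself acknowledge in its final paragraph, is a plan of attack rather than a proof. As such it cannot be credited as a proof of the statement.

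That said, the strategy you describe is faithful to the techniques the paper does deploy in the proved cases: restrict to the finitely many $u\in W$ for which $u\cdot\la-\omega_4$ lies in the non-negative root cone, run forced-root reductions on the Kostant partition functions as in Observations~\ref{O:B31} and~\ref{O:B41} and the lemmas of Section~\ref{part2}, cancel opposite-parity pairs, and handle the residual by an explicit count or bijection as in Lemma~\ref{L:G2comb5}. You also correctly diagnose the obstacles: $|W(F_4)|=1152$, the presence of long roots, short roots and the ``spin'' short roots makes the forced-root bookkeeping much less rigid than in types $B$ or $G_2$, and the even/odd $m$ dichotomy in part~(b) hinges on isolating a single extremal partition whose existence is not at all obvious from symmetry. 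Your alternative suggestion for part~(c) — peeling off Koszul resolutions as in Lemma~\ref{L:Prelations}(d) to interpret the alternating sum as an honest $\Hom$-dimension and then exhibiting a one-dimensional space via a surjection in the spirit of Proposition~\ref{P:Homcalculation}, adapted to $\al_0$ — is a reasonable idea, but Proposition~\ref{P:Homcalculation} as stated uses the \emph{long} root $\ta$, and it is not clear that the adjoint-representation surjection onto $\ul^*$ carries over to give the $\omega_4=\al_0$-twisted statement you need; that adaptation is itself a nontrivial gap. In short: you have not supplied a proof, but you have correctly described the natural line of attack and the specific difficulties that (presumably) kept the authors from proving it, so there is nothing here to ``correct'' beyond noting that the statement remains open in the paper.
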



\bibliographystyle{amsalpha}

\end{document}